\documentclass[pdflatex,sn-mathphys-num]{sn-jnl}
\usepackage{amsmath,amssymb,amsfonts,amsthm,mathtools}
\usepackage{graphicx,booktabs,array,tabularx,longtable}
\usepackage{algorithm,algpseudocode}
\algrenewcommand\algorithmicrequire{\textbf{Input:}}
\algrenewcommand\algorithmicensure{\textbf{Output:}}
\usepackage{enumitem,microtype}
\usepackage{xcolor}
\colorlet{blue}{black}
\usepackage{placeins}
\usepackage[title]{appendix}
\usepackage{xurl,bookmark}
\hypersetup{hypertexnames=false,colorlinks=true,linkcolor=black,citecolor=black,urlcolor=black}
\providecommand{\burl}[1]{\url{#1}}
\numberwithin{equation}{section}

\theoremstyle{thmstyleone}
\newtheorem{theorem}{Theorem}[section]
\newtheorem{lemma}{Lemma}[section]
\newtheorem{corollary}{Corollary}[section]
\theoremstyle{thmstylethree}
\newtheorem{assumption}{Assumption}[section]
\newtheorem{definition}{Definition}[section]
\theoremstyle{thmstyletwo}
\newtheorem{remark}{Remark}[section]
\newcommand{\R}{\mathbb R}
\newcommand{\eps}{\varepsilon}
\newcommand{\ip}[2]{\langle #1,#2\rangle}
\newcommand{\norm}[1]{\lVert #1\rVert}
\DeclareMathOperator{\dist}{dist}
\DeclareMathOperator{\supp}{supp}
\DeclareMathOperator{\diam}{diam}
\DeclareMathOperator{\Lip}{Lip}
\DeclareMathOperator{\sgn}{sgn}
\DeclareMathOperator*{\argmin}{argmin}
\allowdisplaybreaks[1]
\author[1]{\fnm{Minhao} \sur{Zhang}}
\email{zhangminhao@shu.edu.cn}

\author*[1]{\fnm{Zi} \sur{Xu}}
\email{xuzi@shu.edu.cn}

\affil*[1]{Department of Mathematics, College of Sciences, Shanghai University, Shanghai, 200444, P.R.China}

\begin{document}
\title[]{Matching Multi-Loop Complexities with a Single Loop: Optimal Optimization Stationarity and Best-Known Game Stationarity
	in Nonconvex--Concave Minimax Optimization}
\abstract{We introduce a new single-loop algorithmic framework for smooth
	nonconvex--concave minimax optimization. The resulting projected damped
	extragradient method combines projected extragradient updates, dual
	momentum, and a moving proximal center. Under both the optimization-
	stationarity and game-stationarity criteria, our method achieves the
	best-known complexity among single-loop first-order methods. For optimization stationarity, our method achieves a gradient
	complexity of
	$\mathcal{O}(L^2D_Y\bar\Delta_0\varepsilon^{-3})$,
	where $L$ is the gradient Lipschitz constant, $D_Y$ bounds the diameter
	of the dual feasible set, and $\bar\Delta_0$ is an initialization
	quantity involving the value-function gap and the initial gradients.
	Moreover, by incorporating a fixed-center warm-up phase, the complexity
	can be improved to
	$\mathcal{O}(L^2D_Y\Delta_\phi\varepsilon^{-3})$,
	up to an additive lower-order cost, where
	$\Delta_\phi:=\phi(x_0)-\inf_x\phi(x)$.
	We further establish a lower bound of
	$\Omega(L^2D_Y\Delta_\phi\varepsilon^{-3})$
	for optimization stationarity over projected zero-respecting
	first-order methods. This lower bound proves that the warm-started
	version of our algorithm is optimal up to a constant factor for
	optimization stationarity within this oracle class.
	For game stationarity, our method achieves
	$\mathcal{O}\!\left(L^{3/2}D_Y^{1/2}\Delta_\phi\varepsilon^{-5/2}\right)$
	gradient complexity. This matches the best-known complexity of
	multi-loop first-order methods, thereby establishing the same complexity
	with a single-loop algorithmic structure. Under dual strong concavity,
	the proposed framework achieves
	$\mathcal{O}\!\left(\sqrt{\kappa}\,L\Delta_\phi\varepsilon^{-2}\right)$
	leading complexity for both stationarity criteria, where
	$\kappa=L/\mu$ is the dual condition number, up to an additive
	initialization cost. The $\varepsilon^{-2}$ accuracy dependence is
	optimal under fixed regularity and initialization bounds.}
	
\keywords{Minimax optimization, Nonconvex optimization, Single-loop methods, Extragradient methods, Oracle complexity, Lyapunov analysis}
\maketitle
\section{Introduction}\label{sec:introduction}

We study the deterministic nonconvex--concave (NC--C) minimax problem
\[
\min_{x\in X}\max_{y\in Y}f(x,y),
\]
where $X$ is nonempty, closed, and convex, $Y$ is nonempty, compact,
and convex, and $f$ has an $L$-Lipschitz continuous gradient. The
objective is possibly nonconvex in $x$ and concave in $y$. Smooth
instances arise in distributionally robust
learning~\cite{rafique2022weakly}, group robust
learning~\cite{sagawa2020distributionally}, and learning with average
top-$k$ losses~\cite{fan2017topk}. In these applications, the dual
variable weights samples or groups, or selects large losses, while
the primal variable parametrizes a possibly nonconvex model. A central
algorithmic challenge is to attain sharp stationarity guarantees
using simple first-order updates, without repeatedly solving
auxiliary optimization problems.

Two stationarity criteria are relevant to this problem.
Optimization stationarity (OS) measures the gradient of a Moreau
envelope of the value function
$\phi(x)=\max_{y\in Y}f(x,y)$, extended by $+\infty$ outside $X$.
Game stationarity (GS) measures the joint first-order residual of
$f$, including the normal cones to $X$ and $Y$. These criteria
capture different aspects of stationarity and must also be
distinguished from the value-gradient and projected-gradient-mapping
criteria used in earlier works. Their formal definitions are given
in Section~\ref{sec:preliminaries}. In the literature review below,
complexity bounds suppress fixed smoothness, domain, and
initialization quantities unless displayed explicitly;
$\widetilde O$ additionally suppresses logarithmic factors.

Dual strong concavity provides an important point of comparison.
In the nonconvex--strongly concave (NC--SC) setting, let $\mu>0$
be the strong concavity parameter and write $\kappa=L/\mu$.
For an unconstrained primal variable, two-timescale gradient
descent ascent (TS-GDA) attains $O(\kappa^2\varepsilon^{-2})$
complexity for value-gradient
stationarity~\cite{lin2020gda,lin2025ttgda}. Under a dual
Polyak--\L{}ojasiewicz condition on unconstrained domains,
Smoothed-AGDA improves the dependence to
$O(\kappa\varepsilon^{-2})$ for GS, with a terminal refinement
giving the same leading order for value-gradient
stationarity~\cite{yang2022smoothed}. Minimax-PPA and
Catalyst-EG/OGDA attain
$\widetilde O(\sqrt{\kappa}\varepsilon^{-2})$ complexity under
strong concavity~\cite{lin2020ppa,zhang2021ncsc}. The single-loop
mirror descent ascent method MDA also reports
$O(\sqrt{\kappa}\varepsilon^{-2})$ complexity under its prescribed
mirror geometry and mirror-gradient-mapping
criterion~\cite{huang2021mda}. Lower bounds of
$\Omega(\sqrt{\kappa}L\Delta_\phi\varepsilon^{-2})$ are known
for value-gradient stationarity in the corresponding unconstrained
oracle models~\cite{li2021lower,zhang2021ncsc}, where
$\Delta_\phi=\phi(x_0)-\inf_{x\in X}\phi(x)$.

The general NC--C setting is more difficult because the value
function can be nonsmooth. Multi-loop methods address this
difficulty by solving auxiliary optimization problems.
Prox-DIAG~\cite{thekumparampil2019efficient} and
Minimax-PPA~\cite{lin2020ppa} attain
$\widetilde O(\varepsilon^{-3})$ complexity for OS.
Minimax-PPA also gives $\widetilde O(\varepsilon^{-5/2})$
complexity under its game-stationarity formulation.
Li et al.~\cite{li2026smoothing} develop Perturbed Smoothed FOAM,
which attains $\widetilde O(\varepsilon^{-3})$ for OS and
$\widetilde O(\varepsilon^{-5/2})$ for GS. These methods obtain
their guarantees through regularized strongly convex--strongly
concave subproblems, whose solution requires inner iterations.

Single-loop methods use a fixed number of elementary first-order
updates per iteration, but their representative NC--C guarantees
have been weaker. TS-GDA has an $O(\varepsilon^{-6})$ OS bound
under an additional primal Lipschitz
assumption~\cite{lin2020gda,lin2025ttgda}, while AGP attains
$O(\varepsilon^{-4})$ under a projected primal--dual
gradient-mapping criterion on compact convex
domains~\cite{xu2023agp}. The analysis of
Li et al.~\cite{li2026smoothing} establishes
$O(\varepsilon^{-4})$ complexity for Smoothed GDA under both
OS and GS. Their Perturbed Smoothed GDA improves the GS bound
to $O(\varepsilon^{-3})$, while retaining an
$O(\varepsilon^{-4})$ OS bound. Thus, under both criteria, a
complexity gap remains between these single-loop methods and
methods that rely on auxiliary solvers. This motivates the
following question:
\begin{quote}
	\emph{Can a single-loop first-order method attain the best-known
		multi-loop complexity rates under both OS and GS, and can its
		complexity be certified as optimal by a matching lower bound?}
\end{quote}

We answer the algorithmic question affirmatively under both
stationarity criteria and establish a matching lower bound for OS.
Our method achieves the best-known complexity among single-loop
first-order methods under both OS and GS. Its warm-started OS
complexity is optimal up to a constant factor within the projected
zero-respecting first-order oracle class, while its GS complexity
matches the best-known rate of multi-loop first-order methods.
The same framework also attains the sharp
$O(\sqrt{\kappa}\varepsilon^{-2})$ dependence under dual strong
concavity. Our contributions are as follows; $D_Y$ denotes an
upper bound on the dual diameter.

\begin{enumerate}[leftmargin=*]
	\item \textbf{A new single-loop algorithm and Lyapunov analysis.}
	We develop a projected damped extragradient framework combining
	projected prediction--correction steps, dual momentum, and a
	moving proximal center. A new joint Lyapunov function couples
	descent of a regularized envelope with tracking of its saddle
	point, allowing both effects to be controlled within a single
	loop. A predetermined fixed-center warm-up reduces the
	initialization dependence while using the same elementary
	updates, without resetting the state or introducing repeated
	inner solves.
	
	\item \textbf{Optimal OS complexity and a matching lower bound.}
	For NC--C problems, the method achieves gradient complexity
	\[
	O\!\left(L^2D_Y\bar\Delta_0\varepsilon^{-3}\right),
	\]
	where $\bar\Delta_0$ involves the initial value-function gap
	and initial gradients. The fixed-center warm-up improves this
	bound to
	\[
	O\!\left(L^2D_Y\Delta_\phi\varepsilon^{-3}\right),
	\]
	up to an additive lower-order cost, with an expected squared
	Moreau-gradient guarantee
	(Corollary~\ref{thm:warmup-os}). We also prove the lower bound
	\[
	\Omega\!\left(L^2D_Y\Delta_\phi\varepsilon^{-3}\right)
	\]
	for projected zero-respecting first-order methods, including
	randomized output rules at a fixed query budget
	(Theorem~\ref{lb:thm:lower-bound}). The upper and lower bounds
	match in their dependence on $L$, $D_Y$, $\Delta_\phi$, and
	$\varepsilon$, establishing optimality up to a constant factor
	for the leading warm-started OS complexity within this oracle
	class.
	
	\item \textbf{Multi-loop GS complexity attained with a single loop.}
	For NC--C problems, the warm-started method returns a
	deterministic best-certificate iterate with leading gradient
	complexity
	\[
	O\!\left(
	L^{3/2}D_Y^{1/2}\Delta_\phi\varepsilon^{-5/2}
	\right),
	\]
	with an additive lower-order warm-up cost
	(Corollary~\ref{thm:warmup-gs}). To the best of our knowledge,
	this is the first single-loop method to attain the best-known
	$\varepsilon^{-5/2}$ GS complexity of multi-loop first-order
	methods for general smooth NC--C problems. The leading term
	has no multiplicative logarithmic factor.
	
	\item \textbf{A common framework for the strongly concave case.}
	Under dual strong concavity, the same framework requires no
	dual perturbation and yields
	$O(\sqrt{\kappa}L\Delta_0\varepsilon^{-2})$
	complexity for both OS and GS, where $\Delta_0$ is the
	corresponding initialization quantity. With the fixed-center
	warm-up, the leading complexity becomes
	\[
	O\!\left(\sqrt{\kappa}L\Delta_\phi\varepsilon^{-2}\right)
	\]
	for fixed problem and initialization quantities with
	$\Delta_\phi>0$ as $\varepsilon\downarrow0$
	(Corollary~\ref{cor:warmup-ncsc}). This dependence matches
	the known NC--SC lower-bound rate for value-gradient
	stationarity in the corresponding unconstrained oracle models.
\end{enumerate}

\begin{table}
	\caption{First-order oracle complexity bounds for NC--C minimax
		optimization, with problem and initialization parameters.}
	\label{tab:perturbed-comparison}
	\small
	\setlength{\tabcolsep}{3pt}
	\renewcommand{\arraystretch}{1.65}
	\begin{tabularx}{\textwidth}{
			@{}>{\raggedright\arraybackslash}p{0.24\textwidth}
			*{2}{>{\centering\arraybackslash}X}@{}
		}
		\toprule
		Algorithm
		& \shortstack{Optimization\\stationarity}
		& \shortstack{Game\\stationarity}\\
		\midrule
		\multicolumn{3}{@{}l}{\textit{Single-loop algorithms}}\\
		TS-GDA~\cite{lin2020gda,lin2025ttgda}
		& $\displaystyle
		O\!\left(\frac{L^3L_f^2D_Y^2\Delta_\phi}{\varepsilon^6}\right)$
		& $\displaystyle
		O\!\left(\frac{L^3L_f^2D_Y^2\Delta_\phi}{\varepsilon^6}\right)$
		\\[3pt]
		Smoothed GDA~\cite{li2026smoothing}
		& $\displaystyle
		O\!\left(\frac{L^3D_Y^2\Delta_{\Psi_2}}{\varepsilon^4}\right)$
		& $\displaystyle
		O\!\left(\frac{L^3D_Y^2\Delta_{\Psi_2}}{\varepsilon^4}\right)$
		\\[3pt]
		Perturbed GDA~\cite{xu2023agp,li2026smoothing}
		& $\displaystyle
		O\!\left(\frac{L^5D_Y^4\Delta_{\Psi_1}}{\varepsilon^6}\right)$
		& $\displaystyle
		O\!\left(\frac{L^3D_Y^2\Delta_{\Psi_1}}{\varepsilon^4}\right)$
		\\[3pt]
		Perturbed Smoothed GDA~\cite{li2026smoothing}
		& $\displaystyle
		O\!\left(\frac{L^3D_Y^2\Delta_{\Psi_2}}{\varepsilon^4}\right)$
		& $\displaystyle
		O\!\left(\frac{L^2D_Y\Delta_{\Psi_2}}{\varepsilon^3}\right)$
		\\[3pt]
		\textbf{Ours (Algorithm~\ref{alg:main})}
		& {\boldmath$\displaystyle
			O\!\left(\frac{L^2D_Y\bar\Delta_0}{\varepsilon^3}\right)$}
		& {\boldmath$\displaystyle
			O\!\left(
			\frac{L^{3/2}D_Y^{1/2}\bar\Delta_0}{\varepsilon^{5/2}}
			\right)$}
		\\[3pt]
		\textbf{Ours with warm-up (Algorithm~\ref{alg:warmup})}
		& {\boldmath$\displaystyle
			O\!\left(\frac{L^2D_Y\Delta_\phi}{\varepsilon^3}\right)$}
		& {\boldmath$\displaystyle
			O\!\left(
			\frac{L^{3/2}D_Y^{1/2}\Delta_\phi}{\varepsilon^{5/2}}
			\right)$}
		\\[3pt]
		\midrule
		\multicolumn{3}{@{}l}{\textit{Multi-loop algorithms}}\\
		Prox-DIAG~\cite{thekumparampil2019efficient}
		& $\displaystyle
		\widetilde O\!\left(
		\frac{L^2D_Y\Delta_\phi}{\varepsilon^3}
		\right)$
		& $\displaystyle
		\widetilde O\!\left(
		\frac{L^2D_Y\Delta_\phi}{\varepsilon^3}
		\right)$
		\\[3pt]
		Minimax-PPA~\cite{lin2020ppa}
		& $\displaystyle
		\widetilde O\!\left(
		\frac{L^2D_Y\Delta_\phi}{\varepsilon^3}
		\right)$
		& $\displaystyle
		\widetilde O\!\left(
		\frac{L^{3/2}D_Y^{1/2}\Delta_\phi}{\varepsilon^{5/2}}
		\right)$
		\\[3pt]
		Perturbed Smoothed FOAM~\cite{li2026smoothing}
		& $\displaystyle
		\widetilde O\!\left(
		\frac{L^2D_Y\Delta_{p_0}}{\varepsilon^3}
		\right)$
		& $\displaystyle
		\widetilde O\!\left(
		\frac{L^{3/2}D_Y^{1/2}\Delta_{p_0}}{\varepsilon^{5/2}}
		\right)$
		\\[3pt]
		\midrule
		\textbf{Lower bound (this paper)}
		& {\boldmath$\displaystyle
			\Omega\!\left(
			\frac{L^2D_Y\Delta_\phi}{\varepsilon^3}
			\right)$}
		& \textbf{---}\\
		\bottomrule
	\end{tabularx}
	\par\smallskip
	\begin{minipage}{\textwidth}
		\footnotesize\raggedright
		\textit{Note.}
		$D_Y$ bounds the dual diameter, $L_f$ is the additional primal
		Lipschitz constant required by TS-GDA, and
		$\Delta_\phi=\phi(x_0)-\phi_{\inf}$, where
		$\phi_{\inf}=\inf_{x\in X}\phi(x)$.
		The quantities $\Delta_{\Psi_1}$ and $\Delta_{\Psi_2}$ are the
		initial potential gaps in~\cite[Definition~3.1]{li2026smoothing},
		evaluated with each method's corresponding perturbation.
		The quantity $\Delta_{p_0}$ is the initial regularized-envelope
		gap in~\cite[Theorem~6.1]{li2026smoothing}.
		The lower-bound row concerns OS and applies to the projected
		zero-respecting first-order oracle class.
		The warm-up row reports leading terms for fixed problem and
		initialization quantities with $\Delta_\phi>0$ as
		$\varepsilon\downarrow0$; the full bound, including the additive
		warm-up cost, is given in~\eqref{eq:warmup-query-bound}.
	\end{minipage}
\end{table}

Table~\ref{tab:perturbed-comparison} compares our NC--C upper
bounds with representative single- and multi-loop first-order
methods and records the OS lower bound established in this paper.

\noindent\textbf{Organization.}
Section~\ref{sec:preliminaries} states the assumptions and
stationarity criteria. Section~\ref{sec:algorithm} presents the
algorithm. Section~\ref{sec:complexity} develops the common
Lyapunov analysis and establishes the NC--SC and NC--C
complexity bounds, together with the warm-up refinement.
Section~\ref{sec:lower-bound} proves the matching OS lower bound
and specifies the oracle class in which the leading warm-started
complexity is optimal.
Section~\ref{sec:experiments} presents numerical illustrations,
and Section~\ref{sec:conclusions} concludes. The appendices
contain the technical proofs.

\noindent\textbf{Notation.}
	We use $\ip{\cdot}{\cdot}$ and $\norm{\cdot}$ for the Euclidean
	inner product and norm, respectively, with
	$\norm{(x,y)}^2=\norm x^2+\norm y^2$ on product spaces.
	For matrices, $\norm{\cdot}$ denotes the spectral norm.
	For a nonempty closed convex set $C\subseteq\R^d$, let
	$\Pi_C(w):=\argmin_{u\in C}\norm{u-w}^2/2$ be the Euclidean
	projection onto $C$, and let
	$N_C(u):=\{\eta\in\R^d:
	\ip{\eta}{a-u}\le0\text{ for all }a\in C\}$
	be its normal cone at $u\in C$.
	For a nonempty set $S$, write
	$\dist(w,S):=\inf_{u\in S}\norm{w-u}$ and
	$\diam(S):=\sup_{u,v\in S}\norm{u-v}$.
	The support of a vector $u$ is
	$\supp(u):=\{i:u_i\ne0\}$, and $\mathbb E$ denotes
	expectation over all randomness.

\section{Problem Formulation and Preliminaries}\label{sec:preliminaries}
\subsection{Problem setting and assumptions}
We consider the deterministic minimax problem
\begin{equation}\label{eq:problem}
 \min_{x\in X}\max_{y\in Y}f(x,y),
\end{equation}
where $X\subseteq\R^n$ is nonempty, closed, and convex, and $Y\subseteq\R^p$ is nonempty, compact, and convex. Let $D_Y>0$ be an upper bound on the diameter of $Y$. After a translation of the dual coordinates, we assume $0\in Y$, so that $\norm y\le D_Y$ for every $y\in Y$.

\begin{assumption}\label{ass:main}
The function $f$ is continuously differentiable on {an open neighborhood} of $X\times Y$ and satisfies the following conditions.
\begin{enumerate}[label=(\roman*),leftmargin=*]
\item For some $L>0$ and all $(x,y),(x',y')\in X\times Y$,
\begin{equation}\label{eq:smooth}
 \norm{\nabla f(x',y')-\nabla f(x,y)}\le L\norm{(x'-x,y'-y)}.
\end{equation}
\item For some $0\le\mu\le L$, the function $f(x,\cdot)$ is $\mu$-strongly concave on $Y$ for every $x\in X$; namely,
\begin{equation}\label{eq:strong-concavity}
 {f(x,y')\le f(x,y)+\ip{\nabla_yf(x,y)}{y'-y}-\frac\mu2\norm{y'-y}^2,\qquad y,y'\in Y.}
\end{equation}
\item The value function is bounded below:
\begin{equation}\label{eq:lower-bounded}
 \inf_{x\in X}\max_{y\in Y}f(x,y)>-\infty.
\end{equation}
\end{enumerate}
\end{assumption}

Define the extended-valued value function of~\eqref{eq:problem} by
\begin{equation}\label{eq:original-value}
 \phi(x)=\begin{cases}\max_{y\in Y}f(x,y),&x\in X,\\+\infty,&x\notin X.\end{cases}
\end{equation}
By~\eqref{eq:lower-bounded}, we have $\phi_{\inf}:=\inf_{x\in X}\phi(x)>-\infty$.
When $\mu>0$, the maximizer over $Y$ is unique and the finite-valued mapping $x\mapsto\max_{y\in Y}f(x,y)$ is differentiable at every $x\in X$. When $\mu=0$, this mapping may be nonsmooth even though $f$ is smooth, motivating the use of the Moreau envelope to study stationarity. For $\lambda>L$, define the Moreau envelope and proximal mapping of $\phi$ by
\begin{align}
 \Phi_\lambda(z)&=\min_{x\in X}\left\{\phi(x)+\frac\lambda2\norm{x-z}^2\right\},\label{eq:original-envelope}\\
 \bar x(z)&=\operatorname{prox}_{\phi/\lambda}(z)
 =\argmin_{x\in X}\left\{\phi(x)+\frac\lambda2\norm{x-z}^2\right\}.\label{eq:prox-definition}
\end{align}
The function $\phi$ is $L$-weakly convex. Thus the minimizer in~\eqref{eq:prox-definition} is unique and
\begin{equation}\label{eq:moreau-gradient}
 \nabla\Phi_\lambda(z)=\lambda\bigl(z-\bar x(z)\bigr).
\end{equation}
For properties of the value function $\phi$ and its Moreau envelope $\Phi_\lambda$, see~\cite{lin2025ttgda}.

\subsection{Stationarity measures}
We use two stationarity criteria for~\eqref{eq:problem}: optimization stationarity (OS), based on the Moreau envelope of the value function, and game stationarity (GS), based on first-order residuals in both variables.
\begin{definition}[$\eps$-optimization stationary point]\label{def:optimization}
For $\lambda>L$ and $\eps>0$, a point $x\in X$ is an $\eps$-optimization stationary point relative to $\lambda$ if
\begin{equation}\label{eq:optimization-definition}
 \norm{\nabla\Phi_\lambda(x)}\le\eps.
\end{equation}
A random point $Z\in X$ almost surely is $\eps$-optimization stationary
in expectation relative to $\lambda$ if
\[
 \mathbb E\norm{\nabla\Phi_\lambda(Z)}\le\eps.
\]
\end{definition}
The OS upper bounds below establish the stronger guarantee
$\mathbb E\norm{\nabla\Phi_{{\lambda}}(Z)}^2\le\eps^2$,
which implies this expected-norm criterion by {Jensen's inequality}.

For game stationarity, define the residual at $(x,y)\in X\times Y$ by
\begin{equation}\label{eq:game-residual}
 {\mathcal R(x,y)=\sqrt{\dist^2\bigl(0,\nabla_xf(x,y)+N_X(x)\bigr)+\dist^2\bigl(0,-\nabla_yf(x,y)+N_Y(y)\bigr)}.}
\end{equation}
\begin{definition}[$\eps$-game stationary point]\label{def:game}
For $\eps>0$, a pair $(x,y)\in X\times Y$ is an $\eps$-game stationary point of~\eqref{eq:problem} if
\begin{equation}\label{eq:game-definition}
 \mathcal R(x,y)\le\eps.
\end{equation}
\end{definition}
For further discussion of these stationarity criteria and their relationships, see~\cite{lin2025ttgda,li2026smoothing}.

\section[Algorithm]{{Algorithm}}\label{sec:algorithm}
\subsection{Regularized saddle-point formulation}
We first introduce a regularized saddle-point problem. Fix $\lambda>L$ and choose $\tau\ge0$ such that $\mu_y:=\mu+\tau>0$. For a center $z\in\R^n$, define
\begin{equation}\label{eq:envelope}
 p(z)=\min_{x\in X}\max_{y\in Y}G(x,y;z),
\end{equation}
where
\[
 G(x,y;z)=f(x,y)+\frac\lambda2\norm{x-z}^2-\frac\tau2\norm y^2.
\]
The function $G(\cdot,\cdot;z)$ is $(\lambda-L)$-strongly convex in $x$ and $\mu_y$-strongly concave in $y$, so the auxiliary problem has a unique saddle point $(x^\star(z),y^\star(z))$. Write $G^\star(z)=G(x^\star(z),y^\star(z);z)=p(z)$. The envelope is continuously differentiable and
\begin{equation}\label{eq:auxiliary-gradient}
 \nabla p(z)=\lambda\bigl(z-x^\star(z)\bigr).
\end{equation}
For this regularized formulation and its envelope properties, see~\cite[Sections~3.1 and~6]{li2026smoothing}. If $\tau=0$, then $p=\Phi_\lambda$. Positive $\tau$ supplies the dual curvature needed when the original problem is merely concave. The same auxiliary problem is used by Li et al.~\cite{li2026smoothing}, whereas our method employs different primal--dual updates and a different Lyapunov function.

\subsection{Algorithm and implementation}\label{sec:implementation}
We now use the regularized saddle-point formulation~\eqref{eq:envelope} to construct a single-loop method. At iteration $t$, with $z=z_t$ fixed, we apply a projected extragradient step with dual momentum: a prediction is followed by a correction using gradients at the predicted pair. We then update $z_t$ toward the corrected primal iterate $x_{t+1}$. The complete procedure is presented in Algorithm~\ref{alg:main}.

\begin{algorithm}[!ht]
\caption{Single-loop projected damped extragradient method}\label{alg:main}
\footnotesize
\begin{algorithmic}[1]
\Require $x_0\in X$, $y_0\in Y$, $T\ge1$; $\lambda>L$, $\tau\ge0$, $\mu+\tau>0$, $h>0$, $0<\alpha<1$, $\gamma\ge0$, $0<\beta\le1$.
\State Initialize $z_0=x_0$ and $\xi_0=n_0=v_0=0$.
\For{$t=0,\ldots,T-1$}
\State $\widetilde x_t=\Pi_X\!\left(x_t-h[{\nabla_xG(x_t,y_t;z_t)}+\xi_t]\right)$.\label{alg:core-start}
\State $\widetilde y_t=\Pi_Y\!\left(y_t+h[{\nabla_yG(x_t,y_t;z_t)}-n_t+v_t]\right)$.
\State $\bar v_t=\alpha v_t+(1-\alpha)\gamma{\nabla_yG(\widetilde x_t,\widetilde y_t;z_t)}$.
\State $x_{t+1}=\Pi_X\!\left(x_t-h{\nabla_xG(\widetilde x_t,\widetilde y_t;z_t)}\right)$.
\State $\xi_{t+1}=(x_t-x_{t+1})/h-{\nabla_xG(\widetilde x_t,\widetilde y_t;z_t)}$.
\State $y_{t+1}=\Pi_Y\!\left(y_t+h[{\nabla_yG(\widetilde x_t,\widetilde y_t;z_t)}+\bar v_t]\right)$.
\State $\displaystyle n_{t+1}=\frac{(y_t-y_{t+1})/h+{\nabla_yG(\widetilde x_t,\widetilde y_t;z_t)}+\bar v_t}{1+(1-\alpha)\gamma}$.
\State $v_{t+1}=\alpha v_t+(1-\alpha)\gamma[{\nabla_yG(x_{t+1},y_{t+1};z_t)}-n_{t+1}]$.\label{alg:core-end}
\State $z_{t+1}=z_t+\beta(x_{t+1}-z_t)$.
\EndFor
\Ensure OS: $z_{\rm out}=z_J$ with $J\sim\operatorname{Unif}\{0,\ldots,T-1\}$.
\Statex \hspace{\algorithmicindent}Optional GS: $(x_{\rm out},y_{\rm out})=(x_{j+1},y_{j+1})$ with $j\in\operatorname*{argmin}_{0\le t<T}S_t$.
\end{algorithmic}
\end{algorithm}

\begin{samepage}
The subproblem iteration is a modified version of the damped extragradient method developed in our unpublished manuscript on unconstrained strongly convex--strongly concave minimax optimization.  To handle the constraints $X$ and $Y$, we introduce projections and the normal-cone variables $\xi_t$ and $n_t$. The dual correction $n_t$ enters both the prediction and momentum updates. The following lemma shows that these variables belong to the normal cones at the corresponding iterates.

\begin{lemma}\label{lem:normal-membership}
The iterates of Algorithm~\ref{alg:main} satisfy
\begin{equation}\label{eq:normal-membership}
 \xi_t\in N_X(x_t),\qquad n_t\in N_Y(y_t),\qquad t=0,\ldots,T.
\end{equation}
\end{lemma}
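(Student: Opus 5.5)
The proof goes by induction on $t$, and at each step uses only the variational characterization of the Euclidean projection: for a nonempty closed convex set $C$ and any $w$, the point $u=\Pi_C(w)$ satisfies $w-u\in N_C(u)$. At $t=0$ we have $\xi_0=0$ and $n_0=0$. Since $x_0\in X$ and $y_0\in Y$, and every normal cone contains the origin, both memberships in~\eqref{eq:normal-membership} hold. The argument does not actually use the inductive hypothesis: each of $\xi_{t+1}$ and $n_{t+1}$ is defined directly from the projection that produces $x_{t+1}$ or $y_{t+1}$.

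\textbf{Primal membership.} The update is $x_{t+1}=\Pi_X(w)$ with $w=x_t-h\nabla_xG(\widetilde x_t,\widetilde y_t;z_t)$. The projection property gives
\[
 w-x_{t+1}=(x_t-x_{t+1})-h\nabla_xG(\widetilde x_t,\widetilde y_t;z_t)\in N_X(x_{t+1}).
\]
By the definition of $\xi_{t+1}$, the left-hand side equals $h\xi_{t+1}$. Since $h>0$ and $N_X(x_{t+1})$ is a cone, it follows that $\xi_{t+1}\in N_X(x_{t+1})$.

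\textbf{Dual membership.} The update is $y_{t+1}=\Pi_Y(w')$ with $w'=y_t+h[\nabla_yG(\widetilde x_t,\widetilde y_t;z_t)+\bar v_t]$. The projection property gives
\[
 w'-y_{t+1}=(y_t-y_{t+1})+h\bigl[\nabla_yG(\widetilde x_t,\widetilde y_t;z_t)+\bar v_t\bigr]\in N_Y(y_{t+1}).
\]
Dividing by $h$ shows that the numerator in the definition of $n_{t+1}$ lies in $N_Y(y_{t+1})$. The denominator $1+(1-\alpha)\gamma$ is positive because $0<\alpha<1$ and $\gamma\ge0$. Hence $n_{t+1}$ is a positive multiple of an element of $N_Y(y_{t+1})$, and the cone property gives $n_{t+1}\in N_Y(y_{t+1})$.

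There is no real obstacle here. The only points to check are the sign conventions (descent in $x$, ascent in $y$), which determine whether $w-u$ or $u-w$ appears, and the positivity of the scaling factors $h$ and $1+(1-\alpha)\gamma$. The momentum variables $v_t$ and $\bar v_t$, as well as the prediction step, play no role: the argument uses only the two correction projections.
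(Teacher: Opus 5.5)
Your proposal is correct and is essentially the paper's proof: both settle $t=0$ via $\xi_0=n_0=0$, then apply the projection characterization at the two correction steps and identify the resulting normal vectors as $h\xi_{t+1}$ and $h[1+(1-\alpha)\gamma]n_{t+1}$. The paper phrases the projection characterization as the variational inequality $\langle w-x_{t+1},x-x_{t+1}\rangle\le0$ for all $x\in X$, and, like you, divides by the positive factors directly without using any inductive hypothesis.
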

\end{samepage}
\begin{proof}
At initialization, both normal-cone inequalities hold because $\xi_0=n_0=0$.
For any $x\in X$, the projection theorem applied to the primal correction gives
\[
 \left\langle \frac{x_t-x_{t+1}}h
 -\nabla_xf(\widetilde x_t,\widetilde y_t)
 -\lambda(\widetilde x_t-z_t),\,x-x_{t+1}\right\rangle\le0.
\]
By the definition of $\xi_{t+1}$, this is
$\langle\xi_{t+1},x-x_{t+1}\rangle\le0$.
Similarly, for any $y\in Y$, the dual projection gives
\[
 \left\langle \frac{y_t-y_{t+1}}h
 +\nabla_yf(\widetilde x_t,\widetilde y_t)
 -\tau\widetilde y_t+\bar v_t,\,y-y_{t+1}\right\rangle\le0.
\]
The first argument is $[1+(1-\alpha)\gamma]n_{t+1}$.
Since $1+(1-\alpha)\gamma>0$, division yields
$\langle n_{t+1},y-y_{t+1}\rangle\le0$.
As $x\in X$ and $y\in Y$ are arbitrary, these inequalities prove
$\xi_{t+1}\in N_X(x_{t+1})$ and $n_{t+1}\in N_Y(y_{t+1})$.
\end{proof}

To implement the game-stationarity output in Algorithm~\ref{alg:main}, define the computable certificate at each corrected iterate by
\begin{equation}\label{eq:certificate}
 S_t=\norm{\nabla_xf(x_{t+1},y_{t+1})+\xi_{t+1}}^2+\norm{-\nabla_yf(x_{t+1},y_{t+1})+\tau y_{t+1}+n_{t+1}}^2.
\end{equation}
The quantity $S_t$ is the squared norm of a stationarity residual for the dual-regularized objective $f(x,y)-\tau\norm y^2/2$, evaluated using the normal-cone vectors supplied by the algorithm. It is used to select the game-stationarity output. The following lemma relates this certificate and the auxiliary envelope gradient to the stationarity measures of the original problem. Its proof is given in Appendix~\ref{app:criterion-transfer}.

\begin{lemma}\label{lem:criterion-transfer}
Suppose that Assumption~\ref{ass:main} holds, $\tau\ge0$, $\lambda>L$,
and $\mu+\tau>0$. Then the iterates of Algorithm~\ref{alg:main} satisfy
\begin{equation}\label{eq:general-game-transfer}
 \mathcal R(x_{t+1},y_{t+1})\le\sqrt{S_t}+\tau D_Y
\end{equation}
Moreover, for every $z\in\mathbb R^n$,
\begin{equation}\label{eq:general-envelope-bias}
 \|\nabla\Phi_\lambda(z)-\nabla p(z)\|
 \le\lambda D_Y\sqrt{\frac{\tau}{\lambda-L}}.
\end{equation}
Consequently, for every iterate $z_t$,
\begin{equation}\label{eq:general-optimization-transfer}
 \|\nabla\Phi_\lambda(z_t)\|
 \le\|\nabla p(z_t)\|
 +\lambda D_Y\sqrt{\frac{\tau}{\lambda-L}}.
\end{equation}
In particular, if $\tau=0$, then
${\|\nabla\Phi_\lambda(z_t)\|=\|\nabla p(z_t)\|}.$
\end{lemma}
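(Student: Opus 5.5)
The game-stationarity transfer \eqref{eq:general-game-transfer} follows directly from Lemma~\ref{lem:normal-membership} and the definition of the distance. Since $\xi_{t+1}\in N_X(x_{t+1})$, we have
\[
\dist\bigl(0,\nabla_xf(x_{t+1},y_{t+1})+N_X(x_{t+1})\bigr)\le\norm{\nabla_xf(x_{t+1},y_{t+1})+\xi_{t+1}}.
\]
Since $n_{t+1}\in N_Y(y_{t+1})$, the dual distance is at most
\[
\norm{-\nabla_yf+n_{t+1}}\le\norm{-\nabla_yf+\tau y_{t+1}+n_{t+1}}+\tau\norm{y_{t+1}},
\]
and $\norm{y_{t+1}}\le D_Y$ because $0\in Y$. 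Write $a$ and $b$ for the two norms appearing in $S_t$. Then $\mathcal R\le\sqrt{a^2+(b+\tau D_Y)^2}\le\sqrt{a^2+b^2}+\tau D_Y$ by the triangle inequality in $\R^2$ applied to $(a,b)+(0,\tau D_Y)$.

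For the envelope bias \eqref{eq:general-envelope-bias}, I use $\nabla\Phi_\lambda(z)-\nabla p(z)=\lambda(x^\star(z)-\bar x(z))$, which follows from \eqref{eq:moreau-gradient} and \eqref{eq:auxiliary-gradient}. It therefore suffices to show
\[
\norm{x^\star(z)-\bar x(z)}^2\le\frac{\tau D_Y^2}{\lambda-L}.
\]
Define the primal functions
\[
\psi_0(x)=\phi(x)+\tfrac\lambda2\norm{x-z}^2,\qquad
\psi_\tau(x)=\max_{y\in Y}G(x,y;z)
\]
on $X$. By Sion's minimax theorem (strong convexity in $x$, compactness of $Y$), $x^\star(z)$ minimizes $\psi_\tau$ over $X$, and $\bar x(z)$ minimizes $\psi_0$. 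Both functions are $(\lambda-L)$-strongly convex on $X$, since each is a pointwise max of $(\lambda-L)$-strongly convex functions. They also satisfy the sandwich
\[
\psi_0-\tfrac\tau2D_Y^2\le\psi_\tau\le\psi_0,
\]
because $0\le\tfrac\tau2\norm y^2\le\tfrac\tau2 D_Y^2$ on $Y$.

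Strong convexity at the minimizers gives
\[
\psi_0(x^\star)\ge\psi_0(\bar x)+\tfrac{\lambda-L}2\norm{x^\star-\bar x}^2,\qquad
\psi_\tau(\bar x)\ge\psi_\tau(x^\star)+\tfrac{\lambda-L}2\norm{x^\star-\bar x}^2.
\]
Adding the two inequalities yields
\[
(\lambda-L)\norm{x^\star-\bar x}^2\le[\psi_0-\psi_\tau](x^\star)-[\psi_0-\psi_\tau](\bar x)\le\tfrac\tau2D_Y^2-0.
\]
This gives $\norm{x^\star-\bar x}^2\le\tau D_Y^2/(2(\lambda-L))$, which is even better than needed. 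Multiplying by $\lambda$ after taking square roots gives \eqref{eq:general-envelope-bias}.

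Finally, \eqref{eq:general-optimization-transfer} is the triangle inequality applied at $z=z_t$. When $\tau=0$ we have $G(\cdot,\cdot;z)=f+\tfrac\lambda2\norm{\cdot-z}^2$, so $p=\Phi_\lambda$, and the equality case follows.

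The only delicate point is justifying that $x^\star(z)$ minimizes $\psi_\tau$. This is the standard saddle-point characterization for a unique saddle point of a strongly convex--strongly concave function with compact $Y$. I would cite it from the regularized-formulation properties of \cite{li2026smoothing} rather than reprove it.
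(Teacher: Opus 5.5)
Your proof is correct and follows the paper's argument. The game transfer comes from the normal-cone memberships of Lemma~\ref{lem:normal-membership} plus a triangle inequality, and the envelope bias comes from $\nabla\Phi_\lambda(z)-\nabla p(z)=\lambda(x^\star(z)-\bar x(z))$, the sandwich $0\le\psi_0-\psi_\tau\le\tau D_Y^2/2$, and strong convexity at the minimizers.

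The differences are minor. You apply strong convexity to both $\psi_0$ and $\psi_\tau$, whereas the paper uses it only for $\psi_0$ and mere optimality of $x^\star(z)$ for $\psi_\tau$; this improves the constant by $\sqrt2$. Also, no minimax theorem is needed for your ``delicate point'': once the saddle point exists (as the paper asserts), $\psi_\tau(x^\star)=G(x^\star,y^\star;z)\le G(x,y^\star;z)\le\psi_\tau(x)$ follows directly from the saddle inequalities.
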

Game stationarity incurs a bias linear in $\tau$, whereas the envelope
gradient incurs a bias proportional to $\sqrt\tau$. These different
dependences determine the two regularization choices in the concave case.

We count one full first-order oracle call at $(x,y)$ as returning
$(\nabla_xf(x,y),\nabla_yf(x,y))$. After one call at $(x_0,y_0)$,
each iteration queries only the predicted and corrected pairs. The
corrected-point gradient is used for the momentum update and $S_t$,
then cached for the next prediction. Thus $T$ iterations require at most
$1+2T$ calls; the regularization terms are computed explicitly.

\section{Convergence and Complexity Analysis}\label{sec:complexity}
\label{sec:common-analysis}
{\color{black}
We first establish the Lyapunov and residual estimates shared by both
regimes. We then present the NC--SC and NC--C complexity bounds in
Sections~\ref{sec:complexity-results} and~\ref{sec:ncc-optimization},
respectively, and give a common fixed-center warm-up refinement in
Section~\ref{sec:warmup}. The common descent proof is in
Appendices~\ref{app:auxiliary}--\ref{app:common}; the initialization,
output, and complexity arguments are collected in Appendix~\ref{app:cases}.
}

\paragraph{Common Lyapunov estimates.}
Throughout this section, we set $\lambda=2L$ and fix $\tau\ge 0$ such that $0<\mu_y\le L$. 
The center update decreases $p$ up to an error proportional to
$\|x_{t+1}-x^\star(z_t)\|^2$. Since the algorithm makes only one auxiliary
update before moving the center, this error must be controlled together with
the auxiliary dynamics. Let $w_t=(x_t,y_t,\xi_t,n_t,v_t)$ and define
\begin{equation}\label{eq:joint-potential}
\begin{aligned}
 \mathcal V_t:={}&p(z_t)-\inf_z p(z)+\mathcal E_{z_t}(w_t)
 +\frac{\sqrt{2L\mu_y}}{256L^2}\|v_t\|^2.
\end{aligned}
\end{equation}
The first two terms together form the optimality gap of the auxiliary
envelope, while the last two terms measure the error in tracking its saddle point. For $x\in X$,
$y\in Y$, $\xi\in N_X(x)$, $n\in N_Y(y)$, and arbitrary $v$, write
$w=(x,y,\xi,n,v)$ and set
\begin{equation}\label{eq:potential}
\begin{aligned}
 \mathcal E_z(w):={}&
 -\frac{16L-\sqrt{2L\mu_y}}{16L}
       \bigl(G(x,y;z)-G^\star(z)\bigr)+\frac1L\|\nabla_xG(x,y;z)+\xi\|^2\\
 &+\frac1L\|-\nabla_yG(x,y;z)+n-v\|^2+\frac{\sqrt{2L\mu_y}}{16L}\langle v,y-y^\star(z)\rangle
 +\frac{\mu_y}{256}\|y-y^\star(z)\|^2.
\end{aligned}
\end{equation}
The residual and momentum terms are chosen so that their decrease absorbs
the tracking error in the envelope estimate below.

We now specify the algorithm parameters used in the descent analysis.
A Lipschitz constant for the full gradient of $G(\cdot,\cdot;z)$ is $L_G:=3L+\tau\le4L$.
Choose
\begin{equation}\label{eq:parameters}
\begin{gathered}
 h=\frac1{64L_G},\qquad
 \alpha=\left(1+\frac{h\sqrt{2L\mu_y}}{16}\right)^{-1},\qquad
 \gamma=4\sqrt{\frac{2L}{\mu_y}}-1,\\[3pt]
 (1-\alpha)\gamma
 =\frac{h(8L-\sqrt{2L\mu_y})}{16+h\sqrt{2L\mu_y}},\qquad
 \beta=\frac{h\sqrt{2L\mu_y}}{4096}.
\end{gathered}
\end{equation}
The parameters of Algorithm~\ref{alg:main} remain fixed during a run. The parameter $\tau$ is chosen to balance the convergence bound and the bias in the
stationarity measure of the original problem.

\begin{lemma}\label{lem:value-change}
Suppose that Assumption~\ref{ass:main} holds and the parameters are chosen as
in~\eqref{eq:parameters}. Then the center update satisfies
\begin{equation}\label{eq:outer-descent}
\begin{aligned}
 p(z_{t+1})-p(z_t)
 &\le-\frac{\beta}{4L}\|\nabla p(z_t)\|^2
 +\frac52\beta L\|x_{t+1}-x^\star(z_t)\|^2.
\end{aligned}
\end{equation}
\end{lemma}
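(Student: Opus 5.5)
The plan is to treat $p$ as an $L$-smooth-type function of the center and to use the update $z_{t+1}-z_t=\beta(x_{t+1}-z_t)$. We split $x_{t+1}-z_t$ into an exact proximal step plus a tracking error:
\[
x_{t+1}-z_t=\bigl(x^\star(z_t)-z_t\bigr)+\bigl(x_{t+1}-x^\star(z_t)\bigr)=-\frac{1}{\lambda}\nabla p(z_t)+e_t,
\qquad e_t:=x_{t+1}-x^\star(z_t).
\]
Here we used $\nabla p(z)=\lambda(z-x^\star(z))$ from~\eqref{eq:auxiliary-gradient}, with $\lambda=2L$.

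\textbf{Step 1: smoothness of $p$.} We first establish that $\nabla p$ is Lipschitz. One route is to show that $x^\star(\cdot)$ is Lipschitz with a constant of order $\lambda/(\lambda-L)$, using the strong convexity and strong concavity of $G$ and comparing optimality conditions at two centers. A cleaner route is the upper quadratic bound
\[
p(z')\le p(z)+\langle\nabla p(z),z'-z\rangle+\tfrac{\lambda}{2}\|z'-z\|^2,
\]
which follows from the definition of $p$. Since $p(z')=\min_x\max_y G(x,y;z')$, we can plug in the point $x=x^\star(z)$. Then $G(x,y;z')=G(x,y;z)+\frac\lambda2(\|x-z'\|^2-\|x-z\|^2)$, and the $z'$-dependent term does not involve $y$. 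Hence
\[
p(z')\le p(z)+\tfrac\lambda2\bigl(\|x^\star(z)-z'\|^2-\|x^\star(z)-z\|^2\bigr),
\]
and expanding this gives exactly the quadratic upper bound with constant $\lambda$. This avoids needing Lipschitzness of $x^\star$ and is the key step; I expect it to go through cleanly.

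\textbf{Step 2: substitute the update.} With $d=z_{t+1}-z_t=\beta(-\nabla p/\lambda+e_t)$ we get
\[
p(z_{t+1})-p(z_t)\le-\frac{\beta}{\lambda}\|\nabla p\|^2+\beta\langle\nabla p,e_t\rangle+\frac{\lambda\beta^2}{2}\|-\nabla p/\lambda+e_t\|^2.
\]
With $\lambda=2L$ the leading term is $-\frac{\beta}{2L}\|\nabla p\|^2$. Young's inequality gives $\beta\langle\nabla p,e_t\rangle\le\frac{\beta}{8L}\|\nabla p\|^2+2\beta L\|e_t\|^2$. The last term is at most $\lambda\beta^2(\|\nabla p\|^2/\lambda^2+\|e_t\|^2)=\frac{\beta^2}{2L}\|\nabla p\|^2+2L\beta^2\|e_t\|^2$.

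\textbf{Step 3: use the size of $\beta$.} From~\eqref{eq:parameters}, $\beta=h\sqrt{2L\mu_y}/4096\le \frac{1}{64\cdot 3L}\cdot\frac{\sqrt2 L}{4096}$, which is tiny, so $\beta\le1/8$ with large room. Collecting terms, the gradient coefficient is at most $-\frac{\beta}{2L}+\frac{\beta}{8L}+\frac{\beta}{16L}\le-\frac{\beta}{4L}$. The error coefficient is at most $2\beta L+2\beta^2L\le\frac52\beta L$. This gives~\eqref{eq:outer-descent}.

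The only potential obstacle is Step 1, namely justifying the upper model of $p$ via the min–max structure. The plug-in argument handles it because the center enters only through the $x$-block of $G$, and the inner maximum is separate from it.
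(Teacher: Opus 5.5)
Your proof is correct, and it differs from the paper's at the smoothness step.

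\textbf{The paper's route.} The paper first proves the sensitivity bound $\|x^\star(z')-x^\star(z)\|\le 2\|z'-z\|$ from the variational inequalities at two saddle points. From this it deduces that $\nabla p$ is $6L$-Lipschitz and applies the two-sided descent lemma with curvature term $3L\|z_{t+1}-z_t\|^2$. It then uses $\|z_{t+1}-z_t\|^2\le 2\beta^2\|x_{t+1}-x^\star(z_t)\|^2+\frac{\beta^2}{2L^2}\|\nabla p(z_t)\|^2$ and the same Young split you use. This yields the coefficients $-\frac{\beta}{L}(\frac38-\frac32\beta)$ and $\beta L(2+6\beta)$, which it closes using $\beta<1/12$.

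\textbf{Your route.} You instead derive only the one-sided model $p(z')\le p(z)+\langle\nabla p(z),z'-z\rangle+\frac\lambda2\|z'-z\|^2$, by testing the outer minimum at the feasible point $x^\star(z)$. This step implicitly uses $\max_{y\in Y}G(x^\star(z),y;z)=p(z)$. That identity holds because $(x^\star(z),y^\star(z))$ is a saddle point, and you should state it. Equivalently, $p$ is the Moreau envelope of $\phi_\tau$, hence an infimum of quadratics in $z$ with curvature $\lambda$.

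\textbf{Comparison.} Your argument is more elementary, since it needs no sensitivity analysis. A descent inequality only requires an upper bound, so you get curvature constant $L$ instead of $3L$ and slightly better constants: $-\frac{5\beta}{16L}$ and $2\beta L+2\beta^2L$. The paper's route costs it nothing extra, however. The sensitivity bounds~\eqref{eq:anchor-block-bounds} and the smoothness estimate~\eqref{eq:envelope-gradient} are needed anyway in Lemma~\ref{lem:moving-potential}, in particular the Lipschitz bound on $y^\star(\cdot)$.
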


\begin{lemma}\label{lem:auxiliary-change}
Under Assumption~\ref{ass:main} and the parameter choice~\eqref{eq:parameters},
\begin{equation}\label{eq:auxiliary-change}
\begin{aligned}
 &\mathcal E_{z_{t+1}}(w_{t+1})-\mathcal E_{z_t}(w_t)+\frac{\sqrt{2L\mu_y}}{256L^2}
       \bigl(\|v_{t+1}\|^2-\|v_t\|^2\bigr)\le-\frac52\beta L\|x_{t+1}-x^\star(z_t)\|^2
       +\frac{\beta}{8L}\|\nabla p(z_t)\|^2.
\end{aligned}
\end{equation}
\end{lemma}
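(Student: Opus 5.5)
The proof splits the left-hand side of~\eqref{eq:auxiliary-change} into two pieces,
\[
\bigl[\mathcal E_{z_t}(w_{t+1})-\mathcal E_{z_t}(w_t)+c\bigl(\|v_{t+1}\|^2-\|v_t\|^2\bigr)\bigr]
+\bigl[\mathcal E_{z_{t+1}}(w_{t+1})-\mathcal E_{z_t}(w_{t+1})\bigr],
\qquad c=\frac{\sqrt{2L\mu_y}}{256L^2}.
\]
The first piece is the \emph{fixed-center contraction}: one damped extragradient step on the strongly convex--strongly concave problem $G(\cdot,\cdot;z_t)$. The second is the \emph{center-drift} error. The plan is to show that the first piece is at most $-\rho\,\mathcal E_{z_t}(w_{t+1})$-type decrease, with rate $\rho\asymp h\sqrt{L\mu_y}$. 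More precisely, it should dominate a negative multiple $-c_1 h\sqrt{L\mu_y}\,L\|x_{t+1}-x^\star(z_t)\|^2$ plus negative multiples of the residual terms. The drift piece should then be of order $\beta$ times these same quantities plus $\beta\|\nabla p(z_t)\|^2/L$. Since $\beta=h\sqrt{2L\mu_y}/4096$ is a tiny fraction of the contraction rate, the constants should close.

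\textbf{Fixed-center step.} With $z=z_t$ frozen, I would write the projection optimality conditions, which are the normal-cone memberships from Lemma~\ref{lem:normal-membership}, for the prediction and correction steps. These express $x_{t+1}$ and $y_{t+1}$ through $\xi_{t+1}$ and $n_{t+1}$. I would then expand each term of $\mathcal E_{z_t}$ along the step:
\begin{itemize}[leftmargin=*]
\item the gap $G-G^\star$, via $L_G$-smoothness and strong convexity/concavity, comparing with $(x^\star(z_t),y^\star(z_t))$;
\item the two residual norms, via Lipschitz bounds on $\nabla G$ and the fact that the correction uses gradients at the predicted point, so the residual change is controlled by $hL_G$ times the prediction--correction gap;
\item the coupling $\langle v,y-y^\star\rangle$ and $\|v\|^2$, via the momentum recursion $v_{t+1}=\alpha v_t+(1-\alpha)\gamma[\cdots]$.
\end{itemize}
The parameter identity $(1-\alpha)\gamma=h(8L-\sqrt{2L\mu_y})/(16+h\sqrt{2L\mu_y})$ should make the cross terms in the momentum cancel against the dual residual term. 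The coefficient $(16L-\sqrt{2L\mu_y})/(16L)$ on the gap is chosen for the same purpose. I expect this to mirror the unconstrained damped-EG analysis, with the normal-cone vectors handled by monotonicity of $N_X$ and $N_Y$: $\langle \xi-\xi',x-x'\rangle\ge0$. The target outcome is a decrease of at least
\[
\frac{h\sqrt{2L\mu_y}}{C}\Bigl(\tfrac{\lambda-L}{2}\|x_{t+1}-x^\star(z_t)\|^2+\tfrac{\mu_y}{256}\|y_{t+1}-y^\star(z_t)\|^2+\tfrac1L\,\text{residuals}\Bigr).
\]
Since $\lambda-L=L$ and $h\sqrt{2L\mu_y}/C\gg\beta$, this exceeds $5\beta L\|x_{t+1}-x^\star(z_t)\|^2$, leaving a surplus to absorb the drift.

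\textbf{Center drift.} In $\mathcal E_z(w)$, the center $z$ enters through $\lambda\|x-z\|^2/2$ in $G$, through $G^\star(z)=p(z)$, through $\nabla_xG=\nabla_xf+\lambda(x-z)$, and through $y^\star(z)$. I would use the following facts:
\begin{itemize}[leftmargin=*]
\item $z_{t+1}-z_t=\beta(x_{t+1}-z_t)$, with $\|x_{t+1}-z_t\|\le\|x_{t+1}-x^\star(z_t)\|+\|\nabla p(z_t)\|/\lambda$ by~\eqref{eq:auxiliary-gradient};
\item Lipschitz continuity of the saddle map, $\|x^\star(z)-x^\star(z')\|,\ \|y^\star(z)-y^\star(z')\|\le C'\|z-z'\|$, from strong convexity/concavity of $G$, with a constant possibly depending on $\sqrt{L/\mu_y}$;
\item $\nabla_zG^\star=\nabla p$.
\end{itemize}
With these, each drift term is bounded by $\beta$ times a linear combination of $L\|x_{t+1}-x^\star(z_t)\|^2$, the residuals, $\mu_y\|y_{t+1}-y^\star\|^2$, and $\|\nabla p(z_t)\|^2/L$. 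Young's inequality then gives exactly $\frac{\beta}{8L}\|\nabla p(z_t)\|^2$ plus terms absorbed by the fixed-center surplus.

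\textbf{Main obstacle.} The hard step is the fixed-center contraction with projections and momentum: verifying that the specific weights in~\eqref{eq:potential} make all cross terms nonpositive. I would first do it for $X=\R^n$ and $Y=\R^p$, so that $\xi=n=0$, to fix the quadratic-form inequality. I would then reinsert the normal-cone terms using three-point projection inequalities. A secondary delicacy is the $y^\star$ drift: its Lipschitz constant scales like $\sqrt{L/\mu_y}$ or $L/\mu_y$. It must be checked that $\beta\propto h\sqrt{L\mu_y}$ is small enough to absorb it against the weight $\mu_y/256$; the $1/4096$ factor in $\beta$ is presumably chosen for this.
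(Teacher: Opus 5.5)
Your architecture matches the paper's. You split the change as in \eqref{eq:auxiliary-two-part-decomposition} into a fixed-center step (Theorem~\ref{thm:fixed-dissipation}) and a center drift (Lemma~\ref{lem:moving-potential}), and you use the same drift tools: the saddle-map Lipschitz constants $2$ and $\sqrt{2L/\mu_y}$, the $6L$-smoothness of $p$, and $\|z_{t+1}-z_t\|\le\beta\|x_{t+1}-x^\star(z_t)\|+\frac{\beta}{2L}\|\nabla p(z_t)\|$. You then absorb by Young's inequality using $\beta=h\sqrt{2L\mu_y}/4096$, as in Lemma~\ref{lem:moving-center-dissipation}.

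\textbf{The gap is in the drift bookkeeping.} You follow the $z$-dependence of $y^\star(z)$ only through the weight $\mu_y/256$. That term is harmless: its drift is at most $\frac{\sqrt{2L\mu_y}}{128}\|y-y^\star(z)\|\,\|z_{t+1}-z_t\|+\frac{L}{128}\|z_{t+1}-z_t\|^2$. But $y^\star(z)$ also appears in the coupling term $\frac{\sqrt{2L\mu_y}}{16L}\langle v,y-y^\star(z)\rangle$. There the weight and the Lipschitz constant cancel exactly, so the drift can be as large as $\frac18\|v_{t+1}\|\,\|z_{t+1}-z_t\|$, with no small factor.
\begin{itemize}
\item The momentum $v_{t+1}$ is a damped average of past residuals. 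It is not bounded by the current quantities in your list ($\|x_{t+1}-x^\star\|$, $\|y_{t+1}-y^\star\|$, and the two residuals).
\item Its piece $\frac{\beta}{16L}\|v_{t+1}\|\,\|\nabla p(z_t)\|$ must be split against the fixed budget $\frac{\beta}{8L}\|\nabla p(z_t)\|^2$. This forces a coefficient of at least $\frac{\beta}{128L}$ on $\|v_{t+1}\|^2$.
\item So the fixed-center step must dissipate $\|v_{t+1}\|^2$ at a rate of order $h\sqrt{L\mu_y}/L$. Your target decrease has no such term.
\item The contraction $\frac{h\sqrt{2L\mu_y}}{32}\bigl[\mathcal E_{z_t}(w_{t+1})+\frac{\sqrt{2L\mu_y}}{256L^2}\|v_{t+1}\|^2\bigr]$ supplies only $\frac{h\mu_y}{4096L}\|v_{t+1}\|^2$. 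That is smaller than needed by a factor $\sqrt{\mu_y/(2L)}$, so the absorption fails in exactly the NC--C regime $\mu_y=\tau\to0$.
\end{itemize}

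\textbf{The missing ingredient is the damping built into $\alpha$ and $\gamma$.} The parameter identities turn the momentum update into $(1+\frac{hL}{2})v_{t+1}=v_t-\frac{h(8L-\sqrt{2L\mu_y})}{16}Q_{t+1}$, where $Q_{t+1}=-\nabla_yG(x_{t+1},y_{t+1};z_t)+n_{t+1}-v_{t+1}$. The kinetic energy therefore contracts at the faster rate $hL$, up to $\|Q_{t+1}\|^2$, which is itself dissipated. This gives the term $\frac{7h\sqrt{2L\mu_y}}{4096L}\|v_{t+1}\|^2=\frac{7\beta}{L}\|v_{t+1}\|^2$ in $\mathcal D_{z_t}$ (see \eqref{eq:fixed-v-absorption}). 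The budget $\mathcal B_t$ spends this term on the $\frac18\|v\|$ drift.

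\textbf{Two further points about the fixed-center step.}
\begin{itemize}
\item You rightly demand $L\|x_{t+1}-x^\star(z_t)\|^2$ dissipation, but it does not come from $\rho\,\mathcal E$ alone: Lemma~\ref{lem:fixed-lower} gives only weight $\sqrt{2L\mu_y}/32$ on that term. The paper gets it by pairing the gap term of the endpoint weight with the momentum-generated term $\langle-\nabla_yG+n,y-y^\star\rangle$ and then applying \eqref{eq:saddle-growth}.
\item Your sketch only announces the fixed-center contraction, which is the bulk of the proof. The paper's tool is Lemma~\ref{lem:error}. It writes the projected extragradient step as an implicit step at $(x_{t+1},y_{t+1})$ plus an error bounded by $2hL_G\sqrt{\mathcal I_{z_t}(w_{t+1},w_t)}$. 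That error is absorbed by the $-\frac{1}{4h}\mathcal I_{z_t}$ terms from the difference-of-squares expansions. This is the opposite direction from your plan of controlling the residual change by the prediction--correction gap.
\end{itemize}
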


Define the one-step dissipation at the center $z_t$ by
\begin{equation}\label{eq:dissipation}
\begin{aligned}
 {\mathcal D_{z_t}(w_{t+1},w_t):={}}&{\frac h4\|\nabla_xG(x_{t+1},y_{t+1};z_t)+\xi_{t+1}\|^2}\\
 &{{}+{}}\frac{3h\sqrt{2L\mu_y}}{512L}
       \|-\nabla_yG(x_{t+1},y_{t+1};z_t)+n_{t+1}-v_{t+1}\|^2{{}+{}}\frac{7h\sqrt{2L\mu_y}}{4096L}\|v_{t+1}\|^2\\
 &+\frac{h\mu_y\sqrt{2L\mu_y}}{128}\|y_{t+1}-y^\star(z_t)\|^2{{}+{}}\frac{h\sqrt{2L\mu_y}(16L-\sqrt{2L\mu_y})}{1024}
       \|x_{t+1}-x^\star(z_t)\|^2\\
 &{{}+{}}\frac1{4L}\mathcal I_{z_t}(w_{t+1},w_t).
\end{aligned}
\end{equation}
Here $\mathcal I_{z_t}(w_{t+1},w_t)$ is the sum of squared residual
increments defined in~\eqref{eq:increment}. All terms in
$\mathcal D_{z_t}(w_{t+1},w_t)$ are nonnegative.

\begin{theorem}[Unified Lyapunov descent]\label{thm:common-descent}
\label{sec:joint-descent}
Under Assumption~\ref{ass:main} and the parameter
choice~\eqref{eq:parameters}, Algorithm~\ref{alg:main} satisfies
$\mathcal V_t\ge0$ for every $t$. Moreover,
\begin{equation}\label{eq:joint}
\begin{aligned}
 \mathcal V_{t+1}-\mathcal V_t
 \le-\frac12\mathcal D_{z_t}(w_{t+1},w_t)
 -\frac{\beta}{8L}\|\nabla p(z_t)\|^2.
\end{aligned}
\end{equation}
In particular,
\begin{equation}\label{eq:main-joint}
 \mathcal V_{t+1}-\mathcal V_t
 \le-\frac{\beta}{8L}\|\nabla p(z_t)\|^2.
\end{equation}
\end{theorem}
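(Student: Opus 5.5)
The proof has two parts: nonnegativity of $\mathcal V_t$, and the one-step inequality~\eqref{eq:joint}. The inequality will come from adding Lemma~\ref{lem:value-change} and Lemma~\ref{lem:auxiliary-change}. The catch is that, as stated, Lemma~\ref{lem:auxiliary-change} does not display the dissipation term. So the plan is to go back into its proof, where I expect the sharper estimate
\[
\mathcal E_{z_{t+1}}(w_{t+1})-\mathcal E_{z_t}(w_t)+\tfrac{\sqrt{2L\mu_y}}{256L^2}\bigl(\|v_{t+1}\|^2-\|v_t\|^2\bigr)
\le -\tfrac12\mathcal D_{z_t}(w_{t+1},w_t)-\tfrac52\beta L\|x_{t+1}-x^\star(z_t)\|^2+\tfrac{\beta}{8L}\|\nabla p(z_t)\|^2 .
\]
This estimate splits into a fixed-center step and a center-shift step.

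\textbf{Fixed-center step.} With $z=z_t$ frozen, I would show that the one extragradient/momentum step decreases $\mathcal E_{z_t}(\cdot)+\frac{\sqrt{2L\mu_y}}{256L^2}\|v\|^2$ by at least $\mathcal D_{z_t}(w_{t+1},w_t)$. This is the damped-extragradient estimate carried over to the projected setting. The normal-cone memberships of Lemma~\ref{lem:normal-membership} replace the missing gradient identities: for example, $\langle\xi_{t+1},x^\star-x_{t+1}\rangle\le0$ and $\langle n_{t+1},y^\star-y_{t+1}\rangle\le0$. The quantities $G(x_{t+1},y_{t+1};z_t)-G^\star(z_t)$ are handled with the strong convexity/concavity moduli $\lambda-L=L$ and $\mu_y$. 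The parameter identities in~\eqref{eq:parameters} make the cross terms cancel.

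\textbf{Center-shift step.} Next I bound $\mathcal E_{z_{t+1}}(w_{t+1})-\mathcal E_{z_t}(w_{t+1})$. Moving the center by $z_{t+1}-z_t=\beta(x_{t+1}-z_t)$ changes $\nabla_xG$ by $\lambda\beta(x_{t+1}-z_t)$. It also moves the saddle point: I would use a Lipschitz bound on $(x^\star,y^\star)$ in $z$ with constant $O(\lambda/\sqrt{L\mu_y})$, and $\nabla_z G^\star=\nabla p$. Splitting $x_{t+1}-z_t=(x_{t+1}-x^\star(z_t))-\nabla p(z_t)/\lambda$ and applying Young's inequality, together with the tiny choice $\beta=h\sqrt{2L\mu_y}/4096$, bounds these perturbations. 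Half of $\mathcal D$ absorbs them, leaving exactly the $\frac52\beta L\|x_{t+1}-x^\star\|^2$ credit and the $\frac{\beta}{8L}\|\nabla p\|^2$ loss. This bookkeeping of constants is where I expect the main obstacle to be.

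\textbf{Summing the two estimates.} Adding this to~\eqref{eq:outer-descent}, the $\pm\frac52\beta L\|x_{t+1}-x^\star(z_t)\|^2$ terms cancel, and
\[
-\tfrac{\beta}{4L}\|\nabla p\|^2+\tfrac{\beta}{8L}\|\nabla p\|^2=-\tfrac{\beta}{8L}\|\nabla p\|^2,
\]
which gives~\eqref{eq:joint}. Then~\eqref{eq:main-joint} follows because $\mathcal D\ge0$.

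\textbf{Nonnegativity of $\mathcal V_t$.} The term $p(z_t)-\inf p\ge0$ is immediate, so it suffices to show that $\mathcal E_z(w)+\frac{\sqrt{2L\mu_y}}{256L^2}\|v\|^2\ge0$. The only possibly negative contributions are $-c\bigl(G(x,y;z)-G^\star(z)\bigr)$ with $c<1$, and the cross term $\langle v,y-y^\star\rangle$.

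For the first, I split
\[
G(x,y)-G^\star=\bigl[G(x,y)-G(x^\star,y)\bigr]+\bigl[G(x^\star,y)-G^\star\bigr].
\]
The second bracket is $\le0$ because $x^\star$ is the saddle point and $G(x^\star,\cdot)$ is maximized at $y^\star$; this is also where $-\frac{\mu_y}{2}\|y-y^\star\|^2$ is gained. The first bracket is at most $\frac{1}{2(\lambda-L)}\operatorname{dist}^2(0,\nabla_xG+N_X(x))\le\frac1{2L}\|\nabla_xG+\xi\|^2$, by strong convexity in $x$ and $\xi\in N_X(x)$. So the negative part is dominated by $\frac1L\|\nabla_xG+\xi\|^2$ plus nonnegative dual terms.

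For the cross term, Young's inequality gives
\[
\tfrac{\sqrt{2L\mu_y}}{16L}|\langle v,y-y^\star\rangle|\le \tfrac{\sqrt{2L\mu_y}}{256L^2}\|v\|^2+\tfrac{\sqrt{2L\mu_y}}{4}\|y-y^\star\|^2 .
\]
The last term needs $\mu_y$-scaled control of $\|y-y^\star\|^2$. This is supplied by the $\mu_y/256$ term together with the residual $\frac1L\|-\nabla_yG+n-v\|^2$, via strong concavity error bounds (residual $\ge\mu_y\|y-y^\star\|$ up to the $x$-residual). I would check that the constants close using $\mu_y\le L$. If they do not, I would instead keep the part $\frac{\mu_y}{2}\|y-y^\star\|^2$ obtained from the $G$ gap above, which is larger.
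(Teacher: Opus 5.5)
Your derivation of~\eqref{eq:joint} follows the paper. The sharpened estimate you expect is Lemma~\ref{lem:moving-center-dissipation}. It combines the fixed-center bound~\eqref{eq:fixedhat} with the center-shift bound of Lemma~\ref{lem:moving-potential}, and half of $\mathcal D_{z_t}$ absorbs the shift. Adding it to~\eqref{eq:outer-descent} gives the cancellation you describe. You should also record $\inf_zp(z)\ge\phi_{\inf}-\tau D_Y^2/2>-\infty$, so that $\mathcal V_t$ is finite.

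\textbf{The gap is in the nonnegativity argument.} The weight on $\|v\|^2$ is fixed at $\frac{\sqrt{2L\mu_y}}{256L^2}$. Absorbing $\frac{\sqrt{2L\mu_y}}{16L}\|v\|\,\|y-y^\star\|$ by Young's inequality against it therefore unavoidably costs $\frac{\sqrt{2L\mu_y}}{4}\|y-y^\star\|^2$. Your $G$-gap split and the $\frac{\mu_y}{256}$ term supply at most $\frac{129}{256}\mu_y\|y-y^\star\|^2$. The ratio of supply to cost is of order $\sqrt{\mu_y/L}$, so the constants fail once $\mu_y$ is below roughly $L/2$. That is exactly the regime of interest: $\mu_y=\mu\ll L$, or $\mu_y=\tau\to0$. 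Your fallback term $\frac{\mu_y}{2}\|y-y^\star\|^2$ is not larger than this cost; it is smaller by the factor $\sqrt{2\mu_y/L}$.

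The error-bound route cannot rescue it either. The available residual is $\|-\nabla_yG+n-v\|$, which vanishes for $v=-\nabla_yG+n$ regardless of $\|y-y^\star\|$. For a concrete failure, take $X=\R^n$, $x=x^\star(z)$, $\nabla_xf$ independent of $y$, and this choice of $v$. Then every positive term left after your Young step vanishes except the $\|y-y^\star\|^2$ terms, and the resulting lower bound is negative.

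\textbf{The missing idea is to keep the sign of the cross term.} Its weight is tuned so that it completes a square with the dual residual. Writing $r:=-\nabla_yG(x,y;z)+n$,
\[
\frac1L\|r-v\|^2+\frac{\sqrt{2L\mu_y}}{16L}\langle v,y-y^\star\rangle
=\frac1L\Bigl\|r-v-\frac{\sqrt{2L\mu_y}}{32}(y-y^\star)\Bigr\|^2
+\frac{\sqrt{2L\mu_y}}{16L}\langle r,y-y^\star\rangle
-\frac{\mu_y}{512}\|y-y^\star\|^2 ,
\]
and the last term is covered by $\frac{\mu_y}{256}\|y-y^\star\|^2$. Strong concavity together with $\langle n,y-y^\star\rangle\ge0$ gives
\[
\langle r,y-y^\star\rangle\ge G(x,y^\star;z)-G(x,y;z)+\frac{\mu_y}{2}\|y-y^\star\|^2 .
\]
This turns the weighted gap into
\[
-\bigl(G(x,y;z)-G^\star(z)\bigr)+\frac{\sqrt{2L\mu_y}}{16L}\bigl(G(x,y^\star;z)-G^\star(z)\bigr),
\]
whose second term is nonnegative at the saddle point. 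Your own split then gives
\[
-\bigl(G(x,y;z)-G^\star(z)\bigr)\ge-\frac1{2L}\|\nabla_xG+\xi\|^2+\frac{\mu_y}{2}\|y-y^\star\|^2 ,
\]
and the $\frac1L\|\nabla_xG+\xi\|^2$ term absorbs the negative part. This is Lemma~\ref{lem:fixed-lower}: it shows $\mathcal E_z(w)\ge0$ on its own, without the kinetic term.
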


For a fixed problem and initial point $(x_0,y_0)$, define the initial energy
\begin{equation}\label{eq:initial-energy}
\begin{aligned}
 \Delta_\tau:=\mathcal V_0
 =p(x_0)-\inf_zp(z)+\mathcal E_{x_0}(x_0,y_0,0,0,0).
\end{aligned}
\end{equation}
It depends on $\tau$, but is fixed throughout the run.

\begin{lemma}\label{lem:stationarity-sums}
Under Assumption~\ref{ass:main} and the parameter
choice~\eqref{eq:parameters}, the iterates of Algorithm~\ref{alg:main}
satisfy, for every integer $T\ge1$,
\begin{equation}\label{eq:stationarity-sums}
 \sum_{t=0}^{T-1}S_t\le\frac{32L\Delta_\tau}{\beta},
 \qquad
 \sum_{t=0}^{T-1}\|\nabla p(z_t)\|^2
 \le\frac{8L\Delta_\tau}{\beta}.
\end{equation}
{\color{black}
More generally, let $a\ge0$ and suppose that the same elementary updates
use the moving-center step $\beta$ for $a\le t<a+T$, starting from a
feasible state satisfying the normal-cone inclusions. Then
\begin{equation}\label{eq:shifted-stationarity-sums}
 \sum_{t=a}^{a+T-1}S_t\le\frac{32L\mathcal V_a}{\beta},
 \qquad
 \sum_{t=a}^{a+T-1}\|\nabla p(z_t)\|^2
 \le\frac{8L\mathcal V_a}{\beta}.
\end{equation}
The state at index $a$ need not have zero normal-cone or momentum
variables.
}
\end{lemma}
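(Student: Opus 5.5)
The plan is to telescope the unified Lyapunov descent of Theorem~\ref{thm:common-descent} and use $\mathcal V_t\ge0$. For the envelope-gradient sum, summing \eqref{eq:main-joint} over $t=0,\ldots,T-1$ gives
\[
\frac{\beta}{8L}\sum_{t=0}^{T-1}\|\nabla p(z_t)\|^2\le\mathcal V_0-\mathcal V_T\le\mathcal V_0=\Delta_\tau,
\]
which is the second bound in \eqref{eq:stationarity-sums}. For the certificate sum I would use the stronger inequality \eqref{eq:joint}. Summing it yields $\frac12\sum_t\mathcal D_{z_t}(w_{t+1},w_t)+\frac{\beta}{8L}\sum_t\|\nabla p(z_t)\|^2\le\Delta_\tau$. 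It then suffices to prove a pointwise bound of the form $S_t\le c\,\mathcal D_{z_t}(w_{t+1},w_t)/\beta+c'\|\nabla p(z_t)\|^2$, with constants making the total at most $32L\Delta_\tau/\beta$.

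To bound $S_t$ pointwise, I would rewrite both components in terms of $G$ at center $z_t$. For the primal component, $\nabla_xf(x_{t+1},y_{t+1})=\nabla_xG(x_{t+1},y_{t+1};z_t)-\lambda(x_{t+1}-z_t)$. Hence
\[
\|\nabla_xf+\xi_{t+1}\|\le\|\nabla_xG+\xi_{t+1}\|+2L\|x_{t+1}-x^\star(z_t)\|+\|\nabla p(z_t)\|,
\]
using $\lambda(x^\star(z_t)-z_t)=-\nabla p(z_t)$ from \eqref{eq:auxiliary-gradient}. For the dual component, $-\nabla_yf+\tau y_{t+1}=-\nabla_yG(x_{t+1},y_{t+1};z_t)$, so it equals
\[
\bigl(-\nabla_yG+n_{t+1}-v_{t+1}\bigr)+v_{t+1}.
\]
Squaring with $(a+b+c)^2\le3(a^2+b^2+c^2)$ reduces everything to the individual terms of $\mathcal D_{z_t}$ in \eqref{eq:dissipation}, plus $\|\nabla p(z_t)\|^2$. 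I then compare coefficients. The primal residual appears in $\mathcal D$ with weight $h/4$, and since $\beta\le h$ it is covered by $\mathcal D/\beta$. The terms $\|x_{t+1}-x^\star\|^2$, $\|-\nabla_yG+n-v\|^2$ and $\|v_{t+1}\|^2$ carry weights of order $h\sqrt{L\mu_y}$, and $\beta=h\sqrt{2L\mu_y}/4096$. Each such term is therefore bounded by $O(L^{-1})\cdot L^2\,\mathcal D/\beta$ up to absolute constants. The gradient term is absorbed through $\sum_t\|\nabla p(z_t)\|^2\le 8L\Delta_\tau/\beta$.

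The main obstacle is the constant bookkeeping needed to land exactly on $32L/\beta$. The crude factor $3$ splitting may overshoot. If so, I would instead bound $\|x_{t+1}-x^\star(z_t)\|$ and $\|\nabla p(z_t)\|$ jointly, using the $\|\nabla p\|^2$ budget $\beta/(8L)$ in \eqref{eq:joint} together with the $\frac12\mathcal D$ term. I would also use weighted Young inequalities tuned so that the contributions add to $16L\cdot 2\Delta_\tau/\beta$. If necessary I would exploit $\mu_y\le L$ to compare weights like $h\mu_y\sqrt{2L\mu_y}$ against $\beta L$.

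The shifted statement \eqref{eq:shifted-stationarity-sums} follows identically. Theorem~\ref{thm:common-descent} is a one-step inequality valid from any feasible state with $\xi_a\in N_X(x_a)$ and $n_a\in N_Y(y_a)$, and Lemma~\ref{lem:normal-membership}'s induction step propagates these inclusions. So telescoping from $a$ to $a+T$ and using $\mathcal V_{a+T}\ge0$ replaces $\Delta_\tau$ by $\mathcal V_a$. Nothing in the argument uses $\xi_a=n_a=v_a=0$.
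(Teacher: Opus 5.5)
Your plan is correct and is essentially the paper's proof. You telescope \eqref{eq:joint}, using $\mathcal V_T\ge0$, or $\mathcal V_{a+T}\ge0$ with the sum started at $a$ for \eqref{eq:shifted-stationarity-sums}. You bound each nonnegative term of $\mathcal D_{z_t}$ separately, and you split $S_t$ through $\nabla_xf+\xi_{t+1}=(\nabla_xG+\xi_{t+1})-2L(x_{t+1}-x^\star(z_t))+\nabla p(z_t)$ and $-\nabla_yG+n_{t+1}=(-\nabla_yG+n_{t+1}-v_{t+1})+v_{t+1}$.

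The crude $3$- and $2$-term splittings already suffice, so the fallback you mention is not needed. They give $\sum_tS_t\le\bigl(25+\tfrac16+24\beta/(hL)\bigr)L\Delta_\tau/\beta$ with $\beta/(hL)\le\sqrt2/4096$. One small correction: the comparison that controls the primal-residual term is $\beta\le hL$, not $\beta\le h$.
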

To turn the aggregate estimates in Lemma~\ref{lem:stationarity-sums} into output guarantees, Algorithm~\ref{alg:main} selects the corrected pair minimizing the computable quantity $S_t$ for GS and samples a center uniformly for OS. Lemma~\ref{lem:criterion-transfer} then transfers the corresponding bounds to the original problem. Neither rule evaluates $\nabla p(z_t)$ or solves an auxiliary saddle problem exactly.
\begin{corollary}\label{cor:common-output}
Under the conditions of Theorem~\ref{thm:common-descent}, the deterministic
output satisfies
\begin{equation}\label{eq:original-game-certificate}
 {\mathcal R(x_{\rm out},y_{\rm out})\le\sqrt{S_j}+\tau\|y_{j+1}\|\le\sqrt{\frac{32L\Delta_\tau}{\beta T}}+\tau D_Y.}
\end{equation}
The random output $z_{\rm out}\in X$ satisfies
\begin{equation}\label{eq:random-envelope-bound}
 \mathbb E\|\nabla p(z_{\rm out})\|^2
 \le\frac{8L\Delta_\tau}{\beta T}
\end{equation}
and
\begin{equation}\label{eq:unified-original-envelope}
 \mathbb E\|\nabla\Phi_{2L}(z_{\rm out})\|^2
 \le\frac{16L\Delta_\tau}{\beta T}+8L\tau D_Y^2.
\end{equation}
\end{corollary}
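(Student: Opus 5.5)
The corollary combines the aggregate bounds of Lemma~\ref{lem:stationarity-sums} with the transfer estimates of Lemma~\ref{lem:criterion-transfer}. I would handle the three claims in turn.

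\emph{Game-stationarity output.} By the output rule, $j\in\argmin_{0\le t<T}S_t$. A minimum is at most the average, so $S_j\le\frac1T\sum_{t=0}^{T-1}S_t\le 32L\Delta_\tau/(\beta T)$ by~\eqref{eq:stationarity-sums}.

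For the first inequality in~\eqref{eq:original-game-certificate}, I would not cite~\eqref{eq:general-game-transfer} directly, since that bound already uses $\|y\|\le D_Y$. Instead I would redo its short argument with $\|y_{j+1}\|$ kept explicit. By Lemma~\ref{lem:normal-membership}, $\xi_{j+1}\in N_X(x_{j+1})$ and $n_{j+1}\in N_Y(y_{j+1})$. Hence
\[
\dist\bigl(0,\nabla_xf+N_X\bigr)\le\|\nabla_xf+\xi_{j+1}\|,
\]
\[
\dist\bigl(0,-\nabla_yf+N_Y\bigr)\le\|-\nabla_yf+n_{j+1}\|\le\|-\nabla_yf+\tau y_{j+1}+n_{j+1}\|+\tau\|y_{j+1}\|,
\]
with all gradients evaluated at $(x_{j+1},y_{j+1})$. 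Write $a$ for the first right-hand side, $b$ for the first term of the second, and $c=\tau\|y_{j+1}\|$. Then
\[
\sqrt{a^2+(b+c)^2}\le\sqrt{a^2+b^2}+c=\sqrt{S_j}+\tau\|y_{j+1}\|
\]
by Minkowski's inequality. The bound $\|y_{j+1}\|\le D_Y$, which holds because $0\in Y$, then gives the second inequality.

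\emph{Envelope output.} Since $J$ is uniform on $\{0,\ldots,T-1\}$,
\[
\mathbb E\|\nabla p(z_J)\|^2=\frac1T\sum_{t=0}^{T-1}\|\nabla p(z_t)\|^2\le\frac{8L\Delta_\tau}{\beta T},
\]
again by~\eqref{eq:stationarity-sums}. This is~\eqref{eq:random-envelope-bound}.

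To see that $z_{\rm out}\in X$, I would use induction: $z_0=x_0\in X$, and $z_{t+1}=(1-\beta)z_t+\beta x_{t+1}$ is a convex combination of points of $X$ because $0<\beta\le1$. So every center lies in $X$.

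For~\eqref{eq:unified-original-envelope}, I would apply $(u+v)^2\le2u^2+2v^2$ to~\eqref{eq:general-optimization-transfer} with $\lambda=2L$. This gives
\[
\|\nabla\Phi_{2L}(z_t)\|^2\le2\|\nabla p(z_t)\|^2+2\cdot4L^2D_Y^2\,\frac{\tau}{L}=2\|\nabla p(z_t)\|^2+8L\tau D_Y^2.
\]
Taking the expectation over $J$ yields $16L\Delta_\tau/(\beta T)+8L\tau D_Y^2$.

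There is no real obstacle here. The only points needing care are keeping $\|y_{j+1}\|$ explicit in the game bound rather than replacing it by $D_Y$ too early, and checking the constant $\lambda^2\tau/(\lambda-L)=4L\tau$ at $\lambda=2L$.
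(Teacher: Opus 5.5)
Your proposal is correct and follows the paper's proof essentially step for step. You bound $S_j$ by the average using Lemma~\ref{lem:stationarity-sums}, and you use the intermediate bound $\sqrt{S_j}+\tau\|y_{j+1}\|$ from the proof of Lemma~\ref{lem:criterion-transfer}; your Minkowski step is the same triangle inequality the paper uses there. The remaining steps also match the paper: the convex-combination argument for $z_t\in X$, averaging over the uniform index $J$, and the squared-norm inequality with the bias $2LD_Y\sqrt{\tau/L}$.
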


The expectations are taken with respect to the independently sampled output index $J$, while the iterates themselves are generated deterministically.

\subsection{Nonconvex--strongly concave complexity results}
\label{sec:complexity-results}

When $\mu>0$, taking $\tau=0$ gives $p=\Phi_{2L}$. The two output rules give the following OS
and GS guarantees for the constrained problem. Write $\kappa=L/\mu$,
and let $\Delta_0$ denote the initial energy $\Delta_\tau$ in
\eqref{eq:initial-energy} evaluated at $\tau=0$.

\begin{theorem}[Nonconvex--strongly concave optimization-stationarity complexity]
\label{thm:ncsc-optimization}\label{cor:ncsc-complexity}
Suppose that Assumption~\ref{ass:main} holds with $\mu>0$.
For $\varepsilon>0$, let $\tau=0$, $\mu_y=\mu$, and $\lambda=2L$,
and choose the remaining parameters as in~\eqref{eq:parameters}.
Run Algorithm~\ref{alg:main} for
\begin{equation}\label{eq:ncsc-budget}
 T=\max\left\{1,
 \left\lceil\frac{128L\Delta_0}{\beta\varepsilon^2}\right\rceil
 \right\}
\end{equation}
iterations. The random center returned by Algorithm~\ref{alg:main} satisfies
\begin{equation}\label{eq:ncsc-optimization-guarantee}
 \mathbb E\|\nabla\Phi_{2L}(z_{\rm out})\|^2\le\varepsilon^2.
\end{equation}
\end{theorem}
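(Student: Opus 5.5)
The plan is to reduce the theorem to Corollary~\ref{cor:common-output}, specialized to $\tau=0$. Since $\mu>0$, the choice $\tau=0$ gives $\mu_y=\mu\in(0,L]$. This is admissible for the standing convention $0<\mu_y\le L$ of the common analysis, because Assumption~\ref{ass:main} requires $\mu\le L$. With $\lambda=2L$ and the parameters~\eqref{eq:parameters}, all hypotheses of Theorem~\ref{thm:common-descent} hold. Lemma~\ref{lem:stationarity-sums} and Corollary~\ref{cor:common-output} therefore apply with $\Delta_\tau=\Delta_0$.

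The first step is to identify the envelope. When $\tau=0$, the auxiliary function $G(x,y;z)=f(x,y)+L\|x-z\|^2$ is concave in $y$. The regularized problem $\min_{x\in X}\max_{y\in Y}G$ is strongly convex in $x$ and strongly concave in $y$ over compact $Y$. By a minimax interchange (Sion), $p(z)=\min_{x\in X}\{\phi(x)+L\|x-z\|^2\}=\Phi_{2L}(z)$. This is exactly the statement in Lemma~\ref{lem:criterion-transfer} that $\|\nabla\Phi_\lambda(z_t)\|=\|\nabla p(z_t)\|$ when $\tau=0$, so the identity holds at every center $z_t$, and in particular at $z_{\rm out}=z_J$.

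The second step is to invoke~\eqref{eq:random-envelope-bound}. Because $J$ is uniform on $\{0,\ldots,T-1\}$, it gives
\[
\mathbb E\|\nabla\Phi_{2L}(z_{\rm out})\|^2=\mathbb E\|\nabla p(z_{\rm out})\|^2=\frac1T\sum_{t=0}^{T-1}\|\nabla p(z_t)\|^2\le\frac{8L\Delta_0}{\beta T}.
\]
We use this sharper bound directly, rather than the looser~\eqref{eq:unified-original-envelope}, which carries a factor $16$ and a bias term that vanishes here anyway. With $T\ge 128L\Delta_0/(\beta\varepsilon^2)$, the right-hand side is at most $\varepsilon^2/16\le\varepsilon^2$.

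If $\Delta_0=0$, the $\max\{1,\cdot\}$ in~\eqref{eq:ncsc-budget} makes $T=1$, and the bound gives $0$; nonnegativity $\Delta_0=\mathcal V_0\ge0$ comes from Theorem~\ref{thm:common-descent}. There is no real obstacle, since the substantive work lies in Theorem~\ref{thm:common-descent}. The only points needing care are that $\Delta_0$ is finite and nonnegative, and that $z_{\rm out}\in X$ so that the OS definition applies. Membership in $X$ follows because $z_0=x_0\in X$ and each $z_{t+1}$ is a convex combination of $z_t$ and $x_{t+1}\in X$ with $\beta\in(0,1]$.
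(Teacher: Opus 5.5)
Your proposal is correct and matches the paper's proof: with $\tau=0$ you identify $p=\Phi_{2L}$ and apply the envelope-gradient sum of Lemma~\ref{lem:stationarity-sums} to obtain $\mathbb E\|\nabla\Phi_{2L}(z_{\rm out})\|^2\le 8L\Delta_0/(\beta T)\le\varepsilon^2/16$. The appeal to Sion is unnecessary: $p$ is already defined in min--max order, so for $\tau=0$ we have $\max_{y\in Y}G(x,y;z)=\phi(x)+L\|x-z\|^2$ directly.
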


\begin{remark}\label{rem:ncsc-os-complexity}
For fixed $\Delta_0>0$, as $\varepsilon\downarrow0$, the first-order oracle
complexity of Theorem~\ref{thm:ncsc-optimization} is
\begin{equation}\label{eq:ncsc-oracle-bound}
 O\!\left(\sqrt\kappa L\Delta_0\varepsilon^{-2}\right).
\end{equation}
\end{remark}

\begin{theorem}[Nonconvex--strongly concave game-stationarity complexity]
\label{thm:ncsc-game}
Under the assumptions and parameter choices of
Theorem~\ref{thm:ncsc-optimization}, run Algorithm~\ref{alg:main} for
$T$ iterations, with $T$ given by~\eqref{eq:ncsc-budget}.
The deterministic game output satisfies
\begin{equation}\label{eq:ncsc-game-guarantee}
 \mathcal R(x_{\rm out},y_{\rm out})\le\varepsilon.
\end{equation}
\end{theorem}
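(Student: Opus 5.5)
The proof applies Corollary~\ref{cor:common-output} with $\tau=0$, $\mu_y=\mu$ and $\lambda=2L$. Condition $0<\mu_y\le L$ holds by Assumption~\ref{ass:main}, so Theorem~\ref{thm:common-descent} and the parameter choice~\eqref{eq:parameters} apply and $\Delta_\tau=\Delta_0$. The bound~\eqref{eq:original-game-certificate} then reads
\[
 \mathcal R(x_{\rm out},y_{\rm out})\le\sqrt{S_j}\le\sqrt{\frac{32L\Delta_0}{\beta T}},
\]
because the bias term $\tau\|y_{j+1}\|$ vanishes when $\tau=0$. Equivalently, Lemma~\ref{lem:criterion-transfer} gives $\mathcal R(x_{j+1},y_{j+1})\le\sqrt{S_j}$, and since $j$ minimizes $S_t$, Lemma~\ref{lem:stationarity-sums} gives $S_j\le T^{-1}\sum_{t<T}S_t\le 32L\Delta_0/(\beta T)$.

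Next we insert the budget~\eqref{eq:ncsc-budget}. If $\Delta_0>0$, then $T\ge 128L\Delta_0/(\beta\varepsilon^2)$, so
\[
 \frac{32L\Delta_0}{\beta T}\le\frac{\varepsilon^2}{4}
 \quad\text{and hence}\quad
 \mathcal R(x_{\rm out},y_{\rm out})\le\varepsilon/2\le\varepsilon.
\]
If $\Delta_0=0$, then $\mathcal V_0=0$. Theorem~\ref{thm:common-descent} gives $\mathcal V_t\ge0$ and $\mathcal V_t$ nonincreasing, so the sum bound forces $S_t=0$ for every $t$. With $T\ge1$ the selection rule is well defined, and the conclusion holds trivially.

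There is no real obstacle here; the only points to check are bookkeeping. First, $\Delta_0\ge0$ is needed so that the square root is meaningful, and this follows from $\mathcal V_0\ge0$ in Theorem~\ref{thm:common-descent}. Second, the same budget that was chosen for OS, with constant $128$, is more than enough for the GS constant $32$.
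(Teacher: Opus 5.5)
Your proposal is correct and follows the paper's proof: with $\tau=0$ the certificate bound of Corollary~\ref{cor:common-output} reduces to $\mathcal R(x_{\rm out},y_{\rm out})\le\sqrt{S_j}$, and the budget~\eqref{eq:ncsc-budget} gives $S_j\le 32L\Delta_0/(\beta T)\le\varepsilon^2/4$. Your separate treatment of $\Delta_0=0$ is harmless but unnecessary, because the ceiling already gives $T\ge128L\Delta_0/(\beta\varepsilon^2)$ for every $\Delta_0\ge0$.
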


\begin{remark}\label{rem:ncsc-gs-complexity}
For fixed $\Delta_0>0$, as $\varepsilon\downarrow0$, the first-order oracle
complexity of Theorem~\ref{thm:ncsc-game} is
\begin{equation}\label{eq:ncsc-game-oracle-bound}
 O\!\left(\sqrt\kappa L\Delta_0\varepsilon^{-2}\right).
\end{equation}
\end{remark}

{\color{black}
The two theorems use the same trajectory and iteration budget; only the
output rule differs. Section~\ref{sec:warmup} sharpens the leading
initialization dependence from $\Delta_0$ to the value-function gap by
adding a fixed-center warm-up. No warm-up is needed for the baseline
bounds above.
}

\subsection{Nonconvex--concave complexity results}
\label{sec:ncc-optimization}\label{sec:additional-guarantees}

We next take $\mu=0$. We first bound the dependence of the initial
energy $\Delta_\tau$ on the regularization parameter $\tau$.

\begin{lemma}\label{lem:uniform-initial-bound}
Suppose that $\mu=0$, $0<\tau\le L$, $\mu_y=\tau$, and
$z_0=x_0$, $\xi_0=n_0=v_0=0$. Define
\begin{equation}\label{eq:uniform-initial-bound}
 {\bar\Delta_0:=\phi(x_0)-\phi_{\inf}+D_Y\|\nabla_y f(x_0,y_0)\|+\frac{\|\nabla_x f(x_0,y_0)\|^2+2\|\nabla_y f(x_0,y_0)\|^2}{L}.}
\end{equation}
Then
\begin{equation}\label{eq:initial-bound-result}
 0\le\Delta_\tau\le\bar\Delta_0+
 \left[\left(1+\frac1{256}\right)\tau+\frac{2\tau^2}{L}\right]D_Y^2.
\end{equation}
\end{lemma}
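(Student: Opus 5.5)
Since $\Delta_\tau=\mathcal V_0$, nonnegativity comes directly from Theorem~\ref{thm:common-descent}. For the upper bound we write $\Delta_\tau=[p(x_0)-\inf_z p(z)]+\mathcal E_{x_0}(x_0,y_0,0,0,0)$ and bound the envelope gap and each term of $\mathcal E$ in~\eqref{eq:potential} separately. With $v=0$ the inner-product term vanishes.

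\emph{Envelope gap.} Since $0\in Y$ and $\norm y\le D_Y$, we have $f(x,y)-\tau D_Y^2/2\le f(x,y)-\tau\norm y^2/2\le f(x,y)$. Taking max over $y$ and then min over $x$ gives $\Phi_{2L}(z)-\tau D_Y^2/2\le p(z)\le\Phi_{2L}(z)$. Hence $p(x_0)\le\Phi_{2L}(x_0)\le\phi(x_0)$. Also $\inf_z p\ge\inf_z\Phi_{2L}-\tau D_Y^2/2\ge\phi_{\inf}-\tau D_Y^2/2$. So the gap is at most $\phi(x_0)-\phi_{\inf}+\tau D_Y^2/2$.

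\emph{Value term.} Its coefficient is $-(1-\sqrt{2L\tau}/(16L))$, so we need a lower bound on $G(x_0,y_0;x_0)-G^\star(x_0)$, i.e.\ an upper bound on $G^\star(x_0)-G(x_0,y_0;x_0)$. We have $G^\star(x_0)\le\max_yG(x_0,y;x_0)$ by taking $x=x_0$ in the min. By concavity, $f(x_0,y)-f(x_0,y_0)\le\ip{\nabla_yf(x_0,y_0)}{y-y_0}\le D_Y\norm{\nabla_yf(x_0,y_0)}$. The regularizer differences contribute at most $\tau\norm{y_0}^2/2\le\tau D_Y^2/2$. The coefficient lies in $(0,1]$, so this term is at most $D_Y\norm{\nabla_yf(x_0,y_0)}+\tau D_Y^2/2$. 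Adding this to the first piece already gives a $\tau D_Y^2$ contribution.

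\emph{Residual terms.} We have $\nabla_xG(x_0,y_0;x_0)=\nabla_xf(x_0,y_0)$, which gives $\norm{\nabla_xf}^2/L$. Also $\norm{-\nabla_yf(x_0,y_0)+\tau y_0}^2\le2\norm{\nabla_yf}^2+2\tau^2D_Y^2$, which yields $2\norm{\nabla_yf}^2/L+2\tau^2D_Y^2/L$.

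\emph{Last term.} $\frac{\tau}{256}\norm{y_0-y^\star}^2$ must be at most $\frac{\tau}{256}D_Y^2$, and this uses the diameter bound. Summing all pieces gives exactly~\eqref{eq:initial-bound-result}.

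The main obstacle is the sign bookkeeping of the value term. We must get a lower bound on $G-G^\star$ at the initial point; bounding $G^\star$ by the primal choice $x=x_0$ handles this. The constants are then exactly matched.
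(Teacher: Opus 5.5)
Your proof is correct and takes essentially the same route as the paper. The paper invokes Lemma~\ref{lem:common-initialization}, which bounds the envelope gap and the initial tracking energy term by term; like you, it replaces the possibly negative value difference by its nonnegative upper bound before using that the coefficient lies in $(0,1)$, and then applies the same splitting $\|-\nabla_y f(x_0,y_0)+\tau y_0\|^2\le2\|\nabla_y f(x_0,y_0)\|^2+2\tau^2D_Y^2$. The only cosmetic difference is in the envelope gap: you sandwich $p$ between $\Phi_{2L}-\tau D_Y^2/2$ and $\Phi_{2L}$, whereas the paper identifies $\inf_z p=\inf_{x\in X}\phi_\tau(x)$. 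Both give the same bound $\Delta_\phi+\tau D_Y^2/2$.
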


\begin{theorem}[Nonconvex--concave optimization-stationarity complexity]
\label{thm:ncc-optimization-complexity}
Suppose that Assumption~\ref{ass:main} holds with $\mu=0$.
For $\varepsilon>0$, let
\begin{equation}\label{eq:optimization-perturbation-choice}
 \tau=\min\left\{L,\frac{\varepsilon^2}{16LD_Y^2}\right\},
 \qquad\mu_y=\tau,
\end{equation}
let $\lambda=2L$, and choose the remaining parameters
as in~\eqref{eq:parameters}. Run Algorithm~\ref{alg:main} for
\begin{equation}\label{eq:ncc-budget}
 T=\max\left\{1,
 \left\lceil\frac{128L\Delta_\tau}{\beta\varepsilon^2}\right\rceil
 \right\}
\end{equation}
iterations.
The random output of Algorithm~\ref{alg:main} satisfies
\begin{equation}\label{eq:ncc-optimization-guarantee}
 \mathbb E\|\nabla\Phi_{2L}(z_{\rm out})\|^2\le\varepsilon^2.
\end{equation}
\end{theorem}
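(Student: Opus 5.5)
The argument combines Corollary~\ref{cor:common-output} with the stated choices of $\tau$ and $T$. Since $\mu=0$ and $\mu_y=\tau$, the requirement $0<\mu_y\le L$ holds because $0<\tau\le L$. Hence Theorem~\ref{thm:common-descent} and Corollary~\ref{cor:common-output} apply with $\lambda=2L$ and the parameters~\eqref{eq:parameters}. The starting point is the bound~\eqref{eq:unified-original-envelope},
\[
 \mathbb E\|\nabla\Phi_{2L}(z_{\rm out})\|^2\le\frac{16L\Delta_\tau}{\beta T}+8L\tau D_Y^2 .
\]
The plan is to show that each of the two terms on the right is at most $\varepsilon^2/2$.

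\emph{Bias term.} By~\eqref{eq:optimization-perturbation-choice}, $\tau\le\varepsilon^2/(16LD_Y^2)$. Therefore
\[
 8L\tau D_Y^2\le\frac{8L D_Y^2\,\varepsilon^2}{16LD_Y^2}=\frac{\varepsilon^2}{2}.
\]
This holds in both branches of the minimum. When $\tau=L$, the inequality $L\le\varepsilon^2/(16LD_Y^2)$ holds by the definition of the minimum, so the same estimate applies.

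\emph{Optimization term.} If $\Delta_\tau=0$, this term vanishes. If $\Delta_\tau>0$, the budget~\eqref{eq:ncc-budget} gives $T\ge 128L\Delta_\tau/(\beta\varepsilon^2)$, and so
\[
 \frac{16L\Delta_\tau}{\beta T}\le\frac{16\varepsilon^2}{128}=\frac{\varepsilon^2}{8}\le\frac{\varepsilon^2}{2}.
\]
Here $\Delta_\tau\ge0$ by Theorem~\ref{thm:common-descent} (or by Lemma~\ref{lem:uniform-initial-bound}). Also $\Delta_\tau<\infty$ and $\beta>0$, since $\mu_y=\tau>0$, so $T$ is a well-defined finite integer.

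Adding the two bounds gives~\eqref{eq:ncc-optimization-guarantee}. The output $z_J$ lies in $X$ because $z_0=x_0\in X$ and each update $z_{t+1}=(1-\beta)z_t+\beta x_{t+1}$ is a convex combination of points of $X$ with $\beta\le1$.

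There is no substantial obstacle in this proof. The only care needed is in the constant bookkeeping: the factor $16$ in~\eqref{eq:unified-original-envelope} has to come from $\|a+b\|^2\le2\|a\|^2+2\|b\|^2$ applied to~\eqref{eq:general-optimization-transfer}, using $\lambda^2\tau/(\lambda-L)=4L\tau$ when $\lambda=2L$. I take that step as already established in the corollary. Lemma~\ref{lem:uniform-initial-bound} is needed only later, to convert $T$ into an explicit complexity bound in terms of $\bar\Delta_0$. It is not needed for the guarantee itself.
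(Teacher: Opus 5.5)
Your proposal is correct and is essentially the paper's proof. The paper bounds $\mathbb E\|\nabla p(z_{\rm out})\|^2\le\varepsilon^2/16$ via \eqref{eq:random-envelope-bound} and the pointwise bias by $2D_Y\sqrt{L\tau}\le\varepsilon/2$ via \eqref{eq:general-envelope-bias}. It then combines them with $\|a+b\|^2\le2\|a\|^2+2\|b\|^2$; this is exactly the content of \eqref{eq:unified-original-envelope}, which you invoke directly, and it gives the same $\varepsilon^2/8+\varepsilon^2/2\le\varepsilon^2$.
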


\begin{remark}\label{rem:ncc-oracle-complexity}
By Lemma~\ref{lem:uniform-initial-bound} and the choice of $\tau$,
$\Delta_\tau\le\bar\Delta_0+O(\varepsilon^2/L)$ as $\varepsilon\downarrow0$.
For fixed $\bar\Delta_0>0$, the first-order oracle complexity of
Theorem~\ref{thm:ncc-optimization-complexity} is therefore
\begin{equation}\label{eq:ncc-optimization-complexity}
 O\!\left(L^2D_Y\bar\Delta_0\varepsilon^{-3}\right).
\end{equation}
\end{remark}

We next consider game stationarity. The game-stationarity bound~\eqref{eq:original-game-certificate} contains the regularization bias $\tau D_Y$. To keep this term below $\varepsilon/2$, we choose $\tau\le\varepsilon/(2D_Y)$.
\begin{theorem}[Nonconvex--concave game-stationarity complexity]
\label{thm:ncc-game-complexity}
Suppose that Assumption~\ref{ass:main} holds with $\mu=0$.
For $\varepsilon>0$, let
\begin{equation}\label{eq:game-perturbation-choice}
 \tau=\min\left\{L,\frac{\varepsilon}{2D_Y}\right\},
 \qquad \mu_y=\tau,
\end{equation}
let $\lambda=2L$, and choose the remaining parameters
as in~\eqref{eq:parameters}. Run Algorithm~\ref{alg:main} for $T$
iterations, with $T$ given by~\eqref{eq:ncc-budget} using the present
values of $\beta$ and $\Delta_\tau$. The deterministic game output of Algorithm~\ref{alg:main}
satisfies
\begin{equation}\label{eq:ncc-game-guarantee}
 \mathcal R(x_{\rm out},y_{\rm out})\le\varepsilon.
\end{equation}
\end{theorem}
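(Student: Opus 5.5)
The proof reduces to Corollary~\ref{cor:common-output}, whose hypotheses hold here. Theorem~\ref{thm:common-descent} only needs $\lambda=2L$, $\tau\ge0$ with $0<\mu_y\le L$, and the parameters~\eqref{eq:parameters}. With $\mu=0$ and $\tau=\min\{L,\varepsilon/(2D_Y)\}>0$ we get $\mu_y=\tau\in(0,L]$, so the Lyapunov descent and the aggregate bound of Lemma~\ref{lem:stationarity-sums} apply. Corollary~\ref{cor:common-output} then gives, for the deterministic output $(x_{\rm out},y_{\rm out})=(x_{j+1},y_{j+1})$,
\[
\mathcal R(x_{\rm out},y_{\rm out})\le\sqrt{\frac{32L\Delta_\tau}{\beta T}}+\tau D_Y .
\]
The task is to show that each of the two terms is at most $\varepsilon/2$.

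\emph{Bias term.} By the choice~\eqref{eq:game-perturbation-choice}, $\tau\le\varepsilon/(2D_Y)$, so $\tau D_Y\le\varepsilon/2$. This holds in both branches of the minimum: if $\tau=L$, then $L\le\varepsilon/(2D_Y)$ anyway.

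\emph{Optimization term.} The budget~\eqref{eq:ncc-budget} gives $T\ge 128L\Delta_\tau/(\beta\varepsilon^2)$, with $\beta$ and $\Delta_\tau$ evaluated at the present $\tau$. Hence
\[
\frac{32L\Delta_\tau}{\beta T}\le\frac{32\varepsilon^2}{128}=\frac{\varepsilon^2}{4},
\]
and the square root is at most $\varepsilon/2$. Two edge cases need a remark. First, $\Delta_\tau\ge0$ by Lemma~\ref{lem:uniform-initial-bound} (equivalently $\mathcal V_0\ge0$ from Theorem~\ref{thm:common-descent}), so the square root is well defined. Second, if $\Delta_\tau=0$ the maximum in $T$ is $1$ and the bound $\sqrt{S_j}\le 0$ holds directly. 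Adding the two estimates gives $\mathcal R(x_{\rm out},y_{\rm out})\le\varepsilon$.

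There is no real obstacle; the only thing to check is bookkeeping. The constant $128$ in~\eqref{eq:ncc-budget} was tuned for the OS bound, and here it must be matched against the constant $32$ in the GS certificate. It does match, with room to spare. For a complexity remark one would also note $\beta=h\sqrt{2L\tau}/4096\asymp\sqrt{\tau/L}$, so $T=O(L^{3/2}D_Y^{1/2}\Delta_\tau\varepsilon^{-5/2})$, with $\Delta_\tau\le\bar\Delta_0+O(\varepsilon D_Y)$ by Lemma~\ref{lem:uniform-initial-bound}. This is not needed for the stated guarantee.
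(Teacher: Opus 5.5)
Your proof is correct and follows the paper's argument. The paper likewise combines Lemma~\ref{lem:stationarity-sums} with the budget~\eqref{eq:ncc-budget} to get $S_j\le\varepsilon^2/4$, uses $\tau D_Y\le\varepsilon/2$ from the choice of $\tau$, and concludes via~\eqref{eq:original-game-certificate}; your edge-case remarks on $\Delta_\tau$ are harmless but unnecessary.
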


\begin{remark}\label{rem:ncc-game-oracle-complexity}
By Lemma~\ref{lem:uniform-initial-bound} and the choice of $\tau$,
$\Delta_\tau\le\bar\Delta_0+O(\varepsilon D_Y)$ as $\varepsilon\downarrow0$.
For fixed $\bar\Delta_0>0$, the first-order oracle complexity of
Theorem~\ref{thm:ncc-game-complexity} is therefore
\begin{equation}\label{eq:ncc-game-complexity}
 O\!\left(L^{3/2}D_Y^{1/2}\bar\Delta_0\varepsilon^{-5/2}\right).
\end{equation}
\end{remark}

Both results follow from the common residual estimates. Their different
accuracy exponents arise from the perturbation choices: the OS transfer
has a $\sqrt\tau$ bias, whereas the GS transfer has a $\tau$ bias.
{\color{black}
The following warm-up refinement leaves these choices unchanged.
}

\subsection{Fixed-center warm-up and refined complexity bounds}
\label{sec:warmup}

{\color{black}
We now improve the initialization dependence of both the NC--SC and
NC--C bounds. A single fixed-center argument applies to both regimes:
only the effective dual curvature and the warm-up tolerance change.
Throughout this subsection, Assumption~\ref{ass:main} holds,
$\lambda=2L$, $\tau\ge0$, and $0<\mu_y=\mu+\tau\le L$. The parameters
$h,\alpha,\gamma,\beta$ are given by~\eqref{eq:parameters}. Write
\[
 \Delta_\phi:=\phi(x_0)-\phi_{\inf}.
\]
For a feasible state $w=(x,y,\xi,n,v)$ satisfying the normal-cone
inclusions, abbreviate its tracking energy by
\begin{equation}\label{eq:warmup-tracking-energy}
 H_z(w):=\mathcal E_z(w)
       +\frac{\sqrt{2L\mu_y}}{256L^2}\|v\|^2.
\end{equation}
Thus $\mathcal V_t=p(z_t)-\inf_zp(z)+H_{z_t}(w_t)$.
Define the computable initialization bound
\begin{equation}\label{eq:warmup-initial-bound}
\begin{aligned}
 \overline H_{\mu,\tau}:={}&D_Y\|\nabla_yf(x_0,y_0)\|\\
 &+\frac{\|\nabla_xf(x_0,y_0)\|^2
       +\|-\nabla_yf(x_0,y_0)+\tau y_0\|^2}{L}
 +\left(\frac\tau2+\frac{\mu+\tau}{256}\right)D_Y^2.
\end{aligned}
\end{equation}
Keeping the squared regularized gradient unexpanded makes this bound
valid for $\tau=0$ as well as $\tau>0$.

\paragraph{One warm-up schedule for both regimes.}
For an arbitrary tolerance $\eta>0$, set
\begin{equation}\label{eq:warmup-length}
 q_{\mu,\tau,\eta}:=\max\left\{1,
          \frac{\overline H_{\mu,\tau}}{\eta}\right\},\qquad
 T_{\rm w}:=\left\lceil
       \frac{\log q_{\mu,\tau,\eta}}{\log(1+128\beta)}\right\rceil.
\end{equation}
For a target accuracy $\varepsilon>0$, define
\begin{equation}\label{eq:warmup-main-budget}
 \begin{gathered}
 B_{\mu,\tau,\eta}:=\Delta_\phi+\frac{\tau D_Y^2}{2}+\eta,\\
 T:=\max\left\{1,
      \left\lceil\frac{128LB_{\mu,\tau,\eta}}
                       {\beta\varepsilon^2}\right\rceil\right\}.
 \end{gathered}
\end{equation}
Use the predetermined center-step schedule
\begin{equation}\label{eq:warmup-schedule}
 \beta_t=\begin{cases}
 0,&0\le t<T_{\rm w},\\
 \beta,&T_{\rm w}\le t<T_{\rm w}+T.
 \end{cases}
\end{equation}
All other parameters stay fixed, and the entire state is retained at
the transition. Algorithm~\ref{alg:warmup} therefore uses the same
explicit updates as Algorithm~\ref{alg:main}, with no inner solve or
state reset.
}

\begin{algorithm}[t]
\caption{Projected damped extragradient with a fixed-center warm-up}
\label{alg:warmup}
\small
\begin{algorithmic}[1]
{\color{black}\Require $x_0\in X$, $y_0\in Y$, $\varepsilon>0$, $\eta>0$;
$\tau\ge0$, $0<\mu_y=\mu+\tau\le L$; $\Delta_\phi$ or a known upper bound.}
\State Set $\lambda=2L$ and choose the parameters in~\eqref{eq:parameters}.
\State Initialize $z_0=x_0$, $\xi_0=n_0=v_0=0$; query and cache $\nabla f(x_0,y_0)$.
\For{$t=0,\ldots,T_{\rm w}+T-1$}
\State Execute lines 3--10 of Algorithm~\ref{alg:main} at center $z_t$ to obtain $w_{t+1}$.
\State $z_{t+1}=z_t+\beta_t(x_{t+1}-z_t)$, with~\eqref{eq:warmup-schedule}.
\EndFor
\Ensure OS: $z_{\rm out}=z_J$ with
$J\sim\operatorname{Unif}\{T_{\rm w},\ldots,T_{\rm w}+T-1\}$.
\Statex \hspace{\algorithmicindent}GS:
$(x_{\rm out},y_{\rm out})=(x_{j+1},y_{j+1})$,
$j\in\operatorname*{argmin}_{T_{\rm w}\le t<T_{\rm w}+T}S_t$.
\end{algorithmic}
\end{algorithm}

{\color{black}
\begin{lemma}[Unified warm-up energy bound]\label{lem:warmup-energy}
Let $H_t:=H_{x_0}(w_t)$ for $0\le t\le T_{\rm w}$ and
\begin{equation}\label{eq:warmup-energy}
 \Delta_\tau^{\rm w}:=\mathcal V_{T_{\rm w}}
        =p(x_0)-\inf_zp(z)+H_{T_{\rm w}}.
\end{equation}
Algorithm~\ref{alg:warmup} satisfies
\begin{equation}\label{eq:warmup-start}
 \begin{gathered}
 0\le H_0\le\overline H_{\mu,\tau},\qquad
 H_{T_{\rm w}}\le\eta,\qquad
 0\le\Delta_\tau^{\rm w}\le B_{\mu,\tau,\eta}.
 \end{gathered}
\end{equation}
\end{lemma}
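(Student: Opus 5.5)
We prove the three claims in order: the bounds on $H_0$, the contraction of $H_t$ during warm-up, and the bound on $\Delta_\tau^{\rm w}$.

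\emph{Bounds on $H_0$.} At $t=0$ we have $w_0=(x_0,y_0,0,0,0)$ and $z=x_0$. Hence
\[
H_0=-\tfrac{16L-\sqrt{2L\mu_y}}{16L}\bigl(G(x_0,y_0;x_0)-G^\star(x_0)\bigr)+\tfrac1L\|\nabla_xf(x_0,y_0)\|^2+\tfrac1L\|-\nabla_yf(x_0,y_0)+\tau y_0\|^2+\tfrac{\mu_y}{256}\|y_0-y^\star(x_0)\|^2 .
\]
Here we used $\nabla_xG(x_0,y_0;x_0)=\nabla_xf(x_0,y_0)$ and $\nabla_yG=\nabla_yf-\tau y$.

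The last term is at most $\frac{\mu+\tau}{256}D_Y^2$. The gradient terms match those in $\overline H_{\mu,\tau}$. For the gap term, write
\[
G^\star(x_0)-G(x_0,y_0;x_0)\le \max_y G(x_0,y;x_0)-G(x_0,y_0;x_0),
\]
which holds because $G^\star(x_0)=\min_x\max_yG(x,y;x_0)$. By concavity of $G(x_0,\cdot\,;x_0)$ the right side is at most $\langle\nabla_yG(x_0,y_0;x_0),y-y_0\rangle$ for the maximizer $y\in Y$. Expanding $\nabla_yG=\nabla_yf-\tau y_0$ gives the bound
\[
D_Y\|\nabla_yf(x_0,y_0)\|+\tau\langle y_0,y_0-y\rangle .
\]
The $\tau$ part is at most $\frac\tau2\|y\|^2\le \frac\tau2D_Y^2$ when handled via the exact form $-\frac\tau2\|y\|^2+\frac\tau2\|y_0\|^2$. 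Concretely, we split $G=f-\frac\tau2\|y\|^2$ and use concavity of $f$ alone, giving $D_Y\|\nabla_yf\|+\frac\tau2\|y_0\|^2$. The prefactor lies in $(0,1]$, so this yields $H_0\le\overline H_{\mu,\tau}$.

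Nonnegativity $H_0\ge0$ (indeed $H_z(w)\ge0$ for any feasible state) comes from the same argument that gives $\mathcal V_t\ge0$ in Theorem~\ref{thm:common-descent}. That proof must show $\mathcal E_z(w)+\frac{\sqrt{2L\mu_y}}{256L^2}\|v\|^2\ge0$ separately from the envelope gap, which is itself $\ge0$, so I would cite it.

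\emph{Linear contraction during warm-up.} With $\beta_t=0$ we have $z_{t+1}=z_t=x_0$, so the center is fixed. Consider the proof of Lemma~\ref{lem:auxiliary-change}, i.e.\ the chain leading to~\eqref{eq:joint} before the center-motion error is introduced. At fixed center it should give
\[
H_{x_0}(w_{t+1})-H_{x_0}(w_t)\le-\tfrac12\mathcal D_{x_0}(w_{t+1},w_t).
\]
The key step is to show $\mathcal D_{x_0}(w_{t+1},w_t)\ge c\,H_{x_0}(w_{t+1})$ with $c$ such that $1+128\beta\le (1-c/2)^{-1}$, i.e.\ roughly $c\ge 256\beta=h\sqrt{2L\mu_y}/16$. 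We would bound each term of $H$ at $w_{t+1}$ by the matching dissipation terms.
\begin{itemize}
\item The gap $G^\star-G$ is controlled via strong convexity/concavity by $\|x-x^\star\|^2$, $\|y-y^\star\|^2$ and residual norms. Alternatively, drop its negative-sign contribution when $G\ge G^\star$ and bound it by residuals otherwise.
\item The residual squares appear in $\mathcal D$ with weights $h/4$ and $\frac{3h\sqrt{2L\mu_y}}{512L}$, against weights $1/L$ in $H$. This is a ratio of order $h\sqrt{\mu_y/L}$.
\item The cross term $\langle v,y-y^\star\rangle$ is handled by Young's inequality using the $\|v\|^2$ and $\|y-y^\star\|^2$ dissipation.
\end{itemize}
This yields $H_{t+1}\le H_t/(1+128\beta)$. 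Iterating $T_{\rm w}$ times and using the definition~\eqref{eq:warmup-length} gives $H_{T_{\rm w}}\le H_0\,q^{-1}\le \overline H_{\mu,\tau}\cdot\eta/\overline H_{\mu,\tau}=\eta$ when $q>1$. When $q=1$ we have $H_0\le\eta$ and $H$ is nonincreasing. I expect this comparison $\mathcal D\gtrsim\beta H$ to be the main obstacle, especially the constants, the residual terms, and whether the increments $\mathcal I$ are needed to relate quantities at $w_{t+1}$ versus $w_t$.

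\emph{Bound on $\Delta_\tau^{\rm w}$.} Since $z_{T_{\rm w}}=x_0$, we have $\Delta_\tau^{\rm w}=p(x_0)-\inf p+H_{T_{\rm w}}$, and nonnegativity is immediate. It remains to show $p(x_0)-\inf_z p\le\Delta_\phi+\frac\tau2D_Y^2$. We use
\[
p(x_0)\le\max_y G(x_0,y;x_0)\le\phi(x_0),
\]
taking $x=x_0$ and dropping $-\frac\tau2\|y\|^2\le0$. In the other direction, for every $z$,
\[
p(z)\ge\min_x\bigl\{\phi(x)+\tfrac\lambda2\|x-z\|^2\bigr\}-\tfrac\tau2D_Y^2\ge\phi_{\inf}-\tfrac\tau2D_Y^2 .
\]
Adding $H_{T_{\rm w}}\le\eta$ gives $\Delta_\tau^{\rm w}\le B_{\mu,\tau,\eta}$.
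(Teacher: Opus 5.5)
Your bound on $H_0$, the iteration over $T_{\rm w}$ steps, and the bound $\Delta_\tau^{\rm w}\le B_{\mu,\tau,\eta}$ are correct and agree with the paper. \textbf{The gap is in the contraction step.} You try to derive $(1+128\beta)H_{t+1}\le H_t$ from $H_{t+1}-H_t\le-\frac12\mathcal D_{x_0}(w_{t+1},w_t)$ together with a termwise comparison $\mathcal D_{x_0}(w_{t+1},w_t)\ge c\,H_{x_0}(w_{t+1})$ with $c\approx256\beta$. The weights in~\eqref{eq:dissipation} do not allow such a comparison.

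Take $(x_{t+1},y_{t+1})=(x^\star(x_0),y^\star(x_0))$, let $\xi_{t+1},n_{t+1}$ be the saddle-point multipliers, and let $v_{t+1}\neq0$. Then the gap, cross, and distance terms vanish, the primal residual is zero, and the dual residual equals $-v_{t+1}$. Hence $H_{x_0}(w_{t+1})\ge\|v_{t+1}\|^2/L$. Using $h\sqrt{2L\mu_y}=4096\beta$, all terms of $\mathcal D$ other than $\mathcal I/(4L)$ sum to only $31\beta\|v_{t+1}\|^2/L$. The increment $\mathcal I$ depends on $w_t$, so it cannot be charged against $H_{x_0}(w_{t+1})$.

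The positive part of the gap term is worse. Split $G^\star-G\le\langle-\nabla_yG+n,y-y^\star\rangle-\frac{\mu_y}{2}\|y-y^\star\|^2$ between the $\|y-y^\star\|^2$ weight $32\beta\mu_y$ and the residual weights of order $\beta/L$. This gives a ratio of order only $\beta\sqrt{\mu_y/L}$. So your route yields a far slower rate, and with $T_{\rm w}$ fixed by~\eqref{eq:warmup-length} it does not give $H_{T_{\rm w}}\le\eta$.

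\textbf{The missing observation:} the fixed-center inequality~\eqref{eq:fixedhat} of Theorem~\ref{thm:fixed-dissipation} already has the contraction on its left-hand side:
\[
 H_{z_t}(w_{t+1})-H_{z_t}(w_t)+\frac{h\sqrt{2L\mu_y}}{32}H_{z_t}(w_{t+1})\le-\mathcal D_{z_t}(w_{t+1},w_t).
\]
The moving-center chain behind Lemma~\ref{lem:auxiliary-change}, which you invoke, discards exactly this term when it absorbs the center movement. During warm-up $z_t=x_0$, and $h\sqrt{2L\mu_y}/32=128\beta$. Dropping $\mathcal D\ge0$ then gives $(1+128\beta)H_{t+1}\le H_t$ directly. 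That term comes from Steps~4--5 of the theorem's proof, where the endpoint-weighted potential is added and the saddle-growth inequality~\eqref{eq:saddle-growth} is used with the momentum cancellations. It does not come from comparing $\mathcal D$ with $H$.

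With this substitution your argument is complete. You should also record that Lemma~\ref{lem:normal-membership} does not depend on the center step. Hence the normal-cone inclusions needed for $H_t\ge0$ and for~\eqref{eq:fixedhat} persist throughout the warm-up.
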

The proof uses the fixed-center contraction in
Theorem~\ref{thm:fixed-dissipation} and the common initialization estimate
in Appendix~\ref{app:initial-bound}; it is given in
Appendix~\ref{app:warmup-energy-proof}.

\begin{theorem}[Warm-started residual bounds and oracle complexity]
\label{thm:warmup-unified}
Under the conditions above, Algorithm~\ref{alg:warmup} satisfies
\begin{equation}\label{eq:warmup-auxiliary-guarantees}
 \mathbb E\|\nabla p(z_{\rm out})\|^2\le\frac{\varepsilon^2}{16},
 \qquad S_j\le\frac{\varepsilon^2}{4}.
\end{equation}
Consequently,
\begin{equation}\label{eq:warmup-transfer-guarantees}
 \mathcal R(x_{\rm out},y_{\rm out})\le\frac\varepsilon2+\tau D_Y,
 \qquad
 \mathbb E\|\nabla\Phi_{2L}(z_{\rm out})\|^2
       \le\frac{\varepsilon^2}{8}+8L\tau D_Y^2.
\end{equation}
When $\tau=0$, the OS bound is sharpened to
$\mathbb E\|\nabla\Phi_{2L}(z_{\rm out})\|^2\le\varepsilon^2/16$.
The number $N$ of full first-order oracle calls satisfies
\begin{equation}\label{eq:warmup-query-bound}
 \begin{aligned}
 N&\le1+2(T_{\rm w}+T)\\
  &\le5+\frac{\log q_{\mu,\tau,\eta}}{32\beta}
          +\frac{256LB_{\mu,\tau,\eta}}{\beta\varepsilon^2}.
 \end{aligned}
\end{equation}
There are $4(T_{\rm w}+T)$ projections. The initial query used to
compute~\eqref{eq:warmup-initial-bound} is shared with the first
prediction, and no additional query is required at the phase transition.
\end{theorem}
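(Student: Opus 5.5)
The plan is to apply the shifted estimate of Lemma~\ref{lem:stationarity-sums} to the main phase of Algorithm~\ref{alg:warmup}, started from the warm-up output at index $a=T_{\rm w}$.

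\emph{Main-phase estimates.} During $T_{\rm w}\le t<T_{\rm w}+T$ the center step equals $\beta$, the updates are those of Algorithm~\ref{alg:main}, and the state $w_{T_{\rm w}}$ is feasible with the normal-cone inclusions by Lemma~\ref{lem:normal-membership}. So \eqref{eq:shifted-stationarity-sums} with $a=T_{\rm w}$ gives
\[
 \sum_{t=T_{\rm w}}^{T_{\rm w}+T-1}S_t\le\frac{32L\mathcal V_{T_{\rm w}}}{\beta},\qquad
 \sum_{t=T_{\rm w}}^{T_{\rm w}+T-1}\|\nabla p(z_t)\|^2\le\frac{8L\mathcal V_{T_{\rm w}}}{\beta}.
\]
Lemma~\ref{lem:warmup-energy} gives $\mathcal V_{T_{\rm w}}=\Delta^{\rm w}_\tau\le B_{\mu,\tau,\eta}$. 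Since $T\ge 128LB_{\mu,\tau,\eta}/(\beta\varepsilon^2)$:
\begin{itemize}
 \item the uniform average over the $T$ main-phase centers gives $\mathbb E\|\nabla p(z_{\rm out})\|^2\le 8LB/(\beta T)\le\varepsilon^2/16$;
 \item the minimum is at most the average, so $S_j\le 32LB/(\beta T)\le\varepsilon^2/4$.
\end{itemize}
This proves \eqref{eq:warmup-auxiliary-guarantees}.

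\emph{Transfer to the original criteria.} For GS, use \eqref{eq:general-game-transfer}: $\mathcal R(x_{j+1},y_{j+1})\le\sqrt{S_j}+\tau D_Y\le\varepsilon/2+\tau D_Y$. For OS, combine \eqref{eq:general-envelope-bias} with $\lambda=2L$, $\lambda-L=L$, which gives bias $2LD_Y\sqrt{\tau/L}$, whose square is $4L\tau D_Y^2$. Then $(a+b)^2\le 2a^2+2b^2$ yields
\[
 \mathbb E\|\nabla\Phi_{2L}(z_{\rm out})\|^2\le 2\cdot\frac{\varepsilon^2}{16}+8L\tau D_Y^2.
\]
When $\tau=0$, the identity $p=\Phi_{2L}$ gives the sharper bound $\varepsilon^2/16$ directly.

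\emph{Oracle count.} Each iteration of either phase makes two oracle calls: the predicted point, and the corrected point whose gradient is cached for the next prediction. The initial cached query is also the one used to evaluate $\overline H_{\mu,\tau}$, so $N\le 1+2(T_{\rm w}+T)$, and the phase change does not alter the query pattern. Next, bound $T_{\rm w}\le 1+\log q/\log(1+128\beta)$. Here $128\beta=h\sqrt{2L\mu_y}/32\le\sqrt2/(32\cdot 64)$ is tiny, and $\log(1+u)\ge u/2$ for $u\in[0,1]$ gives $\log(1+128\beta)\ge 64\beta$, so $2T_{\rm w}\le 2+\log q/(32\beta)$. Also $T\le 1+128LB/(\beta\varepsilon^2)$, where the $\max$ with $1$ is absorbed because the ceiling term is at least $1$ whenever $B>0$; in the degenerate case we only need the crude bound $T\le 1+\text{(ceiling argument)}$. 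Hence $2T\le 2+256LB/(\beta\varepsilon^2)$, and adding the pieces gives $N\le 5+\log q/(32\beta)+256LB/(\beta\varepsilon^2)$. Each iteration performs four projections: $\widetilde x_t,\widetilde y_t,x_{t+1},y_{t+1}$.

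\emph{Main obstacle.} The main step is the validity of the shifted Lyapunov application. Theorem~\ref{thm:common-descent} must hold verbatim from a nonzero initial state $w_{T_{\rm w}}$, with $\mathcal V_{T_{\rm w}}$ measured at center $z_{T_{\rm w}}=x_0$ (the center is unchanged during warm-up since $\beta_t=0$). This is exactly what the general clause of Lemma~\ref{lem:stationarity-sums} and the bound on $\Delta_\tau^{\rm w}$ in Lemma~\ref{lem:warmup-energy} supply. The only remaining detail is the elementary inequality $\log(1+128\beta)\ge64\beta$.
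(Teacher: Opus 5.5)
Your proposal is correct and follows the paper's proof essentially step for step. Both arguments use the same ingredients: the interval form of Lemma~\ref{lem:stationarity-sums} at $a=T_{\rm w}$ with the inherited state, combined with $\Delta_\tau^{\rm w}\le B_{\mu,\tau,\eta}$ from Lemma~\ref{lem:warmup-energy}; the transfer bounds of Lemma~\ref{lem:criterion-transfer}; and the query count via $\log(1+128\beta)\ge64\beta$ and $T\le1+128LB_{\mu,\tau,\eta}/(\beta\varepsilon^2)$.
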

The inherited-state form of Lemma~\ref{lem:stationarity-sums} supplies
both residual bounds. Appendix~\ref{app:warmup-complexity-proof}
contains the proof and the query count. The following three corollaries
only specialize $(\mu,\tau,\eta)$.
}

{\color{black}
\begin{corollary}[Warm-started NC--SC complexity]
\label{cor:warmup-ncsc}\label{thm:ncsc-warmstart-new}
Suppose that $\mu>0$. Set $\tau=0$, $\eta=\varepsilon^2/L$, and
$\kappa=L/\mu$, and write $\overline H_\mu:=\overline H_{\mu,0}$.
Algorithm~\ref{alg:warmup} returns a random OS center and a
deterministic GS pair satisfying
\begin{equation}\label{eq:ncsc-w-guarantees}
 \mathbb E\|\nabla\Phi_{2L}(z_{\rm out})\|^2\le\varepsilon^2,
 \qquad \mathcal R(x_{\rm out},y_{\rm out})\le\varepsilon.
\end{equation}
Its oracle complexity is
\begin{equation}\label{eq:ncsc-w-order}
 N=O\!\left(\sqrt\kappa\left[
       \frac{L\Delta_\phi}{\varepsilon^2}+1+
       \log\!\left(1+\frac{L\overline H_\mu}{\varepsilon^2}\right)
                         \right]\right).
\end{equation}
For a fixed instance with $\Delta_\phi>0$, the leading term is
$O(\sqrt\kappa L\Delta_\phi\varepsilon^{-2})$.
\end{corollary}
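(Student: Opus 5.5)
The corollary is a direct specialization of Theorem~\ref{thm:warmup-unified} to $\tau=0$, $\mu_y=\mu$, $\eta=\varepsilon^2/L$, so I would derive both guarantees and the oracle count from it.

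\textbf{Step 1: the two guarantees.} I first check that the standing hypotheses $0<\mu_y=\mu+\tau\le L$ hold. This follows from $\mu>0$, $\tau=0$, and $\mu\le L$ from Assumption~\ref{ass:main}. Theorem~\ref{thm:warmup-unified} then gives
\[
\mathcal R(x_{\rm out},y_{\rm out})\le\varepsilon/2+\tau D_Y=\varepsilon/2\le\varepsilon .
\]
For the OS side, the sharpened $\tau=0$ statement gives
\[
\mathbb E\|\nabla\Phi_{2L}(z_{\rm out})\|^2\le\varepsilon^2/16\le\varepsilon^2 .
\]
This uses $p=\Phi_{2L}$ when $\tau=0$, as stated after Lemma~\ref{lem:criterion-transfer}. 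This yields \eqref{eq:ncsc-w-guarantees}.

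\textbf{Step 2: the oracle count.} I start from \eqref{eq:warmup-query-bound}:
\[
N\le5+\frac{\log q}{32\beta}+\frac{256LB}{\beta\varepsilon^2}.
\]
With $\tau=0$ and $\eta=\varepsilon^2/L$, the budget term becomes $B=\Delta_\phi+\varepsilon^2/L$. Also $q=\max\{1,L\overline H_\mu/\varepsilon^2\}$, so $\log q\le\log(1+L\overline H_\mu/\varepsilon^2)$. From \eqref{eq:parameters} with $\mu_y=\mu$ and $L_G=3L$, we have $h=1/(192L)$ and
\[
\beta=\frac{h\sqrt{2L\mu}}{4096}=\frac{\sqrt{2}}{192\cdot4096}\,\kappa^{-1/2}.
\]
Hence $1/\beta=c\sqrt\kappa$ for an absolute constant $c$. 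Substituting, the budget term is
\[
\frac{256LB}{\beta\varepsilon^2}=c'\sqrt\kappa\left(\frac{L\Delta_\phi}{\varepsilon^2}+1\right),
\]
and the warm-up term is $O\bigl(\sqrt\kappa\log(1+L\overline H_\mu/\varepsilon^2)\bigr)$. Since $\kappa\ge1$, the additive constant $5$ is absorbed into $\sqrt\kappa\cdot1$. Together these give \eqref{eq:ncsc-w-order}.

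\textbf{Step 3: the leading term.} For a fixed instance with $\Delta_\phi>0$, I compare the three terms in \eqref{eq:ncsc-w-order} as $\varepsilon\downarrow0$. The term $L\Delta_\phi\varepsilon^{-2}$ grows polynomially, while the remaining terms are $O(1)+O(\log\varepsilon^{-1})$ and are therefore $o(\varepsilon^{-2})$. So the leading term is $O(\sqrt\kappa L\Delta_\phi\varepsilon^{-2})$.

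\textbf{Main obstacle.} The only delicate point is bookkeeping. One must verify that the choice $\eta=\varepsilon^2/L$ makes the $\eta$ contribution to $B$ produce only the additive $\sqrt\kappa$ term and not a new leading term. One must also confirm that $1/\beta\asymp\sqrt\kappa$ exactly, which requires $L_G=3L+\tau$ with $\tau=0$. Neither step needs new analysis beyond Theorem~\ref{thm:warmup-unified}.
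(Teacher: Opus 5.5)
Your proposal is correct and follows the paper's own proof essentially step for step. Like the paper, you specialize Theorem~\ref{thm:warmup-unified} to $\tau=0$ and $\eta=\varepsilon^2/L$, insert $B_{\mu,0,\eta}=\Delta_\phi+\varepsilon^2/L$, $\log q\le\log(1+L\overline H_\mu/\varepsilon^2)$ and $\beta^{-1}=\Theta(\sqrt\kappa)$ into \eqref{eq:warmup-query-bound}, and observe that the constant and logarithmic terms are $o(\varepsilon^{-2})$ for a fixed instance. Your explicit checks that $0<\mu_y\le L$ and that $\kappa\ge1$ absorbs the additive constant are details the paper leaves implicit.
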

}

{\color{black}
\begin{corollary}[Warm-started NC--C optimization stationarity]
\label{thm:warmup-os}
Suppose that $\mu=0$. Choose $\tau$ as in
\eqref{eq:optimization-perturbation-choice} and set $\eta=\tau D_Y^2$.
Algorithm~\ref{alg:warmup} returns a random center satisfying
\begin{equation}\label{eq:warmup-os-guarantee}
 \mathbb E\|\nabla\Phi_{2L}(z_{\rm out})\|^2\le\varepsilon^2.
\end{equation}
The full query bound is~\eqref{eq:warmup-query-bound}, with
$B_{0,\tau,\eta}=\Delta_\phi+3\tau D_Y^2/2$. For a fixed instance
with $\Delta_\phi>0$, its leading order as $\varepsilon\downarrow0$ is
\begin{equation}\label{eq:warmup-os-order}
 O\!\left(L^2D_Y\Delta_\phi\varepsilon^{-3}\right).
\end{equation}
\end{corollary}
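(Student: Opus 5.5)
The plan is to specialize Theorem~\ref{thm:warmup-unified} to $\mu=0$, with $\tau$ as in \eqref{eq:optimization-perturbation-choice} and $\eta=\tau D_Y^2$. Since $\mu=0$ we have $\mu_y=\tau$, and $0<\tau\le L$, so the standing conditions $0<\mu_y\le L$ of the warm-up subsection hold.

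\textbf{Stationarity guarantee.} By \eqref{eq:warmup-transfer-guarantees},
\[
\mathbb E\|\nabla\Phi_{2L}(z_{\rm out})\|^2\le\frac{\varepsilon^2}{8}+8L\tau D_Y^2 .
\]
The choice $\tau\le\varepsilon^2/(16LD_Y^2)$ gives $8L\tau D_Y^2\le\varepsilon^2/2$. The right-hand side is therefore at most $5\varepsilon^2/8\le\varepsilon^2$, which proves \eqref{eq:warmup-os-guarantee}.

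\textbf{Query bound.} By \eqref{eq:warmup-main-budget} with $\eta=\tau D_Y^2$,
\[
B_{0,\tau,\eta}=\Delta_\phi+\tfrac{\tau D_Y^2}{2}+\tau D_Y^2=\Delta_\phi+\tfrac{3\tau D_Y^2}{2}.
\]
The full query bound is then just \eqref{eq:warmup-query-bound}.

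\textbf{Leading order.} For $\varepsilon$ small, $\tau=\varepsilon^2/(16LD_Y^2)$ and
\[
\beta=\frac{h\sqrt{2L\tau}}{4096}=\Theta\!\left(\frac{\sqrt{\tau/L}}{1}\right)=\Theta\!\left(\frac{\varepsilon}{LD_Y}\right),
\]
using $h=1/(64L_G)$ with $L_G\in[3L,4L]$. The main term of \eqref{eq:warmup-query-bound} is
\[
\frac{256LB}{\beta\varepsilon^2}=O\!\left(\frac{L^2D_Y}{\varepsilon^3}\left(\Delta_\phi+\frac{\varepsilon^2}{L}\right)\right)=O\!\left(\frac{L^2D_Y\Delta_\phi}{\varepsilon^3}\right)+O\!\left(\frac{LD_Y}{\varepsilon}\right).
\]

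\textbf{Warm-up term (main obstacle).} The step needing care is the term $\log q/(32\beta)$, where $q=\max\{1,\overline H_{0,\tau}/\eta\}$. Since $\tau\le L$ and $\|y_0\|\le D_Y$, $\overline H_{0,\tau}$ is bounded by a fixed constant: it involves $\|\nabla f(x_0,y_0)\|$, $D_Y$, and $L$, because $\|-\nabla_y f+\tau y_0\|^2\le 2\|\nabla_y f\|^2+2L^2D_Y^2$ and $(\tau/2+\tau/256)D_Y^2\le LD_Y^2$. Meanwhile $\eta=\varepsilon^2/(16L)$, so
\[
\log q=O\!\left(\log\!\left(1+\frac{L\overline H}{\varepsilon^2}\right)\right).
\]
The warm-up cost is therefore
\[
O\!\left(\frac{LD_Y}{\varepsilon}\log\frac1\varepsilon\right)=o(\varepsilon^{-3}).
\]
This lower-order term is absorbed for a fixed instance with $\Delta_\phi>0$, which yields \eqref{eq:warmup-os-order}.
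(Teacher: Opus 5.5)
Your proposal is correct and follows the paper's proof essentially step for step. You specialize Theorem~\ref{thm:warmup-unified} with $\mu_y=\tau\in(0,L]$, use $8L\tau D_Y^2\le\varepsilon^2/2$ for the guarantee, compute $B_{0,\tau,\eta}=\Delta_\phi+3\tau D_Y^2/2$, and combine $\beta^{-1}=\Theta(LD_Y/\varepsilon)$ with the boundedness of $\overline H_{0,\tau}$ to show that the remaining terms are $O((LD_Y/\varepsilon)[1+\log(1+L\overline H_{0,\tau}/\varepsilon^2)])$; your explicit uniform bound on $\overline H_{0,\tau}$ for $\tau\le L$ just spells out the paper's remark that it stays bounded as $\tau\downarrow0$.
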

}

{\color{black}
\begin{corollary}[Warm-started NC--C game stationarity]
\label{thm:warmup-gs}
Suppose that $\mu=0$. Choose $\tau$ as in
\eqref{eq:game-perturbation-choice} and set $\eta=\tau D_Y^2$.
Algorithm~\ref{alg:warmup} returns a deterministic pair satisfying
\begin{equation}\label{eq:warmup-gs-guarantee}
 \mathcal R(x_{\rm out},y_{\rm out})\le\varepsilon.
\end{equation}
The full query bound is again~\eqref{eq:warmup-query-bound}, with
$B_{0,\tau,\eta}=\Delta_\phi+3\tau D_Y^2/2$. For a fixed instance
with $\Delta_\phi>0$, its leading order as $\varepsilon\downarrow0$ is
\begin{equation}\label{eq:warmup-gs-order}
 O\!\left(L^{3/2}D_Y^{1/2}\Delta_\phi\varepsilon^{-5/2}\right).
\end{equation}
\end{corollary}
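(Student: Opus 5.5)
The argument specializes Theorem~\ref{thm:warmup-unified} to $\mu=0$, $\tau=\min\{L,\varepsilon/(2D_Y)\}$, $\mu_y=\tau$, $\eta=\tau D_Y^2$, and then reads off the leading order of \eqref{eq:warmup-query-bound}.

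\emph{Admissibility.} With this choice $0<\mu_y=\tau\le L$, so the standing conditions of Section~\ref{sec:warmup} hold. Lemma~\ref{lem:warmup-energy} and Theorem~\ref{thm:warmup-unified} therefore apply verbatim.

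\emph{Accuracy.} The GS part of \eqref{eq:warmup-transfer-guarantees} gives
\[
\mathcal R(x_{\rm out},y_{\rm out})\le\frac\varepsilon2+\tau D_Y\le\frac\varepsilon2+\frac\varepsilon2=\varepsilon,
\]
which is \eqref{eq:warmup-gs-guarantee}. The value of $B$ follows from \eqref{eq:warmup-main-budget}:
\[
B_{0,\tau,\eta}=\Delta_\phi+\frac{\tau D_Y^2}{2}+\tau D_Y^2=\Delta_\phi+\frac{3\tau D_Y^2}{2}.
\]

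\emph{Oracle count.} The work is in bounding the three terms of \eqref{eq:warmup-query-bound}. For $\varepsilon\le 2LD_Y$ we have $\tau=\varepsilon/(2D_Y)$, so $\sqrt{2L\mu_y}=\sqrt{L\varepsilon/D_Y}$. Since $L_G\le4L$ gives $h\ge1/(256L)$, this yields
\[
\frac1\beta=\frac{4096}{h\sqrt{2L\mu_y}}\le 4096\cdot256\,\sqrt{\frac{LD_Y}{\varepsilon}},
\]
that is, $1/\beta=O(L^{1/2}D_Y^{1/2}\varepsilon^{-1/2})$. The main term is then
\[
\frac{256LB}{\beta\varepsilon^2}=O\!\left(L^{3/2}D_Y^{1/2}\varepsilon^{-5/2}\bigl(\Delta_\phi+\tfrac34\varepsilon D_Y\bigr)\right).
\]
For fixed $\Delta_\phi>0$ and $\varepsilon\downarrow0$, the $\varepsilon D_Y$ part contributes only $O(L^{3/2}D_Y^{3/2}\varepsilon^{-3/2})$, which is lower order.

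\emph{Warm-up cost.} We need $\log q_{0,\tau,\eta}$, where $\eta=\varepsilon D_Y/2$. From \eqref{eq:warmup-initial-bound}, $\overline H_{0,\tau}$ stays bounded as $\tau\downarrow0$: its $\tau$-dependent pieces are $O(\tau D_Y^2)$ and $\|-\nabla_yf(x_0,y_0)+\tau y_0\|^2/L$, and the latter is bounded using $\|y_0\|\le D_Y$. Hence $\log q=O(\log(1+\overline H_{0,\tau}/(\varepsilon D_Y)))=O(\log(1/\varepsilon))$, and
\[
\frac{\log q}{32\beta}=O\!\left(L^{1/2}D_Y^{1/2}\varepsilon^{-1/2}\log(1/\varepsilon)\right),
\]
which is negligible against $\varepsilon^{-5/2}$. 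The constant $5$ is trivial. Together these give \eqref{eq:warmup-gs-order}.

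The only delicate points are the regime split at $\tau=L$ (irrelevant as $\varepsilon\downarrow0$) and checking that $\overline H_{0,\tau}$ stays bounded so the logarithmic warm-up term is genuinely lower order. Neither is a real obstacle, because all the substantive work already sits in Theorem~\ref{thm:warmup-unified}.
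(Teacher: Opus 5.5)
Your proposal is correct and follows the paper's own argument: you specialize Theorem~\ref{thm:warmup-unified} with $\mu=0$ and $\eta=\tau D_Y^2$, which gives $B_{0,\tau,\eta}=\Delta_\phi+3\tau D_Y^2/2$ and $\mathcal R\le\varepsilon/2+\tau D_Y\le\varepsilon$. You then use $\beta^{-1}=\Theta(\sqrt{LD_Y/\varepsilon})$ and the boundedness of $\overline H_{0,\tau}$ as $\tau\downarrow0$ to show that the remainder terms $O(L^{3/2}D_Y^{3/2}\varepsilon^{-3/2})$ and $O(\sqrt{LD_Y/\varepsilon}\,\log(1/\varepsilon))$ are lower order, exactly as the paper does.
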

}

{\color{black}
\begin{remark}[Tolerance and budget information]\label{rem:warmup-budget}
The positive tolerance $\eta$ is distinct from the perturbation
parameter $\tau$. In the NC--C case, $\eta=\tau D_Y^2$ recovers the
original warm-up target; in the NC--SC case, $\tau=0$ and this target
must be replaced by a positive tolerance. The choice
$\eta=\varepsilon^2/L$ is valid even when $\Delta_\phi=0$.
If only a known bound $\overline\Delta\ge\Delta_\phi$ is available,
it may replace $\Delta_\phi$ in the budget and complexity statements.
In particular, for NC--SC and $\overline\Delta>0$, the alternative
$\eta=\overline\Delta$ and $B=2\overline\Delta$ gives
\begin{equation}\label{eq:ncsc-w-gap-order}
 N=O\!\left(\sqrt\kappa\left[
 \frac{L\overline\Delta}{\varepsilon^2}+1+
 \log\max\{1,\overline H_\mu/\overline\Delta\}\right]\right),
\end{equation}
whose warm-up length is independent of $\varepsilon$.
Neither version evaluates the envelope or its infimum. The additive
terms in the full bounds cannot be discarded for a zero gap or
unbounded initialization errors.
\end{remark}
}

\section[Lower Bound for Optimization Stationarity]{{Lower Bound for Optimization Stationarity}}\label{sec:lower-bound}
\subsection{Problem class and oracle model}
{Fix $L,\Delta,D_Y>0$. Consider instances of~\eqref{eq:problem} with}
$X=\R^{d_x}$, a nonempty compact convex dual set $Y\subset\R^{d_y}$
containing the origin, and a globally continuously differentiable objective function
$f:\R^{d_x+d_y}\to\R$ satisfying
\begin{equation}\label{lb:eq:problem-class}
\begin{gathered}
 \|\nabla f(w)-\nabla f(w')\|\le L\|w-w'\|
       \quad(w,w'\in\R^{d_x+d_y}),\\
 f(x,\cdot)\text{ is concave},\qquad
 \diam(Y)\le D_Y,\qquad \phi(0)-\inf_x\phi(x)\le\Delta.
\end{gathered}
\end{equation}
Let $\Delta_\phi:=\phi(0)-\inf_x\phi(x)\le \Delta$ denote the actual initial gap.
The dimensions are not fixed in advance. In particular, global smoothness
in~\eqref{lb:eq:problem-class} is stronger than the product-domain
smoothness required in Assumption~\ref{ass:main}. Since $\phi$ is
$L$-weakly convex and bounded below, {$\Phi_{2L}$ under the
convention~\eqref{eq:original-envelope} is differentiable. The pointwise OS target is}
\begin{equation}\label{lb:eq:criterion}
 \|\nabla\Phi_{2L}(x^{\mathrm{out}})\|\le\eps.
\end{equation}

\begin{definition}[Projected zero-respecting first-order method]
\label{lb:def:algorithm}
Let $\mathcal Z=\R^{d_x}\times Y$ and
$F(x,y)=(\nabla_xf(x,y),-\nabla_yf(x,y))$. A deterministic method starts
at $w^0=0$ and queries $F$ at feasible points $w^t\in\mathcal Z$.
It is projected zero-respecting if each subsequent query satisfies
\begin{equation}\label{lb:eq:algorithm-support}
\begin{gathered}
 w^{t+1}=\Pi_{\mathcal Z}(a^{t+1}),\\
 \supp(a^{t+1})\subseteq\bigcup_{s=0}^{t}
       \bigl(\supp(w^s)\cup\supp(F(w^s))\bigr).
\end{gathered}
\end{equation}
After $T$ queries, at $w^0,\ldots,w^{T-1}$, its output must also satisfy
\begin{equation}\label{lb:eq:output-support}
\begin{gathered}
 w^{\mathrm{out}}=\Pi_{\mathcal Z}(a^{\mathrm{out}}),\\
 \supp(a^{\mathrm{out}})\subseteq\bigcup_{s=0}^{T-1}
       \bigl(\supp(w^s)\cup\supp(F(w^s))\bigr).
\end{gathered}
\end{equation}
A primal-only output is permitted if it is the first block of such a
$w^{\mathrm{out}}$. For $T=0$, the union is empty and the output is the
origin. Complexity counts first-order oracle calls; projections are not
charged.
\end{definition}
The output restriction
is part of the model. Accordingly, the result does not cover methods that introduce
arbitrary unexplored coordinates or arbitrary changes of basis.

\subsection[Hard instance and zero-chain structure]{{Hard instance and zero-chain structure}}
\label{lb:sec:hard-instance}
Let $M\ge1$ and $n\ge2$ be integers. Write
$x=(x_1,\ldots,x_M)\in\R^M$ and
$y=(y^1,\ldots,y^M)\in\R^{Mn}$, where $y^i\in\R^n$.
Define the scalar functions
\begin{equation}
\begin{aligned}
 s(t)&=\begin{cases}
 0,&t\le0,\\
 \sin^2(\pi t/2),&0<t<1,\\
 1,&t\ge1,
 \end{cases}
 &h(t)&=\begin{cases}
 t,&|t|\le1,\\
 \sgn(t)\bigl(2-(3-|t|)^2/4\bigr),&1<|t|<3,\\
 2\sgn(t),&|t|\ge3,
 \end{cases}
\end{aligned}
\label{lb:eq:scalar-functions}
\end{equation}
and set
\begin{equation}
\begin{gathered}
 g(t):=\frac14h(t),\qquad
 R_n(t):=\frac{(|t|-\sqrt n)_+^2}{2n},\qquad
 t^-:=\max\{-t,0\},\\
 P_i(x):=\prod_{j=1}^is(x_j),\quad P_0(x):=1,\qquad
 w_i(x):=P_{i-1}(x)(1-s(x_i)).
\end{gathered}
\label{lb:eq:weights}
\end{equation}
The symmetric matrix $Q_n$ and the dual feasible set $Y_{M,n}$ are
defined by
\begin{equation}
 u^\top Q_nu
 :=\sum_{j=1}^{n-1}(u_j-u_{j+1})^2
   +\frac{u_1^2+u_n^2}{n}
 \quad(u\in\R^n)
 \label{lb:eq:Q}
\end{equation}
and
\begin{equation}
 Y_{M,n}:=\{y\in\R^{Mn}:\|y\|_2\le n,\ \|y\|_\infty\le\sqrt n\}.
 \label{lb:eq:Y}
\end{equation}
Consider the globally defined objective
\begin{equation}
\begin{aligned}
\widehat f_{M,n}(x,y)
={}&-\frac12\sum_{i=1}^M(y^i)^\top Q_ny^i+\sum_{i=1}^Mw_i(x)
 \left[h\!\left(\frac{y_1^i}{\sqrt n}\right)
       -g(x_i)h\!\left(\frac{y_n^i}{\sqrt n}\right)\right]\\
&-\sum_{i=1}^M\bigl(R_n(y_1^i)+R_n(y_n^i)\bigr)
  -4\sum_{i=1}^Ms(x_i)+\frac12\sum_{i=1}^M(x_i^-)^2.
\end{aligned}
\label{lb:eq:hard-function}
\end{equation}
The corresponding base value function is
\begin{equation}
 \varphi_{M,n}(x):=\max_{y\in Y_{M,n}}\widehat f_{M,n}(x,y).
 \label{lb:eq:base-value-definition}
\end{equation}

{
\noindent\textbf{Zero-chain structure.}
On the feasible set $Y_{M,n}$, the endpoint function $h$ is linear and the penalty $R_n$
has zero derivative. Hence
\begin{equation}\label{lb:eq:mechanism-dual-gradient}
 \nabla_{y^i}\widehat f_{M,n}(x,y)
 =-Q_ny^i+\frac{w_i(x)}{\sqrt n}\bigl(e_1-g(x_i)e_n\bigr),
\end{equation}
where $e_1,e_n\in\R^n$ are the first and last standard basis vectors.

\begin{enumerate}
\item \textbf{Propagation within a dual block.}
When $x_i=0$, the forcing in~\eqref{lb:eq:mechanism-dual-gradient} acts only
on $e_1$. Since $Q_n$ is tridiagonal, a vector supported on
$y_1^i,\ldots,y_r^i$ with $r<n$ produces a dual gradient supported on at most
$y_1^i,\ldots,y_{r+1}^i$. Hence $y_n^i$ can only be reached sequentially.

\item \textbf{Dual-to-primal activation.}
At $x_i=0$, the identities $s(0)=s'(0)=0$ and $g'(0)=1/4$ give
\begin{equation}\label{lb:eq:mechanism-primal-activation}
 \partial_{x_i}\widehat f_{M,n}(x,y)
 =-\frac{P_{i-1}(x)}{4\sqrt n}\,y_n^i.
\end{equation}
Thus $x_i$ cannot be revealed until $y_n^i$ has been revealed.

\item \textbf{Primal-to-dual gating.}
For $i<M$, if $x_{i+1}=0$,
\begin{equation}\label{lb:eq:mechanism-next-block}
 w_{i+1}(x)=P_i(x)=P_{i-1}(x)s(x_i).
\end{equation}
Hence the next dual block cannot be activated until $s(x_i)>0$.
\end{enumerate}

Consequently, information can propagate only according to the order
\begin{equation}\label{lb:eq:mechanism-chain-order}
 y_1^1\to\cdots\to y_n^1\to x_1\to\cdots\to
 y_1^M\to\cdots\to y_n^M\to x_M.
\end{equation}
Lemma~\ref{lb:lem:chain} formalizes the corresponding support induction
for projected zero-respecting methods and shows that
\begin{equation}\label{lb:eq:mechanism-query-obstruction}
 T<M(n+1)\quad\Longrightarrow\quad x_M^{\mathrm{out}}=0.
\end{equation}
}

\subsection[Oracle lower bound and scope of optimality]{{Oracle lower bound and scope of optimality}}
\label{lb:sec:lower-bound-result}
By appropriately scaling the instance constructed in Section~\ref{lb:sec:hard-instance} and applying Lemmas~\ref{lb:lem:chain} and~\ref{lb:lem:moreau-barrier}, we obtain the following lower bound.

\begin{theorem}[Optimization-stationarity query lower bound]\label{lb:thm:lower-bound}
Let $L_0=128$, $c_0=1/512$, and define
\begin{equation}\label{lb:eq:epsilon-range}
 \eps_*:=\min\left\{
 c_0\sqrt{\frac{L\Delta}{40L_0}},\frac{c_0LD_Y}{8L_0}\right\},
 \qquad c_*:=\frac{c_0^3}{320L_0^2}.
\end{equation}
For every $0<\eps\le\eps_*$, there is an instance
satisfying~\eqref{lb:eq:problem-class}, with initial gap
$2\Delta/5\le\Delta_\phi\le\Delta$, such that any method in
Definition~\ref{lb:def:algorithm} whose output
satisfies~\eqref{lb:eq:criterion} must make at least
\begin{equation}\label{lb:eq:main-lower-bound}
 T\ge c_*\frac{L^2\Delta_\phi D_Y}{\eps^3}
\end{equation}
first-order oracle calls. More precisely, for the dimensions chosen in the
proof, every output after $T<M(n+1)$ queries satisfies
$\|\nabla\Phi_{2L}(x^{\mathrm{out}})\|>2\eps$.
\end{theorem}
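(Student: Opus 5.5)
The plan is to scale the base instance $\widehat f_{M,n}$ of Section~\ref{lb:sec:hard-instance} and pick $M,n$ so that three things hold at once. The scaled instance must lie in the class~\eqref{lb:eq:problem-class}. Lemma~\ref{lb:lem:chain} must force $x_M^{\mathrm{out}}=0$ whenever $T<M(n+1)$. And Lemma~\ref{lb:lem:moreau-barrier} must turn $x_M=0$ into a Moreau-gradient lower bound larger than $2\eps$. Concretely, we take
\[
 f(x,y)=\frac{L\sigma^2}{L_0}\,\widehat f_{M,n}\!\left(\frac x\sigma,\frac{y}{\rho}\right),
 \qquad Y=\rho\,Y_{M,n},
\]
with $\rho$ chosen so that $\diam(Y)\le 2\rho n\le D_Y$. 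We take $L_0$ to be a global smoothness constant of $\widehat f_{M,n}$ in the variables $(x,y/\sqrt n\,\cdot)$ after the natural normalization; we expect the lemmas to deliver this, or we absorb it into the scaling. We set $\sigma\propto\eps/L$, so that the primal barrier of Lemma~\ref{lb:lem:moreau-barrier} becomes $c_0$ times a multiple of $L\sigma$, i.e.\ $>2\eps$.

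First, check membership in the class. Concavity in $y$ is immediate, since $-\frac12 y^\top Q_n y$ is concave, $h$ is linear on $Y_{M,n}$, and $-R_n$ is concave. The dual mixing of scales ($y$ scaled by $\rho$ and $\sigma$) has to be arranged so that the Lipschitz constant stays $\le L$. This forces a relation such as $\rho\sim\sigma\sqrt n$ times a constant, or more likely an extra dual rescaling factor. Together with $\rho n\lesssim D_Y$, this relation fixes $n\sim LD_Y/\eps$ up to constants; the condition $\eps\le c_0LD_Y/(8L_0)$ guarantees $n\ge2$.

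Second, compute the gap. On the base instance, $\varphi_{M,n}(0)-\inf\varphi_{M,n}$ is $\Theta(M)$: each block contributes at most $4$ from the $-4s(x_i)$ term, plus bounded $h$ terms. We expect Lemma~\ref{lb:lem:moreau-barrier} or an accompanying computation to give the bounds $c\,M\le$ gap $\le 10M$ in base units. After scaling, the gap is $\frac{L\sigma^2}{L_0}\Theta(M)$. Choosing $M=\lfloor \Delta L_0/(10 L\sigma^2)\rfloor$, or similar, yields $2\Delta/5\le\Delta_\phi\le\Delta$. The condition $\eps\le c_0\sqrt{L\Delta/(40L_0)}$ ensures $M\ge1$, and the floor costs at most the factor $2/5$.

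Third, combine. Lemma~\ref{lb:lem:chain} shows that every projected zero-respecting output after $T<M(n+1)$ queries has $x_M^{\mathrm{out}}=0$. The scaling commutes with supports, and projection onto the scaled $Y$ preserves the zero pattern, so the lemma applies verbatim. Lemma~\ref{lb:lem:moreau-barrier}, rescaled, then gives $\|\nabla\Phi_{2L}(x^{\mathrm{out}})\|>2\eps$. Hence any successful method needs
\[
 T\ge M(n+1)\gtrsim \frac{L\Delta_\phi}{\eps^2}\cdot\frac{LD_Y}{\eps},
\]
and tracking the constants produces $c_*=c_0^3/(320L_0^2)$.

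The main obstacle is the bookkeeping of the scaling. Three constraints must hold simultaneously: the smoothness $L$ under the mixed primal/dual scaling, the diameter bound $D_Y$ through $\|y\|_2\le n$, and the barrier $>2\eps$. Moreover, the Moreau envelope parameter must remain exactly $2L$ after scaling, which requires checking how $\Phi_{2L}$ of the scaled $\phi$ relates to $\Phi_{2L_0}$ of $\varphi_{M,n}$. I would first write $\phi(x)=\frac{L\sigma^2}{L_0}\varphi_{M,n}(x/\sigma)$ and verify the identity $\nabla\Phi_{2L}(x)=\frac{L\sigma}{L_0}\nabla\Phi^{\varphi}_{2L_0}(x/\sigma)$. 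With that identity in hand, every constraint reduces to the base lemmas.
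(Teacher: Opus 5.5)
Your overall plan matches the paper's proof: scale $\widehat f_{M,n}$, use Lemma~\ref{lb:lem:chain} to force $x_M^{\mathrm{out}}=0$, and transfer Lemma~\ref{lb:lem:moreau-barrier} through the envelope-scaling identity. Your identity $\nabla\Phi_{2L}(x)=\frac{L\sigma}{L_0}\nabla\Phi_{M,n,2L_0}(x/\sigma)$ is exactly~\eqref{lb:eq:moreau-scaling} at $\ell=2L$, and it forces $\sigma=2L_0\eps/(c_0L)$, which is the paper's $b$.

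The gap is in the step you yourself flag as the main obstacle. You never fix the dual scale $\rho$, and the relation you suggest, $\rho\sim\sigma\sqrt n$, would break the result. Under that relation, $2\rho n\le D_Y$ gives only $n\sim(LD_Y/\eps)^{2/3}$, and hence an $\eps^{-8/3}$ bound. Your claim $n\sim LD_Y/\eps$ does not follow from the relation you wrote.

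The missing fact is Lemma~\ref{lb:lem:smoothness}. It shows that $\widehat f_{M,n}$ is jointly $L_0$-smooth in the raw coordinates $(x,y)$, with $L_0=128$ independent of $M$ and $n$. The $1/\sqrt n$ normalization is already built into the coupling terms $h(y_1^i/\sqrt n)$, $h(y_n^i/\sqrt n)$ and into $Y_{M,n}$. As a result, the $yy$ block is $-I_M\otimes Q_n$ plus small diagonal corrections, with $\|Q_n\|\le5$, and the $xy$ block is $O(1/\sqrt n)$.

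So no extra dual factor is needed: take $\rho=\sigma=b$. That is, set $f_{\mathrm{sc}}(x,y)=a\widehat f_{M,n}(x/b,y/b)$ on $bY_{M,n}$ with $a=Lb^2/L_0$. Its gradient is then $aL_0/b^2=L$-Lipschitz. Since $\eps\le c_0LD_Y/(8L_0)$ gives $b\le D_Y/4$, the choice $n=\lfloor D_Y/(2b)\rfloor$ satisfies $n\ge D_Y/(4b)$, so $n=\Theta(LD_Y/\eps)$.

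The remaining bookkeeping also needs exact choices.

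\textbf{Gap source.} The initial gap comes from Lemma~\ref{lb:lem:value}, not the Moreau lemma: $\varphi_{M,n}(0)-\inf\varphi_{M,n}=4M+a_n/2\in[4M,5M]$, so $\Delta_\phi\in[4aM,5aM]$.

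\textbf{Choice of $M$.} Your $M=\lfloor\Delta/(10a)\rfloor$ only guarantees $\Delta_\phi\ge\Delta/5$. Instead take $M=\lfloor\Delta/(5a)\rfloor$. From $\eps\le c_0\sqrt{L\Delta/(40L_0)}$ you get $a=4L_0\eps^2/(c_0^2L)\le\Delta/10$. Hence $\Delta/(10a)\le M\le\Delta/(5a)$, and therefore $2\Delta/5\le\Delta_\phi\le\Delta$.

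\textbf{Constant.} Then $M(n+1)\ge Mn\ge\Delta D_Y/(40ab)$, and $ab=8L_0^2\eps^3/(c_0^3L^2)$. This gives exactly $c_*L^2\Delta D_Y/\eps^3\ge c_*L^2\Delta_\phi D_Y/\eps^3$.

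\textbf{Concavity.} Your concavity argument only covers $Y_{M,n}$. Off that set, $h$ is convex for negative arguments. The global concavity in $y$ imposed by~\eqref{lb:eq:problem-class} on the globally defined objective comes from $R_n''=1/n$ dominating $w_i|h''|/n\le1/(2n)$. This is the Hessian computation in Lemma~\ref{lb:lem:smoothness}.
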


\begin{proof}
{For the base value function~\eqref{lb:eq:base-value-definition},
define its envelope with curvature $\ell>L_0$ by}
\begin{equation}\label{lb:eq:base-envelope}
 \Phi_{M,n,\ell}(x):=\min_u
   \left\{\varphi_{M,n}(u)+\frac\ell2\|u-x\|^2\right\}.
\end{equation}
{The full gradient of}
$\widehat f_{M,n}$ is globally $L_0$-Lipschitz and the objective is
concave in the dual variable (Lemma~\ref{lb:lem:smoothness}). The
dual diameter is $2n$, and the initial value gap is at most $5M$
(Lemmas~\ref{lb:lem:basic} and~\ref{lb:lem:value}). After fewer than
$M(n+1)$ queries, every admissible output has $x_M^{\mathrm{out}}=0$
(Lemma~\ref{lb:lem:chain}). Finally,
\begin{equation}\label{lb:eq:base-barrier-summary}
 x_M=0\quad\Longrightarrow\quad
 \|\nabla\Phi_{M,n,2L_0}(x)\|>c_0
\end{equation}
by Lemma~\ref{lb:lem:moreau-barrier}.

{To meet the prescribed $L$, $\Delta$, and $D_Y$, choose
the scale factors $b,a>0$ and the integers $M,n$ as}
\begin{equation}\label{lb:eq:parameter-choice}
 b=\frac{2L_0\eps}{c_0L},\qquad
 a=\frac{Lb^2}{L_0},\qquad
 M=\left\lfloor\frac{\Delta}{5a}\right\rfloor,\qquad
 n=\left\lfloor\frac{D_Y}{2b}\right\rfloor.
\end{equation}
{These choices make $a=\Theta(\eps^2/L)$ and
$b=\Theta(\eps/L)$, so $M=\Theta(L\Delta/\eps^2)$ and
$n=\Theta(LD_Y/\eps)$ for fixed positive $L,\Delta,D_Y$ as
$\eps\downarrow0$.}
Since $a=4L_0\eps^2/(c_0^2L)$, the accuracy
range~\eqref{lb:eq:epsilon-range} implies $a\le\Delta/10$ and
$b\le D_Y/4$. Thus $M\ge1$, $n\ge2$, and
\begin{equation}\label{lb:eq:floor-bounds}
 \frac{\Delta}{10a}\le M\le\frac{\Delta}{5a},\qquad
 \frac{D_Y}{4b}\le n\le\frac{D_Y}{2b}.
\end{equation}
{With these parameters, define the scaled instance by}
\begin{equation}\label{lb:eq:scaled-instance}
 f_{\mathrm{sc}}(x,y)=a\widehat f_{M,n}(x/b,y/b),
 \qquad \mathcal Y=bY_{M,n}.
\end{equation}
Its gradient Lipschitz constant is at most $aL_0/b^2=L$.
Positive scaling preserves dual concavity and primal nonconvexity, and
$\diam(\mathcal Y)=2bn\le D_Y$. Its value function is
$\phi_{\mathrm{sc}}(x)=a\varphi_{M,n}(x/b)$, so
\[
 \frac25\Delta\le4aM\le
 \Delta_\phi:=\phi_{\mathrm{sc}}(0)-\inf\phi_{\mathrm{sc}}
 =a\left(4M+\frac{a_n}{2}\right)\le5aM\le\Delta.
\]
Here $a_n$ is defined in Lemma~\ref{lb:lem:value}; the lower bound uses
\eqref{lb:eq:floor-bounds}. Hence~\eqref{lb:eq:problem-class} holds,
and $\Delta_\phi$ and $\Delta$ differ by at most a constant factor.

Projection and the first-order operator obey
\[
 \Pi_{bY_{M,n}}(v)=b\Pi_{Y_{M,n}}(v/b),\qquad
 F_{\mathrm{sc}}(w)=\frac ab F_{M,n}(w/b).
\]
Positive scaling changes neither coordinate support nor the chain
subspaces in Lemma~\ref{lb:lem:chain}. Therefore that lemma also applies
to the scaled instance, and $T<M(n+1)$ implies
$x_M^{\mathrm{out}}=0$.

Let $\Phi_{\mathrm{sc},\ell}$ denote the envelope of
$\phi_{\mathrm{sc}}$ with curvature $\ell$. Substituting $u=bq$ gives
\begin{equation}\label{lb:eq:moreau-scaling}
\begin{aligned}
 \Phi_{\mathrm{sc},\ell}(x)
 &=\min_q\left\{a\varphi_{M,n}(q)
            +\frac{\ell b^2}{2}\|q-x/b\|^2\right\}\\
 &=a\Phi_{M,n,\ell b^2/a}(x/b).
\end{aligned}
\end{equation}
At $\ell=2L$, the base curvature is $2Lb^2/a=2L_0$. Consequently,
if $T<M(n+1)$, then~\eqref{lb:eq:base-barrier-summary} gives
\[
 \|\nabla\Phi_{\mathrm{sc},2L}(x^{\mathrm{out}})\|
 =\frac ab\|\nabla\Phi_{M,n,2L_0}(x^{\mathrm{out}}/b)\|
 >\frac ab c_0=2\eps.
\]
This contradicts~\eqref{lb:eq:criterion}.
{The query bound follows from}
\[
 M(n+1)\ge Mn\ge\frac{\Delta D_Y}{40ab}
 =\frac{c_0^3}{320L_0^2}\frac{L^2\Delta D_Y}{\eps^3}.
\]
Since $\Delta_\phi\le\Delta$, this proves~\eqref{lb:eq:main-lower-bound}.
\end{proof}

{
\begin{remark}[Randomized outputs]\label{lb:rem:random}
The support induction and Moreau-gradient barrier hold pathwise.
For randomized methods satisfying
\eqref{lb:eq:algorithm-support}--\eqref{lb:eq:output-support} almost surely,
any fixed query budget $T<M(n+1)$ therefore gives
\[
 \mathbb E\|\nabla\Phi_{2L}(x^{\mathrm{out}})\|>2\eps,
 \qquad
 \mathbb E\|\nabla\Phi_{2L}(x^{\mathrm{out}})\|^2>4\eps^2.
\]
Thus the lower bound also holds for the expected-norm OS criterion in
Definition~\ref{def:optimization} and the expected squared-norm criterion in
Theorem~\ref{thm:ncc-optimization-complexity}, at a fixed total query budget.
\end{remark}

\begin{remark}[Matching the upper bound]\label{lb:rem:optimality}
The leading OS upper bound~\eqref{eq:warmup-os-order} for
Algorithm~\ref{alg:warmup} matches the lower bound in Theorem~\ref{lb:thm:lower-bound}
in its $L^2D_Y\Delta_\phi\eps^{-3}$ dependence within  the class of projected zero-respecting methods. The additional terms in
\eqref{eq:warmup-query-bound} are lower order for each fixed instance
with $\Delta_\phi>0$ as $\eps\downarrow0$.
Appendix~\ref{app:lb-comparison} verifies the initialization and
oracle-class conditions required for this comparison. 
\end{remark}
}

{\color{black}
\section{Numerical Experiments}\label{sec:experiments}
We provide two numerical examples to illustrate the empirical behavior
of the proposed method. We compare our method with GDA, AGP, Smoothed GDA
(SGDA), and Perturbed Smoothed GDA (PSGDA). Performance is measured against
the number of full-gradient oracle calls; for the Waterbirds experiment,
we additionally report elapsed time.

\subsection{Dirac-GAN}\label{sec:exp-dirac}
We first consider the Dirac-GAN game used in~\cite{xu2023agp},
\begin{equation}\label{eq:exp-dirac}
 \min_{x\in\R}\max_{y\in\R}
 f(x,y)=-\log(1+e^{-xy})+\log 2.
\end{equation}
Its stationary point is $(0,0)$. We measure convergence by
\[
 d_t=\sqrt{x_t^2+y_t^2}.
\]

We compare Ours with GDA, AGP, SGDA, and PSGDA. All methods start from
$(x_0,y_0)=(1,1)$. Figure~\ref{fig:exp-dirac} reports the distance to the
stationary point against full-gradient oracle calls and shows the
corresponding trajectories in the $(x,y)$ plane.

\begin{figure}
 \centering
 \includegraphics[width=0.8\linewidth]{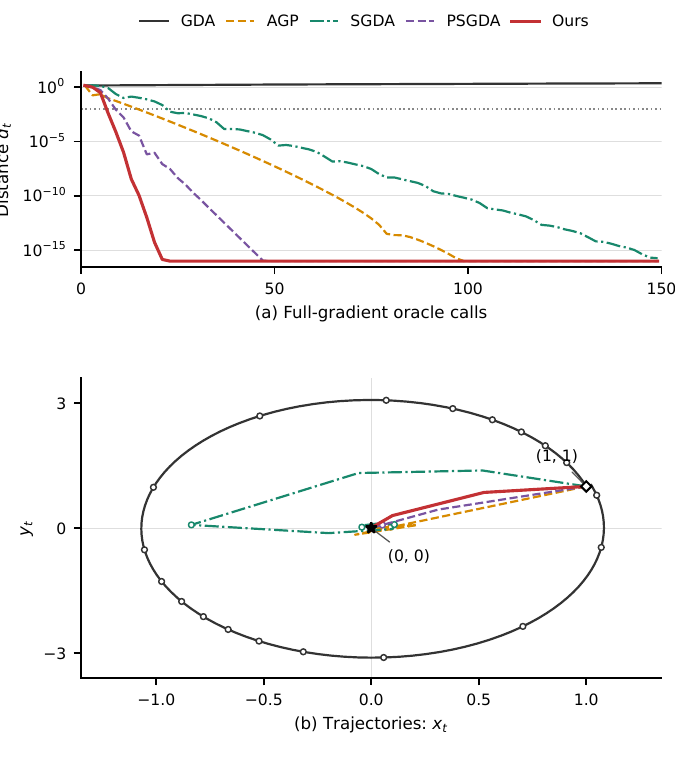}
 \caption{{Dirac-GAN: (a) distance over $150$ oracle calls
 (dotted line: $10^{-2}$; display floor: $10^{-16}$); (b) one GDA revolution
 ($1160$ calls) and $150$ calls for the other methods. Circles mark selected
 iterates; the diamond and star mark $(1,1)$ and $(0,0)$, respectively.}}
 \label{fig:exp-dirac}
\end{figure}

Figure~\ref{fig:exp-dirac}(a) shows that Ours converges rapidly to the
stationary point. It first reaches and remains below $d=10^{-2}$ after
$7$ oracle calls, compared with $9$ calls for PSGDA, $15$ calls for AGP,
and $23$ calls for SGDA. For the more stringent threshold $d=10^{-8}$,
Ours reaches the threshold after $13$ calls, while PSGDA requires $25$
calls in the displayed run.

Figure~\ref{fig:exp-dirac}(b) further illustrates the different dynamics
of the methods. Ours approaches the origin directly and rapidly, whereas
GDA exhibits pronounced rotational behavior. Taken together, the two panels
show that the proposed method outperforms all compared baselines in this
example: it reaches the reported distance thresholds with fewer oracle calls
and follows a more direct trajectory toward the stationary point.

\subsection{Waterbirds Group DRO}\label{sec:exp-waterbirds}
We next consider a group distributionally robust optimization problem
based on the Waterbirds dataset~\cite{sagawa2020distributionally}:
\begin{equation}\label{eq:exp-waterbirds}
 \min_{\|\theta\|_2\le5}\max_{y\in\Delta_4}
 f(\theta,y)=\sum_{g=1}^4\frac{y_g}{|I_g|}
 \sum_{j\in I_g}\log\!\left(1+e^{-b_j s_\theta(a_j)}\right).
\end{equation}
Here $\Delta_4=\{y\ge0:\mathbf1^\top y=1\}$, $I_g$ indexes the samples
in group $g$, and $b_j\in\{-1,1\}$ is the bird label. The four groups are
determined by the bird labels and backgrounds. We use fixed
ImageNet-pretrained ResNet18 features followed by a fixed projection to
$\R^{32}$. The classifier is
\[
 s_\theta(a)=v^\top\tanh(W^\top a+u)+c,
 \qquad \theta=(\operatorname{vec}(W),u,v,c)\in\R^{137},
\]
where $W\in\R^{32\times4}$, $u,v\in\R^4$, and $c\in\R$. The resulting
objective is nonconvex in $\theta$ and linear in $y$.

With $X=\{\theta:\|\theta\|_2\le5\}$, we evaluate the methods using the
projected first-order residual
\begin{equation}\label{eq:exp-projected-residual}
 \mathcal R_{\mathrm{proj}}(\theta,y)
 =\sqrt{\|\theta-\Pi_X(\theta-\nabla_\theta f)\|_2^2
       +\|y-\Pi_{\Delta_4}(y+\nabla_y f)\|_2^2}.
\end{equation}

We compare Ours with GDA, AGP, SGDA, and PSGDA from the same initial point
and on the same data split. Figure~\ref{fig:exp-waterbirds} reports the
best projected residual against both full-gradient oracle calls and
elapsed time.

\begin{figure}[!ht]
 \centering
 \includegraphics[width=\linewidth]{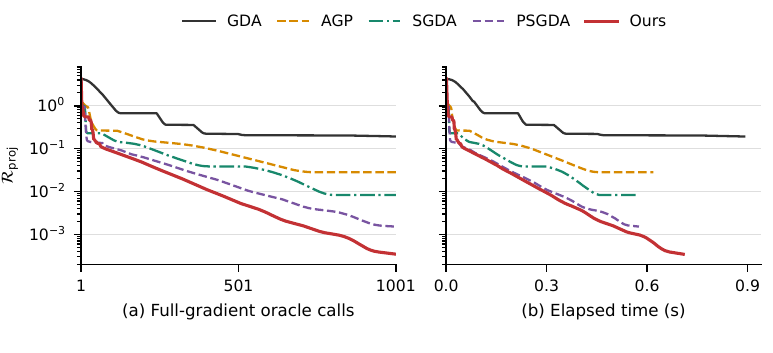}
 \caption{{Waterbirds: best original-game projected residual versus
 (a) full-gradient oracle calls and (b) elapsed time for the same runs.
 Time includes algorithm updates and diagnostics.}}
 \label{fig:exp-waterbirds}
\end{figure}

Figure~\ref{fig:exp-waterbirds}(a) shows that the proposed method achieves
the smallest projected residual among all compared methods. After $1001$
full-gradient oracle calls, Ours reaches $3.457\times10^{-4}$, compared
with $1.520\times10^{-3}$ for PSGDA, $8.269\times10^{-3}$ for SGDA,
$2.797\times10^{-2}$ for AGP, and $1.914\times10^{-1}$ for GDA.

The elapsed-time curves in Figure~\ref{fig:exp-waterbirds}(b) show a
similar trend: the proposed method decreases the residual rapidly and
attains the best final residual among the compared methods. Thus, on this
Waterbirds instance, the proposed method outperforms all compared baselines
under both the full-gradient-oracle-call and elapsed-time comparisons,
achieving the smallest projected residual in each case.
\FloatBarrier
}

\section{Conclusions}\label{sec:conclusions}
This work closes the complexity gap between single-loop and
multi-loop first-order methods for smooth nonconvex--concave
minimax optimization under both optimization stationarity (OS)
and game stationarity (GS). The algorithmic contribution lies in
coordinating projected extragradient steps, dual momentum, and
proximal-center evolution within a single loop. The theoretical
contribution is a Lyapunov analysis that jointly controls
optimization progress and saddle-point tracking, together with
a lower-bound construction certifying the sharpness of the OS
guarantee.

For OS, the fixed-center warm-up converts the initialization-dependent
bound $O(L^2D_Y\bar\Delta_0\varepsilon^{-3})$ into the leading
complexity $O(L^2D_Y\Delta_\phi\varepsilon^{-3})$, relegating the
initial-gradient dependence to an additive lower-order cost.
Our globally smooth hard instances require
$\Omega(L^2D_Y\Delta_\phi\varepsilon^{-3})$ queries from projected
zero-respecting first-order methods, even when admissible randomized
outputs are allowed at a fixed query budget. Thus, on the common
initialization class, the upper and lower bounds agree up to a
constant factor in all displayed parameters. This establishes
optimality of the leading warm-started OS complexity within the
stated oracle class, beyond merely identifying the sharp
$\varepsilon^{-3}$ exponent.

For GS, the leading bound
$O(L^{3/2}D_Y^{1/2}\Delta_\phi\varepsilon^{-5/2})$
shows that the best-known multi-loop rate can be attained through
elementary single-loop updates, without a logarithmic factor in
the leading term. To our knowledge, no previous single-loop
method achieves this bound for general smooth nonconvex--concave
problems. Dual strong concavity further yields
$O(\sqrt{\kappa}L\Delta_0\varepsilon^{-2})$ complexity for both
criteria; after warm-up, the leading term becomes
$O(\sqrt{\kappa}L\Delta_\phi\varepsilon^{-2})$, matching the
known lower-bound rate for value-gradient stationarity in the
corresponding unconstrained oracle models. These warm-started
leading bounds apply for fixed problem and initialization
quantities with $\Delta_\phi>0$ as $\varepsilon\downarrow0$.

The resulting picture is that sharp complexity guarantees need
not rely on nested auxiliary solvers: a suitably coordinated
single-loop dynamics can attain them, with oracle optimality
established for OS. Natural next steps include stochastic
extensions, treatment of unbounded dual domains, and lower
bounds for broader first-order oracle classes.

\begin{appendices}
\section{Saddle-Point Sensitivity and Center Descent}\label{app:auxiliary}
Throughout this appendix, Assumption~\ref{ass:main} holds, $\lambda=2L$,
$0<\mu_y=\mu+\tau\le L$, and the parameters are given by~\eqref{eq:parameters}.

\subsection{Saddle-point sensitivity}
\label{app:saddle-sensitivity}

We first estimate how the saddle point changes with the center.
For $z,z'\in\R^n$, write $\delta z=z'-z$,
$\delta x^\star=x^\star(z')-x^\star(z)$, and
$\delta y^\star=y^\star(z')-y^\star(z)$.
Adding the variational inequalities at the two saddle points and using
$L$-strong convexity in $x$, $\mu_y$-strong concavity in $y$, and Young's inequality gives
\begin{equation}\label{eq:anchor-sensitivity}
\begin{aligned}
 L\|\delta x^\star\|^2+\mu_y\|\delta y^\star\|^2
 &\le2L\langle\delta z,\delta x^\star\rangle\le\frac L2\|\delta x^\star\|^2+2L\|\delta z\|^2.
\end{aligned}
\end{equation}
Consequently,
\begin{equation}\label{eq:anchor-block-bounds}
\begin{aligned}
 \|x^\star(z')-x^\star(z)\|&\le2\|z'-z\|,\\
 \|y^\star(z')-y^\star(z)\|&\le\sqrt{\frac{2L}{\mu_y}}\|z'-z\|.
\end{aligned}
\end{equation}

\subsection{Smoothness of the auxiliary envelope}
\label{app:envelope-smoothness}

Combining the first estimate in~\eqref{eq:anchor-block-bounds}
with~\eqref{eq:auxiliary-gradient} yields
\begin{equation}\label{eq:envelope-gradient}
\begin{aligned}
 \nabla p(z)&=2L(z-x^\star(z)),\\
 \|\nabla p(z')-\nabla p(z)\|&\le6L\|z'-z\|.
\end{aligned}
\end{equation}

\subsection[Center descent]{Proof of Lemma~\ref{lem:value-change}}
\label{sec:value-change}

\begin{proof}
The center update and~\eqref{eq:auxiliary-gradient} give
\begin{equation}
\begin{aligned}
 z_{t+1}-z_t
 &=\beta(x_{t+1}-z_t)\\
 &=\beta\bigl[x_{t+1}-x^\star(z_t)+x^\star(z_t)-z_t\bigr]\\
 &=\beta\left(x_{t+1}-x^\star(z_t)
 -\frac1{2L}\nabla p(z_t)\right).
\end{aligned}
 \label{eq:anchor-step}
\end{equation}
In particular,
\begin{equation}
 \|z_{t+1}-z_t\|^2
 \le2\beta^2\|x_{t+1}-x^\star(z_t)\|^2
 +\frac{\beta^2}{2L^2}\|\nabla p(z_t)\|^2.
 \label{eq:anchor-step-square}
\end{equation}
By $6L$-smoothness and Young's inequality,
\[
 \langle\nabla p(z_t),x_{t+1}-x^\star(z_t)\rangle
 \le\frac1{8L}\|\nabla p(z_t)\|^2
 +2L\|x_{t+1}-x^\star(z_t)\|^2,
\]
we obtain
\begin{align*}
 p(z_{t+1})-p(z_t)
 &\le\langle\nabla p(z_t),z_{t+1}-z_t\rangle
 +3L\|z_{t+1}-z_t\|^2\\
 &\le-\frac\beta L\left(\frac38-\frac32\beta\right)
 \|\nabla p(z_t)\|^2\\
 &\quad+\beta L(2+6\beta)\|x_{t+1}-x^\star(z_t)\|^2.
\end{align*}
By~\eqref{eq:parameters}, $\mu_y\le L$, and $L_G\ge3L$, we have
$0<\beta\le\sqrt2/(3\cdot64\cdot4096)<1/12$.
Consequently, $3/8-3\beta/2\ge1/4$ and $2+6\beta\le5/2$, which gives~\eqref{eq:outer-descent}.
\end{proof}

The squared-distance term in this bound is absorbed by the decrease in the auxiliary error established in the next appendix.

\section{Auxiliary Saddle Dynamics and Lyapunov Descent}\label{app:common}
The parameters are those in~\eqref{eq:parameters}, with $\lambda=2L$ and
$0<\mu_y=\mu+\tau\le L$. The proof separates the value-function change
from the change in the auxiliary tracking error:
\begin{equation}\label{eq:joint-two-part-decomposition}
\begin{aligned}
 \mathcal V_{t+1}-\mathcal V_t
 ={}&p(z_{t+1})-p(z_t)\\
 &+\mathcal E_{z_{t+1}}(w_{t+1})-\mathcal E_{z_t}(w_t)\\
 &+\frac{\sqrt{2L\mu_y}}{256L^2}
       \bigl(\|v_{t+1}\|^2-\|v_t\|^2\bigr).
\end{aligned}
\end{equation}
Appendix~\ref{sec:value-change} bounds the first difference. We establish
the auxiliary estimate by combining fixed-center dissipation with the
effect of the center update. Appendix~\ref{app:joint-descent} combines
the two estimates to prove Theorem~\ref{thm:common-descent}.
\subsection{Projection and extragradient error identities}\label{sec:subproblem-error}

We estimate the second term in~\eqref{eq:joint-two-part-decomposition}.
Adding and subtracting $\mathcal E_{z_t}(w_{t+1})$ in the auxiliary error difference gives
\begin{equation}\label{eq:auxiliary-two-part-decomposition}
\begin{aligned}
 &\mathcal E_{z_{t+1}}(w_{t+1})-\mathcal E_{z_t}(w_t)
 +\frac{\sqrt{2L\mu_y}}{256L^2}
   \bigl(\|v_{t+1}\|^2-\|v_t\|^2\bigr)\\
 &=\underbrace{\mathcal E_{z_t}(w_{t+1})-\mathcal E_{z_t}(w_t)
 +\frac{\sqrt{2L\mu_y}}{256L^2}
   \bigl(\|v_{t+1}\|^2-\|v_t\|^2\bigr)
 }_{\text{one-step change at a fixed center}}\\
 &+\underbrace{\mathcal E_{z_{t+1}}(w_{t+1})-\mathcal E_{z_t}(w_{t+1})}_{\text{change due to center movement}}.
\end{aligned}
\end{equation}
For a fixed center, $G(\cdot,\cdot;z_t)$ is strongly convex--strongly concave.
We first control the extragradient error induced by the projections, and then estimate the three potential components to prove the one-step contraction~\eqref{eq:fixedhat}, retaining all dissipation terms.
These terms absorb the error due to center movement in Lemma~\ref{lem:moving-center-dissipation}, which yields Lemma~\ref{lem:auxiliary-change}.

Lemma~\ref{lem:normal-membership} guarantees the normal-cone inclusions required by the algorithm.
Fix iteration $t$. The prediction, correction, and momentum updates use the center $z_t$
and map $w_t=(x_t,y_t,\xi_t,n_t,v_t)$ to
$w_{t+1}=(x_{t+1},y_{t+1},\xi_{t+1},n_{t+1},v_{t+1})$.
Both gradient evaluations below refer to the same function $G(\cdot,\cdot;z_t)$; the change from $z_t$ to $z_{t+1}$ is estimated separately.
Define the sum of squared increments
\begin{equation}\label{eq:increment}
\begin{aligned}
 &\mathcal I_{z_t}(w_{t+1},w_t)\\
 &={} 
 \norm{\nabla_xG(x_{t+1},y_{t+1};z_t)+\xi_{t+1}
       -\nabla_xG(x_t,y_t;z_t)-\xi_t}^2\\
 &+\bigl\|{-\nabla_yG(x_{t+1},y_{t+1};z_t)+n_{t+1}-v_{t+1}}+\nabla_yG(x_t,y_t;z_t)-n_t+v_t\bigr\|^2.
\end{aligned}
\end{equation}
For iteration $t$, define
\begin{equation}\label{eq:error-definition}
\begin{aligned}
 e_{x,t}&=\nabla_xG(\widetilde x_t,\widetilde y_t;z_t)
          -\nabla_xG(x_{t+1},y_{t+1};z_t),\\
 e_{y,t}&=\bigl(1+(1-\alpha)\gamma\bigr)
  \bigl(\nabla_yG(x_{t+1},y_{t+1};z_t)
        -\nabla_yG(\widetilde x_t,\widetilde y_t;z_t)\bigr).
\end{aligned}
\end{equation}

\begin{lemma}\label{lem:error}
The iterates of Algorithm~\ref{alg:main} satisfy
\begin{align}
 x_{t+1}-x_t&=-h\bigl(\nabla_xG(x_{t+1},y_{t+1};z_t)+\xi_{t+1}+e_{x,t}\bigr),
 \label{eq:x-dynamics}\\
 y_{t+1}-y_t&=-h\bigl(-\nabla_yG(x_{t+1},y_{t+1};z_t)+n_{t+1}-v_{t+1}+e_{y,t}\bigr),
 \label{eq:y-dynamics}\\
 \sqrt{\norm{e_{x,t}}^2+\norm{e_{y,t}}^2}
 &\le 2hL_G\sqrt{\mathcal I_{z_t}(w_{t+1},w_t)}.
 \label{eq:error}
\end{align}
\end{lemma}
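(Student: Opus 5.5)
The plan is to verify the two dynamics identities algebraically from the update rules and then bound the errors with the $L_G$-Lipschitz continuity of $\nabla G(\cdot,\cdot;z_t)$. The central step is to show that the distance from the predicted pair to the corrected pair is controlled by the residual increment $\sqrt{\mathcal I_{z_t}}$.

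For \eqref{eq:x-dynamics}, the definition of $\xi_{t+1}$ gives $x_{t+1}-x_t=-h(\nabla_xG(\widetilde x_t,\widetilde y_t;z_t)+\xi_{t+1})$. Substituting $\nabla_xG(\widetilde x_t,\widetilde y_t;z_t)=\nabla_xG(x_{t+1},y_{t+1};z_t)+e_{x,t}$ yields the identity. For \eqref{eq:y-dynamics}, write $c=(1-\alpha)\gamma$, $\widetilde g=\nabla_yG(\widetilde x_t,\widetilde y_t;z_t)$ and $g^+=\nabla_yG(x_{t+1},y_{t+1};z_t)$. The $n_{t+1}$ update gives $y_{t+1}-y_t=h(\widetilde g+\bar v_t-(1+c)n_{t+1})$. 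The momentum updates give $\bar v_t-v_{t+1}=c(\widetilde g-g^++n_{t+1})$, so $\bar v_t=v_{t+1}+c\widetilde g-cg^++cn_{t+1}$. Substituting, we get $y_{t+1}-y_t=h((1+c)\widetilde g-cg^+-n_{t+1}+v_{t+1})=-h(-g^++n_{t+1}-v_{t+1}+(1+c)(g^+-\widetilde g))$, which is \eqref{eq:y-dynamics} with $e_{y,t}$ as defined.

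For \eqref{eq:error}, Lipschitz continuity gives $\|e_{x,t}\|\le L_G\|(\widetilde x_t-x_{t+1},\widetilde y_t-y_{t+1})\|$ and $\|e_{y,t}\|\le(1+c)L_G\|(\widetilde x_t-x_{t+1},\widetilde y_t-y_{t+1})\|$. Since $c\le h\cdot 8L/16\le 1/256$, the factor $1+c$ is harmless. The remaining task is to bound $\delta:=\|(\widetilde x_t-x_{t+1},\widetilde y_t-y_{t+1})\|$, which is the main obstacle. I would use nonexpansiveness of the projections. The predictor $\widetilde x_t$ and corrector $x_{t+1}$ are both projections of $x_t-h(\cdot)$, so
\[
\|\widetilde x_t-x_{t+1}\|\le h\|\nabla_xG(x_t,y_t;z_t)+\xi_t-\widetilde g_x\|,
\]
where $\widetilde g_x=\nabla_xG(\widetilde x_t,\widetilde y_t;z_t)$. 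Writing $\widetilde g_x=\nabla_xG(x_{t+1},y_{t+1};z_t)+\xi_{t+1}+e_{x,t}-\xi_{t+1}$ does not directly help, because $\xi_{t+1}$ is inside a cone. A better route uses the projection characterization: $x_{t+1}=\Pi_X(x_{t+1}-h\xi_{t+1}\cdot s)$ holds for any $s\ge0$, since $\xi_{t+1}\in N_X(x_{t+1})$. Hence $x_{t+1}=\Pi_X(x_t-h(\widetilde g_x+\xi_{t+1})+h\xi_{t+1})$, which is the original corrector; more usefully, $x_{t+1}=\Pi_X(x_{t+1}+x_t-x_{t+1}-h\widetilde g_x)$ and $x_t-x_{t+1}-h\widetilde g_x=h\xi_{t+1}$. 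Comparing $\widetilde x_t=\Pi_X(x_t-h(\nabla_xG(x_t,y_t)+\xi_t))$ with $x_{t+1}=\Pi_X(x_t-h(\nabla_xG(x_{t+1},y_{t+1})+\xi_{t+1}+e_{x,t}))$, where the latter argument equals $x_{t+1}+h\xi_{t+1}$ and is therefore projected to $x_{t+1}$, nonexpansiveness gives
\[
\|\widetilde x_t-x_{t+1}\|\le h\|r_x^+-r_x+e_{x,t}\|,
\]
with $r_x$ denoting the primal residual $\nabla_xG+\xi$ at the indicated iterate. The dual case is the same, using $n_{t+1}\in N_Y(y_{t+1})$ and the reconstructed form of \eqref{eq:y-dynamics}; the momentum terms $v_t$ and $v_{t+1}$ then enter exactly as in the second component of $\mathcal I$, up to the factor $1+c$ on $e_{y,t}$. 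This gives $\delta\le h(\sqrt{\mathcal I}+\|(e_{x,t},e_{y,t})\|)$.

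Combining the pieces, $\|e\|\le(1+c)L_Gh(\sqrt{\mathcal I}+\|e\|)$. Since $(1+c)hL_G\le 1.004/64<1/2$, we can absorb the $\|e\|$ term to get $\|e\|\le\frac{(1+c)hL_G}{1-(1+c)hL_G}\sqrt{\mathcal I}\le2hL_G\sqrt{\mathcal I}$. The delicate bookkeeping is in the dual prediction argument, where the predictor uses $-n_t+v_t$ while the corrector's equivalent form carries $n_{t+1}-v_{t+1}$ and the scaled error $e_{y,t}$; I expect the constants to absorb this mismatch.
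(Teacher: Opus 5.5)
Your proposal is correct and follows the paper's proof. You derive the two dynamics identities by the same algebra. You then bound the predictor--corrector distance by nonexpansiveness, comparing $\widetilde x_t=\Pi_X\bigl(x_t-h(\nabla_xG(x_t,y_t;z_t)+\xi_t)\bigr)$ with $x_{t+1}=\Pi_X(x_{t+1})$ and likewise in $y$. Finally you apply the $L_G$-Lipschitz bound and absorb the error term, which works because $(1+(1-\alpha)\gamma)hL_G<1/2$.

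Three small corrections:
\begin{itemize}
\item By \eqref{eq:x-dynamics}, the argument $x_t-h(\nabla_xG(x_{t+1},y_{t+1};z_t)+\xi_{t+1}+e_{x,t})$ equals $x_{t+1}$ itself, not $x_{t+1}+h\xi_{t+1}$. So no normal-cone property is needed at that step; note also that $\Pi_X(x_{t+1}+s\xi_{t+1})=x_{t+1}$ holds with a plus sign, not a minus.
\item In the dual block, the difference of arguments is exactly the second component of $\mathcal I_{z_t}(w_{t+1},w_t)$ plus $e_{y,t}$. There is no mismatch left to absorb.
\item The combined factor $(1+(1-\alpha)\gamma)L_G$ comes from the joint Lipschitz estimate on the full gradient, not from your two blockwise bounds. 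The blockwise bounds give a factor of about $\sqrt2\,L_G$, which still yields $2hL_G$ after absorption.
\end{itemize}
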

\begin{proof}
By the definition of $\xi_{t+1}$ and~\eqref{eq:error-definition},
\begin{align*}
 x_{t+1}-x_t
 &=-h\bigl(\nabla_xG(\widetilde x_t,\widetilde y_t;z_t)+\xi_{t+1}\bigr)\\
 &=-h\bigl[\nabla_xG(x_{t+1},y_{t+1};z_t)+\xi_{t+1}+\nabla_xG(\widetilde x_t,\widetilde y_t;z_t)
                -\nabla_xG(x_{t+1},y_{t+1};z_t)\bigr]\\
 &=-h\bigl(\nabla_xG(x_{t+1},y_{t+1};z_t)+\xi_{t+1}+e_{x,t}\bigr),
\end{align*}
which proves~\eqref{eq:x-dynamics}.
Subtracting the two momentum updates gives
\begin{align}
 \bar v_t-v_{t+1}
 &=\alpha v_t+(1-\alpha)\gamma\nabla_yG(\widetilde x_t,\widetilde y_t;z_t)\nonumber\\
 &-\alpha v_t-(1-\alpha)\gamma
 \bigl(\nabla_yG(x_{t+1},y_{t+1};z_t)-n_{t+1}\bigr)\nonumber\\
 &=(1-\alpha)\gamma
 \bigl(\nabla_yG(\widetilde x_t,\widetilde y_t;z_t)
       -\nabla_yG(x_{t+1},y_{t+1};z_t)+n_{t+1}\bigr).
 \label{eq:momentum-difference}
\end{align}
By the definition of $n_{t+1}$,
\[
 y_{t+1}-y_t=h\bigl(\nabla_yG(\widetilde x_t,\widetilde y_t;z_t)
 +\bar v_t-\bigl(1+(1-\alpha)\gamma\bigr)n_{t+1}\bigr).
\]
Substituting $\bar v_t=v_{t+1}+(\bar v_t-v_{t+1})$ and~\eqref{eq:momentum-difference}, we obtain
\begin{align*}
 \frac{y_{t+1}-y_t}{h}
 &=\nabla_yG(\widetilde x_t,\widetilde y_t;z_t)+v_{t+1} +(1-\alpha)\gamma
 \bigl[\nabla_yG(\widetilde x_t,\widetilde y_t;z_t)
       -\nabla_yG(x_{t+1},y_{t+1};z_t)\bigr]\\
 &+\bigl[(1-\alpha)\gamma-1-(1-\alpha)\gamma\bigr]n_{t+1}\\
 &=\nabla_yG(x_{t+1},y_{t+1};z_t)+v_{t+1}-n_{t+1}\\
 &+\bigl(1+(1-\alpha)\gamma\bigr)
 \bigl[\nabla_yG(\widetilde x_t,\widetilde y_t;z_t)
       -\nabla_yG(x_{t+1},y_{t+1};z_t)\bigr]\\
 &=\nabla_yG(x_{t+1},y_{t+1};z_t)+v_{t+1}-n_{t+1}-e_{y,t},
\end{align*}
which is~\eqref{eq:y-dynamics}.

Let
\[
 \widehat x_t=x_t-h\bigl(\nabla_xG(x_t,y_t;z_t)+\xi_t\bigr),\qquad
 \widehat y_t=y_t-h\bigl(-\nabla_yG(x_t,y_t;z_t)+n_t-v_t\bigr).
\]
Subtracting these identities from~\eqref{eq:x-dynamics} and~\eqref{eq:y-dynamics}, respectively, gives
\begin{align*}
 x_{t+1}-\widehat x_t
 &=x_{t+1}-x_t+h\bigl(\nabla_xG(x_t,y_t;z_t)+\xi_t\bigr)\\
 &=-h\bigl(\nabla_xG(x_{t+1},y_{t+1};z_t)+\xi_{t+1}+e_{x,t}\bigr)
   +h\bigl(\nabla_xG(x_t,y_t;z_t)+\xi_t\bigr)\\
 &=-h\bigl[\nabla_xG(x_{t+1},y_{t+1};z_t)+\xi_{t+1}
           -\nabla_xG(x_t,y_t;z_t)-\xi_t\bigr]-he_{x,t},\\
 y_{t+1}-\widehat y_t
 &=y_{t+1}-y_t+h\bigl(-\nabla_yG(x_t,y_t;z_t)+n_t-v_t\bigr)\\
 &=-h\bigl(-\nabla_yG(x_{t+1},y_{t+1};z_t)+n_{t+1}-v_{t+1}+e_{y,t}\bigr)\\
 &+h\bigl(-\nabla_yG(x_t,y_t;z_t)+n_t-v_t\bigr)\\
 &=-h\bigl[-\nabla_yG(x_{t+1},y_{t+1};z_t)+n_{t+1}-v_{t+1}+\nabla_yG(x_t,y_t;z_t)-n_t+v_t\bigr]-he_{y,t}.
\end{align*}
The triangle inequality and~\eqref{eq:increment} yield
\begin{equation}
\begin{aligned}
 &\norm{(x_{t+1}-\widehat x_t,y_{t+1}-\widehat y_t)}\\
 &=h\left\|
 \begin{pmatrix}
 \nabla_xG(x_{t+1},y_{t+1};z_t)+\xi_{t+1}-\nabla_xG(x_t,y_t;z_t)-\xi_t\\
 -\nabla_yG(x_{t+1},y_{t+1};z_t)+n_{t+1}-v_{t+1}
 +\nabla_yG(x_t,y_t;z_t)-n_t+v_t
 \end{pmatrix}
 +\begin{pmatrix}e_{x,t}\\e_{y,t}\end{pmatrix}
 \right\|\\
 &\le h\left\|
 \begin{pmatrix}
 \nabla_xG(x_{t+1},y_{t+1};z_t)+\xi_{t+1}-\nabla_xG(x_t,y_t;z_t)-\xi_t\\
 -\nabla_yG(x_{t+1},y_{t+1};z_t)+n_{t+1}-v_{t+1}
 +\nabla_yG(x_t,y_t;z_t)-n_t+v_t
 \end{pmatrix}\right\|
 +h\left\|\begin{pmatrix}e_{x,t}\\e_{y,t}\end{pmatrix}\right\|\\
 &=h\sqrt{\mathcal I_{z_t}(w_{t+1},w_t)}
             +h\sqrt{\norm{e_{x,t}}^2+\norm{e_{y,t}}^2}.
\end{aligned}
 \label{eq:predictor-distance}
\end{equation}
By the projection updates and the nonexpansiveness of Euclidean projection,
\begin{align*}
 \norm{(\widetilde x_t-x_{t+1},\widetilde y_t-y_{t+1})}^2 &=\norm{\Pi_X(\widehat x_t)-\Pi_X(x_{t+1})}^2
   +\norm{\Pi_Y(\widehat y_t)-\Pi_Y(y_{t+1})}^2\\
 &\le\norm{\widehat x_t-x_{t+1}}^2+\norm{\widehat y_t-y_{t+1}}^2\\
 &=\norm{(\widehat x_t-x_{t+1},\widehat y_t-y_{t+1})}^2.
\end{align*}
Taking square roots gives
\begin{equation}\label{eq:projection-distance}
 \norm{(\widetilde x_t-x_{t+1},\widetilde y_t-y_{t+1})}
 \le\norm{(\widehat x_t-x_{t+1},\widehat y_t-y_{t+1})}.
\end{equation}
The parameters in~\eqref{eq:parameters} satisfy $0\le(1-\alpha)\gamma\le hL/2$.
Combining the two gradient differences in~\eqref{eq:error-definition}, we obtain
\begin{align*}
 \norm{e_{x,t}}^2+\norm{e_{y,t}}^2 &=\norm{\nabla_xG(\widetilde x_t,\widetilde y_t;z_t)
       -\nabla_xG(x_{t+1},y_{t+1};z_t)}^2\\
 &+\bigl(1+(1-\alpha)\gamma\bigr)^2
 \norm{\nabla_yG(x_{t+1},y_{t+1};z_t)
       -\nabla_yG(\widetilde x_t,\widetilde y_t;z_t)}^2\\
 &\le\bigl(1+(1-\alpha)\gamma\bigr)^2
 \norm{\nabla G(\widetilde x_t,\widetilde y_t;z_t)
       -\nabla G(x_{t+1},y_{t+1};z_t)}^2\\
 &\le\left(1+\frac{hL}{2}\right)^2 L_G^2
 \norm{(\widetilde x_t-x_{t+1},\widetilde y_t-y_{t+1})}^2,
\end{align*}
where the last inequality uses the $L_G$-Lipschitz continuity of the full gradient stated above.
Taking square roots and then applying~\eqref{eq:projection-distance} and~\eqref{eq:predictor-distance} gives
\begin{align*}
 \sqrt{\norm{e_{x,t}}^2+\norm{e_{y,t}}^2}
 &\le\left(1+\frac{hL}{2}\right)L_G
    \norm{(\widetilde x_t-x_{t+1},\widetilde y_t-y_{t+1})}\\
 &\le\left(1+\frac{hL}{2}\right)L_G
    \norm{(\widehat x_t-x_{t+1},\widehat y_t-y_{t+1})}\\
 &\le hL_G\left(1+\frac{hL}{2}\right)
  \left(\sqrt{\mathcal I_{z_t}(w_{t+1},w_t)}
             +\sqrt{\norm{e_{x,t}}^2+\norm{e_{y,t}}^2}\right).
\end{align*}
Since $hL_G=1/64$ and $hL/2\le1/128$, rearranging and dividing by the positive coefficient yields
\begin{align*}
 \sqrt{\norm{e_{x,t}}^2+\norm{e_{y,t}}^2}
 &\le\frac{hL_G(1+hL/2)}{1-hL_G(1+hL/2)}
       \sqrt{\mathcal I_{z_t}(w_{t+1},w_t)}\\
 &\le2hL_G\sqrt{\mathcal I_{z_t}(w_{t+1},w_t)},
\end{align*}
This proves~\eqref{eq:error}.
\end{proof}
\subsection{Positivity and fixed-center dissipation}
We use the potential function defined in~\eqref{eq:potential}.

\begin{lemma}\label{lem:fixed-lower}
For every state $w$ admitted by the definition in~\eqref{eq:potential},
\begin{align}
 &\mathcal E_z(w)\nonumber\\
 &\ge{} 
 \frac1{2L}\|\nabla_xG(x,y;z)+\xi\|^2
 +\frac{\sqrt{2L\mu_y}}{32}\|x-x^\star(z)\|^2
 +\frac{\mu_y}{2}\|y-y^\star(z)\|^2\nonumber\\
 &+\frac1L\left\|-\nabla_yG(x,y;z)+n-v-\frac{\sqrt{2L\mu_y}}{32}(y-y^\star(z))\right\|^2.
 \label{eq:potential-lower}
\end{align}
Moreover,
\begin{equation}
 \langle -\nabla_yG(x,y;z)+n,y-y^\star(z)\rangle+G(x,y;z)-G^\star(z)
 \ge\frac L2\|x-x^\star(z)\|^2+\frac{\mu_y}{2}\|y-y^\star(z)\|^2.
 \label{eq:saddle-growth}
\end{equation}
\end{lemma}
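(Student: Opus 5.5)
The second inequality \eqref{eq:saddle-growth} is the easier part and will be proved first; it is then used inside the proof of \eqref{eq:potential-lower}. Throughout, write $(x^\star,y^\star)=(x^\star(z),y^\star(z))$, $d_x=x-x^\star$, $d_y=y-y^\star$, $g_x=\nabla_xG(x,y;z)+\xi$, $r=-\nabla_yG(x,y;z)+n$ and $s=\sqrt{2L\mu_y}$. Recall that with $\lambda=2L$ the function $G(\cdot,\cdot;z)$ is $L$-strongly convex in $x$ and $\mu_y$-strongly concave in $y$.

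\emph{Step 1: saddle growth.} Split
\[
G(x,y)-G^\star=[G(x,y)-G(x,y^\star)]+[G(x,y^\star)-G(x^\star,y^\star)].
\]
For the second bracket, $x^\star$ minimizes the $L$-strongly convex function $G(\cdot,y^\star;z)$ over $X$, so it is at least $\frac L2\|d_x\|^2$. For the first bracket, $\mu_y$-strong concavity of $G(x,\cdot;z)$ gives
\[
G(x,y)-G(x,y^\star)\ge\langle\nabla_yG(x,y;z),d_y\rangle+\tfrac{\mu_y}2\|d_y\|^2 .
\]
Adding $\langle r,d_y\rangle$ to both sides, the $\nabla_yG$ terms cancel. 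What remains is $\langle n,y-y^\star\rangle$, which is nonnegative because $n\in N_Y(y)$ and $y^\star\in Y$. This yields \eqref{eq:saddle-growth}.

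\emph{Step 2: an upper bound on the gap.} Next bound $G-G^\star$ from above using both strong properties at the saddle point. First, $y^\star$ maximizes $G(x^\star,\cdot;z)$ over $Y$, so $G(x^\star,y)\le G^\star-\frac{\mu_y}2\|d_y\|^2$. Second, strong convexity in $x$ together with $\langle\xi,x^\star-x\rangle\le0$ gives
\[
G(x,y)-G(x^\star,y)\le\langle g_x,d_x\rangle-\tfrac L2\|d_x\|^2 .
\]
Combining the two,
\[
G(x,y)-G^\star\le\langle g_x,d_x\rangle-\tfrac L2\|d_x\|^2-\tfrac{\mu_y}2\|d_y\|^2 .
\]

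\emph{Step 3: assembling $\mathcal E_z(w)$.} Write the coefficient as $-\frac{16L-s}{16L}(G-G^\star)=-(G-G^\star)+\frac{s}{16L}(G-G^\star)$ and treat the two pieces separately.
\begin{itemize}
\item For $-(G-G^\star)+\frac1L\|g_x\|^2$, apply Step 2 and then Young's inequality $-\langle g_x,d_x\rangle\ge-\frac1{2L}\|g_x\|^2-\frac L2\|d_x\|^2$. This produces $\frac1{2L}\|g_x\|^2+\frac{\mu_y}2\|d_y\|^2$.
\item For the dual block, expand the target square:
\[
\tfrac1L\|r-v-\tfrac s{32}d_y\|^2=\tfrac1L\|r-v\|^2-\tfrac{s}{16L}\langle r-v,d_y\rangle+\tfrac{s^2}{1024L}\|d_y\|^2 .
\]
Since $s^2=2L\mu_y$, the last term equals $\frac{\mu_y}{512}\|d_y\|^2$. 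Hence the terms $\frac1L\|r-v\|^2+\frac s{16L}\langle v,d_y\rangle$ appearing in $\mathcal E_z(w)$ equal the target square plus $\frac s{16L}\langle r,d_y\rangle-\frac{\mu_y}{512}\|d_y\|^2$.
\item The leftover $\frac s{16L}\bigl[\langle r,d_y\rangle+(G-G^\star)\bigr]$ is bounded below by Step 1 by $\frac s{32}\|d_x\|^2+\frac{s\mu_y}{32L}\|d_y\|^2$. This supplies the required $\frac{\sqrt{2L\mu_y}}{32}\|x-x^\star\|^2$ term.
\end{itemize}

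\emph{Step 4: bookkeeping.} The remaining $\|d_y\|^2$ terms are $\frac{\mu_y}{256}-\frac{\mu_y}{512}+\frac{s\mu_y}{32L}$, which is nonnegative and can be dropped. The $\frac{\mu_y}2\|d_y\|^2$ from the first bullet then remains, which completes \eqref{eq:potential-lower}.

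The main obstacle is Step 3: the coefficients must be split correctly so that the indefinite gap term $-(G-G^\star)$ and the cross term $\langle v,d_y\rangle$ both cancel exactly. The key device is to feed the $\frac s{16L}$ share of the gap into the saddle-growth inequality, while bounding the rest with the two-sided strong-property upper bound of Step 2.
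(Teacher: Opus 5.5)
Your proof is correct and follows essentially the same route as the paper. The shared steps are: saddle growth from primal growth at $y^\star(z)$ together with strong concavity and $\langle n,y-y^\star(z)\rangle\ge0$; completing the square in the dual block, with the $\frac{\sqrt{2L\mu_y}}{16L}$ share of the gap absorbed by the growth inequality; and the bound $\frac1{2L}\|\nabla_xG(x,y;z)+\xi\|^2-(G(x,y;z)-G^\star(z))\ge\frac{\mu_y}{2}\|y-y^\star(z)\|^2$ for the remaining full gap.

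The differences are cosmetic. The paper derives that last bound by minimizing the strong-convexity lower bound over all $a\in X$, whereas you evaluate it at $x^\star(z)$ and apply Young's inequality. The paper also substitutes the strong-concavity inequality directly into the cross term, instead of quoting the assembled saddle-growth inequality as you do.
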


\begin{proof}
Strong convexity--strong concavity and saddle-point optimality imply
\begin{align}
 G(x,y^\star(z);z)-G^\star(z)&\ge\frac L2\|x-x^\star(z)\|^2,
 \label{eq:fixed-growth-x}\\
 G(x,y^\star(z);z)&\le G(x,y;z)
 -\langle\nabla_yG(x,y;z),y-y^\star(z)\rangle
 -\frac{\mu_y}{2}\|y-y^\star(z)\|^2.
 \label{eq:fixed-growth-y}
\end{align}
Since $\langle n,y-y^\star(z)\rangle\ge0$, combining the two inequalities proves~\eqref{eq:saddle-growth}.

Completing the square in~\eqref{eq:potential}, we obtain
\begin{align}
 &\mathcal E_z(w)\nonumber\\
 &={} 
 -\frac{16L-\sqrt{2L\mu_y}}{16L}\bigl(G(x,y;z)-G^\star(z)\bigr)
 +\frac1L\|\nabla_xG(x,y;z)+\xi\|^2\nonumber\\
 &+\frac{\sqrt{2L\mu_y}}{16L}\langle -\nabla_yG(x,y;z)+n,y-y^\star(z)\rangle\nonumber\\
 &+\frac1L\left\|-\nabla_yG(x,y;z)+n-v-\frac{\sqrt{2L\mu_y}}{32}(y-y^\star(z))\right\|^2
 +\frac{\mu_y}{512}\|y-y^\star(z)\|^2.
 \label{eq:potential-square}
\end{align}
Substituting~\eqref{eq:fixed-growth-y} and $\langle n,y-y^\star(z)\rangle\ge0$ gives
\begin{align}
 &\mathcal E_z(w)\nonumber\\
 &\ge{} 
 -\bigl(G(x,y;z)-G^\star(z)\bigr)
 +\frac{\sqrt{2L\mu_y}}{16L}\bigl(G(x,y^\star(z);z)-G^\star(z)\bigr)
 +\frac1L\|\nabla_xG(x,y;z)+\xi\|^2\nonumber\\
 &+\frac1L\left\|-\nabla_yG(x,y;z)+n-v-\frac{\sqrt{2L\mu_y}}{32}(y-y^\star(z))\right\|^2.
 \label{eq:potential-lower-intermediate}
\end{align}
Here, we have dropped the nonnegative term involving $\|y-y^\star(z)\|^2$.
For any $a\in X$, strong convexity and
$\langle\xi,a-x\rangle\le0$ give
\begin{align*}
 G(a,y;z)
 &\ge G(x,y;z)+\langle\nabla_xG(x,y;z)+\xi,a-x\rangle
       +\frac L2\|a-x\|^2\\
 &=G(x,y;z)-\frac1{2L}\|\nabla_xG(x,y;z)+\xi\|^2
   +\frac L2\left\|a-x+\frac{\nabla_xG(x,y;z)+\xi}{L}\right\|^2\\
 &\ge G(x,y;z)-\frac1{2L}\|\nabla_xG(x,y;z)+\xi\|^2.
\end{align*}
Taking the infimum over $a\in X$ and rearranging yields
\[
 G(x,y;z)-\min_{a\in X}G(a,y;z)
 \le\frac1{2L}\|\nabla_xG(x,y;z)+\xi\|^2,
\]
whereas strong concavity at the saddle point gives
\[
 \min_{a\in X}G(a,y;z)\le G(x^\star(z),y;z)
 \le G^\star(z)-\frac{\mu_y}{2}\|y-y^\star(z)\|^2.
\]
Consequently,
\[
 \frac1{2L}\|\nabla_xG(x,y;z)+\xi\|^2-\bigl(G(x,y;z)-G^\star(z)\bigr)
 \ge\frac{\mu_y}{2}\|y-y^\star(z)\|^2.
\]
Substituting this inequality and~\eqref{eq:fixed-growth-x} into~\eqref{eq:potential-lower-intermediate} proves~\eqref{eq:potential-lower}.
\end{proof}

To relate the one-step change at a fixed center to its dissipation terms, we split the auxiliary subproblem error into three components:
\begin{align}
 A_z(w):={}&-\frac{16L-\sqrt{2L\mu_y}}{32}\bigl(G(x,y;z)-G^\star(z)\bigr)
 +\frac12\|\nabla_xG(x,y;z)+\xi\|^2,\nonumber\\
 B_z(w):={}&\frac12\|-\nabla_yG(x,y;z)+n-v\|^2,\nonumber\\
 C_z(w):={}&\frac{\sqrt{2L\mu_y}}{32}\langle v,y-y^\star(z)\rangle
 +\frac{L\mu_y}{512}\|y-y^\star(z)\|^2
 +\frac{\sqrt{2L\mu_y}}{512L}\|v\|^2.
 \label{eq:fixed-components}
\end{align}
Then
\begin{equation}
 A_z(w)+B_z(w)+C_z(w)
 =\frac L2\left[\mathcal E_z(w)+\frac{\sqrt{2L\mu_y}}{256L^2}\|v\|^2\right].
 \label{eq:fixed-component-sum}
\end{equation}
The third component contains the momentum cross term, the squared dual distance, and the kinetic energy.
We estimate the changes of all three components jointly and retain the dissipation terms needed to control center movement.

We use the full one-step dissipation defined in~\eqref{eq:dissipation}.

To make the residual control explicit, define the two corrected residuals
at the old center by
\begin{equation}\label{eq:dissipation-residuals}
\begin{aligned}
 P_{t+1}&:=\nabla_xG(x_{t+1},y_{t+1};z_t)+\xi_{t+1},\\
 Q_{t+1}&:=-\nabla_yG(x_{t+1},y_{t+1};z_t)
                 +n_{t+1}-v_{t+1}.
\end{aligned}
\end{equation}
The definition~\eqref{eq:dissipation} gives the following lower bound:
\begin{equation}\label{eq:main-dissipation-lower}
\begin{aligned}
 &\mathcal D_{z_t}(w_{t+1},w_t)\\
 &\quad\ge \frac h4\|P_{t+1}\|^2
       +\frac{3h\sqrt{2L\mu_y}}{512L}\|Q_{t+1}\|^2\\
 &\qquad+\frac{7h\sqrt{2L\mu_y}}{4096L}\|v_{t+1}\|^2\\
 &\qquad+\frac{h\sqrt{2L\mu_y}(16L-\sqrt{2L\mu_y})}{1024}
       \|x_{t+1}-x^\star(z_t)\|^2.
\end{aligned}
\end{equation}
The omitted terms are nonnegative dual-tracking and projection-error
terms. All displayed coefficients are positive because $0<\mu_y\le L$.
Thus the dissipation controls both blocks of the auxiliary stationarity
residual, including the momentum needed to recover the dual residual
$Q_{t+1}+v_{t+1}$, and the primal tracking error in the envelope descent.

\begin{theorem}\label{thm:fixed-dissipation}
For each iteration $t$, the update from $w_t$ to $w_{t+1}$ at the fixed center $z_t$ satisfies
\begin{equation}
 \begin{aligned}
 &\mathcal E_{z_t}(w_{t+1})-\mathcal E_{z_t}(w_t)
 +\frac{\sqrt{2L\mu_y}}{256L^2}\bigl(\|v_{t+1}\|^2-\|v_t\|^2\bigr)\\
 &+\frac{h\sqrt{2L\mu_y}}{32}
 \left[\mathcal E_{z_t}(w_{t+1})+\frac{\sqrt{2L\mu_y}}{256L^2}\|v_{t+1}\|^2\right]
 \le-\mathcal D_{z_t}(w_{t+1},w_t).
 \end{aligned}
 \label{eq:fixedhat}
\end{equation}
Both terms in brackets are nonnegative by Lemma~\ref{lem:fixed-lower}.
\end{theorem}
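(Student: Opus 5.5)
The plan is to work at the fixed center $z=z_t$ throughout and use the decomposition \eqref{eq:fixed-component-sum}. Multiplying \eqref{eq:fixedhat} by $L/2$, it suffices to show
\[
 (1+\rho)\bigl[A+B+C\bigr](w_{t+1})-\bigl[A+B+C\bigr](w_t)\le-\tfrac L2\mathcal D_{z_t}(w_{t+1},w_t),
 \qquad \rho:=\tfrac{h\sqrt{2L\mu_y}}{32}.
\]
I will estimate each component separately, keeping all squared-increment and residual terms. The two inputs are the exact dynamics \eqref{eq:x-dynamics}--\eqref{eq:y-dynamics}, in which the update equals $-h$ times the corrected residuals $P_{t+1},Q_{t+1}$ plus errors, and the error bound \eqref{eq:error}. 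All errors are then absorbed into $\mathcal I_{z_t}$ with coefficient $O(h^2L_G^2)\le O(1/64^2)$.

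\textbf{Components $A$ and $B$.} For $A$, I will use $L_G$-smoothness of $G$ to expand $G(w_{t+1})-G(w_t)$ around $w_{t+1}$. The primal part of this expansion is $\langle P_{t+1},x_{t+1}-x_t\rangle=-h\|P_{t+1}\|^2-h\langle P_{t+1},e_{x,t}\rangle$; it uses $\langle\xi_{t+1},x_t-x_{t+1}\rangle\le0$ to replace the gradient by $P_{t+1}$. The squared-residual change in $A$ I will write as $\langle P_{t+1},P_{t+1}-P_t\rangle-\tfrac12\|P_{t+1}-P_t\|^2$, which generates the first half of $\mathcal I$. The coefficient $(16L-\sqrt{2L\mu_y})/32$ on the negative gap is chosen so that the dual part of the $G$ change cancels against the cross term $\langle Q_{t+1},y_{t+1}-y_t\rangle$ coming from $B$. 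Component $B$ is treated the same way, using $\langle n_{t+1},y_t-y_{t+1}\rangle\le0$ for the $n$ part, and writing the change in $v$ through the momentum recursion $v_{t+1}-v_t=(1-\alpha)(\gamma(\cdot)-v_t)$.

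\textbf{Component $C$ and the contraction.} This is the main obstacle: obtaining the strict contraction factor $\rho$, since neither $A$ nor $B$ contracts on its own. My plan is to use the identity $\alpha^{-1}=1+16^{-1}h\sqrt{2L\mu_y}$. With it, the momentum cross term $\langle v,y-y^\star\rangle$ together with the kinetic term $\|v\|^2$ produces $-\Theta(h\sqrt{L\mu_y})\langle -\nabla_yG+n,y-y^\star\rangle$. I will then invoke \eqref{eq:saddle-growth} from Lemma~\ref{lem:fixed-lower}: it converts this term into $-\Theta(h\sqrt{L\mu_y})[G-G^\star+\tfrac L2\|x-x^\star\|^2+\tfrac{\mu_y}2\|y-y^\star\|^2]$. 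This supplies the $\|x_{t+1}-x^\star\|^2$ and $\|y_{t+1}-y^\star\|^2$ dissipation terms and also the missing positive multiple of the gap. That gap multiple is exactly what turns the combination into $\rho$ times the full potential. The choice $\gamma=4\sqrt{2L/\mu_y}-1$ should make the coefficient of the $\langle\nabla_yG,\cdot\rangle$ cross terms match across $B$ and $C$.

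\textbf{Assembly.} The remaining step is bookkeeping. I will collect the coefficients of $\|P_{t+1}\|^2$, $\|Q_{t+1}\|^2$, $\|v_{t+1}\|^2$, the two distance terms and $\mathcal I$, and bound every error cross term by Young's inequality together with \eqref{eq:error}. I will then check numerically, using $hL_G=1/64$, $\mu_y\le L$ and $\sqrt{2L\mu_y}\le\sqrt2L$, that at least the coefficients listed in \eqref{eq:dissipation} survive. Nonnegativity of the bracketed terms follows from \eqref{eq:potential-lower}.
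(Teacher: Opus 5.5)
Your reduction via \eqref{eq:fixed-component-sum} matches the paper. So does your reading of the $C$-block: the momentum cross term combined with \eqref{eq:saddle-growth} supplies both the contraction and the distance dissipation. The gap is that your plan has no mechanism that makes the residual $P_{t+1}$ decrease, and without one the inequality cannot close.

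Since $A$ carries $-\tfrac{16L-\sqrt{2L\mu_y}}{32}(G-G^\star)$ and $x_{t+1}-x_t=-h(P_{t+1}+e_{x,t})$, the function-value change contributes about $+\tfrac{16L-\sqrt{2L\mu_y}}{32}\,h\|P_{t+1}\|^2$. This term is \emph{positive}. To reach the required $-\tfrac L2\cdot\tfrac h4\|P_{t+1}\|^2$ you need $\langle P_{t+1},P_{t+1}-P_t\rangle\lesssim-hL\|P_{t+1}\|^2$, and your plan never says how this inner product is bounded. It is not an error cross term. Young's inequality on it costs a coefficient of order one on $\|P_{t+1}\|^2$, while the whole budget is of order $hL$.

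The missing ingredient is the strong monotonicity \eqref{eq:fixed-monotone} of the normal-cone-augmented saddle operator:
\[
 \langle P_{t+1}-P_t,\,x_{t+1}-x_t\rangle
 +\bigl\langle(-\nabla_yG+n)_{t+1}-(-\nabla_yG+n)_t,\,y_{t+1}-y_t\bigr\rangle
 \ge L\|x_{t+1}-x_t\|^2+\mu_y\|y_{t+1}-y_t\|^2,
\]
where $(-\nabla_yG+n)_s:=-\nabla_yG(x_s,y_s;z_t)+n_s$. It follows from strong convexity--concavity of $G(\cdot,\cdot;z_t)$ together with $\langle\xi_{t+1}-\xi_t,x_{t+1}-x_t\rangle\ge0$ and $\langle n_{t+1}-n_t,y_{t+1}-y_t\rangle\ge0$. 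Substituting the implicit dynamics gives
\[
 \tfrac1h\langle P_{t+1},P_{t+1}-P_t\rangle
 +\tfrac1h\bigl\langle Q_{t+1},(-\nabla_yG+n)_{t+1}-(-\nabla_yG+n)_t\bigr\rangle
 \le-L\|P_{t+1}+e_{x,t}\|^2-\mu_y\|Q_{t+1}+e_{y,t}\|^2+(\text{error terms}).
\]
The $-L\|P_{t+1}\|^2$ here is what beats the positive term above. This inequality is inherently joint. $P_{t+1}-P_t$ depends on $y_{t+1}-y_t$ through the $x$--$y$ coupling in $\nabla_xG$, which has no sign. That coupling cancels, via the sign flip in $-\nabla_yG$, only in the sum with the dual residual increment. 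So estimating ``each component separately'' cannot work; this is why the paper's Step~2 treats $A$ and $B$ jointly.

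Second, the expansion of $G(w_{t+1})-G(w_t)$ you describe goes the wrong way in the primal block. $A$ needs a \emph{lower} bound on this change, but $\xi_{t+1}\in N_X(x_{t+1})$ only gives $\langle\nabla_xG(x_{t+1},y_{t+1};z_t),x_{t+1}-x_t\rangle\le\langle P_{t+1},x_{t+1}-x_t\rangle$. The paper instead passes through the intermediate point $(x_{t+1},y_t)$:
\begin{itemize}
 \item strong convexity in $x$ at $x_t$, with $\langle\xi_t,x_{t+1}-x_t\rangle\le0$;
 \item strong concavity in $y$ at $y_{t+1}$, with $\langle n_{t+1},y_{t+1}-y_t\rangle\ge0$;
 \item then the square completion \eqref{eq:fixed-x-square}, which converts $P_t$ into $P_{t+1}$.
\end{itemize}
This route also avoids the loss $\tfrac{L_G}{2}\|w_{t+1}-w_t\|^2$ of a plain smoothness expansion. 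In the theorem's scale, the dual part of that loss is about $\tfrac h{128}\|Q_{t+1}\|^2$. Every negative $\|Q_{t+1}\|^2$ term in the argument is only of order $h\sqrt{\mu_y/L}$, so the loss cannot be absorbed when $\mu_y\ll L$, which is exactly the NC--C regime $\mu_y=\tau\to0$.

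Finally, a sign check on the $C$-block. \eqref{eq:saddle-growth} bounds $-\langle-\nabla_yG+n,y-y^\star\rangle$ by $+(G-G^\star)$ minus the distance terms, so it produces a positive gap, not a negative one. That gap must be cancelled exactly by the $-\tfrac{\sqrt{2L\mu_y}(16L-\sqrt{2L\mu_y})}{1024}(G-G^\star)$ coming from the endpoint weight. The leftover $\tfrac{L\mu_y}{512}\langle Q_{t+1}+v_{t+1},y_{t+1}-y^\star(z_t)\rangle$ is then split: its $v_{t+1}$ part cancels against the endpoint's momentum cross term, and its $Q_{t+1}$ part is handled by Young's inequality.
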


\begin{proof}
By~\eqref{eq:fixed-component-sum}, it suffices to prove
\begin{align}
 &A_{z_t}(w_{t+1})-A_{z_t}(w_t)+B_{z_t}(w_{t+1})-B_{z_t}(w_t)\nonumber\\
 &\quad+C_{z_t}(w_{t+1})-C_{z_t}(w_t)\nonumber\\
 &+\frac{h\sqrt{2L\mu_y}}{32}
       \bigl[A_{z_t}(w_{t+1})+B_{z_t}(w_{t+1})+C_{z_t}(w_{t+1})\bigr]
 \le-\frac L2\mathcal D_{z_t}(w_{t+1},w_t).
 \label{eq:fixed-abc-target}
\end{align}
We first expand $A_{z_t}$ and then add $B_{z_t}$ and $C_{z_t}$ in turn.
The kinetic energy in $C_{z_t}$ is kept as an explicit difference of squares and estimated using the momentum update in the final step.

\smallskip
\noindent\textbf{Step 1: Estimating $A_{z_t}(w_{t+1})-A_{z_t}(w_t)$.}

Since $\xi_t\in N_X(x_t)$ and $n_{t+1}\in N_Y(y_{t+1})$,
\[
 \langle\xi_t,x_{t+1}-x_t\rangle\le0,
 \qquad\langle n_{t+1},y_{t+1}-y_t\rangle\ge0.
\]
Combining these inequalities with strong convexity in $x$ and strong concavity in $y$, respectively, gives
\begin{align*}
 G(x_{t+1},y_t;z_t)&\ge G(x_t,y_t;z_t)\\
 &\quad+\langle \nabla_xG(x_t,y_t;z_t)+\xi_t,x_{t+1}-x_t\rangle
 +\frac L2\|x_{t+1}-x_t\|^2,\\
 G(x_{t+1},y_t;z_t)&\le G(x_{t+1},y_{t+1};z_t)\\
 &\quad+\langle -\nabla_yG(x_{t+1},y_{t+1};z_t)+n_{t+1},y_{t+1}-y_t\rangle
 -\frac{\mu_y}{2}\|y_{t+1}-y_t\|^2.
\end{align*}
It follows that
\begin{align}
 G(x_{t+1},y_{t+1};z_t)-G(x_t,y_t;z_t) &\ge\langle \nabla_xG(x_t,y_t;z_t)+\xi_t,x_{t+1}-x_t\rangle\nonumber\\
 &-\langle -\nabla_yG(x_{t+1},y_{t+1};z_t)+n_{t+1},y_{t+1}-y_t\rangle\nonumber\\
 &+\frac L2\|x_{t+1}-x_t\|^2+\frac{\mu_y}{2}\|y_{t+1}-y_t\|^2.
 \label{eq:fixed-cross-point}
\end{align}
The difference of squares and the cross term in the $x$ variable satisfy
\begin{align}
 &\frac12\bigl(\|\nabla_xG(x_{t+1},y_{t+1};z_t)+\xi_{t+1}\|^2-\|\nabla_xG(x_t,y_t;z_t)+\xi_t\|^2\bigr)\nonumber\\
 &-\frac{16L-\sqrt{2L\mu_y}}{32}\langle \nabla_xG(x_t,y_t;z_t)+\xi_t,x_{t+1}-x_t\rangle\nonumber\\
 &-\frac{L(16L-\sqrt{2L\mu_y})}{64}\|x_{t+1}-x_t\|^2\nonumber\\
 &=\langle \nabla_xG(x_{t+1},y_{t+1};z_t)+\xi_{t+1},\nabla_xG(x_{t+1},y_{t+1};z_t)+\xi_{t+1}-\nabla_xG(x_t,y_t;z_t)-\xi_t\rangle\nonumber\\
 &-\frac{16L-\sqrt{2L\mu_y}}{32}\langle \nabla_xG(x_{t+1},y_{t+1};z_t)+\xi_{t+1},x_{t+1}-x_t\rangle\nonumber\\
 &-\frac12\left\|\begin{aligned}&\nabla_xG(x_{t+1},y_{t+1};z_t)+\xi_{t+1}\\
 &-\nabla_xG(x_t,y_t;z_t)-\xi_t-\frac{16L-\sqrt{2L\mu_y}}{32}(x_{t+1}-x_t)\end{aligned}\right\|^2\nonumber\\
 &-\frac{L(128L-\mu_y)}{1024}\|x_{t+1}-x_t\|^2. \label{eq:fixed-x-square}
\end{align}
The last term is nonpositive because $0<\mu_y\le L$.
Multiplying~\eqref{eq:fixed-cross-point} by $-\frac{16L-\sqrt{2L\mu_y}}{32}$
and using~\eqref{eq:fixed-x-square} bounds the function-value term and the squared $x$-residual in the potential.

Specifically, dropping the last term in~\eqref{eq:fixed-x-square} yields
\begin{align}
 &A_{z_t}(w_{t+1})-A_{z_t}(w_t)\nonumber\\
 &\le-\frac{16L-\sqrt{2L\mu_y}}{32}
   \langle\nabla_xG(x_{t+1},y_{t+1};z_t)+\xi_{t+1},x_{t+1}-x_t\rangle\nonumber\\
 &+\frac{16L-\sqrt{2L\mu_y}}{32}
   \langle-\nabla_yG(x_{t+1},y_{t+1};z_t)+n_{t+1},y_{t+1}-y_t\rangle\nonumber\\
 &+\left\langle\nabla_xG(x_{t+1},y_{t+1};z_t)+\xi_{t+1},
   \nabla_xG(x_{t+1},y_{t+1};z_t)+\xi_{t+1} -\nabla_xG(x_t,y_t;z_t)-\xi_t\right\rangle\nonumber\\
 &-\frac12\left\|\begin{aligned}&\nabla_xG(x_{t+1},y_{t+1};z_t)+\xi_{t+1}\\
 &-\nabla_xG(x_t,y_t;z_t)-\xi_t -\frac{16L-\sqrt{2L\mu_y}}{32}(x_{t+1}-x_t)\end{aligned}\right\|^2\nonumber\\
 &-\frac{\mu_y(16L-\sqrt{2L\mu_y})}{64}\|y_{t+1}-y_t\|^2.
 \label{eq:fixed-A-increment}
\end{align}

\smallskip
\noindent\textbf{Step 2: Jointly estimating the increments of $A_{z_t}$ and $B_{z_t}$.}
The difference-of-squares identity gives
\begin{align}
 &B_{z_t}(w_{t+1})-B_{z_t}(w_t)\nonumber\\
 &={}
 \left\|-\nabla_yG(x_{t+1},y_{t+1};z_t)+n_{t+1}-v_{t+1}\right\|^2\nonumber\\
 &+
 \left\langle-\nabla_yG(x_{t+1},y_{t+1};z_t)+n_{t+1}-v_{t+1},
 \nabla_yG(x_t,y_t;z_t)-n_t+v_t\right\rangle\nonumber\\
 &-\frac12\left\|-\nabla_yG(x_{t+1},y_{t+1};z_t)+n_{t+1}-v_{t+1} +\nabla_yG(x_t,y_t;z_t)-n_t+v_t\right\|^2.
 \label{eq:fixed-B-increment}
\end{align}

Applying strong convexity in $x$ at the two endpoints yields
\begin{align*}
 G(x_{t+1},y_t;z_t)
 &\ge G(x_t,y_t;z_t)
  +\langle\nabla_xG(x_t,y_t;z_t),x_{t+1}-x_t\rangle
  +\frac L2\|x_{t+1}-x_t\|^2,\\
 G(x_t,y_{t+1};z_t)
 &\ge G(x_{t+1},y_{t+1};z_t)
  -\langle\nabla_xG(x_{t+1},y_{t+1};z_t),x_{t+1}-x_t\rangle
  +\frac L2\|x_{t+1}-x_t\|^2.
\end{align*}
Strong concavity in $y$ gives
\begin{align*}
 G(x_t,y_{t+1};z_t)
 &\le G(x_t,y_t;z_t)
  +\langle\nabla_yG(x_t,y_t;z_t),y_{t+1}-y_t\rangle
  -\frac{\mu_y}{2}\|y_{t+1}-y_t\|^2,\\
 G(x_{t+1},y_t;z_t)
 &\le G(x_{t+1},y_{t+1};z_t)
  -\langle\nabla_yG(x_{t+1},y_{t+1};z_t),y_{t+1}-y_t\rangle
  -\frac{\mu_y}{2}\|y_{t+1}-y_t\|^2.
\end{align*}
Adding the first two inequalities gives a lower bound for the same mixed difference that is bounded from above by the sum of the last two inequalities.
Eliminating the function values yields
\begin{align*}
 &\langle\nabla_xG(x_{t+1},y_{t+1};z_t)-\nabla_xG(x_t,y_t;z_t),
              x_{t+1}-x_t\rangle\\
 &+\langle-\nabla_yG(x_{t+1},y_{t+1};z_t)+\nabla_yG(x_t,y_t;z_t),
              y_{t+1}-y_t\rangle\\
 &\ge L\|x_{t+1}-x_t\|^2+\mu_y\|y_{t+1}-y_t\|^2.
\end{align*}
By monotonicity of the normal cones,
\[
 \langle\xi_{t+1}-\xi_t,x_{t+1}-x_t\rangle\ge0,
 \qquad
 \langle n_{t+1}-n_t,y_{t+1}-y_t\rangle\ge0.
\]
Adding these inequalities gives
\begin{align}
 &\langle \nabla_xG(x_{t+1},y_{t+1};z_t)+\xi_{t+1}-\nabla_xG(x_t,y_t;z_t)-\xi_t,x_{t+1}-x_t\rangle\nonumber\\
 &+\langle -\nabla_yG(x_{t+1},y_{t+1};z_t)+n_{t+1}+\nabla_yG(x_t,y_t;z_t)-n_t,y_{t+1}-y_t\rangle\nonumber\\
 &\ge L\|x_{t+1}-x_t\|^2+\mu_y\|y_{t+1}-y_t\|^2. \label{eq:fixed-monotone}
\end{align}
By~\eqref{eq:parameters},
\[
 \frac1\alpha=1+\frac{h\sqrt{2L\mu_y}}{16},
 \qquad
 \frac{(1-\alpha)\gamma}{\alpha}
 =\frac{h(8L-\sqrt{2L\mu_y})}{16}.
\]
Dividing the momentum update
\[
 v_{t+1}=\alpha v_t-(1-\alpha)\gamma
 \bigl(-\nabla_yG(x_{t+1},y_{t+1};z_t)+n_{t+1}\bigr)
\]
by $\alpha$ and substituting these parameter identities gives
\begin{align*}
 \left(1+\frac{h\sqrt{2L\mu_y}}{16}\right)v_{t+1}
 &=v_t-\frac{h(8L-\sqrt{2L\mu_y})}{16}
       \bigl(-\nabla_yG(x_{t+1},y_{t+1};z_t)+n_{t+1}\bigr),\\
 v_{t+1}-v_t
 &=-\frac{h\sqrt{2L\mu_y}}{16}v_{t+1}
   -\frac{h(8L-\sqrt{2L\mu_y})}{16}
       \bigl(-\nabla_yG(x_{t+1},y_{t+1};z_t)+n_{t+1}\bigr).
\end{align*}
Rearranging the momentum identity and using~\eqref{eq:x-dynamics} and~\eqref{eq:y-dynamics}, we obtain
\begin{align}
 x_{t+1}-x_t&=-h(\nabla_xG(x_{t+1},y_{t+1};z_t)+\xi_{t+1}+e_{x,t}),\nonumber\\
 y_{t+1}-y_t&=-h(-\nabla_yG(x_{t+1},y_{t+1};z_t)+n_{t+1}-v_{t+1}+e_{y,t}),
 \label{eq:fixed-update}\\
 v_{t+1}-v_t&=-\frac{hL}{2}\bigl(-\nabla_yG(x_{t+1},y_{t+1};z_t)+n_{t+1}\bigr)\nonumber\\
 &+\frac{h\sqrt{2L\mu_y}}{16}\bigl(-\nabla_yG(x_{t+1},y_{t+1};z_t)+n_{t+1}-v_{t+1}\bigr).
 \label{eq:fixed-momentum}
\end{align}
Substituting~\eqref{eq:fixed-update} into~\eqref{eq:fixed-monotone} gives
\begin{align}
 &\frac1h\left\langle\begin{aligned}
 &\nabla_xG(x_{t+1},y_{t+1};z_t)+\xi_{t+1},\\
 &\nabla_xG(x_{t+1},y_{t+1};z_t)+\xi_{t+1}-\nabla_xG(x_t,y_t;z_t)-\xi_t
 \end{aligned}\right\rangle\nonumber\\
 &+\frac1h\left\langle\begin{aligned}
 &-\nabla_yG(x_{t+1},y_{t+1};z_t)+n_{t+1}-v_{t+1},\\
 &-\nabla_yG(x_{t+1},y_{t+1};z_t)+n_{t+1} +\nabla_yG(x_t,y_t;z_t)-n_t
 \end{aligned}\right\rangle\nonumber\\
 &\le-L\|\nabla_xG(x_{t+1},y_{t+1};z_t)+\xi_{t+1}+e_{x,t}\|^2\nonumber\\
 &-\mu_y\|-\nabla_yG(x_{t+1},y_{t+1};z_t)+n_{t+1}-v_{t+1}+e_{y,t}\|^2\nonumber\\
 &-\frac1h\langle e_{x,t},\nabla_xG(x_{t+1},y_{t+1};z_t)+\xi_{t+1}-\nabla_xG(x_t,y_t;z_t)-\xi_t\rangle\nonumber\\
 &-\frac1h\langle e_{y,t},-\nabla_yG(x_{t+1},y_{t+1};z_t)+n_{t+1}+\nabla_yG(x_t,y_t;z_t)-n_t\rangle. \label{eq:fixed-monotone-use}
\end{align}
Since
\[
 \begin{aligned}
 &-\nabla_yG(x_{t+1},y_{t+1};z_t)+n_{t+1}-v_{t+1}+\nabla_yG(x_t,y_t;z_t)-n_t+v_t\\
 &=(-\nabla_yG(x_{t+1},y_{t+1};z_t)+n_{t+1}+\nabla_yG(x_t,y_t;z_t)-n_t)-(v_{t+1}-v_t),
 \end{aligned}
\]
the last error inner product can be rewritten using~\eqref{eq:fixed-momentum} as
\begin{align}
 &-\frac1h\langle e_{y,t},-\nabla_yG(x_{t+1},y_{t+1};z_t)+n_{t+1}+\nabla_yG(x_t,y_t;z_t)-n_t\rangle\nonumber\\
 &={}-\frac1h\langle e_{y,t},-\nabla_yG(x_{t+1},y_{t+1};z_t)+n_{t+1}-v_{t+1}+\nabla_yG(x_t,y_t;z_t)-n_t+v_t\rangle\nonumber\\
 &+\left\langle\begin{aligned}
 &e_{y,t},\frac L2(-\nabla_yG(x_{t+1},y_{t+1};z_t)+n_{t+1})\\
 &\qquad-\frac{\sqrt{2L\mu_y}}{16}(-\nabla_yG(x_{t+1},y_{t+1};z_t)+n_{t+1}-v_{t+1})
 \end{aligned}\right\rangle. \label{eq:fixed-error-cancellation}
\end{align}

Add~\eqref{eq:fixed-A-increment} and~\eqref{eq:fixed-B-increment} and divide by $h$.
After applying~\eqref{eq:fixed-monotone-use} to the two increment inner products, the momentum increment and the dual error satisfy
\begin{align*}
 &-\frac1h\langle-\nabla_yG(x_{t+1},y_{t+1};z_t)+n_{t+1}-v_{t+1},v_{t+1}-v_t\rangle\\
 &-\frac1h\left\langle e_{y,t},
 -\nabla_yG(x_{t+1},y_{t+1};z_t)+n_{t+1} +\nabla_yG(x_t,y_t;z_t)-n_t\right\rangle\\
 &=-\frac1h\left\langle e_{y,t},
 -\nabla_yG(x_{t+1},y_{t+1};z_t)+n_{t+1}-v_{t+1} +\nabla_yG(x_t,y_t;z_t)-n_t+v_t\right\rangle\\
 &-\frac1h\langle-\nabla_yG(x_{t+1},y_{t+1};z_t)+n_{t+1}-v_{t+1}+e_{y,t},v_{t+1}-v_t\rangle\\
 &=-\frac1h\left\langle e_{y,t},
 -\nabla_yG(x_{t+1},y_{t+1};z_t)+n_{t+1}-v_{t+1} +\nabla_yG(x_t,y_t;z_t)-n_t+v_t\right\rangle\\
 &+\frac L2\left\langle\begin{aligned}
 &-\nabla_yG(x_{t+1},y_{t+1};z_t)+n_{t+1}-v_{t+1}+e_{y,t},\\
 &-\nabla_yG(x_{t+1},y_{t+1};z_t)+n_{t+1}\end{aligned}\right\rangle\\
 &-\frac{\sqrt{2L\mu_y}}{16}
 \left\langle\begin{aligned}
 &-\nabla_yG(x_{t+1},y_{t+1};z_t)+n_{t+1}-v_{t+1}+e_{y,t},\\
 &-\nabla_yG(x_{t+1},y_{t+1};z_t)+n_{t+1}-v_{t+1}\end{aligned}\right\rangle.
\end{align*}
Substituting~\eqref{eq:fixed-update} into the two displacement inner products in the increment of $A_{z_t}$ gives
\begin{align}
 &\frac{\begin{aligned}A_{z_t}(w_{t+1})-A_{z_t}(w_t)\\+B_{z_t}(w_{t+1})-B_{z_t}(w_t)\end{aligned}}h\nonumber\\
 &\le\frac{16L-\sqrt{2L\mu_y}}{32}\|\nabla_xG(x_{t+1},y_{t+1};z_t)+\xi_{t+1}\|^2\nonumber\\
 &\quad-L\|\nabla_xG(x_{t+1},y_{t+1};z_t)+\xi_{t+1}+e_{x,t}\|^2\nonumber\\
 &-\mu_y\|-\nabla_yG(x_{t+1},y_{t+1};z_t)+n_{t+1}-v_{t+1}+e_{y,t}\|^2\nonumber\\
 &+\frac{16L-\sqrt{2L\mu_y}}{32}
 \langle\nabla_xG(x_{t+1},y_{t+1};z_t)+\xi_{t+1},e_{x,t}\rangle\nonumber\\
 &+\frac{\sqrt{2L\mu_y}}{32}
 \langle-\nabla_yG(x_{t+1},y_{t+1};z_t)+n_{t+1},
        -\nabla_yG(x_{t+1},y_{t+1};z_t)+n_{t+1}-v_{t+1}+e_{y,t}\rangle\nonumber\\
 &-\frac{\sqrt{2L\mu_y}}{16}
 \left\langle\begin{aligned}
 &-\nabla_yG(x_{t+1},y_{t+1};z_t)+n_{t+1}-v_{t+1},\\
 &-\nabla_yG(x_{t+1},y_{t+1};z_t)+n_{t+1}-v_{t+1}+e_{y,t}
 \end{aligned}\right\rangle\nonumber\\
 &-\frac1h\left\langle e_{x,t},
 \nabla_xG(x_{t+1},y_{t+1};z_t)+\xi_{t+1} -\nabla_xG(x_t,y_t;z_t)-\xi_t\right\rangle\nonumber\\
 &-\frac1h\left\langle e_{y,t},
 -\nabla_yG(x_{t+1},y_{t+1};z_t)+n_{t+1}-v_{t+1} +\nabla_yG(x_t,y_t;z_t)-n_t+v_t\right\rangle\nonumber\\
 &-\frac1{2h}\left\|\begin{aligned}&\nabla_xG(x_{t+1},y_{t+1};z_t)+\xi_{t+1}\\
 &-\nabla_xG(x_t,y_t;z_t)-\xi_t -\frac{16L-\sqrt{2L\mu_y}}{32}(x_{t+1}-x_t)\end{aligned}\right\|^2\nonumber\\
 &-\frac1{2h}\left\|-\nabla_yG(x_{t+1},y_{t+1};z_t)+n_{t+1}-v_{t+1} +\nabla_yG(x_t,y_t;z_t)-n_t+v_t\right\|^2\nonumber\\
 &-\frac{\mu_y(16L-\sqrt{2L\mu_y})}{64h}\|y_{t+1}-y_t\|^2.
 \label{eq:fixed-AB-increment}
\end{align}
The coefficient of the cross term involving $-\nabla_yG(x_{t+1},y_{t+1};z_t)+n_{t+1}$ is
\[
 -\frac{16L-\sqrt{2L\mu_y}}{32}+\frac L2
 =\frac{\sqrt{2L\mu_y}}{32}.
\]
This term will cancel after the increment of $C_{z_t}$ is added.

\smallskip
\noindent\textbf{Step 3: Adding the increment of $C_{z_t}$ and canceling the coupling terms.}
Adding and subtracting $\langle v_{t+1},y_t-y^\star(z_t)\rangle$ in the momentum cross-term difference gives
\begin{align*}
 &\langle v_{t+1},y_{t+1}-y^\star(z_t)\rangle-\langle v_t,y_t-y^\star(z_t)\rangle\\
 &=\langle v_{t+1}-v_t,y_t-y^\star(z_t)\rangle+\langle v_{t+1},y_{t+1}-y_t\rangle\\
 &=\langle v_{t+1}-v_t,y_{t+1}-y^\star(z_t)\rangle+\langle v_{t+1},y_{t+1}-y_t\rangle
        -\langle v_{t+1}-v_t,y_{t+1}-y_t\rangle.
\end{align*}
For the squared distance, we use
\[
 \frac12\bigl(\|y_{t+1}-y^\star(z_t)\|^2-\|y_t-y^\star(z_t)\|^2\bigr)
 =\langle y_{t+1}-y^\star(z_t),y_{t+1}-y_t\rangle-\frac12\|y_{t+1}-y_t\|^2.
\]
Hence,
\begin{align}
 &C_{z_t}(w_{t+1})-C_{z_t}(w_t)\nonumber\\
 &={} 
 \frac{\sqrt{2L\mu_y}}{32}\langle v_{t+1}-v_t,y_{t+1}-y^\star(z_t)\rangle\nonumber\\
 &+\frac{\sqrt{2L\mu_y}}{32}\langle v_{t+1},y_{t+1}-y_t\rangle
 -\frac{\sqrt{2L\mu_y}}{32}\langle v_{t+1}-v_t,y_{t+1}-y_t\rangle\nonumber\\
 &+\frac{L\mu_y}{256}\langle y_{t+1}-y^\star(z_t),y_{t+1}-y_t\rangle
 -\frac{L\mu_y}{512}\|y_{t+1}-y_t\|^2\nonumber\\
 &+\frac{\sqrt{2L\mu_y}}{512L}\bigl(\|v_{t+1}\|^2-\|v_t\|^2\bigr).
 \label{eq:fixed-C-increment}
\end{align}
By the definitions of the three components, removing the last term gives
\begin{align*}
 &A_{z_t}(w_{t+1})-A_{z_t}(w_t)+B_{z_t}(w_{t+1})-B_{z_t}(w_t)\\
 &\quad+C_{z_t}(w_{t+1})-C_{z_t}(w_t)\\
 &-\frac{\sqrt{2L\mu_y}}{512L}\bigl(\|v_{t+1}\|^2-\|v_t\|^2\bigr)
 =\frac L2\bigl(\mathcal E_{z_t}(w_{t+1})-\mathcal E_{z_t}(w_t)\bigr).
\end{align*}
Thus, adding~\eqref{eq:fixed-A-increment}, \eqref{eq:fixed-B-increment}, and~\eqref{eq:fixed-C-increment}
gives the complete bound on the potential difference:

\begin{align}
&\frac L2\bigl(\mathcal E_{z_t}(w_{t+1})-\mathcal E_{z_t}(w_t)\bigr)\nonumber\\
&\le-\frac{16L-\sqrt{2L\mu_y}}{32}\langle \nabla_xG(x_{t+1},y_{t+1};z_t)+\xi_{t+1},x_{t+1}-x_t\rangle\nonumber\\
 &+\frac{16L-\sqrt{2L\mu_y}}{32}\langle -\nabla_yG(x_{t+1},y_{t+1};z_t)+n_{t+1},y_{t+1}-y_t\rangle\nonumber\\
 &+\langle \nabla_xG(x_{t+1},y_{t+1};z_t)+\xi_{t+1},\nabla_xG(x_{t+1},y_{t+1};z_t)+\xi_{t+1}-\nabla_xG(x_t,y_t;z_t)-\xi_t\rangle\nonumber\\
 &+\left\|-\nabla_yG(x_{t+1},y_{t+1};z_t)+n_{t+1}-v_{t+1}\right\|^2\nonumber\\
 &+\left\langle -\nabla_yG(x_{t+1},y_{t+1};z_t)+n_{t+1}-v_{t+1},\nabla_yG(x_t,y_t;z_t)-n_t+v_t\right\rangle\nonumber\\
 &+\frac{\sqrt{2L\mu_y}}{32}\langle v_{t+1}-v_t,y_{t+1}-y^\star(z_t)\rangle\nonumber\\
 &+\frac{\sqrt{2L\mu_y}}{32}\langle v_{t+1},y_{t+1}-y_t\rangle\nonumber\\
 &-\frac{\sqrt{2L\mu_y}}{32}\langle v_{t+1}-v_t,y_{t+1}-y_t\rangle\nonumber\\
 &+\frac{L\mu_y}{256}\langle y_{t+1}-y^\star(z_t),y_{t+1}-y_t\rangle\nonumber\\
 &-\frac12\left\|\begin{aligned}&\nabla_xG(x_{t+1},y_{t+1};z_t)+\xi_{t+1}\\
 &-\nabla_xG(x_t,y_t;z_t)-\xi_t-\frac{16L-\sqrt{2L\mu_y}}{32}(x_{t+1}-x_t)\end{aligned}\right\|^2\nonumber\\
 &-\frac12\|-\nabla_yG(x_{t+1},y_{t+1};z_t)+n_{t+1}-v_{t+1}+\nabla_yG(x_t,y_t;z_t)-n_t+v_t\|^2\nonumber\\
 &-\left(\frac{\mu_y(16L-\sqrt{2L\mu_y})}{64}+\frac{L\mu_y}{512}\right)\|y_{t+1}-y_t\|^2. \label{eq:fixed-energy-difference}
\end{align}

Next, apply~\eqref{eq:fixed-update}--\eqref{eq:fixed-momentum} to~\eqref{eq:fixed-C-increment}
and rewrite $v_{t+1}$ as
\[
 v_{t+1}=\bigl(-\nabla_yG(x_{t+1},y_{t+1};z_t)+n_{t+1}\bigr)
      -\bigl(-\nabla_yG(x_{t+1},y_{t+1};z_t)+n_{t+1}-v_{t+1}\bigr).
\]
The two terms involving the dual residual and the distance cancel because
\[
 \frac{\sqrt{2L\mu_y}}{32}\frac{\sqrt{2L\mu_y}}{16}
 -\frac{L\mu_y}{256}=0.
\]
Expanding the remaining terms gives
\begin{align}
 &\frac{C_{z_t}(w_{t+1})-C_{z_t}(w_t)}h
 -\frac{\sqrt{2L\mu_y}}{512Lh}\bigl(\|v_{t+1}\|^2-\|v_t\|^2\bigr)\nonumber\\
 &=-\frac{\sqrt{2L\mu_y}}{32}
 \langle-\nabla_yG(x_{t+1},y_{t+1};z_t)+n_{t+1},
        -\nabla_yG(x_{t+1},y_{t+1};z_t)+n_{t+1}-v_{t+1}+e_{y,t}\rangle\nonumber\\
 &+\frac{\sqrt{2L\mu_y}}{32}\|-\nabla_yG(x_{t+1},y_{t+1};z_t)+n_{t+1}-v_{t+1}\|^2\nonumber\\
 &+\frac{\sqrt{2L\mu_y}}{32}
 \langle-\nabla_yG(x_{t+1},y_{t+1};z_t)+n_{t+1}-v_{t+1},e_{y,t}\rangle\nonumber\\
 &-\frac{L\sqrt{2L\mu_y}}{64}
 \langle-\nabla_yG(x_{t+1},y_{t+1};z_t)+n_{t+1},y_{t+1}-y^\star(z_t)\rangle\nonumber\\
 &-\frac{L\mu_y}{256}\langle y_{t+1}-y^\star(z_t),e_{y,t}\rangle
 -\frac{\sqrt{2L\mu_y}}{32h}\langle v_{t+1}-v_t,y_{t+1}-y_t\rangle\nonumber\\
 &-\frac{L\mu_y}{512h}\|y_{t+1}-y_t\|^2.
 \label{eq:fixed-C-update}
\end{align}
When this identity is added to~\eqref{eq:fixed-AB-increment},
the two terms involving $\langle-\nabla_yG(x_{t+1},y_{t+1};z_t)+n_{t+1},-\nabla_yG(x_{t+1},y_{t+1};z_t)+n_{t+1}-v_{t+1}+e_{y,t}\rangle$ cancel.
The coefficients of the squared residual and its inner product with the error both become
\[
 -\frac{\sqrt{2L\mu_y}}{16}+\frac{\sqrt{2L\mu_y}}{32}
 =-\frac{\sqrt{2L\mu_y}}{32}.
\]

\smallskip
\noindent\textbf{Step 4: Adding the endpoint weight and estimating the current-iterate terms.}
First add~\eqref{eq:fixed-AB-increment} and~\eqref{eq:fixed-C-update},
and then add the weighted sum of the three endpoint components excluding the kinetic energy:
\[
 \frac{\sqrt{2L\mu_y}}{32}
 \left[A_{z_t}(w_{t+1})+B_{z_t}(w_{t+1})+C_{z_t}(w_{t+1})
       -\frac{\sqrt{2L\mu_y}}{512L}\|v_{t+1}\|^2\right]
 =\frac{L\sqrt{2L\mu_y}}{64}\mathcal E_{z_t}(w_{t+1}).
\]
Expanding the right-hand side according to the potential definition gives

\begin{align}
 &\frac{L\sqrt{2L\mu_y}}{64}\mathcal E_{z_t}(w_{t+1})\nonumber\\
 &=-\frac{\sqrt{2L\mu_y}(16L-\sqrt{2L\mu_y})}{1024}\bigl(G(x_{t+1},y_{t+1};z_t)-G^\star(z_t)\bigr)\nonumber\\
 &+\frac{\sqrt{2L\mu_y}}{64}\bigl(\|\nabla_xG(x_{t+1},y_{t+1};z_t)+\xi_{t+1}\|^2+\|-\nabla_yG(x_{t+1},y_{t+1};z_t)+n_{t+1}-v_{t+1}\|^2\bigr)\nonumber\\
 &+\frac{L\mu_y}{512}\langle v_{t+1},y_{t+1}-y^\star(z_t)\rangle
 +\frac{L\mu_y\sqrt{2L\mu_y}}{16384}\|y_{t+1}-y^\star(z_t)\|^2.
 \label{eq:fixed-scaled-energy}
\end{align}
The terms associated with the function value combine as follows:
\begin{align*}
 &-\frac{\sqrt{2L\mu_y}(16L-\sqrt{2L\mu_y})}{1024}
 \bigl(G(x_{t+1},y_{t+1};z_t)-G^\star(z_t)\bigr)\\
 &-\frac{\sqrt{2L\mu_y}(16L-\sqrt{2L\mu_y})}{1024}
 \langle -\nabla_yG(x_{t+1},y_{t+1};z_t)+n_{t+1},y_{t+1}-y^\star(z_t)\rangle\\
 &\le
 -\frac{L\sqrt{2L\mu_y}(16L-\sqrt{2L\mu_y})}{2048}\|x_{t+1}-x^\star(z_t)\|^2\\
 &\quad-\frac{\mu_y\sqrt{2L\mu_y}(16L-\sqrt{2L\mu_y})}{2048}\|y_{t+1}-y^\star(z_t)\|^2,
\end{align*}
Here, we have used~\eqref{eq:saddle-growth}.

We retain both negative squared distances from $x_{t+1}$ and $y_{t+1}$ to the saddle point.

Denote the current-iterate terms and the increment terms resulting from the three-component sum by $(\mathrm i)$ and $(\mathrm{ii})$, respectively:
\begin{align}
 &(\mathrm i):={} \frac{32L-\sqrt{2L\mu_y}}{64}\|\nabla_xG(x_{t+1},y_{t+1};z_t)+\xi_{t+1}\|^2\nonumber\\
 &-L\|\nabla_xG(x_{t+1},y_{t+1};z_t)+\xi_{t+1}+e_{x,t}\|^2\nonumber\\
 &-\mu_y\|-\nabla_yG(x_{t+1},y_{t+1};z_t)+n_{t+1}-v_{t+1}+e_{y,t}\|^2\nonumber\\
 &-\frac{\sqrt{2L\mu_y}}{64}\|-\nabla_yG(x_{t+1},y_{t+1};z_t)+n_{t+1}-v_{t+1}\|^2\nonumber\\
 &+\frac{16L-\sqrt{2L\mu_y}}{32}\langle \nabla_xG(x_{t+1},y_{t+1};z_t)+\xi_{t+1},e_{x,t}\rangle\nonumber\\
 &-\frac{\sqrt{2L\mu_y}}{32}\langle -\nabla_yG(x_{t+1},y_{t+1};z_t)+n_{t+1}-v_{t+1},e_{y,t}\rangle\nonumber\\
 &-\frac{L\mu_y}{256}\langle y_{t+1}-y^\star(z_t),e_{y,t}\rangle\nonumber\\
 &-\frac{L\mu_y}{512}\langle -\nabla_yG(x_{t+1},y_{t+1};z_t)+n_{t+1}-v_{t+1},y_{t+1}-y^\star(z_t)\rangle\nonumber\\
 &+\frac{L\mu_y\sqrt{2L\mu_y}}{16384}\|y_{t+1}-y^\star(z_t)\|^2\nonumber\\
 &-\frac{\mu_y\sqrt{2L\mu_y}(16L-\sqrt{2L\mu_y})}{2048}\|y_{t+1}-y^\star(z_t)\|^2\nonumber\\
 &-\frac{L\sqrt{2L\mu_y}(16L-\sqrt{2L\mu_y})}{2048}\|x_{t+1}-x^\star(z_t)\|^2, \label{eq:fixed-current-block}\\
 &(\mathrm{ii}):={}\nonumber\\
 &-\frac1{2h}\left\|\begin{aligned}&\nabla_xG(x_{t+1},y_{t+1};z_t)+\xi_{t+1}\\
 &-\nabla_xG(x_t,y_t;z_t)-\xi_t-\frac{16L-\sqrt{2L\mu_y}}{32}(x_{t+1}-x_t)\end{aligned}\right\|^2\nonumber\\
 &-\frac1{2h}\|-\nabla_yG(x_{t+1},y_{t+1};z_t)+n_{t+1}-v_{t+1}+\nabla_yG(x_t,y_t;z_t)-n_t+v_t\|^2\nonumber\\
 &-\left(\frac{\mu_y(16L-\sqrt{2L\mu_y})}{64h}+\frac{L\mu_y}{512h}\right)\|y_{t+1}-y_t\|^2\nonumber\\
 &-\frac{\sqrt{2L\mu_y}}{32h}\langle v_{t+1}-v_t,y_{t+1}-y_t\rangle\nonumber\\
 &-\frac1h\langle e_{x,t},\nabla_xG(x_{t+1},y_{t+1};z_t)+\xi_{t+1}-\nabla_xG(x_t,y_t;z_t)-\xi_t\rangle\nonumber\\
 &-\frac1h\langle e_{y,t},-\nabla_yG(x_{t+1},y_{t+1};z_t)+n_{t+1}-v_{t+1}+\nabla_yG(x_t,y_t;z_t)-n_t+v_t\rangle. \label{eq:fixed-increment-block}
\end{align}

Adding the three component increments and the endpoint weight, and then applying the saddle-point growth inequality, gives

\begin{equation}
 \frac L{2h}\left[
 \mathcal E_{z_t}(w_{t+1})-\mathcal E_{z_t}(w_t)
 +\frac{h\sqrt{2L\mu_y}}{32}\mathcal E_{z_t}(w_{t+1})
 \right]\le(\mathrm i)+(\mathrm{ii}).
 \label{eq:fixed-decomposition}
\end{equation}

We now estimate the first block obtained from the three-component sum.

Expanding the squared $x$ terms yields
\begin{align}
 &\frac{32L-\sqrt{2L\mu_y}}{64}\|\nabla_xG(x_{t+1},y_{t+1};z_t)+\xi_{t+1}\|^2\nonumber\\
 &-L\|\nabla_xG(x_{t+1},y_{t+1};z_t)+\xi_{t+1}+e_{x,t}\|^2\nonumber\\
 &\quad+\frac{16L-\sqrt{2L\mu_y}}{32}\langle \nabla_xG(x_{t+1},y_{t+1};z_t)+\xi_{t+1},e_{x,t}\rangle\nonumber\\
 &=-\frac{32L+\sqrt{2L\mu_y}}{64}\|\nabla_xG(x_{t+1},y_{t+1};z_t)+\xi_{t+1}\|^2\nonumber\\
 &-\frac{48L+\sqrt{2L\mu_y}}{32}\langle \nabla_xG(x_{t+1},y_{t+1};z_t)+\xi_{t+1},e_{x,t}\rangle-L\|e_{x,t}\|^2\nonumber\\
 &\le-\frac L4\|\nabla_xG(x_{t+1},y_{t+1};z_t)+\xi_{t+1}\|^2+4L\|e_{x,t}\|^2. \label{eq:fixed-current-x}
\end{align}
Here, we have used $\sqrt{2L\mu_y}\le2L$ and
\[
 2L\|\nabla_xG(x_{t+1},y_{t+1};z_t)+\xi_{t+1}\|\|e_{x,t}\|\le\frac L4\|\nabla_xG(x_{t+1},y_{t+1};z_t)+\xi_{t+1}\|^2+4L\|e_{x,t}\|^2.
\]
The remaining three cross terms satisfy
\begin{align}
 &\frac{\sqrt{2L\mu_y}}{32}|\langle -\nabla_yG(x_{t+1},y_{t+1};z_t)+n_{t+1}-v_{t+1},e_{y,t}\rangle|\nonumber\\
 &\le\frac{\sqrt{2L\mu_y}}{256}\|-\nabla_yG(x_{t+1},y_{t+1};z_t)+n_{t+1}-v_{t+1}\|^2+\frac{\sqrt{2L\mu_y}}{16}\|e_{y,t}\|^2, \label{eq:fixed-young-qe}\\
 &\frac{L\mu_y}{512}|\langle -\nabla_yG(x_{t+1},y_{t+1};z_t)+n_{t+1}-v_{t+1},y_{t+1}-y^\star(z_t)\rangle|\nonumber\\
 &\le\frac{\sqrt{2L\mu_y}}{256}\|-\nabla_yG(x_{t+1},y_{t+1};z_t)+n_{t+1}-v_{t+1}\|^2\nonumber\\
 &+\frac{L\mu_y\sqrt{2L\mu_y}}{8192}\|y_{t+1}-y^\star(z_t)\|^2, \label{eq:fixed-young-qy}\\
 &\frac{L\mu_y}{256}|\langle y_{t+1}-y^\star(z_t),e_{y,t}\rangle|\nonumber\\
 &\le\frac{L\mu_y\sqrt{2L\mu_y}}{16384}\|y_{t+1}-y^\star(z_t)\|^2 +\frac{\sqrt{2L\mu_y}}{32}\|e_{y,t}\|^2. \label{eq:fixed-young-ye}
\end{align}
After these estimates are added, the coefficient of $\|-\nabla_yG(x_{t+1},y_{t+1};z_t)+n_{t+1}-v_{t+1}\|^2$ is $-\sqrt{2L\mu_y}/128$.
The coefficient of the squared dual distance satisfies
\begin{align}
 &\frac{L\mu_y\sqrt{2L\mu_y}}{4096}
 -\frac{\mu_y\sqrt{2L\mu_y}(16L-\sqrt{2L\mu_y})}{2048}
 \le-\frac{L\mu_y\sqrt{2L\mu_y}}{256}.
 \label{eq:fixed-current-y}
\end{align}
Indeed, $\sqrt{2L\mu_y}\le2L$ implies that the left-hand coefficient is at most
$-27L\mu_y\sqrt{2L\mu_y}/4096$.
Finally, dropping the nonpositive term
\[
 -\mu_y\|-\nabla_yG(x_{t+1},y_{t+1};z_t)+n_{t+1}-v_{t+1}+e_{y,t}\|^2,
\]
and bounding the coefficient of $\|e_{y,t}\|^2$ by $3\sqrt{2L\mu_y}/32\le4L$, we obtain

\begin{align}
 &(\mathrm i)\le{} -\frac L4\|\nabla_xG(x_{t+1},y_{t+1};z_t)+\xi_{t+1}\|^2\nonumber\\
 &-\frac{\sqrt{2L\mu_y}}{128}\|-\nabla_yG(x_{t+1},y_{t+1};z_t)+n_{t+1}-v_{t+1}\|^2 -\frac{L\mu_y\sqrt{2L\mu_y}}{256}\|y_{t+1}-y^\star(z_t)\|^2\nonumber\\
 &-\frac{L\sqrt{2L\mu_y}(16L-\sqrt{2L\mu_y})}{2048}\|x_{t+1}-x^\star(z_t)\|^2 +4L\bigl(\|e_{x,t}\|^2+\|e_{y,t}\|^2\bigr). \label{eq:fixed-current-estimate}
\end{align}

\smallskip
\noindent\textbf{Step 5: Absorbing the increment errors and completing the kinetic-energy contraction.}
The negative squared terms in the second block absorb the projection error.

By $\|a-b\|^2\ge\|a\|^2/2-\|b\|^2$ and~\eqref{eq:fixed-update},
\begin{align}
 &-\frac1{2h}\left\|\begin{aligned}&\nabla_xG(x_{t+1},y_{t+1};z_t)+\xi_{t+1}\\
 &-\nabla_xG(x_t,y_t;z_t)-\xi_t-\frac{16L-\sqrt{2L\mu_y}}{32}(x_{t+1}-x_t)\end{aligned}\right\|^2\nonumber\\
 &-\frac1{2h}\|-\nabla_yG(x_{t+1},y_{t+1};z_t)+n_{t+1}-v_{t+1}+\nabla_yG(x_t,y_t;z_t)-n_t+v_t\|^2\nonumber\\
 &\le-\frac1{4h}\mathcal I_{z_t}(w_{t+1},w_t)\nonumber\\
 &+\frac{h(16L-\sqrt{2L\mu_y})^2}{1024}\bigl(\|\nabla_xG(x_{t+1},y_{t+1};z_t)+\xi_{t+1}\|^2+\|e_{x,t}\|^2\bigr). \label{eq:fixed-negative-square}
\end{align}
For the feedback increment, the identity
\[
 \begin{aligned}
 v_{t+1}-v_t={}&-\nabla_yG(x_{t+1},y_{t+1};z_t)+\nabla_yG(x_t,y_t;z_t)+(n_{t+1}-n_t)\\
 &-\left(-\nabla_yG(x_{t+1},y_{t+1};z_t)+n_{t+1}-v_{t+1} +\nabla_yG(x_t,y_t;z_t)-n_t+v_t\right)
 \end{aligned}
\]
and $\langle n_{t+1}-n_t,y_{t+1}-y_t\rangle\ge0$ allow us to drop the nonpositive term due to the normal-cone increment.
Applying $L_G$-Lipschitz continuity only to $\nabla_yG(\cdot,\cdot;z_t)$ then gives
\begin{align}
 &-\frac{\sqrt{2L\mu_y}}{32h}\langle v_{t+1}-v_t,y_{t+1}-y_t\rangle\nonumber\\
 &\le\frac{L_G\sqrt{2L\mu_y}}{32h} \sqrt{\|x_{t+1}-x_t\|^2+\|y_{t+1}-y_t\|^2}\,\|y_{t+1}-y_t\|\nonumber\\
 &+\frac{\sqrt{2L\mu_y}}{32h}\|-\nabla_yG(x_{t+1},y_{t+1};z_t)+n_{t+1}-v_{t+1}+\nabla_yG(x_t,y_t;z_t)-n_t+v_t\|\|y_{t+1}-y_t\|\nonumber\\
 &\le\frac{L_G\sqrt{2L\mu_y}}{64h}\|x_{t+1}-x_t\|^2\nonumber\\
 &+\left(\frac{3L_G\sqrt{2L\mu_y}}{64h}+\frac{L\mu_y}{64h}\right)\|y_{t+1}-y_t\|^2\nonumber\\
 &+\frac1{32h}\|-\nabla_yG(x_{t+1},y_{t+1};z_t)+n_{t+1}-v_{t+1}+\nabla_yG(x_t,y_t;z_t)-n_t+v_t\|^2\nonumber\\
 &\le\frac{hL_G\sqrt{2L\mu_y}}{32}\bigl(\|\nabla_xG(x_{t+1},y_{t+1};z_t)+\xi_{t+1}\|^2+\|e_{x,t}\|^2\bigr)\nonumber\\
 &+\left(\frac{3hL_G\sqrt{2L\mu_y}}{32}+\frac{hL\mu_y}{32}\right) \bigl(\|-\nabla_yG(x_{t+1},y_{t+1};z_t)+n_{t+1}-v_{t+1}\|^2+\|e_{y,t}\|^2\bigr)\nonumber\\
 &+\frac1{32h}\mathcal I_{z_t}(w_{t+1},w_t). \label{eq:fixed-momentum-cross}
\end{align}
The second inequality uses
$\sqrt{a^2+b^2}\,b\le a^2/2+3b^2/2$ for $a,b\ge0$ and Young's inequality.
By~\eqref{eq:error} and the Cauchy--Schwarz inequality,
\begin{align}
 &-\frac1h\langle e_{x,t},\nabla_xG(x_{t+1},y_{t+1};z_t)+\xi_{t+1}-\nabla_xG(x_t,y_t;z_t)-\xi_t\rangle\nonumber\\
 &-\frac1h\langle e_{y,t},-\nabla_yG(x_{t+1},y_{t+1};z_t)+n_{t+1}-v_{t+1}+\nabla_yG(x_t,y_t;z_t)-n_t+v_t\rangle\nonumber\\
 &\le\frac1h\sqrt{\|e_{x,t}\|^2+\|e_{y,t}\|^2}\, \sqrt{\mathcal I_{z_t}(w_{t+1},w_t)}\nonumber\\
 &\le\frac{2hL_G}{h}\mathcal I_{z_t}(w_{t+1},w_t). \label{eq:fixed-increment-error}
\end{align}
Moreover, $L\le L_G$, $\sqrt{2L\mu_y}\le2L$, and $\sqrt{2L\mu_y}\le2L_G$ imply
\begin{align}
 \frac{h(16L-\sqrt{2L\mu_y})^2}{1024}+\frac{hL_G\sqrt{2L\mu_y}}{32}
 &\le\frac{5LhL_G}{16},
 \label{eq:fixed-increment-coeff-x}\\
 \frac{3hL_G\sqrt{2L\mu_y}}{32}+\frac{hL\mu_y}{32}
 &\le\frac{hL_G\sqrt{2L\mu_y}}{8}.
 \label{eq:fixed-increment-coeff-y}
\end{align}
Both coefficients are at most $L/2$.
Adding~\eqref{eq:fixed-negative-square}, \eqref{eq:fixed-momentum-cross}, and~\eqref{eq:fixed-increment-error},
using~\eqref{eq:fixed-increment-coeff-x}--\eqref{eq:fixed-increment-coeff-y},
and dropping the explicit nonpositive $\|y_{t+1}-y_t\|^2$ term in~\eqref{eq:fixed-increment-block}, we obtain

\begin{align}
 &(\mathrm{ii})\le{} \frac{5LhL_G}{16}\|\nabla_xG(x_{t+1},y_{t+1};z_t)+\xi_{t+1}\|^2\nonumber\\
 &+\frac{hL_G\sqrt{2L\mu_y}}{8}\|-\nabla_yG(x_{t+1},y_{t+1};z_t)+n_{t+1}-v_{t+1}\|^2 +\frac L2\bigl(\|e_{x,t}\|^2+\|e_{y,t}\|^2\bigr)\nonumber\\
 &-\frac{7/32-2hL_G}{h}\mathcal I_{z_t}(w_{t+1},w_t). \label{eq:fixed-increment-estimate}
\end{align}

Substituting~\eqref{eq:fixed-current-estimate} and~\eqref{eq:fixed-increment-estimate} into~\eqref{eq:fixed-decomposition} gives
\begin{align}
 &\frac L{2h}\left[ \mathcal E_{z_t}(w_{t+1})-\mathcal E_{z_t}(w_t)+\frac{h\sqrt{2L\mu_y}}{32}\mathcal E_{z_t}(w_{t+1}) \right]\nonumber\\
 &\le-\frac{(4-5hL_G)L}{16}\|\nabla_xG(x_{t+1},y_{t+1};z_t)+\xi_{t+1}\|^2\nonumber\\
 &-\frac{(1-16hL_G)\sqrt{2L\mu_y}}{128}\|-\nabla_yG(x_{t+1},y_{t+1};z_t)+n_{t+1}-v_{t+1}\|^2\nonumber\\
 &-\frac{L\mu_y\sqrt{2L\mu_y}}{256}\|y_{t+1}-y^\star(z_t)\|^2 -\frac{L\sqrt{2L\mu_y}(16L-\sqrt{2L\mu_y})}{2048}\|x_{t+1}-x^\star(z_t)\|^2\nonumber\\
 &+\frac{9L}{2}\bigl(\|e_{x,t}\|^2+\|e_{y,t}\|^2\bigr) -\frac{7/32-2hL_G}{h}\mathcal I_{z_t}(w_{t+1},w_t). \label{eq:fixed-before-absorption}
\end{align}
By~\eqref{eq:error} and $L\le L_G$,
\[
 \frac{9L}{2}\bigl(\|e_{x,t}\|^2+\|e_{y,t}\|^2\bigr)
 \le\frac{18(hL_G)^3}{h}\mathcal I_{z_t}(w_{t+1},w_t).
\]
Substituting $hL_G=1/64$ gives
\begin{align*}
 \frac{4-5hL_G}{16}&=\frac{251}{1024}\ge\frac18,\\
 \frac{1-16hL_G}{128}&=\frac3{512}\ge\frac1{256},\\
 \frac7{32}-2hL_G-18(hL_G)^3&=\frac{24567}{131072}\ge\frac18.
\end{align*}
Multiplying both sides of~\eqref{eq:fixed-before-absorption} by $2h/L$ therefore yields
\begin{align}
 &\mathcal E_{z_t}(w_{t+1})-\mathcal E_{z_t}(w_t)+\frac{h\sqrt{2L\mu_y}}{32}\mathcal E_{z_t}(w_{t+1})\nonumber\\
 &\le-\frac h4\|\nabla_xG(x_{t+1},y_{t+1};z_t)+\xi_{t+1}\|^2\nonumber\\
 &-\frac{h\sqrt{2L\mu_y}}{128L}\|-\nabla_yG(x_{t+1},y_{t+1};z_t)+n_{t+1}-v_{t+1}\|^2\nonumber\\
 &-\frac{h\mu_y\sqrt{2L\mu_y}}{128}\|y_{t+1}-y^\star(z_t)\|^2 -\frac{h\sqrt{2L\mu_y}(16L-\sqrt{2L\mu_y})}{1024}\|x_{t+1}-x^\star(z_t)\|^2\nonumber\\
 &-\frac1{4L}\mathcal I_{z_t}(w_{t+1},w_t). \label{eq:fixed-base}
\end{align}

It remains to estimate the kinetic-energy difference in $C_{z_t}$ and its endpoint weight. By~\eqref{eq:fixed-momentum},
\begin{equation}
 \left(1+\frac{hL}{2}\right)v_{t+1}
 =v_t-\frac{h(8L-\sqrt{2L\mu_y})}{16}(-\nabla_yG(x_{t+1},y_{t+1};z_t)+n_{t+1}-v_{t+1}).
 \label{eq:fixed-v-filter}
\end{equation}
Since $0\le(8L-\sqrt{2L\mu_y})/16\le L/2$, the difference-of-squares identity and Young's inequality give
\begin{align}
 &\|v_t\|^2-\|v_{t+1}\|^2\nonumber\\
 &\ge hL\|v_{t+1}\|^2
 +\frac{h(8L-\sqrt{2L\mu_y})}8\langle v_{t+1},-\nabla_yG(x_{t+1},y_{t+1};z_t)+n_{t+1}-v_{t+1}\rangle\nonumber\\
 &\ge\frac{hL}{2}\|v_{t+1}\|^2-\frac{hL}{2}\|-\nabla_yG(x_{t+1},y_{t+1};z_t)+n_{t+1}-v_{t+1}\|^2.
 \label{eq:fixed-v-energy}
\end{align}
Multiplying this inequality by $\sqrt{2L\mu_y}/(256L^2)$, rearranging, and adding the contraction term for the updated kinetic energy yields
\begin{align}
 &\frac{\sqrt{2L\mu_y}}{256L^2}\bigl(\|v_{t+1}\|^2-\|v_t\|^2\bigr)
 +\frac{h\mu_y}{4096L}\|v_{t+1}\|^2\nonumber\\
 &\le\frac{h\sqrt{2L\mu_y}}{512L}\|-\nabla_yG(x_{t+1},y_{t+1};z_t)+n_{t+1}-v_{t+1}\|^2
 -\frac{h\sqrt{2L\mu_y}}{512L}\left(1-\frac{\sqrt{2L\mu_y}}{16L}\right)\|v_{t+1}\|^2\nonumber\\
 &\le\frac{h\sqrt{2L\mu_y}}{512L}\|-\nabla_yG(x_{t+1},y_{t+1};z_t)+n_{t+1}-v_{t+1}\|^2
 -\frac{7h\sqrt{2L\mu_y}}{4096L}\|v_{t+1}\|^2.
 \label{eq:fixed-v-absorption}
\end{align}
The last inequality uses $\sqrt{2L\mu_y}\le2L$.
Note that
\[
 \frac{h\mu_y}{4096L}
 =\frac{h\sqrt{2L\mu_y}}{32}\frac{\sqrt{2L\mu_y}}{256L^2}.
\]
Adding~\eqref{eq:fixed-v-absorption} and~\eqref{eq:fixed-base},
the magnitude of the negative coefficient of $\|-\nabla_yG(x_{t+1},y_{t+1};z_t)+n_{t+1}-v_{t+1}\|^2$ becomes $3h\sqrt{2L\mu_y}/(512L)$,
which gives~\eqref{eq:fixedhat}.
Multiplying both sides by $L/2$ and applying~\eqref{eq:fixed-component-sum}
identifies the left-hand side with the three component increments and their weighted endpoint sum in~\eqref{eq:fixed-abc-target}, completing the proof.

\end{proof}

\subsection{Effect of moving the center}

We use the saddle-point sensitivity and envelope smoothness estimates in Appendices~\ref{app:saddle-sensitivity} and~\ref{app:envelope-smoothness} to compare auxiliary errors at different centers.

\begin{lemma}\label{lem:moving-potential}
Fix $w=(x,y,\xi,n,v)$, where $x\in X$, $y\in Y$, $\xi\in N_X(x)$, and $n\in N_Y(y)$.
For any $z'=z+\delta$, we have
\begin{equation}
\begin{aligned}
 &\mathcal E_{z'}(w)-\mathcal E_z(w)\\
 &\quad\le\biggl[
 2L\|x-x^\star(z)\|+4\|\nabla_xG(x,y;z)+\xi\|\\
 &\qquad\quad+\frac18\|v\|
 +\frac{\sqrt{2L\mu_y}}{128}\|y-y^\star(z)\|
 \biggr]\|\delta\|
 +\frac{1025L}{128}\|\delta\|^2.
 \label{eq:moving-potential}
\end{aligned}
\end{equation}
\end{lemma}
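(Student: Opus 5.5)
The plan is a term-by-term comparison. In the definition~\eqref{eq:potential} of $\mathcal E_z(w)$, only four ingredients depend on the center $z$: the gap $G(x,y;z)-G^\star(z)$, the primal residual $\nabla_xG(x,y;z)+\xi$, and the two occurrences of $y^\star(z)$. The dual residual $-\nabla_yG(x,y;z)+n-v$ does not depend on $z$, so its term cancels exactly in $\mathcal E_{z'}(w)-\mathcal E_z(w)$. I will bound each remaining difference separately with $\delta=z'-z$ and add the bounds.

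\emph{Gap term.} Since $G(x,y;z)=f(x,y)+L\|x-z\|^2-\frac\tau2\|y\|^2$ and $G^\star=p$, the change in the gap is
\[
L\bigl(\|x-z'\|^2-\|x-z\|^2\bigr)-\bigl(p(z')-p(z)\bigr)
=-2L\langle x-z,\delta\rangle+L\|\delta\|^2-\bigl(p(z')-p(z)\bigr).
\]
By the $6L$-smoothness in~\eqref{eq:envelope-gradient}, we have $|p(z')-p(z)-\langle\nabla p(z),\delta\rangle|\le3L\|\delta\|^2$. We also have $\nabla p(z)=2L(z-x^\star(z))$. The two linear terms therefore combine into $-2L\langle x-x^\star(z),\delta\rangle$. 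This gives the two-sided bound
\[
\bigl|\text{change in the gap}\bigr|\le2L\|x-x^\star(z)\|\|\delta\|+4L\|\delta\|^2.
\]
The coefficient $(16L-\sqrt{2L\mu_y})/(16L)$ lies in $[0,1]$, so the gap term contributes at most this amount.

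\emph{Primal residual term.} We have $\nabla_xG(x,y;z')=\nabla_xG(x,y;z)-2L\delta$. Writing $a=\nabla_xG(x,y;z)+\xi$, the change is
\[
\frac1L\bigl(\|a-2L\delta\|^2-\|a\|^2\bigr)=-4\langle a,\delta\rangle+4L\|\delta\|^2\le4\|a\|\|\delta\|+4L\|\delta\|^2.
\]

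\emph{Dual-tracking terms.} Set $\Delta^\star=y^\star(z')-y^\star(z)$. By~\eqref{eq:anchor-block-bounds}, $\|\Delta^\star\|\le\sqrt{2L/\mu_y}\,\|\delta\|$. The momentum cross term changes by $-\frac{\sqrt{2L\mu_y}}{16L}\langle v,\Delta^\star\rangle$. Its absolute value is at most $\frac{\sqrt{2L\mu_y}}{16L}\sqrt{2L/\mu_y}\,\|v\|\|\delta\|=\frac18\|v\|\|\delta\|$. The quadratic term changes by $\frac{\mu_y}{256}\bigl(-2\langle y-y^\star(z),\Delta^\star\rangle+\|\Delta^\star\|^2\bigr)$. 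This is at most
\[
\frac{\sqrt{2L\mu_y}}{128}\|y-y^\star(z)\|\|\delta\|+\frac{L}{128}\|\delta\|^2.
\]

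\emph{Collecting.} Summing the four contributions gives exactly the bracketed linear coefficient in~\eqref{eq:moving-potential}. The quadratic coefficient is at most $4L+4L+L/128=1025L/128$.

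The only delicate step is the gap term. There the linear-in-$\delta$ pieces from $\|x-z\|^2$ and from $p$ must cancel down to $x-x^\star(z)$, which requires the explicit gradient formula~\eqref{eq:auxiliary-gradient}. The bound must also be two-sided, because the leading coefficient is negative. Using the smoothness bound in both directions handles this. The remaining terms are routine applications of Cauchy--Schwarz combined with the sensitivity estimate.
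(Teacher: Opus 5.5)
Your proposal is correct and follows essentially the same route as the paper's proof. It handles each piece the same way: for the gap term, the exact expansion of the proximal term combined with the two-sided $6L$-smoothness bound on $p$ and the identity $\nabla p(z)=2L(z-x^\star(z))$; for the primal residual, the exact $-2L\delta$ shift; and for both dual-tracking terms, the sensitivity bound $\|y^\star(z')-y^\star(z)\|\le\sqrt{2L/\mu_y}\,\|\delta\|$. The quadratic constants then sum to $1025L/128$, as in the paper.
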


\begin{proof}
First,
\[
 G(x,y;z')-G(x,y;z)
 =-2L\langle x-z,\delta\rangle+L\|\delta\|^2.
\]
By~\eqref{eq:envelope-gradient},
\[
 \left|p(z')-p(z)
 -\langle\nabla p(z),\delta\rangle\right|
 \le 3L\|\delta\|^2.
\]
Since
\[
 2L(x-z)=2L(x-x^\star(z))-\nabla p(z),
\]
we obtain
\begin{equation}
\begin{aligned}
 &\bigl|[G(x,y;z')-p(z')]-[G(x,y;z)-p(z)]\\
 &\qquad+2L\langle x-x^\star(z),\delta\rangle\bigr|
 \le 4L\|\delta\|^2.
 \label{eq:moving-value-part}
\end{aligned}
\end{equation}
The coefficient of the function-value term in the potential satisfies
$0<1-\sqrt{2L\mu_y}/(16L)<1$.
The change in this term is therefore bounded above by
\[
 2L\|x-x^\star(z)\|\|\delta\|+4L\|\delta\|^2.
\]

Next, $\nabla_xG(x,y;z')=\nabla_xG(x,y;z)-2L\delta$, so
\begin{equation}
\begin{aligned}
 &\frac1L\left(
 \|\nabla_xG(x,y;z')+\xi\|^2
 -\|\nabla_xG(x,y;z)+\xi\|^2\right)\\
 &\quad=-4\langle\nabla_xG(x,y;z)+\xi,\delta\rangle+4L\|\delta\|^2\\
 &\quad\le4\|\nabla_xG(x,y;z)+\xi\|\|\delta\|+4L\|\delta\|^2.
 \label{eq:moving-primal-part}
\end{aligned}
\end{equation}

Finally, by~\eqref{eq:anchor-block-bounds}, the cross term in the potential satisfies
\begin{equation}
\begin{aligned}
 &\frac{\sqrt{2L\mu_y}}{16L}
 \left(\langle v,y-y^\star(z')\rangle-\langle v,y-y^\star(z)\rangle\right)\\
 &\quad\le\frac{\sqrt{2L\mu_y}}{16L}\|v\|\|y^\star(z')-y^\star(z)\|\\
 &\quad\le\frac18\|v\|\|\delta\|,
 \label{eq:moving-momentum-part}
\end{aligned}
\end{equation}
whereas the squared-distance term satisfies
\begin{equation}
\begin{aligned}
 &\frac{\mu_y}{256}
 \left(\|y-y^\star(z')\|^2-\|y-y^\star(z)\|^2\right)\\
 &\quad\le\frac{\mu_y}{128}\|y-y^\star(z)\|\|y^\star(z')-y^\star(z)\|\\
 &\qquad+\frac{\mu_y}{256}\|y^\star(z')-y^\star(z)\|^2\\
 &\quad\le\frac{\sqrt{2L\mu_y}}{128}\|y-y^\star(z)\|\|\delta\|
 +\frac L{128}\|\delta\|^2.
 \label{eq:moving-dual-part}
\end{aligned}
\end{equation}
The gradient $\nabla_yG(x,y;z)$ is independent of $z$, and the remaining squared terms do not change.
Adding the bounds associated with~\eqref{eq:moving-value-part}--\eqref{eq:moving-dual-part} gives~\eqref{eq:moving-potential}.
\end{proof}

\phantomsection\label{app:summary-change}
\begin{lemma}[Moving-center auxiliary dissipation]\label{lem:moving-center-dissipation}
Under Assumption~\ref{ass:main}, let $\lambda=2L$, $\tau\ge0$, and
$0<\mu_y=\mu+\tau\le L$, and choose the parameters as in~\eqref{eq:parameters}.
Then the iterates of Algorithm~\ref{alg:main} satisfy
\begin{equation}\label{eq:auxiliary-change-strong}
\begin{aligned}
 &\mathcal E_{z_{t+1}}(w_{t+1})-\mathcal E_{z_t}(w_t)\\
 &\qquad+\frac{\sqrt{2L\mu_y}}{256L^2}
 \bigl(\|v_{t+1}\|^2-\|v_t\|^2\bigr)\\
 &\quad\le-\frac12\mathcal D_{z_t}(w_{t+1},w_t)
 -\frac52\beta L\|x_{t+1}-x^\star(z_t)\|^2\\
 &\qquad+\frac{\beta}{8L}\|\nabla p(z_t)\|^2.
\end{aligned}
\end{equation}
\end{lemma}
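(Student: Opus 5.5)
We split the left-hand side as in \eqref{eq:auxiliary-two-part-decomposition}: a one-step change at the fixed center $z_t$, plus the change $\mathcal E_{z_{t+1}}(w_{t+1})-\mathcal E_{z_t}(w_{t+1})$ caused by moving the center.

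\textbf{Fixed-center part.} Theorem~\ref{thm:fixed-dissipation} bounds the fixed-center part by $-\mathcal D_{z_t}(w_{t+1},w_t)$. The bracketed endpoint term in that theorem is nonnegative by Lemma~\ref{lem:fixed-lower}, so we may drop it. This leaves a full $\mathcal D_{z_t}$ available, and half of it will be spent absorbing the center-movement error.

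\textbf{Center-movement part.} Apply Lemma~\ref{lem:moving-potential} with $w=w_{t+1}$, $z=z_t$, and $\delta=z_{t+1}-z_t$. By \eqref{eq:anchor-step-square},
\[
\|\delta\|^2\le2\beta^2\|x_{t+1}-x^\star(z_t)\|^2+\frac{\beta^2}{2L^2}\|\nabla p(z_t)\|^2 .
\]
The linear-in-$\|\delta\|$ term is a product of $\|\delta\|$ with four quantities:
\[
\|x_{t+1}-x^\star(z_t)\|,\quad \|P_{t+1}\|,\quad \|v_{t+1}\|,\quad \|y_{t+1}-y^\star(z_t)\|.
\]
We split each product by Young's inequality with a weight $\rho$, writing it as $\rho\times(\text{dissipation quantity})^2+\rho^{-1}\|\delta\|^2$. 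The weights are chosen so that:
\begin{itemize}
\item the squared quantities are at most half of the corresponding coefficients in $\mathcal D_{z_t}$, namely $\frac h4$, $\frac{7h\sqrt{2L\mu_y}}{4096L}$, $\frac{h\mu_y\sqrt{2L\mu_y}}{128}$, and $\frac{h\sqrt{2L\mu_y}(16L-\sqrt{2L\mu_y})}{1024}\ge \frac{7h\sqrt{2L\mu_y}L}{512}$;
\item the resulting $\|\delta\|^2$ coefficients add to the $\frac{1025L}{128}$ already present.
\end{itemize}

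\textbf{Scale check.} Each weight $\rho^{-1}$ is of order $1/(h\sqrt{L\mu_y})$ times an appropriate power of $L$. Then $\rho^{-1}\|\delta\|^2$ contributes:
\begin{itemize}
\item to the $\|\nabla p\|^2$ coefficient, at most order $\beta^2/(L^2h\sqrt{L\mu_y})\cdot L$;
\item to the $\|x_{t+1}-x^\star\|^2$ coefficient, at most order $\beta^2 L/(h\sqrt{L\mu_y})$.
\end{itemize}
Since $\beta=h\sqrt{2L\mu_y}/4096$, each is a tiny constant multiple of $\beta/L$ and $\beta L$ respectively. The constant $4096$ in $\beta$ is presumably chosen for exactly this purpose.

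\textbf{Main obstacle: the $\|x_{t+1}-x^\star(z_t)\|^2$ bookkeeping.} After these absorptions we must still produce the extra $-\frac52\beta L\|x_{t+1}-x^\star(z_t)\|^2$ term. The remaining half of the $x$-tracking dissipation is about $\frac{h\sqrt{2L\mu_y}\cdot 15L}{2048}$, which is of order $\beta L\cdot 4096\cdot 15/2048\approx 30\beta L$. This comfortably exceeds $\frac52\beta L$ plus the Young contributions, but only if at most half of the $x$-dissipation is consumed by the cross term $2L\|x-x^\star\|\|\delta\|$. For that term we therefore use the weight $\rho\approx h\sqrt{L\mu_y}$.

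All other constant checks use $hL_G=1/64$, $L\le L_G\le4L$, and $\sqrt{2L\mu_y}\le 2L$. The $\|\nabla p\|^2$ coefficient must come out at most $\beta/(8L)$. The contributions to it are the quadratic term $\frac{1025L}{128}\cdot\frac{\beta^2}{2L^2}$ and the Young terms, both of which are $O(\beta^2/(hL\sqrt{L\mu_y}))\ll \beta/L$. We verify these numerically last.
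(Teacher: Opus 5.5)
Your route matches the paper's. You drop the nonnegative endpoint bracket in \eqref{eq:fixedhat}, apply Lemma~\ref{lem:moving-potential} at $w=w_{t+1}$, split the linear term by Young against half of $\mathcal D_{z_t}(w_{t+1},w_t)$, and substitute \eqref{eq:anchor-step-square}. The paper packages your four Young splits into one weighted Cauchy--Schwarz against the budget $\mathcal B_t\le\mathcal D_{z_t}(w_{t+1},w_t)$ of \eqref{eq:joint-budget}. It spends $\tfrac14\mathcal B_t$ on the linear term and another $\tfrac14\mathcal B_t$ on the $x$-absorption.

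\textbf{The accounting in your ``main obstacle'' paragraph is wrong.} The statement retains $-\tfrac12\mathcal D_{z_t}(w_{t+1},w_t)$, so the ``remaining half'' of the $x$-tracking coefficient is not available to you. Three items must fit inside a single half, which is only guaranteed to be $\frac{7hL\sqrt{2L\mu_y}}{1024}=28\beta L$:
\begin{enumerate}
\item the cross-term share $\rho_x\|x_{t+1}-x^\star(z_t)\|^2$;
\item the term $2\beta^2K\|x_{t+1}-x^\star(z_t)\|^2$, where $K$ is the total $\|\delta\|^2$ coefficient and the term arises through \eqref{eq:anchor-step-square};
\item the new $\tfrac52\beta L$.
\end{enumerate}
If the cross term takes a full half, as you allow, nothing is left to retain.

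\textbf{The slack is also far smaller than your scale check claims.} The $\|\nabla p(z_t)\|^2$ requirement $K\beta^2/(2L^2)\le\beta/(8L)$ forces $K\le L/(4\beta)$. Hence $\rho_x\ge L^2/K\ge4\beta L$. The $x$-budget forces $\rho_x\lesssim25\beta L$. So in any normalization the weight on $\|x_{t+1}-x^\star(z_t)\|^2$ is only about $10^{-3}$ to $10^{-2}$ times $hL\sqrt{L\mu_y}$. The induced $\|\nabla p\|^2$ error is then a fixed fraction of $\beta/L$, not $\ll\beta/L$. In the paper it is $\frac{352\beta^2}{Lh\sqrt{2L\mu_y}}=\frac{11}{128}\frac\beta L$, against the allowed $\frac{16}{128}\frac\beta L$.

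The repair needs no new idea. Use the paper's effective weight $14\beta L$. The first quarter of $\mathcal B_t$ then absorbs the cross term. The second quarter covers $\tfrac52\beta L+\tfrac{11}{32}\beta L\le14\beta L$. The total is $28\beta L$, within the available half.
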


\begin{proof}
We combine Theorem~\ref{thm:fixed-dissipation} with Lemma~\ref{lem:moving-potential}
to absorb the error due to center movement while retaining half of the
nonnegative dissipation defined in~\eqref{eq:dissipation}.
To allocate the negative terms in the fixed-center estimate, define
\begin{equation}
\begin{aligned}
 \mathcal B_t
 &:=\frac{h\sqrt{2L\mu_y}}{8L}\|\nabla_xG(x_{t+1},y_{t+1};z_t)+\xi_{t+1}\|^2\\
 &\quad+\frac{7h\sqrt{2L\mu_y}}{4096L}\|v_{t+1}\|^2
 +\frac{h\mu_y\sqrt{2L\mu_y}}{128}\|y_{t+1}-y^\star(z_t)\|^2\\
 &\quad+\frac{7hL\sqrt{2L\mu_y}}{512}\|x_{t+1}-x^\star(z_t)\|^2.
 \label{eq:joint-budget}
\end{aligned}
\end{equation}
Since $\sqrt{2L\mu_y}\le2L$,
\[
 \frac{h\sqrt{2L\mu_y}}{32}\frac4L\le\frac h4,
 \qquad
 \frac L2-\frac{\sqrt{2L\mu_y}}{32}\ge\frac{7L}{16}.
\]
A termwise comparison with~\eqref{eq:dissipation} gives
\begin{equation}
 0\le\mathcal B_t\le\mathcal D_{z_t}(w_{t+1},w_t).
 \label{eq:budget-below-dissipation}
\end{equation}

The weighted Cauchy--Schwarz inequality yields
\begin{equation}
\begin{aligned}
 &\biggl[2L\|x_{t+1}-x^\star(z_t)\|
 +4\|\nabla_xG(x_{t+1},y_{t+1};z_t)+\xi_{t+1}\|\\
 &\qquad+\frac18\|v_{t+1}\|
 +\frac{\sqrt{2L\mu_y}}{128}\|y_{t+1}-y^\star(z_t)\|\biggr]^2\\
 &\quad\le\frac{432L}{h\sqrt{2L\mu_y}}\mathcal B_t.
 \label{eq:anchor-linear-square}
\end{aligned}
\end{equation}
Indeed, dividing each squared coefficient by the corresponding coefficient in~\eqref{eq:joint-budget} gives constants whose sum is
\[
 \frac{16L}{h\sqrt{2L\mu_y}}
 \left(\frac{128}{7}+8+\frac47+\frac1{1024}\right)
 <\frac{432L}{h\sqrt{2L\mu_y}}.
\]
Set $z'=z_{t+1}$, $z=z_t$, and $w=w_{t+1}$ in~\eqref{eq:moving-potential}.
Applying $ab\le a^2/4+b^2$ to its linear term gives
\begin{align*}
 &\mathcal E_{z_{t+1}}(w_{t+1})
 -\mathcal E_{z_t}(w_{t+1})\\
 &\quad\le\frac14\mathcal B_t
 +\left(\frac{432L}{h\sqrt{2L\mu_y}}+\frac{1025L}{128}\right)
 \|z_{t+1}-z_t\|^2\\
 &\quad\le\frac14\mathcal B_t
 +\frac{704L}{h\sqrt{2L\mu_y}}\|z_{t+1}-z_t\|^2.
\end{align*}
The last inequality uses $h\sqrt{2L\mu_y}/32\le1$.
Substituting~\eqref{eq:anchor-step-square} then yields
\begin{equation}
\begin{aligned}
 &\mathcal E_{z_{t+1}}(w_{t+1})
 -\mathcal E_{z_t}(w_{t+1})\\
 &\quad\le\frac14\mathcal B_t
 +\frac{1408L\beta^2}{h\sqrt{2L\mu_y}}
 \|x_{t+1}-x^\star(z_t)\|^2\\
 &\qquad+\frac{352\beta^2}{Lh\sqrt{2L\mu_y}}
 \|\nabla p(z_t)\|^2.
 \label{eq:anchor-young-bound}
\end{aligned}
\end{equation}

Dropping the nonnegative contraction term on the left-hand side of~\eqref{eq:fixedhat} and adding~\eqref{eq:anchor-young-bound}, we obtain
\begin{align*}
 &\mathcal E_{z_{t+1}}(w_{t+1})-\mathcal E_{z_t}(w_t)\\
 &\qquad+\frac{\sqrt{2L\mu_y}}{256L^2}
 \bigl(\|v_{t+1}\|^2-\|v_t\|^2\bigr)\\
 &\quad\le-\mathcal D_{z_t}(w_{t+1},w_t)+\frac14\mathcal B_t
 +\frac{1408L\beta^2}{h\sqrt{2L\mu_y}}
 \|x_{t+1}-x^\star(z_t)\|^2\\
 &\qquad+\frac{352\beta^2}{Lh\sqrt{2L\mu_y}}\|\nabla p(z_t)\|^2.
\end{align*}
Since $\beta=h\sqrt{2L\mu_y}/4096$ and $4096=32\cdot128$,
\begin{equation}
\begin{aligned}
 \frac52\beta L+\frac{1408L\beta^2}{h\sqrt{2L\mu_y}}
 &=\frac{hL\sqrt{2L\mu_y}}{64}
 \left(\frac5{128}+\frac{88}{128^2}\right)\\
 &\le\frac{7hL\sqrt{2L\mu_y}}{2048},
 \label{eq:anchor-x-absorption}
\end{aligned}
\end{equation}
and
\begin{equation}
 \frac\beta{4L}-\frac{352\beta^2}{Lh\sqrt{2L\mu_y}}
 =\frac\beta{2L}\left(\frac12-\frac{22}{128}\right)
 \ge\frac\beta{8L}.
 \label{eq:anchor-gradient-absorption}
\end{equation}
The right-hand side of~\eqref{eq:anchor-x-absorption} equals the coefficient of $\|x_{t+1}-x^\star(z_t)\|^2$ in $\mathcal B_t/4$.
Consequently,~\eqref{eq:anchor-x-absorption} gives
\[
 \frac{1408L\beta^2}{h\sqrt{2L\mu_y}}
 \|x_{t+1}-x^\star(z_t)\|^2
 \le\frac14\mathcal B_t
 -\frac52\beta L\|x_{t+1}-x^\star(z_t)\|^2,
\]
while~\eqref{eq:anchor-gradient-absorption} gives
\[
 \frac{352\beta^2}{Lh\sqrt{2L\mu_y}}
 \le\frac{\beta}{8L}.
\]
Substituting these bounds into the preceding auxiliary-error estimate and using~\eqref{eq:budget-below-dissipation}, we obtain
\begin{align*}
 &\mathcal E_{z_{t+1}}(w_{t+1})-\mathcal E_{z_t}(w_t)\\
 &\qquad+\frac{\sqrt{2L\mu_y}}{256L^2}
 \bigl(\|v_{t+1}\|^2-\|v_t\|^2\bigr)\\
 &\quad\le-\mathcal D_{z_t}(w_{t+1},w_t)+\frac12\mathcal B_t
 -\frac52\beta L\|x_{t+1}-x^\star(z_t)\|^2\\
 &\qquad+\frac{\beta}{8L}\|\nabla p(z_t)\|^2.
\end{align*}
Since $\mathcal B_t\le\mathcal D_{z_t}(w_{t+1},w_t)$,
this proves~\eqref{eq:auxiliary-change-strong}.
\end{proof}

\begin{proof}[Proof of Lemma~\ref{lem:auxiliary-change}]
The dissipation in~\eqref{eq:dissipation} is nonnegative.
Dropping the term $-\mathcal D_{z_t}(w_{t+1},w_t)/2$ from
Lemma~\ref{lem:moving-center-dissipation} gives~\eqref{eq:auxiliary-change}.
\end{proof}

\subsection{Proof of the unified Lyapunov descent}\label{app:joint-descent}

\begin{proof}[Proof of Theorem~\ref{thm:common-descent}]
By~\eqref{eq:lower-bounded} and $\|y\|\le D_Y$,
\[
 \inf_z p(z)\ge\inf_{x\in X}\phi(x)-\frac{\tau D_Y^2}{2}>-\infty.
\]
Thus $p(z_t)-\inf_zp(z)\ge0$.
Lemma~\ref{lem:fixed-lower} gives $\mathcal E_{z_t}(w_t)\ge0$.
Together with the nonnegative momentum term, this proves $\mathcal V_t\ge0$.

Add the center-descent estimate of Lemma~\ref{lem:value-change}
to the auxiliary dissipation estimate of Lemma~\ref{lem:moving-center-dissipation},
namely~\eqref{eq:outer-descent} and~\eqref{eq:auxiliary-change-strong}.
The coefficients of the primal tracking term
$\|x_{t+1}-x^\star(z_t)\|^2$ are $-5\beta L/2$ and $5\beta L/2$,
and hence cancel. The remaining coefficient of
$\|\nabla p(z_t)\|^2$ is
\[
 \frac{\beta}{8L}-\frac{\beta}{4L}=-\frac{\beta}{8L}.
\]
The definition~\eqref{eq:joint-potential} now gives~\eqref{eq:joint}.
Dropping the nonpositive dissipation term yields~\eqref{eq:main-joint}.
\end{proof}

\section{From Lyapunov Descent to Complexity Bounds}\label{app:cases}
{\color{black}
We first establish the stationarity transfer and aggregate residual
bounds. A common initialization calculation is then used by the
baseline NC--SC and NC--C proofs and by the unified warm-up analysis.
The fixed-center contraction and moving-center descent themselves remain
in Appendix~\ref{app:common}.
}

\subsection{Transfer to the original problem}\label{app:criterion-transfer}

\begin{proof}[Proof of Lemma~\ref{lem:criterion-transfer}]
By Lemma~\ref{lem:normal-membership} and the definition of distance to a
set,
\begin{align*}
 &\dist\bigl(0,\nabla_xf(x_{t+1},y_{t+1})+N_X(x_{t+1})\bigr)\\
 &\qquad\le\|\nabla_xf(x_{t+1},y_{t+1})+\xi_{t+1}\|,\\
 &\dist\bigl(0,-\nabla_yf(x_{t+1},y_{t+1})+N_Y(y_{t+1})\bigr)\\
 &\qquad\le\|-\nabla_yf(x_{t+1},y_{t+1})+n_{t+1}\|.
\end{align*}
Consequently,
\begin{align*}
 &\mathcal R(x_{t+1},y_{t+1})\\
 &\quad\le\left\|\begin{pmatrix}
 \nabla_xf(x_{t+1},y_{t+1})+\xi_{t+1}\\
 -\nabla_yf(x_{t+1},y_{t+1})+n_{t+1}
 \end{pmatrix}\right\|\\
 &\quad=\left\|\begin{pmatrix}
 \nabla_xf(x_{t+1},y_{t+1})+\xi_{t+1}\\
 -\nabla_yf(x_{t+1},y_{t+1})+\tau y_{t+1}+n_{t+1}
 \end{pmatrix}
 -\begin{pmatrix}0\\\tau y_{t+1}\end{pmatrix}\right\|\\
 &\quad\le\sqrt{S_t}+\tau\|y_{t+1}\|
 \le\sqrt{S_t}+\tau D_Y.
\end{align*}
The last line follows from the triangle inequality and
$\|y_{t+1}\|\le D_Y$, proving~\eqref{eq:general-game-transfer}.

To estimate the envelope-gradient bias, fix $z\in\mathbb R^n$ and recall
$\bar x(z)$ from~\eqref{eq:prox-definition}. Since $\|y\|\le D_Y$, for
every $x\in X$,
\[
 0\le\phi(x)+\frac\lambda2\|x-z\|^2
       -\max_{y\in Y}G(x,y;z)
 \le\frac{\tau D_Y^2}{2}.
\]
By the $(\lambda-L)$-strong convexity of the original proximal objective,
this perturbation bound, and the optimality of $x^\star(z)$, respectively,
\begin{align*}
 &\frac{\lambda-L}{2}\|\bar x(z)-x^\star(z)\|^2\\
 &\le\phi(x^\star(z))+\frac\lambda2\|x^\star(z)-z\|^2
       -\phi(\bar x(z))-\frac\lambda2\|\bar x(z)-z\|^2\\
 &\le\max_{y\in Y}G(x^\star(z),y;z)
       -\max_{y\in Y}G(\bar x(z),y;z)+\frac{\tau D_Y^2}{2}\\
 &\le\frac{\tau D_Y^2}{2}.
\end{align*}
Therefore,
\begin{equation}\label{eq:prox-perturbation-distance}
 \|\bar x(z)-x^\star(z)\|
 \le D_Y\sqrt{\frac{\tau}{\lambda-L}}.
\end{equation}
Equations~\eqref{eq:moreau-gradient} and~\eqref{eq:auxiliary-gradient}
give
\begin{align*}
 \nabla\Phi_\lambda(z)-\nabla p(z)
 &=\lambda(z-\bar x(z))-\lambda(z-x^\star(z))\\
 &=\lambda\bigl(x^\star(z)-\bar x(z)\bigr).
\end{align*}
Taking norms and using~\eqref{eq:prox-perturbation-distance}, we obtain
\begin{align*}
 \|\nabla\Phi_\lambda(z)-\nabla p(z)\|
 &=\lambda\|x^\star(z)-\bar x(z)\|\\
 &\le\lambda D_Y\sqrt{\frac{\tau}{\lambda-L}}.
\end{align*}
This proves~\eqref{eq:general-envelope-bias}.
Finally, at $z=z_t$, the triangle inequality gives
\begin{align*}
 \|\nabla\Phi_\lambda(z_t)\|
 &\le\|\nabla p(z_t)\|
       +\|\nabla\Phi_\lambda(z_t)-\nabla p(z_t)\|\\
 &\le\|\nabla p(z_t)\|
       +\lambda D_Y\sqrt{\frac{\tau}{\lambda-L}},
\end{align*}
which proves~\eqref{eq:general-optimization-transfer}.
{When $\tau=0$, the definitions give $p=\Phi_\lambda$, so their gradients coincide.}
\end{proof}

\subsection{Summation and residual bounds}\label{app:stationarity}

We retain the full dissipation term in~\eqref{eq:joint} to bound the sums of the auxiliary envelope gradients and the computable certificates.

\begin{proof}[Proof of Lemma~\ref{lem:stationarity-sums}]
Summing~\eqref{eq:joint} and using $\mathcal V_T\ge0$ gives
\begin{equation}
 \sum_{t=0}^{T-1}
 \left[\frac12 \mathcal D_{z_t}(w_{t+1},w_t)+
 \frac{\beta}{8L}\|\nabla p(z_t)\|^2\right]
 \le\Delta_\tau,
 \label{eq:dissipation-sum}
\end{equation}
where $\mathcal D_{z_t}(w_{t+1},w_t)$ is defined in~\eqref{eq:dissipation}.
Using $\beta=h\sqrt{2L\mu_y}/4096$ and
\[
 \frac{16L-\sqrt{2L\mu_y}}{32}\ge\frac{7L}{16},
\]
we retain individual nonnegative terms in $\mathcal D_{z_t}(w_{t+1},w_t)$ to obtain
\begin{align}
 &\sum_{t=0}^{T-1}
 \|\nabla_x f(x_{t+1},y_{t+1})
       +2L(x_{t+1}-z_t)+\xi_{t+1}\|^2\notag\\*
 &\qquad\le\frac{8\Delta_\tau}{h},
 \label{eq:sum-primal-residual}\\
 &\sum_{t=0}^{T-1}
 \| -\nabla_y f(x_{t+1},y_{t+1})
       +\tau y_{t+1}+n_{t+1}-v_{t+1}\|^2\notag\\*
 &\qquad\le\frac{L\Delta_\tau}{12\beta},
 \label{eq:sum-dual-direction}\\
 &\sum_{t=0}^{T-1}\|v_{t+1}\|^2
 \le\frac{2L\Delta_\tau}{7\beta},
 \label{eq:sum-feedback}\\
 &\sum_{t=0}^{T-1}\|x_{t+1}-x^\star(z_t)\|^2
 \le\frac{\Delta_\tau}{28\beta L},
 \label{eq:sum-primal-tracking}\\
 &\sum_{t=0}^{T-1}\|\nabla p(z_t)\|^2
 \le\frac{8L\Delta_\tau}{\beta}.
 \label{eq:sum-envelope-gradient}
\end{align}
Since $\nabla p(z_t)=2L(z_t-x^\star(z_t))$,
\begin{equation}
\begin{aligned}
 &\nabla_x f(x_{t+1},y_{t+1})+\xi_{t+1}\\
 &\quad=\nabla_x f(x_{t+1},y_{t+1})
 +2L(x_{t+1}-z_t)+\xi_{t+1}\\
 &\qquad-2L(x_{t+1}-x^\star(z_t))
 +\nabla p(z_t).
\end{aligned}
\label{eq:primal-certificate-identity}
\end{equation}
Applying the squared-norm bounds for sums of three and two vectors, respectively, gives
\begin{equation}
\begin{aligned}
 S_t\le{}&
 3\|\nabla_x f(x_{t+1},y_{t+1})
      +2L(x_{t+1}-z_t)+\xi_{t+1}\|^2\\
 &+12L^2\|x_{t+1}-x^\star(z_t)\|^2
 +3\|\nabla p(z_t)\|^2\\
 &+2\| -\nabla_y f(x_{t+1},y_{t+1})
       +\tau y_{t+1}+n_{t+1}-v_{t+1}\|^2\\
 &+2\|v_{t+1}\|^2.
\end{aligned}
\label{eq:certificate-upper-bound}
\end{equation}
Summing~\eqref{eq:certificate-upper-bound} and substituting
\eqref{eq:sum-primal-residual}--\eqref{eq:sum-envelope-gradient} yields
\begin{align*}
 \sum_{t=0}^{T-1}S_t
 &\le\left(
 \frac{24\beta}{hL}+\frac37+24+\frac16+\frac47
 \right)\frac{L\Delta_\tau}{\beta}\\*
 &=\left(25+\frac16+\frac{24\beta}{hL}\right)
 \frac{L\Delta_\tau}{\beta}
 \le\frac{32L\Delta_\tau}{\beta}.
\end{align*}
The last inequality uses
$\beta/(hL)=\sqrt{2\mu_y/L}/4096\le\sqrt2/4096$.

{\color{black}
For an interval $a\le t<a+T$, sum the same one-step inequality from
$a$ rather than zero. The right-hand side of
\eqref{eq:dissipation-sum} becomes $\mathcal V_a-\mathcal V_{a+T}
\le\mathcal V_a$, and each of the retained dissipation estimates has
$\mathcal V_a$ in place of $\Delta_\tau$. The remaining algebra is
unchanged, proving~\eqref{eq:shifted-stationarity-sums}.
Only feasibility and the normal-cone inclusions were used; the values
of $\xi_a,n_a,v_a$ need not vanish.
}
\end{proof}

\subsection{Output guarantees}\label{app:output-guarantees}

\begin{proof}[Proof of Corollary~\ref{cor:common-output}]
By the definition of $j$ and~\eqref{eq:stationarity-sums},
\[
 S_j\le\frac1T\sum_{t=0}^{T-1}S_t\le\frac{32L\Delta_\tau}{\beta T}.
\]
Combining this bound with the residual estimate in the proof of Lemma~\ref{lem:criterion-transfer} and $\|y_{j+1}\|\le D_Y$ gives~\eqref{eq:original-game-certificate}.
Since $z_0=x_0\in X$ and $0<\beta\le1$, the center update preserves $z_t\in X$.
The uniform random output rule and~\eqref{eq:stationarity-sums} yield
\[
 \mathbb E\|\nabla p(z_J)\|^2
 =\frac1T\sum_{t=0}^{T-1}\|\nabla p(z_t)\|^2
 \le\frac{8L\Delta_\tau}{\beta T},
\]
which is~\eqref{eq:random-envelope-bound}.
Setting $\lambda=2L$ in~\eqref{eq:general-envelope-bias} and applying the squared-norm inequality gives
\begin{align*}
 \mathbb E\|\nabla\Phi_{2L}(z_J)\|^2
 &\le2\mathbb E\|\nabla p(z_J)\|^2\\*
 &\quad+2\mathbb E\|\nabla\Phi_{2L}(z_J)-\nabla p(z_J)\|^2\\*
 &\le\frac{16L\Delta_\tau}{\beta T}+8L\tau D_Y^2.
\end{align*}
This proves~\eqref{eq:unified-original-envelope}.
\end{proof}

\subsection{Common initialization estimates}
\label{app:initial-bound}

{\color{black}
The estimates in this subsection hold for both regimes:
$\mu\ge0$, $\tau\ge0$, and $0<\mu_y=\mu+\tau\le L$.
Define
\begin{equation}\label{eq:local-perturbed-value}
 \phi_\tau(x):=\begin{cases}
 \displaystyle\max_{y\in Y}
          \left\{f(x,y)-\frac\tau2\|y\|^2\right\},&x\in X,\\
 +\infty,&x\notin X.
 \end{cases}
\end{equation}
Thus $\phi_0=\phi$. Let $H_z$ and $\overline H_{\mu,\tau}$ be
as in~\eqref{eq:warmup-tracking-energy} and
\eqref{eq:warmup-initial-bound}.

\begin{lemma}[Initialization with effective dual curvature]
\label{lem:common-initialization}
If $z_0=x_0$ and $\xi_0=n_0=v_0=0$, then
\begin{equation}\label{eq:initial-envelope-bound}
 p(x_0)-\inf_zp(z)\le\Delta_\phi+\frac{\tau D_Y^2}{2},
\end{equation}
and
\begin{equation}\label{eq:initial-potential-bound}
 0\le H_{x_0}(w_0)\le\overline H_{\mu,\tau}.
\end{equation}
Consequently,
$0\le\Delta_\tau\le\Delta_\phi+\tau D_Y^2/2+\overline H_{\mu,\tau}$.
\end{lemma}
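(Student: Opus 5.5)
The plan is to prove the two displayed bounds separately and then add them, using $\Delta_\tau=\mathcal V_0=p(x_0)-\inf_zp(z)+H_{x_0}(w_0)$.

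\emph{Envelope gap.} For the upper bound, take $x=x_0$ in the outer minimum defining $p(x_0)$. Since the proximal term vanishes at $z=x_0$, this gives $p(x_0)\le\max_{y\in Y}\{f(x_0,y)-\tau\|y\|^2/2\}=\phi_\tau(x_0)\le\phi(x_0)$. For the lower bound, for every $z$ and $x\in X$ we have $\max_yG(x,y;z)\ge\phi(x)-\tau D_Y^2/2$, because $\|y\|\le D_Y$ and the proximal term is nonnegative. Hence $\inf_zp(z)\ge\phi_{\inf}-\tau D_Y^2/2$, exactly as in the proof of Theorem~\ref{thm:common-descent}. Subtracting the two bounds gives \eqref{eq:initial-envelope-bound}.

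\emph{Tracking energy.} Nonnegativity of $H_{x_0}(w_0)$ follows from Lemma~\ref{lem:fixed-lower}, since $\xi_0=n_0=0$ lie in the normal cones and the momentum term is nonnegative. For the upper bound, set $\xi=n=v=0$ and $z=x_0$ in \eqref{eq:potential}. The cross term and the kinetic term then vanish. We also have $\nabla_xG(x_0,y_0;x_0)=\nabla_xf(x_0,y_0)$ and $-\nabla_yG(x_0,y_0;x_0)=-\nabla_yf(x_0,y_0)+\tau y_0$, so the two residual terms give exactly the squared-gradient quotient in $\overline H_{\mu,\tau}$. The distance term is at most $\mu_y D_Y^2/256=(\mu+\tau)D_Y^2/256$, because $y_0,y^\star(x_0)\in Y$.

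\emph{Function-value term (main step).} It remains to bound $c\,(G^\star(x_0)-G(x_0,y_0;x_0))$, where $c=1-\sqrt{2L\mu_y}/(16L)\in(0,1]$. I would use $G^\star(x_0)\le\max_yG(x_0,y;x_0)$ and then concavity of $f(x_0,\cdot)$:
\[
 f(x_0,y)-\tfrac\tau2\|y\|^2-f(x_0,y_0)+\tfrac\tau2\|y_0\|^2
 \le\|\nabla_yf(x_0,y_0)\|\,\|y-y_0\|+\tfrac\tau2\|y_0\|^2
 \le D_Y\|\nabla_yf(x_0,y_0)\|+\tfrac\tau2D_Y^2 .
\]
This uses $\|y-y_0\|\le D_Y$ (the diameter bound) and $\|y_0\|\le D_Y$. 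The right-hand side is nonnegative, and $c\le1$. Therefore $c$ times the difference is bounded by the same quantity, whether the difference itself is positive or negative. This is the only delicate point: the sign handling and keeping the $-\tau\|y\|^2/2$ term dropped rather than expanded, which is what keeps the bound valid for $\tau=0$.

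Adding the function-value, residual, and distance bounds gives \eqref{eq:initial-potential-bound}, and adding this to \eqref{eq:initial-envelope-bound} gives the stated bound on $\Delta_\tau$.
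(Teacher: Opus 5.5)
Your proposal is correct and follows essentially the same route as the paper: bound $p(x_0)\le\phi_\tau(x_0)\le\phi(x_0)$, control $\inf_z p$ via the $\tau D_Y^2/2$ perturbation, expand $H_{x_0}(w_0)$ with $\xi_0=n_0=v_0=0$, and bound the function-value term by the nonnegative quantity $D_Y\|\nabla_yf(x_0,y_0)\|+\tau D_Y^2/2$ before applying the coefficient $c<1$. The only cosmetic difference is that you lower-bound $\inf_z p$ directly, whereas the paper first identifies it with $\inf_{x\in X}\phi_\tau(x)$.
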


\begin{proof}
The diameter assumption and $0\in Y$ imply, for every $x\in X$,
\begin{equation}\label{eq:value-perturbation-bound}
 0\le\phi(x)-\phi_\tau(x)\le\frac{\tau D_Y^2}{2}.
\end{equation}
The quadratic term in the envelope is nonnegative, and choosing $z=x$
gives
\[
 \inf_zp(z)=\inf_{x\in X}\inf_z
       \{\phi_\tau(x)+L\|x-z\|^2\}
       =\inf_{x\in X}\phi_\tau(x).
\]
Together with $p(x_0)\le\phi_\tau(x_0)\le\phi(x_0)$,
this proves~\eqref{eq:initial-envelope-bound}.

Since $v_0=0$, the momentum terms vanish at initialization. Expanding
the potential gives
\begin{equation}\label{eq:initial-potential-expansion}
\begin{aligned}
 H_{x_0}(w_0)={}&
 \left(1-\frac{\sqrt{2L\mu_y}}{16L}\right)
       \left[p(x_0)-f(x_0,y_0)+\frac\tau2\|y_0\|^2\right]\\
 &+\frac{\|\nabla_x f(x_0,y_0)\|^2
          +\|-\nabla_y f(x_0,y_0)+\tau y_0\|^2}{L}
 +\frac{\mu_y}{256}\|y_0-y^\star(x_0)\|^2.
\end{aligned}
\end{equation}
Concavity of $f(x_0,\cdot)$ and the diameter bound yield
\begin{align*}
 p(x_0)-f(x_0,y_0)+\frac\tau2\|y_0\|^2
 &\le\max_{y\in Y}f(x_0,y)-f(x_0,y_0)+\frac{\tau D_Y^2}{2}\\
 &\le D_Y\|\nabla_y f(x_0,y_0)\|+\frac{\tau D_Y^2}{2}.
\end{align*}
The last expression is nonnegative, whereas the function difference
on the left need not be. We first replace that difference by its
nonnegative upper bound, then use
$0<1-\sqrt{2L\mu_y}/(16L)<1$ to bound the coefficient. Since
$\|y_0-y^\star(x_0)\|\le D_Y$, this proves the upper bound
in~\eqref{eq:initial-potential-bound}. Its lower bound follows from
Lemma~\ref{lem:fixed-lower}. Adding the envelope and tracking bounds
proves the final claim.
\end{proof}
}

\subsection{Baseline nonconvex--strongly concave complexity}\label{app:ncsc-proof}

\begin{proof}[Proof of Theorems~\ref{thm:ncsc-optimization} and~\ref{thm:ncsc-game}]
Since $\tau=0$, Lemma~\ref{lem:criterion-transfer} gives
$\|\nabla\Phi_{2L}(z_t)\|=\|\nabla p(z_t)\|$. Lemma~\ref{lem:stationarity-sums}
and~\eqref{eq:ncsc-budget} give
\begin{gather*}
 S_j\le\frac{32L\Delta_0}{\beta T}\le\frac{\varepsilon^2}{4},\\
 \mathbb E\|\nabla\Phi_{2L}(z_{\rm out})\|^2
 \le\frac{8L\Delta_0}{\beta T}\le\frac{\varepsilon^2}{16}.
\end{gather*}
Combining the first inequality with~\eqref{eq:original-game-certificate}
proves the game-stationarity claim. For $\tau=0$, the parameters satisfy
\[
 h=\frac1{192L},\qquad
 \beta=\frac{\sqrt{2\mu/L}}{786432}
       =\Theta\!\left(\sqrt{\frac\mu L}\right).
\]
Under the full first-order oracle and gradient-caching convention in
Section~\ref{sec:implementation}, initialization requires one query and
each iteration requires two additional queries. Thus the total number is at most
$1+2T$, which gives~\eqref{eq:ncsc-oracle-bound}
and~\eqref{eq:ncsc-game-oracle-bound} as $\varepsilon\downarrow0$ for fixed $\Delta_0>0$.
\end{proof}

\begin{remark}[Budget from an initial-energy upper bound]
\label{rem:ncsc-implementable-budget}
The budget does not require computing $\Delta_0$ exactly.
Given any known $\overline\Delta_0\ge\Delta_0$, choose instead
\begin{equation}\label{eq:ncsc-upper-bound-budget}
 T=\max\left\{1,
 \left\lceil\frac{128L\overline\Delta_0}{\beta\varepsilon^2}\right\rceil
 \right\}.
\end{equation}
The conclusions of Theorems~\ref{thm:ncsc-optimization} and~\ref{thm:ncsc-game} then hold, and for fixed $\overline\Delta_0>0$ the complexity as
$\varepsilon\downarrow0$ is
\begin{equation}\label{eq:ncsc-upper-bound-oracle}
 O\!\left(\sqrt{\frac L\mu}\,
                 \frac{L\overline\Delta_0}{\varepsilon^2}\right)
\end{equation}
first-order oracle calls. This follows from the same proof by using
$\Delta_0\le\overline\Delta_0$ in the two summation estimates.
Thus an available upper bound suffices to prescribe the run length,
without evaluating the envelope or its infimum.
\end{remark}

\begin{remark}[Optimal accuracy dependence under strong concavity]
\label{rem:ncsc-accuracy-optimality}
The $\varepsilon^{-2}$ accuracy dependence is optimal with the regularity
and initialization bounds fixed. To see this, embed a smooth nonconvex
minimization instance $g$ into
$f(x,y)=g(x)-\mu\|y\|^2/2$, with $X=\R^n$ and a fixed compact convex
$Y$ containing $0$. Then $\phi=g$, and the game residual controls
$\|\nabla g(x)\|$. For OS, let
$u=\operatorname{prox}_{g/(2L)}(z)$. Proximal optimality and smoothness give
\[
 \nabla\Phi_{2L}(z)=\nabla g(u)=2L(z-u),\qquad
 \|\nabla g(z)\|\le\tfrac32\|\nabla\Phi_{2L}(z)\|.
\]
This pointwise inequality also transfers an expected squared-norm guarantee.
Initialize $z_0=x_0$, $y_0=0$, and the normal and momentum variables at zero.
The definition of $\Delta_0$ and the gradient-gap inequality imply
\[
 \Delta_0\le g(x_0)-\inf g+\frac{\|\nabla g(x_0)\|^2}{L}
 \le3\bigl(g(x_0)-\inf g\bigr).
\]
The smooth-minimization lower bound of~\cite{carmon2020lower} therefore
applies under a common bounded initialization budget. This establishes
optimality of the accuracy exponent, without asserting optimality of
the condition-number dependence.
\end{remark}

{\color{black}
\begin{remark}[Dependence on the condition number]
Writing $\kappa=L/\mu$, Theorems~\ref{thm:ncsc-optimization} and~\ref{thm:ncsc-game} give a
baseline gradient complexity upper bound of order
$1+\sqrt\kappa L\Delta_0\varepsilon^{-2}$ for each stated output
criterion. Corollary~\ref{cor:warmup-ncsc} replaces $\Delta_0$ in the
leading term by $\Delta_\phi$, with an additive initialization cost. An optimality comparison must retain both the stationarity
criterion and the initialization quantity: lower bounds expressed through
the gradient of the value function do not, by themselves, establish a
matching lower bound for either guarantee stated here.
\end{remark}
}

\subsection{Nonconvex--concave complexity}\label{app:ncc-complexity}

{\color{black}
\begin{proof}[Proof of Lemma~\ref{lem:uniform-initial-bound}]
Apply Lemma~\ref{lem:common-initialization} with $\mu=0$ and
$\mu_y=\tau$. The elementary inequality
\[
 \|-\nabla_y f(x_0,y_0)+\tau y_0\|^2
 \le2\|\nabla_y f(x_0,y_0)\|^2+2\tau^2D_Y^2
\]
implies
\[
 \overline H_{0,\tau}
 \le\bar\Delta_0-\Delta_\phi+
       \left(\frac{129\tau}{256}+\frac{2\tau^2}{L}\right)D_Y^2.
\]
Adding the envelope-gap bound~\eqref{eq:initial-envelope-bound}
gives~\eqref{eq:initial-bound-result}. This proves the NC--C initial-energy bound using the common
initialization estimate.
\end{proof}
}

\begin{proof}[Proof of Theorem~\ref{thm:ncc-optimization-complexity}]\label{app:ncc-os-proof}
By~\eqref{eq:random-envelope-bound} and~\eqref{eq:ncc-budget},
\[
 \mathbb E\|\nabla p(z_{\rm out})\|^2
 \le\frac{8L\Delta_\tau}{\beta T}\le\frac{\varepsilon^2}{16}.
\]
Taking $\lambda=2L$ in the envelope-gradient perturbation bound
\eqref{eq:general-envelope-bias} of Lemma~\ref{lem:criterion-transfer}
and using the choice~\eqref{eq:optimization-perturbation-choice} give, for every
$z\in\mathbb R^n$,
\begin{align*}
 \|\nabla\Phi_{2L}(z)-\nabla p(z)\|
 &\le2D_Y\sqrt{L\tau}\\
 &\le2D_Y\sqrt{\frac{\varepsilon^2}{16D_Y^2}}
 =\frac\varepsilon2.
\end{align*}
It follows that
\begin{align*}
 \mathbb E\|\nabla\Phi_{2L}(z_{\rm out})\|^2
 &\le2\mathbb E\|\nabla p(z_{\rm out})\|^2\\
 &\quad+2\mathbb E\|\nabla\Phi_{2L}(z_{\rm out})
                         -\nabla p(z_{\rm out})\|^2\\
 &\le\frac{\varepsilon^2}{8}+\frac{\varepsilon^2}{2}
 \le\varepsilon^2.
\end{align*}
\end{proof}

\begin{proof}[Proof of Theorem~\ref{thm:ncc-game-complexity}]\label{app:ncc-game-proof}
Lemma~\ref{lem:stationarity-sums} and~\eqref{eq:ncc-budget} imply
\[
 S_j\le\frac{32L\Delta_\tau}{\beta T}
 \le\frac{\varepsilon^2}{4}.
\]
The choice~\eqref{eq:game-perturbation-choice} ensures
$\tau D_Y\le\varepsilon/2$. Hence~\eqref{eq:original-game-certificate}
gives
\[
 \mathcal R(x_{\rm out},y_{\rm out})
 \le\sqrt{S_j}+\tau D_Y
 \le\frac\varepsilon2+\frac\varepsilon2=\varepsilon.
\]
\end{proof}

\subsection{Unified warm-up and its complexity consequences}
\label{app:warmup}

\subsubsection{Warm-up energy}
\label{app:warmup-energy-proof}
{\color{black}
\begin{proof}[Proof of Lemma~\ref{lem:warmup-energy}]
The projection argument in Lemma~\ref{lem:normal-membership} does not
use the center-step length. Hence both phases preserve feasibility and
$\xi_t\in N_X(x_t)$, $n_t\in N_Y(y_t)$. Lemma~\ref{lem:fixed-lower}
gives $H_t\ge0$, and Lemma~\ref{lem:common-initialization} gives
$H_0\le\overline H_{\mu,\tau}$.
For $0\le t<T_{\rm w}$, the center remains $x_0$. Applying the
fixed-center estimate~\eqref{eq:fixedhat}, with
$h\sqrt{2L\mu_y}/32=128\beta$, and dropping its nonnegative dissipation
gives
\begin{equation}\label{eq:warmup-contraction}
 (1+128\beta)H_{t+1}\le H_t,
 \qquad 0\le t<T_{\rm w}.
\end{equation}
Therefore
\[
 H_{T_{\rm w}}
 \le\frac{\overline H_{\mu,\tau}}
           {(1+128\beta)^{T_{\rm w}}}\le\eta.
\]
This includes $T_{\rm w}=0$, which occurs when
$\overline H_{\mu,\tau}\le\eta$. Since $z_{T_{\rm w}}=x_0$,
\eqref{eq:initial-envelope-bound} then gives
\[
 \Delta_\tau^{\rm w}
 =p(x_0)-\inf_zp(z)+H_{T_{\rm w}}
 \le\Delta_\phi+\frac{\tau D_Y^2}{2}+\eta
 =B_{\mu,\tau,\eta}.
\]
All arguments depend on $\mu$ and $\tau$ through
$\mu_y=\mu+\tau$ except for the explicit perturbation bias.
\end{proof}
}

\subsubsection{Inherited-state residuals and query count}
\label{app:warmup-complexity-proof}
{\color{black}
\begin{proof}[Proof of Theorem~\ref{thm:warmup-unified}]
Retaining the complete state preserves the assumptions of the interval
version of Lemma~\ref{lem:stationarity-sums}. Apply
\eqref{eq:shifted-stationarity-sums} with $a=T_{\rm w}$ to obtain
\begin{equation}\label{eq:warmup-stationarity-sums}
 \sum_{t=T_{\rm w}}^{T_{\rm w}+T-1}S_t
       \le\frac{32L\Delta_\tau^{\rm w}}{\beta},\qquad
 \sum_{t=T_{\rm w}}^{T_{\rm w}+T-1}\|\nabla p(z_t)\|^2
       \le\frac{8L\Delta_\tau^{\rm w}}{\beta}.
\end{equation}
Using
$\Delta_\tau^{\rm w}\le B_{\mu,\tau,\eta}$ and
\eqref{eq:warmup-main-budget}, uniform sampling and certificate
minimization give
\[
 \mathbb E\|\nabla p(z_{\rm out})\|^2
       \le\frac{8LB_{\mu,\tau,\eta}}{\beta T}
       \le\frac{\varepsilon^2}{16},\qquad
 S_j\le\frac{32LB_{\mu,\tau,\eta}}{\beta T}
       \le\frac{\varepsilon^2}{4}.
\]
The transfer argument in Appendix~\ref{app:criterion-transfer} then
proves~\eqref{eq:warmup-transfer-guarantees}. For $\tau=0$,
$p=\Phi_{2L}$, so the stronger auxiliary estimate is already an OS
estimate.

The parameters imply
\[
 0<128\beta<1,\qquad
 \beta^{-1}=\frac{262144(3L+\tau)}{\sqrt{2L(\mu+\tau)}}
       =\Theta\!\left(\sqrt{\frac{L}{\mu+\tau}}\right).
\]
Using $\log(1+s)\ge s/2$ for $0\le s\le1$ and the iteration budgets,
\[
 T_{\rm w}\le1+\frac{\log q_{\mu,\tau,\eta}}{64\beta},\qquad
 T\le1+\frac{128LB_{\mu,\tau,\eta}}{\beta\varepsilon^2}.
\]
The gradient-caching convention in Section~\ref{sec:implementation}
uses one initial query and two further queries per iteration. Each
iteration makes four projections, and the center-step switch requires
neither a new gradient nor a state reset. These observations prove
\eqref{eq:warmup-query-bound} and the projection count.
\end{proof}
}

\subsubsection{NC--SC and NC--C specializations}
\label{app:warmup-specializations}
{\color{black}
\begin{proof}[Proof of Corollary~\ref{cor:warmup-ncsc}]
Set $\tau=0$. The definition~\eqref{eq:warmup-initial-bound} reduces to
\[
 \overline H_\mu
 =D_Y\|\nabla_yf(x_0,y_0)\|
 +\frac{\|\nabla_xf(x_0,y_0)\|^2+
        \|\nabla_yf(x_0,y_0)\|^2}{L}
 +\frac{\mu D_Y^2}{256}.
\]
Here $B_{\mu,0,\eta}=\Delta_\phi+\eta$ and
$\beta^{-1}=\Theta(\sqrt\kappa)$.
Theorem~\ref{thm:warmup-unified} gives both stated stationarity
guarantees. For $\eta=\varepsilon^2/L$, its query bound becomes
\eqref{eq:ncsc-w-order}, since
\[
 \frac{L(\Delta_\phi+\eta)}{\varepsilon^2}
 =\frac{L\Delta_\phi}{\varepsilon^2}+1,
 \qquad
 \log\max\{1,L\overline H_\mu/\varepsilon^2\}
 \le\log(1+L\overline H_\mu/\varepsilon^2).
\]
For a fixed instance with $\Delta_\phi>0$,
$\varepsilon^2\log(1+L\overline H_\mu/\varepsilon^2)\to0$.
This proves the leading-order assertion.
For the alternative in Remark~\ref{rem:warmup-budget}, use
$\eta=\overline\Delta>0$ and
$\Delta_\phi+\eta\le2\overline\Delta$ in the same theorem.
This proves~\eqref{eq:ncsc-w-gap-order}; both
$\overline H_\mu$ and $\overline\Delta$ are independent of
$\varepsilon$, so the warm-up length is also independent of it.
\end{proof}
}

{\color{black}
\begin{proof}[Proof of Corollaries~\ref{thm:warmup-os} and~\ref{thm:warmup-gs}]
Set $\mu=0$ and $\eta=\tau D_Y^2$. Then
\[
 B_{0,\tau,\eta}=\Delta_\phi+\frac32\tau D_Y^2,\qquad
 q_{0,\tau,\eta}=\max\{1,\overline H_{0,\tau}/(\tau D_Y^2)\}.
\]
For OS, the choice~\eqref{eq:optimization-perturbation-choice} gives
$8L\tau D_Y^2\le\varepsilon^2/2$. Theorem~\ref{thm:warmup-unified}
therefore yields
\[
 \mathbb E\|\nabla\Phi_{2L}(z_{\rm out})\|^2
 \le\frac{\varepsilon^2}{8}+\frac{\varepsilon^2}{2}
 \le\varepsilon^2.
\]
For GS, the choice~\eqref{eq:game-perturbation-choice} gives
$\tau D_Y\le\varepsilon/2$, so the same theorem gives
$\mathcal R(x_{\rm out},y_{\rm out})\le\varepsilon$.

For fixed problem and initialization data,
$\overline H_{0,\tau}$ is bounded as $\tau\downarrow0$.
For sufficiently small $\varepsilon$, the OS choice satisfies
$\tau=\varepsilon^2/(16LD_Y^2)$ and
$\beta^{-1}=\Theta(LD_Y/\varepsilon)$.
The leading term in~\eqref{eq:warmup-query-bound} is then
$O(L^2D_Y\Delta_\phi\varepsilon^{-3})$; its other terms are
$O((LD_Y/\varepsilon)[1+\log(1+L\overline H_{0,\tau}/\varepsilon^2)])$.
For GS, $\tau=\varepsilon/(2D_Y)$ and
$\beta^{-1}=\Theta(\sqrt{LD_Y/\varepsilon})$.
The contribution of the value-function gap is
$O(L^{3/2}D_Y^{1/2}\Delta_\phi\varepsilon^{-5/2})$.
The remaining terms are bounded by
\[
 O\!\left(
 L^{3/2}D_Y^{3/2}\varepsilon^{-3/2}
 +\sqrt{\frac{LD_Y}{\varepsilon}}
   \left[1+\log\!\left(1+
          \frac{\overline H_{0,\tau}}{\varepsilon D_Y}\right)\right]
 \right).
\]
Dividing these additional terms by the respective leading terms shows
that their ratios tend to zero for each fixed instance with
$\Delta_\phi>0$. This proves~\eqref{eq:warmup-os-order} and
\eqref{eq:warmup-gs-order}; the full nonasymptotic statement remains
\eqref{eq:warmup-query-bound}.
\end{proof}
}

\section{Lower-Bound Construction}\label{app:lower-bound}
We first establish the regularity of the base instance and its value
function, then show how the oracle reveals the chain coordinates.
A value-gradient barrier yields the Moreau-gradient barrier needed for
Theorem~\ref{lb:thm:lower-bound}. {The final subsection verifies
the initialization and oracle-class conditions for comparing the bounds.}
All constants in the base construction are independent of $M$ and $n$.
\subsection{Analytic properties of the base instance}
\label{lb:sec:construction}

We establish the regularity properties of the base instance defined in
Section~\ref{lb:sec:hard-instance}. For the proofs, introduce the simpler
expression
\begin{equation}
\begin{aligned}
f_{M,n}(x,y)
:={}&-\frac12\sum_{i=1}^M(y^i)^\top Q_ny^i
 +\frac1{\sqrt n}\sum_{i=1}^Mw_i(x)(y_1^i-g(x_i)y_n^i)\\
&-4\sum_{i=1}^Ms(x_i)+\frac12\sum_{i=1}^M(x_i^-)^2
\end{aligned}
\label{lb:eq:base-function}
\end{equation}

\begin{lemma}[Scalar functions, quadratic chain, and feasible set]
\label{lb:lem:basic}
The functions $s,h,g,R_n$ belong to $C^{1,1}$ and satisfy
\begin{equation}
\begin{gathered}
 0\le s\le1,\quad s'\ge0,\quad
 (s')^2=\pi^2s(1-s),\quad \Lip(s')\le\pi^2/2,\\
 s(0)=s'(0)=s'(1)=0,\quad s(1)=1,\\
 |h|\le2,\quad 0\le h'\le1,\quad \Lip(h')\le1/2,\\
 |g|\le1/2,\quad 0\le g'\le1/4,\quad \Lip(g')\le1/8,\quad
 g(t)=t/4\quad(|t|\le1).
\end{gathered}
\label{lb:eq:scalar-bounds}
\end{equation}
Moreover,
\begin{equation}
 w_i\ge0,\qquad \sum_{i=1}^Mw_i=1-P_M\le1,
 \label{lb:eq:sum-weights}
\end{equation}
and
\begin{equation}
 \frac1{2n^2}\le\lambda_{\min}(Q_n)\le\frac2{n^2},
 \qquad \|Q_n\|\le5.
 \label{lb:eq:spectrum}
\end{equation}
The set $Y_{M,n}$ is nonempty, compact, and convex, and, for every
$v\in\R^{Mn}$,
\begin{equation}
 \diam(Y_{M,n})=2n,\qquad
 \supp(\Pi_{Y_{M,n}}(v))\subseteq\supp(v).
 \label{lb:eq:projection-support}
\end{equation}
\end{lemma}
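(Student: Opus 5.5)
The plan is to verify each group of claims by elementary one-dimensional calculus, a telescoping identity, a Rayleigh-quotient argument for $Q_n$, and a symmetry argument for the projection. I expect the lower eigenvalue bound on $Q_n$ and the support-preservation of $\Pi_{Y_{M,n}}$ to be the only steps that need an idea rather than direct computation.

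\textbf{Scalar functions.} For $s$, I will compute $s'(t)=\frac\pi2\sin(\pi t)$ on $(0,1)$ and $s'=0$ outside. The one-sided values match at $t=0$ and $t=1$, so $s\in C^1$ with $s(0)=s'(0)=s'(1)=0$, $s(1)=1$, $0\le s\le1$, and $s'\ge0$. The half-angle identity gives
\[
(s')^2=\tfrac{\pi^2}{4}\sin^2(\pi t)=\pi^2\sin^2(\pi t/2)\cos^2(\pi t/2)=\pi^2s(1-s).
\]
On $(0,1)$ we have $|s''|=\frac{\pi^2}{2}|\cos(\pi t)|\le\pi^2/2$, and $s''=0$ elsewhere. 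Since $s'$ is continuous and piecewise $C^1$, this gives $\Lip(s')\le\pi^2/2$.

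For $h$, which is odd, I will check continuity of $h$ and $h'$ at $|t|=1$ and $|t|=3$. We have $2-(3-1)^2/4=1$ and $h'=(3-|t|)/2$ equals $1$ at $|t|=1$ and $0$ at $|t|=3$. Hence $0\le h'\le1$, $|h|\le2$, and $|h''|\le1/2$ almost everywhere, so $\Lip(h')\le1/2$. The bounds for $g=h/4$ follow by scaling, and $g(t)=t/4$ on $|t|\le1$. For $R_n$, the derivative $R_n'(t)=\sgn(t)(|t|-\sqrt n)_+/n$ is continuous and $(1/n)$-Lipschitz, so $R_n\in C^{1,1}$.

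\textbf{Weights.} Since $w_i=P_{i-1}-P_{i-1}s(x_i)=P_{i-1}-P_i$, the sum telescopes to $P_0-P_M=1-P_M$. Because $s\in[0,1]$, each $P_i\in[0,1]$, so $w_i\ge0$ and $1-P_M\le1$.

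\textbf{Spectrum of $Q_n$.} For the upper bound, apply $(a-b)^2\le2a^2+2b^2$ termwise. This gives
\[
u^\top Q_nu\le4\|u\|^2+\tfrac1n(u_1^2+u_n^2)\le5\|u\|^2.
\]
For $\lambda_{\min}\le2/n^2$, I will test the unit constant vector $u=n^{-1/2}\mathbf 1$. All differences vanish and $u^\top Q_nu=(1/n+1/n)/n=2/n^2$.

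For $\lambda_{\min}\ge1/(2n^2)$, which is the main obstacle, take $\|u\|=1$ and pick $k$ with $u_k^2\ge1/n$. Write $u_k=u_1-\sum_{j<k}(u_j-u_{j+1})$ and apply Cauchy--Schwarz with weights $n$ on $u_1^2/n$ and $1$ on each squared difference:
\[
\frac1n\le u_k^2\le\bigl(n+(k-1)\bigr)\Bigl(\frac{u_1^2}{n}+\sum_j(u_j-u_{j+1})^2\Bigr)\le2n\,u^\top Q_nu.
\]
This yields $u^\top Q_nu\ge1/(2n^2)$.

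\textbf{Feasible set.} $Y_{M,n}$ is the intersection of a Euclidean ball and a box. It is therefore compact and convex, and it contains $0$. The ball gives $\diam\le2n$. Equality is attained by $\pm y$ with $y^1=\sqrt n\,\mathbf 1\in\R^n$ and all other blocks zero, since $\|y\|=n$ and $\|y\|_\infty=\sqrt n$.

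For support preservation, let $p=\Pi_{Y_{M,n}}(v)$ and suppose $v_i=0$. Let $p'$ be $p$ with its $i$th coordinate negated. Then $p'\in Y_{M,n}$, because both constraints are invariant under coordinate sign flips. Also $\|p'-v\|=\|p-v\|$, because $v_i=0$. Uniqueness of the Euclidean projection forces $p'=p$, hence $p_i=0$.
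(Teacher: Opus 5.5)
Your proposal is correct and follows the paper's proof essentially step for step: the same calculus checks for $s,h,g,R_n$, the same telescoping $w_i=P_{i-1}-P_i$, the constant test vector and the bound $(a-b)^2\le 2a^2+2b^2$ for the spectrum of $Q_n$, and the same diameter witness. The only deviations are cosmetic. For $\lambda_{\min}(Q_n)\ge 1/(2n^2)$ you apply weighted Cauchy--Schwarz at a single coordinate with $u_k^2\ge 1/n$ instead of summing the per-coordinate bound over all $j$. For support preservation you use sign-flip invariance of $Y_{M,n}$ plus uniqueness of the projection, instead of the paper's observation that zeroing the offending coordinate stays feasible and strictly decreases the distance to $v$.
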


\begin{proof}
For $0<t<1$, we have $s'(t)=(\pi/2)\sin(\pi t)$ and
$s''(t)=(\pi^2/2)\cos(\pi t)$; outside this interval, $s'=0$.
On the intervals $|t|<1$, $1<|t|<3$, and $|t|>3$, the derivative
$h'$ equals $1$, $(3-|t|)/2$, and $0$, respectively.
The first derivatives agree at all junctions, which proves
\eqref{lb:eq:scalar-bounds}. Furthermore,
\[
 R_n'(t)=\frac{\sgn(t)(|t|-\sqrt n)_+}{n},\qquad
 R_n''(t)=\frac1n\boldsymbol 1_{\{|t|>\sqrt n\}}
 \quad\text{almost everywhere}.
\]
Thus $\Lip(R_n')\le1/n$. Summing the identity
$w_i=P_{i-1}-P_i$ gives~\eqref{lb:eq:sum-weights}.

Let $\delta_j=u_{j+1}-u_j$. The identity
$u_j=u_1+\sum_{\ell<j}\delta_\ell$ and the Cauchy--Schwarz inequality
yield
\[
 u_j^2\le2u_1^2+2(j-1)\sum_{\ell=1}^{n-1}\delta_\ell^2.
\]
Summing over $j$ gives
\[
 \|u\|^2\le2nu_1^2+n(n-1)\sum_{\ell=1}^{n-1}\delta_\ell^2
 \le2n^2u^\top Q_nu.
\]
Consequently, $\lambda_{\min}(Q_n)\ge1/(2n^2)$.
Taking $u=\boldsymbol 1_n$ gives a Rayleigh quotient of $2/n^2$
and hence the upper bound on $\lambda_{\min}(Q_n)$.
On the other hand,
\[
 u^\top Q_nu
 \le2\sum_{j=1}^{n-1}(u_j^2+u_{j+1}^2)
   +\frac{u_1^2+u_n^2}{n}
 \le5\|u\|^2,
\]
which proves~\eqref{lb:eq:spectrum}.

The set $Y_{M,n}$ is the intersection of a closed ball and a closed
cube; it contains the origin and is bounded. Hence it is nonempty,
compact, and convex. Its diameter is at most $2n$.
Take $\bar y^1=\sqrt n\boldsymbol 1_n$ and $\bar y^i=0$ for $i>1$.
Then $\pm\bar y\in Y_{M,n}$ and
$\|\bar y-(-\bar y)\|=2n$, so the diameter equals $2n$.
Let $p=\Pi_{Y_{M,n}}(v)$. If $v_a=0$ but $p_a\ne0$, replace $p_a$
by zero to obtain $\widetilde p$. Neither constraint norm increases,
so $\widetilde p\in Y_{M,n}$, whereas
\[
 \|\widetilde p-v\|^2=\|p-v\|^2-|p_a|^2<\|p-v\|^2.
\]
This contradicts the optimality of the projection and proves the
support inclusion.
\end{proof}

\begin{lemma}[Dimension-independent bounds for the product weights]
\label{lb:lem:weights}
Suppose that $v_i\in C^{1,1}(\R)$ satisfy
$|v_i|\le V_0$, $|v_i'|\le V_1$, and $\Lip(v_i')\le V_2$.
For $G(x)=\sum_{i=1}^Mw_i(x)v_i(x_i)$, we have
\begin{equation}
\begin{aligned}
 \|\nabla G(x)\|&\le\pi V_0+V_1,\\
 \Lip(\nabla G)&\le K V_0+2\pi V_1+V_2,
 \qquad K:=\frac{1+\sqrt3}{2}\pi^2.
\end{aligned}
\label{lb:eq:weight-regularity}
\end{equation}
\end{lemma}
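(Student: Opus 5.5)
The plan is to exploit the telescoping identity $w_i=P_{i-1}-P_i$ from Lemma~\ref{lb:lem:basic}, together with the fact that each $w_i$ is a product of one-dimensional factors. The goal is to compute every partial derivative of $G$ explicitly and to show that the sums over the chain index collapse. The collapse should come from
\[
 \sum_{i>j}\Bigl(\prod_{k<i,\,k\ne j}s(x_k)\Bigr)\bigl(1-s(x_i)\bigr)\le\prod_{k<j}s(x_k)=P_{j-1}(x).
\]
This is the dimension-free mechanism that should keep the constants independent of $M$.

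\textbf{Step 1 (gradient).} For fixed $j$ I will write
\[
 \partial_jG=w_jv_j'(x_j)+s'(x_j)\Bigl[-P_{j-1}v_j(x_j)+\sum_{i>j}\Pi_{ij}\bigl(1-s(x_i)\bigr)v_i(x_i)\Bigr],
 \qquad \Pi_{ij}:=\prod_{k<i,\,k\ne j}s(x_k).
\]
The first part is a vector with entries $w_jv_j'$. Since $w_j\ge0$ and $\sum_j w_j\le1$, its Euclidean norm is at most $V_1\sqrt{\sum_j w_j^2}\le V_1$.

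For the bracket, I will use the telescoping sum above to see that it is a signed combination of the values $v_i$ with total mass controlled by $P_{j-1}$. I will then combine this with $(s')^2=\pi^2s(1-s)$ and Cauchy--Schwarz. The quantity to bound is $\sum_j (s_j')^2(\cdots)^2$, and the target is to dominate it by $\pi^2V_0^2\sum_j P_{j-1}s_j(1-s_j)\cdot P_{j-1}$-type terms. These sum to at most $\pi^2V_0^2$, because
\[
 P_{j-1}^2s_j(1-s_j)\le P_{j-1}(1-s_j)=w_j.
\]
Adding the two parts by the triangle inequality gives $\pi V_0+V_1$.

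A naive bound gives $2\pi V_0$ here. To obtain $\pi V_0$, I would try the following: the $i=j$ term and the tail $\sum_{i>j}$ carry complementary weights $s_j$ and $1-s_j$ after the factor $P_{j-1}$ is pulled out. Using this, I would bound the bracket by $V_0P_{j-1}$ rather than $2V_0P_{j-1}$.

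\textbf{Step 2 (Hessian Lipschitz bound).} Since all functions are $C^{1,1}$, it suffices to bound the operator norm of the almost-everywhere Hessian of $G$. I will split this Hessian into three groups.
\begin{enumerate}[leftmargin=*]
\item The diagonal terms $w_jv_j''$ have operator norm at most $V_2$.
\item The mixed terms contain one factor $v_j'$ and one factor $s'$. They have the form $s'(x_k)v_j'(x_j)\cdot(\text{products of }s)$ in positions $(j,k)$ and $(k,j)$. I will bound their operator norm by $2\pi V_1$ via $\|A\|\le\|A\|_F$, or via a rank/Schur-test argument applied to each triangular half. The estimate on each half uses $\sum_k (s_k')^2P_{k-1}\lesssim\pi^2$, as in Step 1.
\item The terms carrying $V_0$ contain either $s''(x_j)$ on the diagonal or products $s'(x_j)s'(x_k)$ off the diagonal. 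For the diagonal part I use $|s''|\le\pi^2/2$ together with the same telescoping. For the off-diagonal part I write it as a sum of outer products $u u^\top$-like pieces with $u_j=s'(x_j)\sqrt{\cdot}$, then bound $\|u\|^2\le\pi^2$ as in Step 1 and use $\|uu^\top\|=\|u\|^2$.
\end{enumerate}
Collecting the $V_0$ contributions should give a constant of the form $(a+b\sqrt3)\pi^2/2$. The $\sqrt3$ presumably comes from optimizing a $2\times2$ block quadratic form, i.e.\ the largest eigenvalue of $\begin{pmatrix}1&1\\1&0\end{pmatrix}$-type combinations, which is $(1+\sqrt3)/2$ times a scale.

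\textbf{Main obstacle.} The hard part is the sharp constant bookkeeping, mainly $K=(1+\sqrt3)\pi^2/2$ in the $V_0$ term and $\pi V_0$ in the gradient bound. The difficulty is to organize the off-diagonal $s'_js'_k$ products so that the telescoping identity applies uniformly in both indices. I would first prove a cruder dimension-free bound (any $M$-independent constant suffices for the lower-bound theorem). Then I would tighten it by treating the $s''$ diagonal and the $s's'$ off-diagonal blocks jointly as a single quadratic form $\xi^\top H\xi$, and maximizing over the split of $\|\xi\|^2$ between the two blocks. That maximization is where the factor $(1+\sqrt3)/2$ should emerge.
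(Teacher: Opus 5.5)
\textbf{There is a genuine gap: your plan never reaches the stated constants, and the one concrete mechanism you give for them is false.}

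\emph{The Step~1 refinement fails.} In Step~1 the bracket equals $P_{j-1}\bigl[-v_j(x_j)+\sum_{i>j}\rho_{ji}v_i(x_i)\bigr]$, where $\rho_{ji}=\bigl(\prod_{j<k<i}s(x_k)\bigr)(1-s(x_i))\ge0$ and $\sum_{i>j}\rho_{ji}=1-\prod_{k>j}s(x_k)$. Neither piece carries $s(x_j)$ or $1-s(x_j)$. The complementary weights you have in mind live in $G$ itself, $G=\sum_{i<j}w_iv_i+P_{j-1}\bigl[(1-s(x_j))v_j+s(x_j)\sum_{i>j}\rho_{ji}v_i\bigr]$. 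Differentiating in $x_j$ therefore produces the \emph{difference} of the two averages. Taking $v_j(x_j)=-V_0$, $v_{j+1}(x_{j+1})=V_0$ and $x_{j+1}\le0$ makes the bracket equal to $2V_0P_{j-1}$, so the pointwise bound $V_0P_{j-1}$ is false.

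\emph{No per-coordinate bound can work.} Bounding each bracket by its own worst case cannot reach $\pi V_0$ either. At $s(x_1)=2/3$, $s(x_2)=1/2$, $x_3\le0$, the worst cases $2V_0$ and $2V_0P_1$ are attained for $j=1,2$. Summing the squares of these worst cases gives a bound of at least $2\pi V_0(\frac29+\frac19)^{1/2}=2\pi V_0/\sqrt3$. What is needed is cancellation \emph{across} $j$: the brackets cannot all be extremal at once.

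\emph{Step~2 has the same defect in stronger form.} The constants $2\pi V_1$ and $K$ are guessed rather than derived. The off-diagonal $s'(x_j)s'(x_k)$ block is not a rank-one $uu^\top$ in general: for $j<k$ its entries carry $\prod_{j<l<k}s(x_l)$ and a signed tail of the $v_i$ with $i>k$. Your suggested origin of $\sqrt3$ is also wrong, since $\begin{pmatrix}1&1\\1&0\end{pmatrix}$ has top eigenvalue $(1+\sqrt5)/2$. Falling back to a cruder $M$-independent constant does not prove the lemma as stated, and Lemma~\ref{lb:lem:smoothness} relies on the exact constants.

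\textbf{The missing idea is the square-root (tensor) representation used in the paper's proof.}
\begin{itemize}
\item Set $\theta_i=\frac\pi2\min\{1,\max\{0,x_i\}\}$, so that $s(x_i)=\sin^2\theta_i$. Let $A(x)=\bigotimes_{i=1}^M(\cos\theta_i,\sin\theta_i)$, a unit vector in $\R^{2^M}$.
\item Let $V$ be diagonal with $V_{\sigma\sigma}=v_{m(\sigma)}(x_{m(\sigma)})$, where $m(\sigma)=\min\{i:\sigma_i=0\}$, and $V_{\sigma\sigma}=0$ if $\sigma=(1,\dots,1)$. Then $\sum_{m(\sigma)=i}A_\sigma^2=w_i$, so $G=A^\top VA$ with $\|V\|\le V_0$.
\item The derivatives are $DA[q]=\sum_ia_iB_i$ and $D^2A[q,q]=-(\sum_ia_i^2)A+2\sum_{i<j}a_ia_jB_{ij}$. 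Here $|a_i|\le\frac\pi2|q_i|$, and $B_i$, $B_{ij}$ replace one or two factors by $(-\sin\theta_i,\cos\theta_i)$, so $A$, $B_i$, $B_{ij}$ form an orthonormal family.
\item Hence $\|DA[q]\|\le\frac\pi2\|q\|$ and $\|D^2A[q,q]\|^2=(\sum_ia_i^2)^2+4\sum_{i<j}a_i^2a_j^2\le3(\sum_ia_i^2)^2$. This is where $\sqrt3$ comes from.
\item The product rule for $A^\top VA$ then yields $2\cdot\frac\pi2V_0+V_1$ and $2\cdot\frac{\sqrt3\pi^2}4V_0+2\cdot\frac{\pi^2}4V_0+4\cdot\frac\pi2V_1+V_2$ immediately.
\end{itemize}

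In your coordinates, the gradient bound is simply Bessel's inequality for $\{B_j\}$. One has $s'(x_j)\,\mathrm{bracket}_j=\pi B_j^\top VA$, so $\sum_js(x_j)(1-s(x_j))\,\mathrm{bracket}_j^2=\sum_j(B_j^\top VA)^2\le\|VA\|^2\le V_0^2$. This is precisely the cross-$j$ cancellation that your per-coordinate estimate discards.
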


\begin{proof}
Let $\theta_i=(\pi/2)\min\{1,\max\{0,x_i\}\}$,
$u_i=(\cos\theta_i,\sin\theta_i)$, and
$A=\bigotimes_{i=1}^Mu_i$. Index the tensor coordinates by
$\sigma\in\{0,1\}^M$, where $0$ selects $\cos\theta_i$ and $1$
selects $\sin\theta_i$. If
$m(\sigma):=\min\{i:\sigma_i=0\}$ exists, define
$V_{\sigma\sigma}(x)=v_{m(\sigma)}(x_{m(\sigma)})$.
Set the diagonal entry indexed by the all-one vector to zero and all
off-diagonal entries to zero. Then
\[
 \sum_{\sigma:m(\sigma)=i}A_\sigma^2
 =\left(\prod_{j<i}\sin^2\theta_j\right)\cos^2\theta_i
   \prod_{j>i}(\cos^2\theta_j+\sin^2\theta_j)
 =w_i.
\]
It follows that $G=A^\top VA$ and $\|A\|=1$.

We first differentiate within any open region determined by the
hyperplanes $x_i=0,1$. For a direction $q$, set
$a_i=(\pi/2)q_i$ when $0<x_i<1$ and $a_i=0$ otherwise.
Let $B_i$ be the tensor obtained by replacing the $i$th factor of $A$
with $u_i^\perp=(-\sin\theta_i,\cos\theta_i)$, and let $B_{ij}$
be the tensor obtained by replacing both the $i$th and $j$th factors.
Since each pair $u_i,u_i^\perp$ is orthonormal, $A$, all $B_i$,
and all $B_{ij}$ form an orthonormal collection. Therefore,
\[
\begin{aligned}
 DA[q]&=\sum_i a_iB_i,\\
 D^2A[q,q]&=-\left(\sum_i a_i^2\right)A
              +2\sum_{i<j}a_ia_jB_{ij},\\
 \|DA[q]\|^2&=\sum_i a_i^2\le\frac{\pi^2}{4}\|q\|^2,\\
 \|D^2A[q,q]\|^2
 &=\left(\sum_i a_i^2\right)^2+4\sum_{i<j}a_i^2a_j^2
   \le\frac{3\pi^4}{16}\|q\|^4.
\end{aligned}
\]
By the definition of the diagonal entries,
\[
 \|V\|\le V_0,\qquad
 \|DV[q]\|\le V_1\|q\|,\qquad
 \|D^2V[q,q]\|\le V_2\|q\|^2
 \quad\text{almost everywhere}.
\]
Differentiating $A^\top VA$ gives
\[
 DG[q]=2(DA[q])^\top VA+A^\top(DV[q])A
\]
and
\[
\begin{aligned}
 D^2G[q,q]
 ={}&2(D^2A[q,q])^\top VA+2(DA[q])^\top VDA[q]\\
    &+4(DA[q])^\top(DV[q])A+A^\top(D^2V[q,q])A.
\end{aligned}
\]
Hence $|DG[q]|\le(\pi V_0+V_1)\|q\|$, and
\[
\begin{aligned}
 |D^2G[q,q]|
 &\le\left(
   \frac{\sqrt3\pi^2}{2}V_0+\frac{\pi^2}{2}V_0
   +2\pi V_1+V_2\right)\|q\|^2\\
 &=(KV_0+2\pi V_1+V_2)\|q\|^2.
\end{aligned}
\]
As a finite sum of finite products of $C^{1,1}$ functions, $G$ is
locally $C^{1,1}$, and its gradient is continuous across the
hyperplanes $x_i=0,1$. The preceding Hessian bound holds almost
everywhere in the full space. For an arbitrary segment $[a,b]$,
Fubini's theorem provides translations $\eta_j\to0$ such that the
bound holds almost everywhere on each translated segment
$[a+\eta_j,b+\eta_j]$. The gradient is absolutely continuous on each
compact segment, so integration yields
\[
 \|\nabla G(b+\eta_j)-\nabla G(a+\eta_j)\|
 \le(KV_0+2\pi V_1+V_2)\|b-a\|.
\]
Letting $j\to\infty$ and using continuity proves the global
Lipschitz bound. Continuity also extends the gradient norm bound to
the boundaries of the regions.
\end{proof}

\begin{lemma}[Global regularity]
\label{lb:lem:smoothness}
The function $\widehat f_{M,n}$ is jointly $L_0$-smooth on
$\R^{M+Mn}$, where $L_0:=128$ is independent of $M$ and $n$.
For every $x$, the function $\widehat f_{M,n}(x,\cdot)$ is
$\mu_0$-strongly concave, where $\mu_0=\lambda_{\min}(Q_n)$.
Moreover, $\widehat f_{M,n}$ is nonconvex in $x$, and
\begin{equation}
 \widehat f_{M,n}=f_{M,n},\qquad
 \nabla\widehat f_{M,n}=\nabla f_{M,n}
 \quad\text{on }\R^M\times Y_{M,n}.
 \label{lb:eq:agreement}
\end{equation}
\end{lemma}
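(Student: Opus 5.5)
The proof splits into four parts: decompose $\widehat f_{M,n}$ into blocks and bound the Hessian of each uniformly in $M,n$; then read off concavity, nonconvexity, and agreement with $f_{M,n}$. Write
\[
\widehat f_{M,n}(x,y)=-\tfrac12\sum_i (y^i)^\top Q_ny^i+\Psi_1(x,y)-\Psi_2(x,y)-\sum_i\bigl(R_n(y_1^i)+R_n(y_n^i)\bigr)+\Gamma(x),
\]
where
\[
\Psi_1=\sum_i w_i(x)h(y_1^i/\sqrt n),\qquad
\Psi_2=\sum_i w_i(x)g(x_i)h(y_n^i/\sqrt n),\qquad
\Gamma(x)=-4\sum_i s(x_i)+\tfrac12\sum_i(x_i^-)^2 .
\]

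\textbf{Uncoupled blocks.} By Lemma~\ref{lb:lem:basic}, the quadratic term has gradient Lipschitz constant $\|Q_n\|\le5$. The $R_n$ terms are separable with $\Lip(R_n')\le1/n$. $\Gamma$ is separable with constant at most $4\cdot\pi^2/2+1$.

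\textbf{Coupled blocks.} This is the main step, since the bound must not depend on $M$ or $n$. I would bound the Hessian quadratic form along a direction $(q,r)$, working almost everywhere and then extending to a global Lipschitz bound by the Fubini/segment argument from Lemma~\ref{lb:lem:weights}.

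For $\Psi_1$, freeze $y$ and set $v_i(t)=h(y_1^i/\sqrt n)$, which is constant in $x_i$. Lemma~\ref{lb:lem:weights} with $V_0=2$, $V_1=V_2=0$ bounds the $xx$-block by $2K$. The $yy$-block is diagonal with entries $w_i h''/n$, so its norm is at most $\tfrac1{2n}\max_i w_i\le\tfrac12$. For the mixed block, use $\partial_{y_1^i}\Psi_1=w_i h'/\sqrt n$. The quantity $\sum_i r_i\,\nabla_x w_i\cdot q\, h'/\sqrt n$ can be written as the directional derivative of $G_r(x)=\sum_i w_i(x)c_i$ with $c_i=r_ih'/\sqrt n$. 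Lemma~\ref{lb:lem:weights} then gives a bound of $\pi\max|c_i|$, but a max-norm bound is not enough here. Instead I would use the tensor representation $G_r=A^\top V A$ from the proof of Lemma~\ref{lb:lem:weights}: $|DG_r[q]|\le2\|DA[q]\|\,\|VA\|$, and
\[
\|VA\|^2=\sum_i w_ic_i^2\le\sum_i c_i^2\le\|r\|^2 .
\]
This gives a mixed-block bound $\le\pi\|q\|\|r\|$, free of $M$ and $n$.

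$\Psi_2$ is handled the same way with $v_i=g(x_i)h(\cdot)$. Its bounds are $V_0\le1$, $V_1\le\tfrac12$, and $V_2\le\tfrac14+\tfrac18$, since $g'$ and $h$ contribute as products. Adding all block bounds should give a total comfortably below $128$. The only thing to check is the constant bookkeeping, for example that $3K+2\pi+\dots$ plus the mixed and $yy$ terms stays $\le128$; $K\approx13.5$, so there is ample room.

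\textbf{Concavity.} In $y$, the Hessian is $-Q_n$ plus diagonal terms $w_ih''(y^i_1/\sqrt n)/n$ and $-w_ig(x_i)h''(y^i_n/\sqrt n)/n$, minus $R_n''\ge0$. Here $h''\le0$ on $t>0$ and $\ge0$ on $t<0$, so a sign problem can arise. I would handle it by pairing with $R_n$: for $|t|>\sqrt n$, $R_n''=1/n$ while $|h''|/n\le1/(2n)$, and $h''$ vanishes on $|t|\le\sqrt n$. The net diagonal is therefore $\le0$, leaving $-Q_n\preceq-\lambda_{\min}(Q_n)I$, which gives $\mu_0$-strong concavity.

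\textbf{Nonconvexity and agreement.} For nonconvexity, at $y=0$, $x=(x_1,0,\dots)$ the function reduces to $-4s(x_1)+\dots$, and $s''(0^+)>0$ gives $-4s''<0$. For \eqref{lb:eq:agreement}, on $Y_{M,n}$ we have $|y_j^i|\le\sqrt n$. There $h(y/\sqrt n)=y/\sqrt n$ and $R_n=R_n'=0$, and since these identities hold on a neighborhood in the relative sense and $h'$ and $R_n'$ also match at the boundary, the gradients agree as well.
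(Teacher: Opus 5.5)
Your proposal is correct and follows essentially the same route as the paper: Lemma~\ref{lb:lem:weights} for the $x$-dependence of the coupling, a direct bound on the $yy$ block, the segment/Fubini extension, pairing $h''$ against $R_n''=1/n$ on $|t|>\sqrt n$ for strong concavity, the slice $\widehat f_{M,n}(te_1,0)=-4s(t)$ for nonconvexity, and $\|y\|_\infty\le\sqrt n$ for the agreement. The one misstatement is your claim that a max-norm bound is insufficient for the mixed block: since $|r^i_j|\le\|r\|=1$, the gradient bound of Lemma~\ref{lb:lem:weights} with $V_0=3/(2\sqrt n)$ and $V_1=1/(4\sqrt n)$ already gives $\|\nabla^2_{xy}\widehat f_{M,n}\|\le(3\pi/2+1/4)/\sqrt n<4$ (using $n\ge2$), which is what the paper uses, so your tensor estimate $\|VA\|^2=\sum_i w_ic_i^2$ is valid but unnecessary.
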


\begin{proof}
Denote the coupling term in~\eqref{lb:eq:hard-function} by $C(x,y)$.
For fixed $y$, set
\[
 v_i(t)=h(y_1^i/\sqrt n)-g(t)h(y_n^i/\sqrt n).
\]
By~\eqref{lb:eq:scalar-bounds}, we may take $V_0=3$, $V_1=1/2$,
and $V_2=1/4$. Applying Lemma~\ref{lb:lem:weights} and including
the second derivatives of $-4s(x_i)$ and $(x_i^-)^2/2$ yields,
almost everywhere,
\begin{equation}
 \|\nabla_{xx}^2\widehat f_{M,n}\|
 \le3K+\pi+\frac14+2\pi^2+1<65.
 \label{lb:eq:xx-bound}
\end{equation}
For $r=(r^1,\ldots,r^M)\in\R^{Mn}$ with $\|r\|=1$, we have
\[
\begin{aligned}
 D_yC(x,y)[r]&=\sum_iw_i(x)\widetilde v_i(x_i),\\
 \widetilde v_i(t)
 &=\frac{h'(y_1^i/\sqrt n)r_1^i
          -g(t)h'(y_n^i/\sqrt n)r_n^i}{\sqrt n}.
\end{aligned}
\]
Since $|r_j^i|\le1$, it follows that
$|\widetilde v_i|\le3/(2\sqrt n)$ and
$|\widetilde v_i'|\le1/(4\sqrt n)$. The gradient bound in
Lemma~\ref{lb:lem:weights} therefore gives
\begin{equation}
 \|\nabla_{xy}^2\widehat f_{M,n}\|
 \le\frac{3\pi/2+1/4}{\sqrt n}<4.
 \label{lb:eq:xy-bound}
\end{equation}

Apart from $-I_M\otimes Q_n$, the $yy$ Hessian has only diagonal
corrections at the first and last coordinates of each block.
These corrections are
\[
\begin{aligned}
 d_{i,1}&=\frac{w_i}{n}h''(y_1^i/\sqrt n)-R_n''(y_1^i),\\
 d_{i,n}&=-\frac{w_ig(x_i)}{n}h''(y_n^i/\sqrt n)-R_n''(y_n^i).
\end{aligned}
\]
If the absolute value of the corresponding coordinate is less than
$\sqrt n$, both terms vanish. If it exceeds $\sqrt n$, the bound
$0\le w_i\le1$ implies, almost everywhere,
\[
 -\frac{3}{2n}\le d_{i,1}\le-\frac1{2n},\qquad
 -\frac{5}{4n}\le d_{i,n}\le-\frac3{4n}.
\]
Consequently,
\begin{equation}
\begin{gathered}
 \nabla_{yy}^2\widehat f_{M,n}\preceq-I_M\otimes Q_n
 \preceq-\mu_0 I,\\
 \|\nabla_{yy}^2\widehat f_{M,n}\|
 \le5+\frac3{2n}<6.
\end{gathered}
\label{lb:eq:yy-bound}
\end{equation}
Decompose the full Hessian into its block-diagonal and off-diagonal
parts. Equations~\eqref{lb:eq:xx-bound}--\eqref{lb:eq:yy-bound} give
\[
 \|\nabla^2\widehat f_{M,n}\|
 \le\max\{65,6\}+4=69<128
 \quad\text{almost everywhere}.
\]
The function $\widehat f_{M,n}$ is locally $C^{1,1}$, with a
continuous gradient across the boundaries of its pieces.
Integrating along segments and using continuity at the boundaries
proves global $L_0$-smoothness. For fixed $x$, integrating
\eqref{lb:eq:yy-bound} in the same way yields
\[
\begin{aligned}
 \widehat f_{M,n}(x,y')\le{}&\widehat f_{M,n}(x,y)
 +\langle\nabla_y\widehat f_{M,n}(x,y),y'-y\rangle\\
 &-\frac{\mu_0}{2}\|y'-y\|^2.
\end{aligned}
\]
For $y=0$ and $0<t<1/2$, we have
$\widehat f_{M,n}(te_1,0)=-4s(t)$, whose second derivative is
$-2\pi^2\cos(\pi t)<0$. Thus the objective is nonconvex in $x$.
Finally, every coordinate of a point in $Y_{M,n}$ satisfies
$|y_j^i|\le\sqrt n$. Hence
$h(y_j^i/\sqrt n)=y_j^i/\sqrt n$, $h'=1$, and $R_n=R_n'=0$.
Substitution proves~\eqref{lb:eq:agreement}, including at boundary
points.
\end{proof}

\subsection{Value function and oracle information propagation}

\begin{lemma}[Dual maximizer, value function, and initial gap]
\label{lb:lem:value}
Let
\[
 a_n:=\frac{2n-1}{3n-1},\qquad b_n:=\frac{n}{3n-1},\qquad
 k_n(t):=\frac{a_n}{2}(1+g(t)^2)-b_ng(t).
\]
For every $x$, the unique dual maximizer is
\begin{equation}
 y^{i,*}(x)=\frac{w_i(x)}{\sqrt n}
 Q_n^{-1}(e_1-g(x_i)e_n),\qquad i=1,\ldots,M.
 \label{lb:eq:dual-solution}
\end{equation}
This vector belongs to $Y_{M,n}$, and
\begin{equation}
 \varphi_{M,n}(x)
 =\sum_{i=1}^M w_i(x)^2 k_n(x_i)
 -4\sum_{i=1}^M s(x_i)+\frac12\sum_{i=1}^M(x_i^-)^2.
 \label{lb:eq:value}
\end{equation}
Moreover, $0\le k_n\le1$ and
\begin{equation}
 \inf\varphi_{M,n}=-4M,\qquad
 \varphi_{M,n}(0)-\inf\varphi_{M,n}
 =4M+\frac{a_n}{2}\le5M.
 \label{lb:eq:gap}
\end{equation}
\end{lemma}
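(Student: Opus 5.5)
On $\R^M\times Y_{M,n}$ the objective $\widehat f_{M,n}$ coincides with $f_{M,n}$ together with its gradient, by~\eqref{lb:eq:agreement}. The plan is therefore to maximize the simpler quadratic $f_{M,n}(x,\cdot)$ over all of $\R^{Mn}$, check that its maximizer lies in $Y_{M,n}$, and conclude from there. For fixed $x$, $f_{M,n}(x,\cdot)$ separates into blocks:
\[
 -\tfrac12(y^i)^\top Q_ny^i+\tfrac{w_i(x)}{\sqrt n}\langle e_1-g(x_i)e_n,y^i\rangle .
\]
Since $Q_n\succ0$ by~\eqref{lb:eq:spectrum}, the unique unconstrained maximizer is~\eqref{lb:eq:dual-solution}. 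If this point lies in $Y_{M,n}$, then $\nabla_y\widehat f_{M,n}$ vanishes there, because the two gradients agree on $Y_{M,n}$. Strong concavity of $\widehat f_{M,n}(x,\cdot)$ (Lemma~\ref{lb:lem:smoothness}) then makes it the unique maximizer of $\widehat f_{M,n}(x,\cdot)$ over $Y_{M,n}$. Substituting it back gives the block value
\[
 \frac{w_i^2}{2n}\,(e_1-g e_n)^\top Q_n^{-1}(e_1-g e_n).
\]

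The key computation is the corner entries of $Q_n^{-1}$. I will solve $Q_nu=e_1$ explicitly.
\begin{itemize}
\item The interior rows of $Q_n$ are second differences, so $u$ is affine in the index: $u_j=A-B(j-1)$.
\item The first row gives $B+A/n=1$.
\item The last row gives $-B+u_n/n=0$, so $u_n=nB$ and hence $A=(2n-1)B$.
\end{itemize}
It follows that $B=n/(3n-1)$, $(Q_n^{-1})_{11}=A=na_n$ and $(Q_n^{-1})_{1n}=u_n=nb_n$. By the reversal symmetry of $Q_n$, also $(Q_n^{-1})_{nn}=na_n$. Hence the block value equals $w_i^2\bigl(\tfrac{a_n}{2}(1+g^2)-b_ng\bigr)=w_i^2k_n(x_i)$. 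Adding back the $x$-only terms of $f_{M,n}$ gives~\eqref{lb:eq:value}.

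Feasibility is the step needing most care. The vector $Q_n^{-1}(e_1-ge_n)$ is again affine in the index, so its entries are bounded by its endpoint values, which are at most $(A+|g|\,nb_n)$ or $(nb_n+|g|A)$ in absolute value. Using $|g|\le1/2$, $A\le 2n/3$ and $nb_n\le n/2$, every entry is at most $n$ in absolute value. After the factor $w_i/\sqrt n$ with $0\le w_i\le1$, each coordinate is at most $\sqrt n$, which gives the $\ell_\infty$ constraint. For the $\ell_2$ constraint, I will not bound the blocks separately but use their weights. Block $i$ has Euclidean norm at most $\sqrt n\cdot\sqrt n\,w_i=nw_i$. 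Hence
\[
 \|y^*\|^2\le n^2\sum_i w_i^2\le n^2\Bigl(\sum_i w_i\Bigr)^2\le n^2
\]
by~\eqref{lb:eq:sum-weights}, so $\|y^*\|_2\le n$ and $y^*\in Y_{M,n}$.

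Finally, the bounds on $k_n$ and the gap.
\begin{itemize}
\item Lower bound on $k_n$: minimizing the quadratic in $g$ gives $k_n\ge \tfrac{a_n}{2}-\tfrac{b_n^2}{2a_n}\ge0$, since $a_n\ge b_n$.
\item Upper bound on $k_n$: $|g|\le1/2$, $a_n\le2/3$ and $b_n\le1/2$ give $k_n\le \tfrac58 a_n+\tfrac12 b_n<1$.
\item Infimum: since $w_i^2k_n\ge0$, $s\le1$ and $(x_i^-)^2\ge0$, we get $\varphi_{M,n}\ge-4M$. Equality holds at $x=\boldsymbol 1$, where every $w_i=0$ and every $s(x_i)=1$.
\item Value at the origin: at $x=0$ we have $w_1=1$, $w_i=0$ for $i>1$, $s=0$ and $g(0)=0$, so $\varphi_{M,n}(0)=a_n/2$.
\end{itemize}
This yields the gap $4M+a_n/2\le5M$, which is~\eqref{lb:eq:gap}.
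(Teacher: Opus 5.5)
Your proposal is correct and follows essentially the same route as the paper. Both arguments compute the corner entries of $Q_n^{-1}$ from its affine structure, establish feasibility through entrywise bounds and the weighted $\ell_2$ estimate $n^2\sum_i w_i^2\le n^2$, and evaluate the gap at $x=0$ and $x=\mathbf 1$. The differences are cosmetic: you solve for $Q_n^{-1}e_1$ rather than verify it, bound the entries at the endpoints instead of via $u_j+v_j=n$, and obtain $k_n\ge0$ by minimizing in $g$ rather than from positive definiteness.
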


\begin{proof}
Define
\[
 u_j=\frac{n(2n-j)}{3n-1},\qquad
 v_j=\frac{n(n+j-1)}{3n-1},\qquad j=1,\ldots,n.
\]
Since $u_j$ is affine in $j$, it satisfies
$2u_j-u_{j-1}-u_{j+1}=0$ for $2\le j\le n-1$.
At the endpoints,
\[
\begin{aligned}
 (1+1/n)u_1-u_2
 &=\frac{(n+1)(2n-1)-n(2n-2)}{3n-1}=1,\\
 (1+1/n)u_n-u_{n-1}
 &=\frac{n(n+1)-n(n+1)}{3n-1}=0.
\end{aligned}
\]
Thus $Q_nu=e_1$. The matrix $Q_n$ is invariant under reversal of the
coordinate order, and $v_j=u_{n+1-j}$, so $Q_nv=e_n$.
Since $Q_n\succ0$, we obtain $u=Q_n^{-1}e_1$ and $v=Q_n^{-1}e_n$.
Completing the square in each quadratic block of
\eqref{lb:eq:base-function} gives the unique unconstrained maximizer
in~\eqref{lb:eq:dual-solution}.

Because $u_j,v_j\ge0$ and $u_j+v_j=n$,
\[
 |u_j-g(x_i)v_j|\le u_j+|g(x_i)|v_j\le n.
\]
Consequently,
\[
 \|y^{i,*}(x)\|_\infty\le\sqrt n\,w_i(x),\qquad
 \|y^{i,*}(x)\|_2\le n w_i(x).
\]
Together with~\eqref{lb:eq:sum-weights}, these estimates imply
\[
 \|y^*(x)\|_\infty\le\sqrt n,\qquad
 \|y^*(x)\|_2^2\le n^2\sum_iw_i(x)^2
 \le n^2\left(\sum_iw_i(x)\right)^2\le n^2.
\]
Hence $y^*(x)\in Y_{M,n}$. By~\eqref{lb:eq:agreement}, this point is
also the unique maximizer of $\widehat f_{M,n}$ over $Y_{M,n}$.

The displayed columns of $Q_n^{-1}$ give
\[
 e_1^\top Q_n^{-1}e_1=e_n^\top Q_n^{-1}e_n=na_n,
 \qquad e_1^\top Q_n^{-1}e_n=nb_n.
\]
The maximum of each quadratic block is therefore
\[
 \frac{w_i^2}{2n}(e_1-g(x_i)e_n)^\top Q_n^{-1}
 (e_1-g(x_i)e_n)=w_i^2k_n(x_i),
\]
which proves~\eqref{lb:eq:value}. Positive definiteness gives
$k_n\ge0$. Since $a_n\le1$, $b_n\le1/2$, and $|g|\le1/2$,
\[
 k_n(t)\le\frac12\left(1+\frac14\right)
 +\frac12\cdot\frac12=\frac78\le1.
\]
The signs of the terms in~\eqref{lb:eq:value} and the bound $s\le1$
give $\varphi_{M,n}(x)\ge-4M$. At $x=\boldsymbol{1}_M$, all
$w_i$ and $x_i^-$ vanish and all $s(x_i)$ equal one, attaining $-4M$.
At $x=0$, only $w_1=1$ is nonzero and $k_n(0)=a_n/2$.
This proves~\eqref{lb:eq:gap}.
\end{proof}

\begin{lemma}[Query support of projected zero-respecting algorithms]
\label{lb:lem:chain}
Let $d=M(n+1)$ and label the coordinates $c_1,\ldots,c_d$ in the order
\begin{equation}
 y_1^1,\ldots,y_n^1,x_1,
 y_1^2,\ldots,y_n^2,x_2,\ldots,
 y_1^M,\ldots,y_n^M,x_M.
 \label{lb:eq:coordinate-order}
\end{equation}
Define
\[
 V_0:=\{0\},\qquad
 V_k:=\{z:\supp(z)\subseteq\{c_1,\ldots,c_{\min\{k,d\}}\}\}
 \quad(k\ge1).
\]
For $\widehat f_{M,n}$ on $\mathcal Z_{M,n}:=\R^M\times Y_{M,n}$,
every algorithm in Definition~\ref{lb:def:algorithm} satisfies
\begin{equation}
 w^t\in V_t,\qquad w^{\mathrm{out}}\in V_T.
 \label{lb:eq:query-chain}
\end{equation}
In particular, $x_M^{\mathrm{out}}=0$ whenever $T<M(n+1)$.
\end{lemma}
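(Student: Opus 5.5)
The plan is to prove \eqref{lb:eq:query-chain} by induction on $t$, using an invariant of the form ``if $w\in V_k$ with $k<d$, then $\supp(w)\cup\supp(F(w))\subseteq\{c_1,\ldots,c_{k+1}\}$''. Equivalently, $w\in V_k$ implies $F(w)\in V_{k+1}$ (and trivially $w\in V_{k+1}$). For $k\ge d$ there is nothing to prove, since then $V_k$ is the whole space. Given the invariant, the induction goes as follows. The base case is $w^0=0\in V_0$. If $w^s\in V_s\subseteq V_t$ for all $s\le t$, then the union in \eqref{lb:eq:algorithm-support} lies in the coordinate set of $V_{t+1}$. So $\supp(a^{t+1})$ does too, and by the support-preserving projection property \eqref{lb:eq:projection-support} we get $w^{t+1}=\Pi_{\mathcal Z_{M,n}}(a^{t+1})\in V_{t+1}$. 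Here the projection onto $\R^M\times Y_{M,n}$ acts as the identity on $x$ and as $\Pi_{Y_{M,n}}$ on $y$. The output rule \eqref{lb:eq:output-support} is handled identically with $T$ in place of $t+1$. Finally, $T<d$ means $c_d=x_M$ is excluded from $V_T$, so $x_M^{\mathrm{out}}=0$.

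The invariant is the real content. Since queries lie in $\R^M\times Y_{M,n}$, Lemma~\ref{lb:lem:smoothness} lets us compute $F$ with $f_{M,n}$, that is, with formula \eqref{lb:eq:mechanism-dual-gradient} and its $x$-counterpart. Fix $w\in V_k$, and let the last revealed coordinate $c_k$ lie in block $i$: either $c_k=y^i_r$ with $r\le n$, or $c_k=x_{i-1}$. Every coordinate in blocks $j>i$ is zero, and so are $x_i$ and $y^i_{r+1},\ldots,y^i_n$ (in the second case, all of block $i$). The key scalar facts are $s(0)=s'(0)=0$, $g(0)=0$, and $g'(0)=1/4$.

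The dual part is checked block by block. For $j>i$ we have $x_{j-1}=0$, hence $P_{j-1}=0$, $w_j=0$, and $y^j=0$, so $\nabla_{y^j}f=0$. For block $i$, $x_i=0$ gives $g(x_i)=0$, so the forcing term acts only on $e_1$. Since $Q_n$ is tridiagonal, a vector supported on the first $r$ entries yields a gradient supported on the first $r+1$ entries. This is exactly $c_{k+1}$ when $r<n$, and when the block is empty it is $y^i_1$, which is again $c_{k+1}$. For blocks $j<i$, everything in the block is already revealed.

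The primal part is where I expect the main obstacle: bookkeeping the derivatives of the products $w_j(x)=P_{j-1}(1-s(x_j))$. I will show that $\partial_{x_m}f=0$ for every $m$ with $x_m$ unrevealed, except possibly $m=i$ in the case $c_k=y^i_n$. Take $m\ge i$ with $x_m=0$. The terms $-4s(x_m)$ and $(x_m^-)^2/2$ have zero derivative at $0$. The coupling terms with $j>m$ contain the factor $s(x_m)$ inside $P_{j-1}$, and $\partial_{x_m}$ of it is $s'(0)=0$. The term $j=m$ gives
\[
P_{m-1}\bigl[-s'(0)(y^m_1-0)-g'(0)y^m_n\bigr]/\sqrt n=-P_{m-1}y^m_n/(4\sqrt n),
\]
as in \eqref{lb:eq:mechanism-primal-activation}. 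This vanishes unless $m=i$ and $y^i_n$ is revealed, i.e.\ $r=n$ and $c_{k+1}=x_i$. For $m>i$ it vanishes because $y^m=0$. Hence $\supp(F(w))\subseteq\{c_1,\ldots,c_{k+1}\}$, completing the invariant and the proof.
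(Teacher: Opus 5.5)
Your proposal is correct and follows essentially the same route as the paper. You prove the one-step invariant $w\in V_k\cap\mathcal Z_{M,n}\Rightarrow F_{M,n}(w)\in V_{k+1}$ from the tridiagonal structure of $Q_n$, the vanishing of $s$, $s'$ and $g$ at unrevealed primal coordinates, and the activation formula $\partial_{x_i}\widehat f_{M,n}=-P_{i-1}y_n^i/(4\sqrt n)$, then finish by induction with the support-preserving projection. Indexing by the last revealed coordinate instead of writing $k=(i-1)(n+1)+r$ is only cosmetic, provided you read $k=0$ as the case of an empty first block.
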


\begin{proof}
Projection onto the product set separates as
\[
 \Pi_{\mathcal Z_{M,n}}(v_x,v_y)=(v_x,\Pi_{Y_{M,n}}(v_y)).
\]
By~\eqref{lb:eq:projection-support}, $v\in V_k$ implies
$\Pi_{\mathcal Z_{M,n}}(v)\in V_k$.

Write $F_{M,n}=(\nabla_x\widehat f_{M,n},-\nabla_y\widehat f_{M,n})$.
By~\eqref{lb:eq:agreement}, at a feasible point we have
\begin{equation}
 \nabla_{y^i}\widehat f_{M,n}
 =-Q_ny^i+\frac{w_i(x)}{\sqrt n}(e_1-g(x_i)e_n).
 \label{lb:eq:dual-gradient}
\end{equation}
If $x_i=0$, then $s'(x_i)=0$ and consequently
$\partial_{x_i}w_j(x)=0$ for every $j$. Indeed, for $j<i$ the weight
does not depend on $x_i$; for $j=i$ its derivative is
$-P_{i-1}s'(x_i)$; and for $j>i$ each derivative contains the factor
$s'(x_i)$. Since $g'(0)=1/4$, it follows that
\begin{equation}
 \partial_{x_i}\widehat f_{M,n}(x,y)
 =-\frac{P_{i-1}(x)}{4\sqrt n}y_n^i
 \quad(x_i=0,\ y\in Y_{M,n}).
 \label{lb:eq:primal-activation}
\end{equation}

We show that $z\in V_k\cap\mathcal Z_{M,n}$ implies
$F_{M,n}(z)\in V_{k+1}$. This holds trivially for $k\ge d$.
For $k<d$, write uniquely
$k=(i-1)(n+1)+r$, where $1\le i\le M$ and $0\le r\le n$.
Then $x_i=0$, only the first $r$ coordinates of $y^i$ may be nonzero,
and $x_j=y^j=0$ for all $j>i$.
In block $i$, $g(x_i)=0$ and $Q_n$ is tridiagonal. If $r<n$,
\eqref{lb:eq:dual-gradient} can therefore activate only the next
coordinate $y_{r+1}^i$, while $y_n^i=0$ makes
\eqref{lb:eq:primal-activation} zero. If $r=n$, every dual coordinate
of block $i$ already belongs to $V_k$, and only $x_i$ can be newly
activated. For $j>i$, the product $P_{j-1}$ contains $s(x_i)=0$,
so $w_j=0$. Together with $y^j=0$, this makes both
\eqref{lb:eq:dual-gradient} and~\eqref{lb:eq:primal-activation} zero
in those blocks. Thus the gradient support lies in the first $k+1$
coordinates.

Starting from $w^0=0\in V_0$, suppose inductively that $w^s\in V_s$
for every $s\le t$. Then
$F_{M,n}(w^s)\in V_{s+1}\subseteq V_{t+1}$.
The support restriction~\eqref{lb:eq:algorithm-support} gives
$a^{t+1}\in V_{t+1}$, and projection gives $w^{t+1}\in V_{t+1}$.
Every vector in the union in~\eqref{lb:eq:output-support} belongs to
$V_T$, so $a^{\mathrm{out}}\in V_T$ and its projection remains in
$V_T$. If $T=0$, the output is zero and the same conclusion holds.
Finally, $x_M$ is the $d$th coordinate, so $T<d$ implies
$x_M^{\mathrm{out}}=0$.
\end{proof}

\subsection{Stationarity barriers}

\begin{lemma}[Value-function gradient barrier]
\label{lb:lem:gradient-barrier}
If $x_M\le1/2$, then
\begin{equation}
 \|\nabla\varphi_{M,n}(x)\|\ge\frac1{256}.
 \label{lb:eq:gradient-barrier}
\end{equation}
\end{lemma}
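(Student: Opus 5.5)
The plan is to compute the partial derivatives of $\varphi_{M,n}$ from the closed form~\eqref{lb:eq:value} in Lemma~\ref{lb:lem:value}, and to find one coordinate, or a group of coordinates, whose derivatives are bounded away from zero. Write $s_k=s(x_k)$ and $k_n'(t)=g'(t)\bigl(a_ng(t)-b_n\bigr)$. Differentiating~\eqref{lb:eq:value} with respect to $x_k$ gives
\[
 \partial_k\varphi_{M,n}
 =-4s'(x_k)-x_k^-+w_k^2k_n'(x_k)
 -2P_{k-1}^2(1-s_k)s'(x_k)k_n(x_k)
 +\sum_{j>k}\partial_k(w_j^2)\,k_n(x_j).
\]

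\textbf{Step 1 (one-sided bound on each partial derivative).} The last sum is the only term that can be positive. For $j>k$ we have $w_j^2=s_k^2\cdot(\text{factors independent of }x_k)$, so $\partial_k(w_j^2)=2(s'(x_k)/s_k)w_j^2$. By~\eqref{lb:eq:sum-weights}, $\sum_{j>k}w_j^2\le s_k^2\bigl(\sum_{j>k}w_j/s_k\bigr)^2\le s_k^2$, and $0\le k_n\le1$. Hence the sum is at most $2s_ks'(x_k)\le2s'(x_k)$.

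Next I will check that $k_n'\le0$ wherever $g\le1/4$. This holds because $a_ng\le1/4<1/3\le b_n$ and $g'\ge0$. For $t\le1$, $k_n'(t)\le-\tfrac14\cdot\tfrac1{12}$ when $|t|\le1$.

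Consequently, for every $k$ with $x_k\in[0,1]$,
\[
 \partial_k\varphi_{M,n}\le-2s'(x_k)+w_k^2k_n'(x_k)\le-2s'(x_k).
\]
For $x_k<0$ (so $s_k=s'(x_k)=0$), the bound becomes $\partial_k\varphi_{M,n}\le-x_k^-+P_{k-1}^2k_n'(x_k)$.

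\textbf{Step 2 (case split via a threshold $\delta$).} Fix a small absolute $\delta$, for instance $\delta=1/16$.

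If some $x_k\in[\delta,1-\delta]$, then Step 1 gives $|\partial_k\varphi_{M,n}|\ge2s'(\delta)=\pi\sin(\pi\delta)$, which is far above $1/256$.

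Otherwise let $i$ be the first index with $x_i<1-\delta$. It exists because $x_M\le1/2$, and it satisfies $x_i<\delta$. Every $k<i$ has $x_k>1-\delta$. Write $\eta_k=(1-x_k)_+\le\delta$. Then
\[
 1-s_k=\cos^2\!\Bigl(\frac{\pi(1-\eta_k)}{2}\Bigr)\le\frac{\pi^2\eta_k^2}{4},
 \qquad
 s'(x_k)=\frac{\pi}{2}\sin(\pi\eta_k)\ge\eta_k\pi
\]
on $[0,\delta]$. A union bound gives $P_{i-1}\ge1-\tfrac{\pi^2}{4}\sum_{k<i}\eta_k^2$.

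\emph{Subcase (a).} If $\sum_{k<i}\eta_k^2\ge2/\pi^2$, then by Step 1 $\|\nabla\varphi_{M,n}\|^2\ge\sum_{k<i}4s'(x_k)^2\ge4\pi^2\sum_k\eta_k^2\ge8$.

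\emph{Subcase (b).} Otherwise $P_{i-1}\ge1/2$, and I examine coordinate $i$. If $x_i<-1$, then $|\partial_i\varphi_{M,n}|\ge x_i^->1$. If $x_i\in[-1,0)$, then $w_i=P_{i-1}$ and $\partial_i\varphi_{M,n}\le P_{i-1}^2k_n'\le-\tfrac14\cdot\tfrac1{48}$. If $x_i\in[0,\delta)$, then $w_i^2=P_{i-1}^2(1-s_i)^2\ge\tfrac14(1-s(\delta))^2$, so $\partial_i\varphi_{M,n}\le w_i^2k_n'(x_i)\le-\tfrac14\cdot\tfrac{0.9}{48}$. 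Each of these values exceeds $1/256$. The constants are only to be checked, not optimized.

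\textbf{Step 3 (differentiability of $\varphi_{M,n}$).} To use $\nabla\varphi_{M,n}$ at all, I will note that $\varphi_{M,n}$ is $C^1$. This follows either from the explicit formula or from Danskin's theorem, since the maximizer is unique by the strong concavity in Lemma~\ref{lb:lem:smoothness}. The formula is built from $C^{1,1}$ pieces, so the derivative computations above are legitimate everywhere.

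The main obstacle is the dimension-free handling of the prefix product $P_{i-1}$, which could be small when many earlier coordinates sit just below $1$. The device of Subcase (a) trades this off against the unweighted $-4s'$ terms: their squared sum is at least proportional to $\sum\eta_k^2$, while $1-P_{i-1}$ is at most proportional to the same sum. Verifying the elementary inequalities $1-s\le\pi^2\eta^2/4$ and $s'\ge\pi\eta$ on $[0,\delta]$, and keeping every constant above $1/256$, is routine.
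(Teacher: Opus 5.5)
Your proof is correct and follows essentially the same route as the paper. The shared steps are the one-sided bound $\partial_i\varphi_{M,n}\le -2s'(x_i)$, obtained from $k_n'\le 0$ and by absorbing the downstream term into $2s_is'(x_i)$; a dimension-free lower bound on the prefix product $P_{i-1}$ via $\sum_{k<i}s'(x_k)^2$; and a negative bound on $k_n'$ at the first unsaturated coordinate. The paper shortens your case split by arguing by contradiction from the first index with $s_k\le 1/2$ and using the identity $(s')^2=\pi^2 s(1-s)$, so that $1-s_k\le 2s'(x_k)^2/\pi^2$ replaces your threshold $\delta$, the middle-range case, and the trigonometric estimates near $x_k=1$.
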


\begin{proof}
By Lemma~\ref{lb:lem:value}, $\varphi_{M,n}$ is continuously
differentiable, and
\[
 k_n'(t)=(a_ng(t)-b_n)g'(t).
\]
Since
\[
 a_ng(t)-b_n\le\frac{a_n}{2}-b_n
 =-\frac1{2(3n-1)}<0
\]
and $g'\ge0$, we have $k_n'\le0$.
For $t\in[-1,1/2]$, $g(t)=t/4\le1/8$ and $g'(t)=1/4$, so
\begin{equation}
 k_n'(t)\le\frac14\left(\frac{a_n}{8}-b_n\right)
 =-\frac{6n+1}{32(3n-1)}\le-\frac1{16}.
 \label{lb:eq:k-derivative}
\end{equation}

Write $s_i=s(x_i)$ and $s_i'=s'(x_i)$, and define
\[
 H_i(x)=\sum_{m=i+1}^M
 \left(\prod_{j=i+1}^{m-1}s_j^2\right)(1-s_m)^2k_n(x_m).
\]
Empty sums equal zero and empty products equal one.
The bounds $0\le k_n\le1$ and $(1-s_m)^2\le1-s_m^2$ imply
\begin{equation}
\begin{aligned}
 0\le H_i
 &\le\sum_{m=i+1}^M
 \left(\prod_{j=i+1}^{m-1}s_j^2\right)(1-s_m^2)\\
 &=1-\prod_{j=i+1}^M s_j^2\le1.
\end{aligned}
 \label{lb:eq:H-bound}
\end{equation}
Decompose the first term of the value function as
\[
 \sum_mw_m^2k_n(x_m)
 =\sum_{m<i}w_m^2k_n(x_m)
 +P_{i-1}^2\bigl((1-s_i)^2k_n(x_i)+s_i^2H_i\bigr).
\]
The terms $\sum_{m<i}w_m^2k_n(x_m)$, $P_{i-1}$, and $H_i$ do not
depend on $x_i$. Differentiating~\eqref{lb:eq:value} term by term yields
\begin{equation}
\begin{aligned}
 \partial_i\varphi_{M,n}(x)
 ={}&P_{i-1}^2\bigl((1-s_i)^2k_n'(x_i)
 -2(1-s_i)s_i'k_n(x_i)+2s_is_i'H_i\bigr)\\
 &-4s_i'+\min\{x_i,0\}.
\end{aligned}
 \label{lb:eq:value-gradient}
\end{equation}
Because $0\le P_{i-1},s_i,H_i\le1$, $s_i'\ge0$, $k_n\ge0$,
and $k_n'\le0$,
\begin{equation}
\begin{aligned}
 \partial_i\varphi_{M,n}(x)
 &\le P_{i-1}^2(1-s_i)^2k_n'(x_i)-2s_i'+\min\{x_i,0\}\\
 &\le-2s_i'\le0.
\end{aligned}
 \label{lb:eq:value-gradient-sign}
\end{equation}
Consequently,
\begin{equation}
 \|\nabla\varphi_{M,n}(x)\|^2\ge4\sum_i(s_i')^2.
 \label{lb:eq:derivative-sum}
\end{equation}

Suppose, for a contradiction, that
$\|\nabla\varphi_{M,n}(x)\|<1/256$.
Let $k$ be the smallest index for which $s_k\le1/2$.
Such an index exists because $x_M\le1/2$ and $s(1/2)=1/2$.
For $i<k$, we have $s_i>1/2$; hence
\eqref{lb:eq:scalar-bounds} and~\eqref{lb:eq:derivative-sum} give
\[
 1-s_i=\frac{(s_i')^2}{\pi^2s_i}
 \le\frac2{\pi^2}(s_i')^2,\qquad
 \sum_{i<k}(1-s_i)
 \le\frac{\|\nabla\varphi_{M,n}(x)\|^2}{2\pi^2}<\frac12.
\]
For $a_i\in[0,1]$, repeated use of
$(1-a)(1-b)\ge1-a-b$ yields
$\prod_i(1-a_i)\ge1-\sum_i a_i$. Therefore,
\[
 P_{k-1}=\prod_{i<k}s_i
 \ge1-\sum_{i<k}(1-s_i)\ge\frac12.
\]
If $x_k<-1$, then~\eqref{lb:eq:value-gradient-sign} implies
$\partial_k\varphi_{M,n}(x)\le x_k<-1$, contradicting the assumed
gradient bound. Otherwise, $s_k\le1/2$ and the definition of $s$
give $x_k\in[-1,1/2]$. Applying~\eqref{lb:eq:k-derivative} and
\eqref{lb:eq:value-gradient-sign}, we obtain
\[
 \partial_k\varphi_{M,n}(x)
 \le-\left(\frac12\right)^2\left(\frac12\right)^2\frac1{16}
 =-\frac1{256},
\]
again a contradiction.
\end{proof}

\begin{lemma}[Moreau-envelope gradient barrier]
\label{lb:lem:moreau-barrier}
Let $c_0=1/512$ and use the curvature convention
\[
 \Phi_{M,n,2L_0}(x)
 :=\min_u\{\varphi_{M,n}(u)+L_0\|u-x\|^2\}.
\]
If $x_M=0$, then
\begin{equation}
 \|\nabla\Phi_{M,n,2L_0}(x)\|>c_0.
 \label{lb:eq:moreau-barrier}
\end{equation}
Consequently, the output of every projected zero-respecting first-order
algorithm after $T<M(n+1)$ queries satisfies~\eqref{lb:eq:moreau-barrier}.
\end{lemma}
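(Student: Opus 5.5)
The plan is to reduce the Moreau-envelope barrier to the value-function gradient barrier of Lemma~\ref{lb:lem:gradient-barrier}, using the proximal point as the bridge. We argue by contradiction: assume $x_M=0$ and $\|\nabla\Phi_{M,n,2L_0}(x)\|\le c_0$.

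\textbf{Envelope gradient and the proximal point.} By Lemma~\ref{lb:lem:smoothness}, $\widehat f_{M,n}$ is $L_0$-smooth, and $\widehat f_{M,n}(x,\cdot)$ is strongly concave with a unique maximizer over $Y_{M,n}$ (Lemma~\ref{lb:lem:value}). Hence $\varphi_{M,n}$ is $L_0$-weakly convex, and by Danskin's theorem it is continuously differentiable with
\[
 \nabla\varphi_{M,n}(u)=\nabla_x\widehat f_{M,n}(u,y^*(u)).
\]
It follows that $u\mapsto\varphi_{M,n}(u)+L_0\|u-x\|^2$ is $L_0$-strongly convex. So the proximal point $u=u(x)$ is unique and $\Phi_{M,n,2L_0}$ is differentiable with
\[
 \nabla\Phi_{M,n,2L_0}(x)=2L_0(x-u),
\]
exactly as in~\eqref{eq:moreau-gradient}. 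Since there is no primal constraint and $\varphi_{M,n}$ is differentiable, the first-order optimality condition at $u$ gives
\[
 \nabla\varphi_{M,n}(u)=2L_0(x-u)=\nabla\Phi_{M,n,2L_0}(x).
\]

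\textbf{The contradiction.} The assumed bound then yields $\|x-u\|\le c_0/(2L_0)=1/(512\cdot256)$. In particular $u_M\le x_M+|u_M-x_M|<1/2$, so Lemma~\ref{lb:lem:gradient-barrier} applies at $u$ and gives $\|\nabla\varphi_{M,n}(u)\|\ge 1/256$. But $\|\nabla\varphi_{M,n}(u)\|=\|\nabla\Phi_{M,n,2L_0}(x)\|\le c_0=1/512<1/256$, a contradiction. This proves~\eqref{lb:eq:moreau-barrier} with strict inequality.

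\textbf{The consequence for algorithms.} Lemma~\ref{lb:lem:chain} shows that $T<M(n+1)$ forces $x_M^{\mathrm{out}}=0$, and the bound just proved then applies to $x^{\mathrm{out}}$.

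The only delicate step is justifying that $\varphi_{M,n}$ is differentiable and that $\nabla\varphi_{M,n}(u)=2L_0(x-u)$ holds at the prox point. Differentiability comes from uniqueness of the dual maximizer, which needs strong concavity of $\widehat f_{M,n}(x,\cdot)$ over the compact set $Y_{M,n}$; alternatively, one can differentiate the explicit formula~\eqref{lb:eq:value}, which is $C^1$. The rest is a two-line margin comparison.
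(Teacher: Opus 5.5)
Your proposal is correct and follows essentially the same route as the paper: you identify $\nabla\Phi_{M,n,2L_0}(x)=2L_0(x-u)=\nabla\varphi_{M,n}(u)$ at the unique proximal point $u$. You then use the assumed bound $c_0$ to force $u_M<1/2$, contradict Lemma~\ref{lb:lem:gradient-barrier}, and obtain the algorithmic consequence from Lemma~\ref{lb:lem:chain}.
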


\begin{proof}
Lemma~\ref{lb:lem:smoothness} implies that $\varphi_{M,n}$ is
$L_0$-weakly convex: for each feasible $y$,
$\widehat f_{M,n}(\cdot,y)+(L_0/2)\|\cdot\|^2$ is convex,
and taking its pointwise maximum preserves convexity.
By~\eqref{lb:eq:gap}, $\varphi_{M,n}$ is bounded below.
Hence
\[
 p=\argmin_u\{\varphi_{M,n}(u)+L_0\|u-x\|^2\}
\]
exists and is unique. Set $d=\nabla\Phi_{M,n,2L_0}(x)$.
The Moreau-gradient identity and the proximal optimality condition give
\[
 d=2L_0(x-p),\qquad
 0=\nabla\varphi_{M,n}(p)+2L_0(p-x).
\]
Thus $d=\nabla\varphi_{M,n}(p)$ and $p=x-d/(2L_0)$.
If $x_M=0$ and $\|d\|\le c_0$, then
\[
 |p_M|=\frac{|d_M|}{2L_0}
 \le\frac1{2L_0\cdot512}<\frac12.
\]
Lemma~\ref{lb:lem:gradient-barrier} then gives
$\|d\|=\|\nabla\varphi_{M,n}(p)\|\ge1/256>c_0$, a contradiction.
The output statement follows from Lemma~\ref{lb:lem:chain}.
\end{proof}

\subsection[Initialization for the upper-bound comparison]{{Initialization for the upper-bound comparison}}
\label{app:lb-comparison}
{
On the scaled hard instance, initialization at the origin gives
\[
 \nabla_xf_{\mathrm{sc}}(0,0)=0,\qquad
 \nabla_yf_{\mathrm{sc}}(0,0)=\frac{a}{b\sqrt n}e_1,
 \qquad \frac{a}{b\sqrt n}\le\frac{LD_Y}{4}.
\]
Substitution into~\eqref{eq:uniform-initial-bound} yields
\begin{equation}\label{lb:eq:initialization-comparison}
 \bar\Delta_0\le\Delta+\frac38LD_Y^2.
\end{equation}
For fixed $L,\Delta,D_Y$, Lemma~\ref{lem:uniform-initial-bound} therefore
bounds $\Delta_\tau$ uniformly for $0<\tau\le L$ on the common class
satisfying~\eqref{lb:eq:problem-class} and
$\bar\Delta_0\le\Delta+\tfrac38LD_Y^2$.
The OS upper and lower bounds match in their $\eps^{-3}$ accuracy
dependence on this class. If $\Delta\ge LD_Y^2$, the hard family also
belongs to the common initialization class $\bar\Delta_0\le2\Delta$,
on which the leading dependence matches at order
$L^2\Delta D_Y\eps^{-3}$.

With all states initialized at zero, Algorithms~\ref{alg:main}
and~\ref{alg:warmup} satisfy the oracle support restrictions on these
domains. Their linear combinations, regularization gradients, momentum
and center updates, and stored-output selection preserve revealed support.
Projections preserve support by Lemma~\ref{lb:lem:basic}, as do their
residuals. Each gradient evaluation counts as a query; the fixed-center
warm-up and its transition to the main phase introduce no new coordinates.
}

\end{appendices}
\begingroup
\small
\setlength{\bibsep}{3pt}
\interlinepenalty=10000
\bibliography{references}
\endgroup
\end{document}